\documentclass[11pt,reqno]{amsart}

\usepackage[utf8]{inputenc}
\usepackage[T1]{fontenc}
\usepackage{lmodern}
\usepackage{amsmath,amssymb,amsthm,amscd,mathtools}
\usepackage[mathscr]{euscript}
\usepackage{mathrsfs}
\usepackage{enumitem}
\usepackage{xcolor}
\usepackage{etoolbox}
\usepackage{float}
\usepackage[colorlinks=true,linkcolor=blue,citecolor=black,urlcolor=black]{hyperref}
\usepackage{aliascnt}
\usepackage[nameinlink,noabbrev]{cleveref}
\let\standardtableofcontents\tableofcontents

\usepackage{tikz}
\usetikzlibrary{arrows.meta}

\usepackage{array}
\usepackage{booktabs}

\newcolumntype{L}[1]{>{\raggedright\arraybackslash}p{#1}}

\theoremstyle{plain}
\newtheorem{theorem}{Theorem}[section]
\newtheorem{example}[theorem]{Example}

\newaliascnt{lemma}{theorem}
\newtheorem{lemma}[lemma]{Lemma}
\aliascntresetthe{lemma}
\crefname{lemma}{lemma}{lemmas}
\Crefname{lemma}{Lemma}{Lemmas}

\newaliascnt{proposition}{theorem}
\newtheorem{proposition}[proposition]{Proposition}
\aliascntresetthe{proposition}
\crefname{proposition}{proposition}{propositions}
\Crefname{proposition}{Proposition}{Propositions}

\newaliascnt{corollary}{theorem}
\newtheorem{corollary}[corollary]{Corollary}
\aliascntresetthe{corollary}
\crefname{corollary}{corollary}{corollaries}
\Crefname{corollary}{Corollary}{Corollaries}

\newaliascnt{claim}{theorem}

\aliascntresetthe{claim}
\crefname{claim}{claim}{claims}
\Crefname{claim}{Claim}{Claims}

\theoremstyle{definition}
\newaliascnt{definition}{theorem}
\newtheorem{definition}[definition]{Definition}
\aliascntresetthe{definition}
\crefname{definition}{definition}{definitions}
\Crefname{definition}{Definition}{Definitions}

\newaliascnt{remark}{theorem}
\newtheorem{remark}[remark]{Remark}
\aliascntresetthe{remark}
\crefname{remark}{remark}{remarks}
\Crefname{remark}{Remark}{Remarks}

\newaliascnt{question}{theorem}
\newtheorem{question}[question]{Question}
\aliascntresetthe{question}
\crefname{question}{question}{questions}
\Crefname{question}{Question}{Questions}

\newcommand{\N}{\mathbb N}
\newcommand{\R}{\mathbb R}
\newcommand{\C}{\mathbb C}
\newcommand{\T}{\mathbb T}

\newcommand{\eps}{\varepsilon}
\newcommand{\distT}{d_{\mathbb T}}

\newcommand{\spancl}[1]{\overline{\operatorname{span}}\{#1\}}
\newcommand{\spann}{\operatorname{span}}

\DeclareMathOperator{\supp}{supp}

\newcounter{problempaper}

\makeatletter

\def\leftmark{\expandafter\@firstoftwo\botmark{}{}}
\def\rightmark{\expandafter\@secondoftwo\botmark{}{}}

\newcommand{\problempaper}[1]{%
  \stepcounter{problempaper}%
  \markboth{#1}{#1}%
  \section{#1}%
  \markboth{#1}{#1}%
}

\newenvironment{problempaperbody}{%
  \setcounter{subsection}{0}%
  \setcounter{subsubsection}{0}%
  \setcounter{paragraph}{0}%
  \renewcommand{\thesubsection}{\arabic{subsection}}%
  \renewcommand{\thesubsubsection}{\thesubsection.\arabic{subsubsection}}%
  \renewcommand{\theparagraph}{\thesubsubsection.\arabic{paragraph}}%
  \let\problem@outersection\section
  \let\problem@outersubsection\subsection
  \let\problem@outersubsubsection\subsubsection
  \let\problem@outerparagraph\paragraph
  \let\section\problem@papersection
  \let\subsection\problem@papersubsection
  \let\subsubsection\problem@papersubsubsection
}{%
  \let\section\problem@outersection
  \let\subsection\problem@outersubsection
  \let\subsubsection\problem@outersubsubsection
  \let\paragraph\problem@outerparagraph
}

\newcommand{\problem@target}[2]{%
  \hypertarget{problem-\theproblempaper-#1-#2}{}%
}

\newcommand{\problem@paperheading}[3]{%
  \par\bigskip
  \problem@target{#1}{#2}%
  \noindent{\large\bfseries #2.\quad #3\par}%
  \nobreak\smallskip
}

\newcommand{\problem@papersubheading}[3]{%
  \par\medskip
  \problem@target{#1}{#2}%
  \noindent{\normalsize\bfseries #2.\quad #3\par}%
  \nobreak\smallskip
}

\newcommand{\problem@papersubsubheading}[3]{%
  \par\smallskip
  \problem@target{#1}{#2}%
  \noindent{\normalsize\itshape #2.\quad #3.\ }%
}

\def\problem@papersection{%
  \@ifstar{\problem@papersectionstar}{\problem@papersectionnostar}%
}

\def\problem@papersectionstar#1{%
  \par\bigskip\noindent{\large\bfseries #1\par}\nobreak\smallskip
}

\def\problem@papersectionnostar#1{%
  \refstepcounter{subsection}%
  \setcounter{subsubsection}{0}%
  \setcounter{paragraph}{0}%
  \problem@paperheading{section}{\thesubsection}{#1}%
  \problem@writecontentsline{section}{\thesubsection}{#1}%
}

\def\problem@papersubsection{%
  \@ifstar{\problem@papersubsectionstar}{\problem@papersubsectionnostar}%
}

\def\problem@papersubsectionstar#1{%
  \par\medskip\noindent{\normalsize\bfseries #1\par}\nobreak\smallskip
}

\def\problem@papersubsectionnostar#1{%
  \refstepcounter{subsubsection}%
  \setcounter{paragraph}{0}%
  \problem@papersubheading{subsection}{\thesubsubsection}{#1}%
  \problem@writecontentsline{subsection}{\thesubsubsection}{#1}%
}

\def\problem@papersubsubsection{%
  \@ifstar{\problem@papersubsubsectionstar}{\problem@papersubsubsectionnostar}%
}

\def\problem@papersubsubsectionstar#1{%
  \par\smallskip\noindent{\normalsize\itshape #1.\ }%
}

\def\problem@papersubsubsectionnostar#1{%
  \refstepcounter{paragraph}%
  \problem@papersubsubheading{subsubsection}{\theparagraph}{#1}%
  \problem@writecontentsline{subsubsection}{\theparagraph}{#1}%
}

\newcommand{\problem@writecontentsline}[3]{%
  \begingroup
  \protected@write\@auxout{}{%
    \string\problemcontentsentry{\theproblempaper}{#1}{#2}{#3}{\thepage}%
  }%
  \endgroup
}

\newcommand{\problemcontentsentry}[5]{%
  \@ifundefined{problemcontents@#1}{%
    \expandafter\gdef\csname problemcontents@#1\endcsname{}%
  }{}%
  \expandafter\g@addto@macro\csname problemcontents@#1\endcsname{%
    \problemcontentsline{#1}{#2}{#3}{#4}{#5}%
  }%
}

\newcommand{\problemcontentslink}[4]{%
  \hyperlink{problem-#1-#2-#3}{#4}%
}

\newcommand{\problemcontentsline}[5]{%
  \ifstrequal{#2}{section}{%
    \par\noindent
    \problemcontentslink{#1}{#2}{#3}{\makebox[2.5em][l]{#3}#4}%
    \nobreak\dotfill
    \problemcontentslink{#1}{#2}{#3}{#5}\par
  }{%
    \ifstrequal{#2}{subsection}{%
      \par\noindent\hspace*{1.5em}%
      \problemcontentslink{#1}{#2}{#3}{\makebox[3.5em][l]{#3}#4}%
      \nobreak\dotfill
      \problemcontentslink{#1}{#2}{#3}{#5}\par
    }{%
      \par\noindent\hspace*{3em}%
      \problemcontentslink{#1}{#2}{#3}{\makebox[4.5em][l]{#3}#4}%
      \nobreak\dotfill
      \problemcontentslink{#1}{#2}{#3}{#5}\par
    }%
  }%
}

\newcommand{\problemcontents}{%
  \begingroup
  \par\medskip\noindent\textbf{Contents of this problem paper}\par\smallskip\small
  \@ifundefined{problemcontents@\theproblempaper}{%
    \noindent\emph{Run LaTeX again to generate this local table of contents.}\par
  }{%
    \csname problemcontents@\theproblempaper\endcsname
  }%
  \endgroup
}

\makeatother

\newenvironment{problemabstract}
  {\begin{quote}\small\noindent\textbf{Abstract.}\ }
  {\end{quote}}

\newcommand{\websiteurl}{https://mathematical-discoveries-in-the-wild.github.io/}
\newcommand{\website}[1][project website]{%
  \href{\websiteurl}{\textcolor{blue}{#1}}%
}
\subjclass[2020]{Primary 46B20; Secondary 46B25, 46B28, 47B01, 68T20, 68T42, 68T50}

\keywords{AI-assisted mathematics, large language models, mathematical discovery, proof search, proof verification, Banach space theory}

\title[Mathematical Discovery in the Wild]{Mathematical Discovery in the Wild:\\ AI-Guided Proofs in Banach Space Theory}

\author[A. Acuaviva]{Antonio Acuaviva}
\address{School of Mathematical Sciences,
Fylde College,
Lancaster University,
LA1 4YF,
United Kingdom} \email{ahacua@gmail.com}

\author[P. Acuaviva]{Pablo Acuaviva}
\address{Institute of Computer Science,
University of Bern,
Neubrückstrasse 10,
3012 Bern,
Switzerland} \email{pablohacuaviva@gmail.com}

\date{19 July 2026}

\hypersetup{
  pdftitle={Mathematical Discovery in the Wild: AI-Guided Proofs in Banach Space Theory},
  pdfauthor={Antonio Acuaviva and Pablo Acuaviva},
  pdfsubject={AI-guided mathematical discovery and proofs in Banach space theory},
  pdfkeywords={
    AI-assisted mathematics,
    large language models,
    mathematical discovery,
    proof search,
    proof verification,
    Banach space theory
  },
  pdfdisplaydoctitle=true
}

\begin{document}

\begin{abstract}
    We investigate the capacity of current language models to contribute to mathematical research. In Banach space theory, AI systems generated key ideas and proofs for five new results, which were then verified and refined by humans. We also developed an automated system that searches the literature for open problems and attempts solutions at scale. Our results show both the potential of language models for mathematical discovery and the continuing importance of expert verification.
\end{abstract}

\maketitle

\begin{flushright}
\begin{minipage}{0.7\textwidth}
\itshape
``C'est par la logique qu'on démontre, c'est par l'intuition qu'on invente.''

\medskip
\upshape\hfill Henri Poincaré, \emph{Science et méthode}
\end{minipage}
\end{flushright}

\setcounter{tocdepth}{1}
\standardtableofcontents

\part{Survey and Discussion}\label{part1}

\section{Introduction and discussion}
\label{sec:introduction-discussion}

The main purpose of this article is to draw attention to a development which, in our view, the mathematical community can no longer regard as merely speculative. Current language model systems, when placed in suitable workflows and checked by experts, can already do useful work on live research level mathematics. The claim is not merely that such systems can solve polished exercises, reproduce known arguments, or assist with exposition, but that they can sometimes generate serious proof candidates for questions that arise naturally in the research literature. This point is made concrete in \Cref{part2}, where we present selected Banach space problems whose proofs were generated essentially by the model before human verification, editing, and integration; the provenance of the individual problem papers is discussed more carefully in \Cref{sec:problems-and-provenance}.

A second purpose is to begin raising the questions that follow from this fact. Autonomous or semi-autonomous proof search with general-purpose models is no longer only a matter for benchmark design or future speculation.  We place this development in recent and historical context in \Cref{sec:historical-background}. The important point is that these systems are beginning to affect how mathematical problems can be found, attempted, checked, organized, and written up. If this continues, the mathematical community will need to think carefully about its consequences: standards of verification, attribution and credit, the role of human judgement, the value of open problem lists, pressure on peer-review, and the ways in which research training and collaboration may change.  We return to these broader issues in \Cref{sec:new-age} and~\ref{sec:limitations-credit-responsibility}.

\subsection{Origin and scope of the project}
\label{sec:origin-scope} This article grew out of an automatic\allowbreak-search project in Banach space theory.  The original objective was to search the arXiv literature for questions, conjectures, and open problems that appeared sufficiently local to be attacked, and then to try to settle them with minimal human guidance. The motivation was to work systematically through unresolved points left in papers.  Such questions are often not the central open problems of a field, but answering them can still clarify the literature: a proposed problem may have a proof, a counterexample, an answer already implicit in known results, or an obstruction explaining why it remains open.  In this sense, the project was concerned with accelerating the gradual closure of mathematical knowledge.

As the project developed, the model-generated outputs were stronger than expected.  In particular, many of the claims presented by the model as full solutions survived subsequent mathematical checking.  This changed the scope of the experiment.  We continued the automatic search, but also began to test the same style of automatic proof search on harder problems selected by humans: problems suggested in discussions with other mathematicians, or chosen from the authors' knowledge of the field.  Thus the project came to have two distinct components.

\Cref{part2} contains the human-selected component.  The problems treated there were not discovered by the automatic pipeline. They were chosen precisely because they seemed to be substantial, non-routine research problems: difficult enough that a solution would, in our judgement, constitute a meaningful advance in Banach-space theory. Their purpose is therefore not to exhibit model performance on routine exercises, but to test autonomous proof search on problems that specialists would regard as serious mathematical targets. Thus, the selection was human and deliberately demanding, but the proof search itself was fully model-driven. The arguments printed in \Cref{part2} were autonomously produced by the model before human verification, editing, and integration into self-contained mathematical notes.

\Cref{part3} records the automatic-search component. Here, both the selection of targets and the attempted solutions came from the pipeline.  The purpose of this part is not to present polished, standalone papers, but to document the wider machine-selected, machine-attempted exploration from which the project began. 

The choice of field should not be interpreted as a claim that the phenomenon is specific to Banach space theory. It reflects a practical constraint on this kind of work: generated mathematics requires verification by someone who understands the relevant definitions, the cited results, and the local literature.  In both parts of the article, human verification enters after generation.  The authors formulate or select problems in the human-chosen cases, run the exploratory process, identify outputs worth checking, verify the mathematics, assess significance, and edit the final exposition, fixing some minor mistakes whenever necessary.

\subsection{Recent context}\label{sec:historical-background}

Several recent projects evaluate AI on advanced or research-level mathematics. FrontierMath currently has two components: Tiers 1--4, a benchmark of unpublished problems authored and peer-reviewed by expert mathematicians, and Open Problems, a separate collection of unsolved research problems \cite{GlazerEtAl2024FrontierMath,EpochAIFrontierMath}. RealMath derives automatically verifiable mathematical tasks from research papers and mathematical forums and is designed as a continually refreshable benchmark \cite{ZhangEtAl2025RealMath}. LemmaBench similarly constructs a live research-level benchmark by extracting recent lemmas from arXiv and rewriting them as self-contained statements \cite{PeyronnetEtAl2026LemmaBench}. IMProofBench is a private benchmark of peer-reviewed proof problems developed by expert mathematicians; models operate in an agentic environment with tools such as web search and SageMath, while complete proofs are graded by human experts \cite{SchmittEtAl2025IMProofBench}. First Proof focuses more specifically on questions that arose in the work of professional mathematicians, had known solutions, and had not previously appeared online. In its first, informal batch, the organisers released ten such questions and temporarily withheld the answers. In the organisers' preliminary tests, each query was run once, without iterative interaction or rerunning \cite{AbouzaidEtAl2026FirstProofFirstBatch}. The second batch was a formal benchmark using ten new questions and four systems: ChatGPT 5.5 Pro and three academic harnesses. Each system received the problems under a fixed one-shot protocol and returned solutions without further interaction from the organisers; the problems, human-generated and AI-generated solutions, costs, evaluations including referee reports, logs, and harness code were subsequently released \cite{AbouzaidEtAl2026FirstProofSecondBatch}.

The aim of the present paper is different. We do not seek to compare systems under a fixed protocol or compute budget, or to produce a leaderboard result. Instead, we ask what current models can contribute when used for repeated exploration of live problems, with final human verification of claims that may be genuinely new. A complementary collaborative paradigm is represented by the AI co-mathematician, an interactive and stateful workbench through which mathematicians guide agents across ideation, literature search, computational exploration, theorem proving, and theory building \cite{ZhengEtAl2026AICoMathematician}. The process studied here was not that kind of specialist-guided collaboration: a mathematician did not supply the next lemma or choose how to use each intermediate observation. Problems could be revisited, and partial outputs could inform later attempts, but the mathematical content of the promoted proofs was generated in the model attempts rather than supplied by a mathematician; see \Cref{sec:problems-and-provenance}.

A related but methodologically distinct line of work uses automated evaluators to guide discovery. FunSearch combines an LLM with evolutionary search over programs and an automated evaluator; it produced new cap-set constructions and improved online bin-packing heuristics \cite{RomeraParedesEtAl2024FunSearch}. Subsequent work made this style of search more accessible to working mathematicians \cite{EllenbergEtAl2025GenerativeModelling}. AlphaEvolve generalizes evaluator-guided evolutionary search to direct modification of algorithmic code and a broader range of scientific and computational tasks \cite{NovikovEtAl2025AlphaEvolve}. Georgiev, G\'omez-Serrano, Tao, and Wagner applied AlphaEvolve to 67 mathematical problems, recovering best-known solutions in most cases and improving them in several others \cite{GeorgievEtAl2025MathDiscoveryScale}. Such methods apply most directly in settings where candidate objects can be represented computationally and scored automatically.

The problems considered here generally do not have that form. The outputs are natural-language proof candidates for questions extracted from papers or selected by mathematical judgement. Their correctness is not fully captured by a numerical objective or a short verification computation. They must instead be checked by reading the proof, verifying the hypotheses of cited results, comparing the claim with the original problem, and rewriting the argument in a clear mathematical form.

Other recent work has developed longer-horizon scientific and mathematical agents with memory, tool use, literature search, theorem retrieval, and verification. The AI Scientist and The AI Scientist v2 automate stages of machine-learning research including idea or hypothesis generation, experiment execution, analysis, manuscript writing, and automated or simulated review \cite{LuEtAl2024AIScientist,YamadaEtAl2025AIScientistV2}. Kosmos coordinates literature search, hypothesis generation, and data analysis through a structured world model \cite{MitchenerEtAl2025Kosmos}, while Arbor uses Hypothesis-Tree Refinement to preserve hypotheses, artifact versions, evidence, and distilled insights across long research runs \cite{JinEtAl2026Arbor}. Iteris uses an explore--plan--execute loop for numerical experimentation, construction, and proof development on open problems in computational mathematics; in its case studies, the resulting artifacts were followed by expert review and correction \cite{ChenLiuHeDong2026Iteris}.

In mathematics more specifically, Aletheia iteratively generates, verifies, and revises natural-language solutions and has also been evaluated on the inaugural First Proof problems \cite{FengEtAl2026AletheiaFirstProof,FengEtAl2026AutonomousMathResearch}. Rethlas combines informal proof search with the Matlas theorem-search engine, while Archon uses LeanSearch to formalize and verify the resulting arguments in Lean 4 \cite{JuEtAl2026RethlasArchon}. Recent AxiomProver-backed papers report a Lean formalization of the combinatorial identity used in one proof and autonomous Lean formalizations and machine-checkable proofs of six conjectures in another \cite{ChenEtAl2026ParityKDifferentials,ChenOnoZhang2026PartitionPolynomials}. Harmonic's Aristotle combines informal reasoning with Lean proof search and a dedicated geometry solver \cite{AchimEtAl2025Aristotle}.

The setup used in the present project is much lighter: direct interaction with ChatGPT 5.5 Pro and Codex agents operating over a filesystem. An automatic pipeline extracts open-problem signals from arXiv source files and organizes them into candidate questions; the agents then attempt the problems, and their outputs are submitted for human review; see \Cref{sec:technical-details-pipeline}. We do not claim that this is an optimal architecture for automated mathematics. The point is that a relatively ordinary setup can already produce useful proof candidates in both settings considered here: difficult problems selected by humans and literature questions selected by machines; see \Cref{part2} and \Cref{tab:human-review-counts}.

\subsection{Contribution and organization}

The paper has two main contributions. The first is to help raise awareness within the mathematical community that current language-model systems can already play a substantive role in research-level mathematical exploration, and to open a discussion about how such tools should be understood, used, disclosed, verified, and credited.

The second contribution is mathematical.  The paper presents selected results in Banach space theory for human-selected problems. These results are not included merely as demonstrations of model behaviour. In our view, they are meaningful mathematical contributions in their own right, and several of them could plausibly have supported independent papers. At the same time, their provenance gives them an additional role: they provide concrete evidence that current systems can sometimes engage with serious research-level problems.

The project was not fully autonomous in the strongest sense.  The authors chose which outputs to promote, verified the mathematical content, checked references, assessed significance, and edited the exposition; they take responsibility for the final text and the mathematical correctness. Nor are the proofs formally verified.  They are ordinary mathematical proofs, checked by human mathematical reading. At the same time, the model's role was not confined to exposition or stylistic assistance. It generated proof ideas, proof structures, and in several cases, complete or essentially complete arguments for problems drawn from the literature.

The paper is organised as follows. \Cref{part1} gives the survey-level discussion, including the list of selected problems and comments on verification, disclosure, credit, and responsibility.  \Cref{part2} contains the selected problem papers. These are intended to be readable as self-contained mathematical notes. \Cref{part3} records both technical details of the automatic-search pipeline and selected outputs from that pipeline.

\section{Selected problems and provenance}
\label{sec:problems-and-provenance}

The project is problem-driven. The problems listed below are not toy examples or prompt demonstrations, but mathematical problems which could reasonably be considered serious research questions in Banach space theory.

The provenance of the proofs is therefore central. For the selected problems, the proofs arose primarily from model-generated proof search, followed by human verification, editing, and rewriting. We discuss this in more detail in Subsection~\ref{susec:proofs-and-provenance}. The original AI outputs can be found on the \website[project website]. Thus, the problem papers should be read neither as conventional unaided proofs nor as unverified model transcripts, but as AI-assisted mathematical arguments for which the authors take responsibility after human verification and editing.

The first two problems were proposed by Tomasz Kania, the third and fourth problems were proposed by Kevin Beanland, and the last problem was proposed by the first-named author. Since the individual problem papers are written mainly as proof notes rather than as fully introduced articles, we give here some brief background on each problem and its significance. When available, we also include comments supplied by the proposer.

\subsection*{Problem summaries and background}

\noindent\textbf{P1. Toroidal Elton--Odell theorem.} \emph{Proposed by Tomasz Kania.} 

The following background and comments were given by Tomasz Kania and are lightly edited here for style.

Kottman's theorem asserts that the unit sphere of every infinite-dimensional normed space contains a sequence whose mutual distances are strictly greater than one. The classical Elton--Odell theorem strengthens this by providing a uniform margin: there are $\varepsilon>0$ and a sequence in the unit sphere whose mutual distances are all at least $1+\varepsilon$.

Over the complex field, the natural projective analogue identifies vectors which differ by multiplication by a unimodular scalar. Accordingly, for $x,y\in S_X$, one considers the toroidal distance
\begin{equation*}
    \distT(x,y)=\inf_{\theta\in\T}\|x-\theta y\|.
\end{equation*}
The toroidal Elton--Odell problem asks whether every infinite-dimensional complex normed space admits $\varepsilon>0$ and a sequence $(x_n)\subset S_X$ such that
\begin{equation*}
    \distT(x_n,x_m)\geq 1+\varepsilon
    \qquad(n\ne m).
\end{equation*}
This problem was raised explicitly in \cite[\S5.3.1]{HajekKaniaRusso2018}.

A strict but non-uniform analogue holds in complete generality: every infinite-dimensional complex normed space contains a sequence $(x_n)\subset S_X$ such that
\begin{equation*}
    \distT(x_n,x_m)>1
    \qquad(n\ne m).
\end{equation*}
This does not settle the uniform problem, since the excess over one may depend on the pair and tend to zero. Uniform positive results were also known under additional geometric hypotheses, including finite cotype, lower $q$-estimates, and asymptotic uniform convexity, together with computations of the toroidal separation constant for several classical spaces \cite{Kania2026Toroidal}. The remaining question was whether the additional hypotheses could be removed and a single positive margin obtained in every infinite-dimensional complex normed space.

The proof included here gives a complete solution to this problem.

\medskip

\noindent\textbf{P2. Non-Calkin unital Banach algebras.}
\emph{Proposed by Tomasz Kania.}

This problem concerns the realisation problem for Calkin algebras. For a Banach space $X$, its Calkin algebra is the quotient
\begin{equation*}
    \mathcal B(X)/\mathcal K(X)
\end{equation*}
of the bounded operators on $X$ by the compact operators. The corresponding realisation problem asks which unital Banach algebras can occur, up to Banach-algebra isomorphism, as Calkin algebras. This question was recorded in Tarbard's thesis and subsequently studied by Horv\'ath and Kania
\cite{Tarbard2013,HorvathKania2021}.

The scalar-plus-compact theorem of Argyros and Haydon provided a striking early solution to a particular realisation problem \cite{ArgyrosHaydon2011}. They constructed an infinite-dimensional Banach space $X$ on which every bounded operator is a scalar multiple of the identity plus a compact operator. Consequently,
\begin{equation*}
    \mathcal B(X)/\mathcal K(X)\cong\mathbb C.
\end{equation*}
Thus even the scalar field can occur as the Calkin algebra of an infinite-dimensional Banach space.

Subsequent constructions demonstrated considerable flexibility in the class of algebras which can be realised as Calkin algebras. Motakis, Puglisi, and Zisimopoulou realised $C(K)$ for every countable compact metric space $K$ \cite{MotakisPuglisiZisimopoulou2016}, and Motakis later extended this to every compact metric space \cite{Motakis2024Calkin}. Further examples include broad classes of diagonal scalar-plus-compact algebras \cite{MotakisPuglisiTolias2020}, infinite-dimensional reflexive Calkin algebras \cite{MotakisPelczarBarwacz2025}, and the unitisation of the noncommutative algebra $\mathcal K(c_0)$ \cite{MotakisPuglisi2025}.

Despite these positive realisation results, it remained unknown whether every unital Banach algebra is isomorphic to the Calkin algebra of some Banach space. Horv\'ath and Kania obtained a partial negative result: they constructed simple unital AF $C^*$-algebras which are not isomorphic to the Calkin algebra of any separable Banach space \cite{HorvathKania2021}. Their argument did not, however, rule out a representation over a nonseparable Banach space. This left open whether there exists a unital Banach algebra $A$ such that
\begin{equation*}
    A\not\cong\mathcal B(X)/\mathcal K(X)
\end{equation*}
for every Banach space $X$.

The proof included here answers this question affirmatively. The principal difficulty is the nonseparable case, where density alone provides no obstruction. The constructions combine large density and topological simplicity with carefully chosen matrix-unit or shift relations. If either algebra were the Calkin algebra of a nonseparable Banach space, topological simplicity would force every separable-range operator to be compact. The additional algebraic relations are then used to construct a noncompact operator with separable range, giving the required contradiction.

\medskip

\noindent\textbf{P3. Strict cosingularity and adjoints.} \emph{Proposed by Kevin Beanland.} 

The following background and comments were given by Kevin Beanland and are lightly edited here for style. This problem concerns the relationship between strictly singular and strictly cosingular operators. Pe{\l}czy\'nski introduced and studied strictly cosingular operators as a dual counterpart to strictly singular operators, and proved several duality results between the two ideals. In particular, if $T \colon X\to Y$ is strictly cosingular and weakly compact, then $T^*$ is strictly singular; conversely, if $T$ is strictly singular and $X$ is reflexive, then $T^*$ is strictly cosingular. He also showed that the weak compactness assumption in the first implication cannot be omitted in general: the canonical inclusion $c_0\hookrightarrow \ell_\infty$ is strictly cosingular, while its adjoint is not strictly singular.

The separable-range case arose from Beanland's 2008 paper with George Androulakis on descriptive set theoretic methods for strictly singular and strictly cosingular operators \cite{AndroulakisBeanland2008}. Earlier work had introduced an ordinal hierarchy of $\mathcal S_\alpha$-strictly singular operators using the transfinite Schreier families, and it was natural to ask whether a corresponding hierarchy could be obtained for strictly cosingular operators. A natural route was to define such classes through duality, but this required understanding whether Pe{\l}czy\'nski's implication remains true when the range space is separable and the weak compactness hypothesis is omitted. The main obstruction was the following question: if $T^*$ restricts to an isomorphism on an infinite-dimensional subspace of $Y^*$, must there be an infinite-dimensional weak-star closed subspace of $Y^*$ on which $T^*$ is still an isomorphism? Beanland later posted this question on MathOverflow \cite{BeanlandMO98449}.

The proof included here gives an affirmative answer in the required form and proves that, when $Y$ is separable, $T$ is strictly cosingular if and only if $T^*:Y^*\to X^*$ is strictly singular. \medskip

\noindent\textbf{P4. Weakly compact basis factorization.} \emph{Proposed by Kevin Beanland.} The following background and comments were given by Kevin Beanland and are lightly edited here for style.

Davis, Figiel, Johnson, and Pe{\l}czy\'nski, in their seminal paper introducing the interpolation method used to prove the weakly compact factorization theorem, also established conditions under which the DFJP interpolation space admits a Schauder basis \cite{DavisFigielJohnsonPelczynski1974}. Specifically, they showed that if $T\colon X\to Y$ is weakly compact and the range space $Y$ has a shrinking Schauder basis, then the interpolation space may be constructed so that it also has a Schauder basis.

This hypothesis cannot simply be omitted. Not all separable Banach spaces have the bounded approximation property, while a separable Banach space has the bounded approximation property if and only if its identity operator factors through a Banach space with a Schauder basis \cite{Pelczynski1971}. Consequently, one cannot expect the DFJP interpolation space to admit a basis in complete generality.

Obtaining a basis is not immediate from the definition of the DFJP interpolation space. If one simply constructs the interpolation space from the relatively weakly compact set $T(B_X)$, there is no evident basis structure. Instead, Davis, Figiel, Johnson, and Pe{\l}czy\'nski enlarge the generating set by adjoining the images of $T(B_X)$ under the coordinate projections associated with the shrinking basis of $Y$. The shrinking property is then used to show that this enlarged set remains relatively weakly compact. This argument is highly specific to shrinking bases; in general, the same enlargement process need not preserve weak compactness, even when $T$ itself is weakly compact.

The corresponding result in the unconditional case was subsequently established by Figiel, Johnson, and Tzafriri \cite{FigielJohnsonTzafriri1975}. Later, Ghoussoub, Maurey, and Schachermayer proved that the DFJP interpolation space may likewise be constructed with a Schauder basis whenever the range space is isomorphic to $C[0,1]$ \cite{GhoussoubMaureySchachermayer1992}.

Beanland's interest in these questions arose from attempts to combine the DFJP interpolation method with the descriptive set theoretic framework developed by Dodos and others \cite{Dodos2010}, in order to obtain uniform factorization theorems for classes of weakly compact operators. A principal motivation is the theorem of Dodos and Ferenczi \cite{DodosFerenczi2007}, who proved that the class of separable reflexive Banach spaces is strongly bounded. Thus, although no universal separable reflexive Banach space exists, every analytic family of separable reflexive Banach spaces embeds isomorphically into a single separable reflexive Banach space. Moreover, when each member of the family has a Schauder basis, these embeddings may be chosen to be complemented.

Passing from Banach spaces to weakly compact operators naturally suggests an analogous theory in which uniform embedding theorems are replaced by uniform factorization theorems. The principal remaining obstacle is that, even for analytic classes of weakly compact operators whose range spaces possess Schauder bases, one needs the associated DFJP interpolation spaces to admit Schauder bases in order to apply the existing descriptive set theoretic machinery. Prior to the present work, this was known only in the special situations described above.

In June 2016, Beanland asked on MathOverflow whether the DFJP interpolation space could always be chosen to have a Schauder basis when the range space is isomorphic to $L_1[0,1]$ \cite{BeanlandMO2016}. Bill Johnson gave an elegant argument establishing this case. Using that observation, the descriptive set theoretic machinery was extended to this setting. In particular, it was proved that the collection of all weakly compact operators $T\colon X\to L_1$, where $X$ is separable, is analytic, and consequently there exists a single separable reflexive Banach space through which every such operator factors.

The more general question of whether the DFJP interpolation space could always be chosen to admit a Schauder basis whenever the range space has a Schauder basis remained open until the proof presented here.

\medskip

\noindent\textbf{P5. Primariness of $L_p(L_1)$.} \emph{Proposed by the first named author.} A Banach space is primary if, whenever it is decomposed as a direct sum of two complemented subspaces, one of those subspaces is isomorphic to the whole space. The problem considered here asks whether the mixed norm space $L_p(L_1)$ is primary for $1<p<\infty$.

This question belongs to the broader programme of understanding primariness for classical Banach spaces and their mixed norm variants. Lechner, Motakis, M\"uller, and Schlumprecht proved that $L_1(L_p)$ is primary for $1<p<\infty$ \cite{LechnerMotakisMullerSchlumprecht2022L1Lp}. In the same paper, they identified the primariness of $L_p(L_1)$ as one of the prominent remaining open cases. They subsequently studied this problem further in their work on multipliers on bi-parameter Haar system Hardy spaces \cite{LechnerMotakisMullerSchlumprecht2024}, where they established factorization results constituting a first step towards a proof of primariness, while leaving the full question unresolved.

The first named author's interest in this question grew out of this line of work, which led him to study primariness for biparameter constructions involving $C(K)$ spaces \cite{Acuaviva2026}. From that perspective, $L_p(L_1)$ was a natural remaining case, although one somewhat outside his original background.

When the problem was proposed to the model, it was included with little expectation that it would actually be solved. It was meant partly as a technical test of whether the exploratory process could handle a long and highly structured problem in Banach space theory. In hindsight, this makes the outcome especially interesting, while the provenance discussion below explains why the original raw output should still not be confused with a finished proof.

\medskip

\subsection{Proof provenance and AI writing}
\label{susec:proofs-and-provenance}

This subsection aims to record more precise comments on the AI outputs, the final proofs, and the human intervention and modification that connected them. We begin with some general comments about the outputs produced by the AI.

For Problems~1, 2, 3, and~4, the original AI outputs could be regarded as essentially correct, up to verification, small modifications, and editing. Somewhat surprisingly, most of the required work was more a matter of form than substance: the issues were often in the writing, presentation, and organisation of the arguments rather than in the underlying mathematics. The main proof was already present. In some places, however, the arguments were written in a nonstandard or unnatural mathematical style. In a few other places, the model misattributed a theorem, cited a result imprecisely, or made a small error. These issues did not require substantial mathematical repair. Another recurring feature was that the model sometimes left parts of an argument not fully fleshed out, presenting them as ``immediate'' even when some verification was still needed.

Problem~5 was different, largely because of its length and complexity. The raw output could not be regarded as a complete proof as it stood. It contained the right proof architecture, and most of the technical details and mathematical content were already present, but the assembly left much to be desired. Some parts of the proof were only indicated rather than fully written out, some terminology was used before being defined, and the exposition was at times difficult to follow. In this case, the human intervention was not merely local editing. It involved reorganising the proof, making the indicated arguments precise, and turning the architecture provided by the model into a complete written argument.
\section{Implications for mathematical exploration}
\label{sec:new-age}

The examples in this paper come from Banach space theory merely from practical limitations when verifying the significance and correctness of the solutions. The broader lesson is not that AI-assisted proof discovery is particular to this field, but that similar modes of exploration may become available in many areas where there is enough human expertise to verify, repair, and contextualise the generated arguments.

\subsection{From solving to exploring}

AI systems may make it cheaper to generate many possible routes through a problem. The value of this is not only that a model may occasionally produce a complete proof. It may also suggest intermediate lemmas, illuminate alternative decompositions of a problem, or produce failed attempts whose failure is itself informative. In this sense, AI may shift part of mathematical research from solving isolated problems to exploring large families of possible approaches. Many generated ideas will fail, some will require substantial repair, and only a few may ultimately become complete mathematical arguments. Nevertheless, the ability to explore a much larger mathematical search space may itself become a valuable research capability.

\subsection{Acceleration rather than replacement}

Current AI systems do not solve every mathematical problem, and many difficult questions remain well beyond their present capabilities. Rather than replacing mathematical research, they may accelerate it. A mathematician can formulate promising questions, provide context, reject unproductive directions, and repeatedly redirect the exploration towards approaches that appear mathematically meaningful. Even when a complete proof is not obtained, the generated intermediate ideas may shorten the path to a human solution. We therefore view AI not as a replacement for mathematical research, but as a tool that may substantially accelerate parts of the discovery process when guided by human expertise.

\subsection{Human mathematical judgement}

If proof generation becomes easier, human judgement does not disappear. It shifts. Choosing worthwhile problems, recognizing genuinely interesting ideas, identifying hidden gaps, deciding which proof strategies deserve further development, connecting new results to existing theory, and judging whether a theorem is mathematically important may become increasingly central tasks. In particular, the ability to distinguish between mathematically significant discoveries and technically correct but uninteresting statements may become one of the defining skills of future mathematical research.

\subsection{Verification and formal mathematics}

One of the principal bottlenecks encountered during this project was not proof generation itself, but verification. Producing plausible mathematical arguments became substantially easier than establishing, with confidence, that every step was correct and every cited ingredient was applicable. Formal proof assistants such as Lean may therefore represent a natural next step. If AI-generated proofs can eventually be translated into formally verified mathematics with relatively little additional effort, then the verification bottleneck may be substantially reduced. Such developments would not diminish the importance of human mathematicians. Rather, they would further shift human effort towards selecting important problems, evaluating ideas, organizing mathematical knowledge, and deciding which directions are worth pursuing.

\subsection{Distillation as a mathematical skill}

As the volume of mathematically plausible text continues to grow, the ability to distill ideas into clear, elegant, and verifiable mathematics may become increasingly valuable. A proof that is technically correct but opaque is less useful than one that reveals the underlying mechanism, exposes the essential ideas, and can be readily understood and reused by other mathematicians. Clear exposition has always been a central mathematical skill. If proof generation becomes substantially cheaper, its importance may only increase. One part of the mathematician's role may increasingly involve recognising important ideas, verifying them, organising them into coherent theories, and communicating them in a form that advances mathematical understanding.

\section{Limitations, credit, and responsibility}
\label{sec:limitations-credit-responsibility}

The examples in this article should be read as evidence that current AI systems can contribute substantially to proof discovery in suitable mathematical workflows. They should not be read as evidence that raw model outputs can be trusted without expert checking, or that the present project gives a general measurement of AI reliability in mathematics. The central lesson is more specific: when combined with human problem selection, verification, repair, and exposition, model generated proof search can already produce mathematically serious results. Precisely because this contribution can be substantial, it also raises a series of challenges about verification, credit, attribution, disclosure, and responsibility. The rest of this section is devoted to those challenges.

\subsection{Disclosure and incentives}

The possibility of substantial AI assistance raises a delicate question about disclosure. If theorems obtained with such assistance are automatically devalued, while undisclosed AI assistance remains difficult to detect, then the mathematical community risks creating a perverse incentive: honest disclosure may be penalized, whereas nondisclosure may be rewarded. This concern is especially serious for early-career researchers, for whom a small number of results can have a large professional impact.

The issue is not merely hypothetical. Some of the results recorded in the selected problem papers are, in our judgement, substantial enough to stand as standalone mathematical contributions. Such results can affect very concrete forms of professional evaluation: hiring, fellowships, grants, invitations, collaborations, and related opportunities. It is therefore important to address the incentive problem directly. If mathematically valuable work receives less professional recognition solely because AI assistance played a substantial, or even decisive, role in its discovery, then authors may have an incentive to present the history of the work selectively rather than transparently. One possible way forward is to keep the emphasis primarily on the usual mathematical standards: correctness, novelty, significance, clarity of presentation, and responsibility for verification.

There is also a second, less visible incentive problem. If substantial AI assistance is taken to diminish the professional value of a result, researchers may become reluctant to use these systems on their most promising ideas. They may reserve their best questions, conjectures, and proof strategies for unaided work, not because AI assistance would be mathematically inappropriate, but because they fear “losing” the result in professional terms if the model contributes decisively. This would be an especially unfortunate outcome. Mathematical progress depends on testing ideas with the strongest available tools, and a norm that discourages researchers from exploring their best ideas with AI could slow discovery, delay the resolution of important problems, and reduce the collective value produced by the field. Transparency should not require researchers to choose between using an effective method of exploration and receiving appropriate recognition for the mathematics that results.

In short, the lasting value of a theorem should ultimately depend on its mathematical content: the strength of its statement and proof, the clarity and depth of the ideas, the quality of the exposition, and the new avenues it opens. There is something personally and culturally meaningful about a theorem being discovered by a particular mathematician through their own insight, and that aspect of mathematics is real. Nevertheless, once a theorem has been stated, proved, and responsibly verified, its primary mathematical value should not depend on whether the path to it involved pencil and paper, conversation with colleagues, computer search, formal verification, or interaction with an AI system. This distinction also matters for incentives: if researchers, especially early-career applicants, expect less credit for results obtained with substantial AI assistance, they may be discouraged from disclosing that assistance. A healthier norm would evaluate the theorem and proof on their mathematical merits, independently of how the result came to be, while still requiring transparency about the process and full responsibility for the correctness of the final written work.

\subsection{Understanding and stewardship}

Mathematics is not merely the production of correct theorem statements and proofs. A result becomes part of mathematics through understanding: one must know why the statement is natural, which ideas make the proof work, how it relates to the surrounding theory, and what new questions it makes possible. An automatically generated proof candidate should therefore normally be regarded as the beginning of a mathematical process rather than its end. Even when it contains the decisive idea or an essentially complete argument, turning it into durable mathematical knowledge requires people to verify it, identify its conceptual content, place it in context, and explain it in a form that others can use and develop further.

There is a useful analogy with software. Software developed by, or in close collaboration with, people who understand its architecture is generally easier to inspect, maintain, repair, and extend. By contrast, code generated largely by AI can be difficult to work with when no person understands the reasons for its design choices or can explain how its components fit together. The problem is not that people have moved on from a system that they once understood, but that the system may never have been properly understood by anyone in the first place. Open-source access to the code does not by itself solve this problem: a system can be completely visible and yet remain practically opaque.

A similar danger arises in mathematics. A discovery pipeline may generate correct arguments while producing a body of work whose structure and significance have not been absorbed by the mathematical community. It would be irresponsible to build systems for producing mathematics while accepting that neither the systems nor their outputs need to be understood by the people presenting and relying on them. Scale must therefore be accompanied by distillation and stewardship. The aim should not be to accumulate an ever larger collection of claims, but to transform selected outputs into mathematics that can be checked, taught, reused, and extended.

People consequently remain part of the system at every stage. Mathematical problems are embedded in communities and histories, and the production of a final proof is only one contribution among many. Formulating a fruitful question, developing the surrounding theory, identifying the relevant literature, recognising a useful idea, verifying and repairing an argument, and presenting it clearly are all substantive parts of mathematical work. The role of an AI system should be disclosed accurately, but authorship and responsibility must remain with people who understand the result sufficiently well to explain it, defend it, and correct it when necessary.

The same principles should govern any public platform for generated attempts. Unverified material should be unmistakably labelled as unverified, changes and corrections should be recorded, the origins of the questions should be credited, and experts should be encouraged to inspect and improve the arguments. Such a platform should support collaboration and reduce duplicated effort, rather than overwhelm the literature with unfiltered claims or create the impression that generated outputs can replace the community whose accumulated knowledge made them possible. If proof generation becomes cheaper, human understanding, verification, organisation, and communication become more important, not less.

\subsection{Responsibility for correctness}

AI-assisted mathematics does not remove responsibility from the authors.  On the contrary, it increases the burden of responsibility.  The authors of an AI-assisted paper remain responsible for the correctness of the statements, the validity of the proofs, the accuracy of the references, and the clarity of the exposition.  AI output should be treated as a source of possible ideas, not as a certificate of truth.

\subsection{Pressure on peer-review}

There is also a practical danger. If AI systems make it easier to generate large numbers of plausible proofs, then the already strained peer-review system may face an additional burden.  Referees should not be expected to clean up unclear AI-generated arguments or to discover basic gaps that the authors could have found themselves.  For this reason, AI-assisted work should be held to a high standard of exposition and verification. Authors using such tools should make special efforts to write arguments in a form that is easy to check, to separate standard ingredients from new ones, to give precise references, and to indicate where human verification has entered.

In this possible future, clear distillation may become one of the central skills of the working mathematician. As proof search becomes cheaper, the ability to recognize the useful idea, discard misleading fragments, repair gaps, organize the argument, and present it in a readable and verifiable form may become even more important than it already is.

\subsection{A cautious optimism} Despite these challenges, our view of this direction is ultimately optimistic. Used carefully, these systems need not diminish mathematical research. They may enlarge the range of problems we can seriously explore, help us follow long technical paths, test approaches that would otherwise seem too costly, and reach parts of the mathematical landscape that might otherwise remain out of practical reach. This optimism does not lessen the need for verification, attribution, and responsibility. Rather, it places those obligations in service of a positive goal: accelerating discovery while preserving the standards of the subject. In this sense, we are sympathetic to Hilbert's maxim, \emph{``Wir m\"ussen wissen. Wir werden wissen.''} Properly used, AI systems may become one of the tools that helps us move further toward that ideal.

\subsection{A moving target.}
One final caution is in order. All of the exploratory work underlying the results discussed here was carried out with ChatGPT 5.5 Pro, before the release of the newer generation of models. These results should therefore be read as a snapshot of what was possible with that earlier generation, rather than as an estimate of the present frontier. The release of the new models shortly before the completion of this manuscript, together with our preliminary and necessarily anecdotal experiments with them, has only strengthened this impression. The improvements are perhaps most immediately apparent in matters of form (for example, in the clarity, organization, and overall quality of the mathematical writing) but our early experience suggests that they extend to matters of mathematical substance as well. Thus, if anything, the examples in this paper may understate what can now be achieved with rapidly improving AI systems and further reinforce our conviction that the questions raised here deserve serious attention.

\newpage
\part{Selected Problems}\label{part2}

\problempaper{Problem 1. Toroidal Elton--Odell theorem}
\label{probpaper:toroidal-elton-odell}

\begin{problemabstract}
We prove a toroidal form of the Elton--Odell theorem. More precisely, every infinite-dimensional complex normed space contains a sequence of unit vectors which are separated by a distance strictly larger than one, even after multiplication by unimodular scalars.
\end{problemabstract}

\problemcontents

\begin{problempaperbody}

\section{Statement and notation}

Throughout, all spaces are complex. For unit vectors $x,y$ in a normed space, set
\begin{equation*}
        \distT(x,y)=\inf_{|\theta|=1}\|x-\theta y\|.
\end{equation*}
We will prove the following statement.

\begin{theorem}[Toroidal Elton--Odell]
\label{p1-first:thm:main}
Let $X$ be an infinite-dimensional complex normed space. Then there are $\eps>0$ and unit vectors $(x_n)_{n=1}^\infty\subset X$ such that
\begin{equation*}
        \|x_n-\theta x_m\|\ge 1+\eps
        \qquad(n\ne m,\ |\theta|=1).
\end{equation*}
Equivalently, $\distT(x_n,x_m)\ge 1+\eps$ for all $n\ne m$.
\end{theorem}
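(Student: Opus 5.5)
The plan is to follow the architecture of the classical Elton--Odell proof (Ramsey theory and spreading models), carrying the unimodular parameter $\theta$ along, and to handle the extra difficulty through a compactness argument in $\theta\in\T$.

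\textbf{Reductions and the ideal case.} We may pass to a separable infinite-dimensional subspace, and it suffices to work in the completion, since vectors of the completion can be approximated on a finite set of $\theta$'s and then on all of $\T$ by equicontinuity. By Rosenthal's $\ell_1$ theorem, either $X$ contains $\ell_1$, or there is a normalized weakly Cauchy non-trivial sequence, or there is a normalized weakly null sequence. If $X$ contains $\ell_1$ (by James' distortion, almost isometrically), then the unit vectors satisfy $\|e_n-\theta e_m\|\approx 2$, and we are done. Otherwise, after taking differences we obtain a seminormalized weakly null sequence $(y_n)$. By Brunel--Sucheston we pass to a subsequence generating a spreading model $(e_n)$, which is suppression $1$-unconditional.

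\textbf{The case $\min_{\theta}\|e_1-\theta e_2\|>1$.} The function $\theta\mapsto\|e_1-\theta e_2\|$ is continuous on the compact set $\T$, so this minimum is some $1+2\eps$. By the definition of a spreading model and compactness of $\T$, combined with a Ramsey argument applied to a finite $\eps$-net of $\T$ and then Lipschitz continuity in $\theta$, for large indices $n<m$ we get $\|y_n-\theta y_m\|\ge(1+\eps)\|y_n\|$ uniformly in $\theta$. Normalization causes a problem because $\|y_n\|$ varies, but we may assume $\|y_n\|\to 1$ after rescaling, since spreading models control the norms. We must also treat both orderings $\|y_m-\theta y_n\|$; this is symmetric because $|\theta|=1$.

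\textbf{The degenerate case, which is the main obstacle.} Suppose $\|e_1-\theta_0e_2\|=1$ for some $\theta_0$. Put $a=-\theta_0$. Using spreading and suppression unconditionality, I would show that $\|\sum_{k\le N}a^k e_k\|$ stays bounded, in fact equal to $1$. Then $X$ contains $\ell_\infty^N$'s (indeed $c_0$-like blocks $\sum a^k y_{n_k}$) $(1+\delta)$-uniformly, and these sit as blocks of a weakly null sequence, so they can be chosen ``almost disjoint'' from any given finite-dimensional subspace.

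In $c_0$ over $\C$ one builds the required sequence directly. Assign to each pair $n<m$ three private coordinates on which $x_n$ takes the values $1,1,1$ and $x_m$ takes $1,\omega,\omega^2$, where $\omega=e^{2\pi i/3}$. Since $\max_j|1-\theta\omega^j|\ge\sqrt3$ for every $\theta\in\T$, we get $\distT(x_n,x_m)\ge\sqrt3$ with all entries of modulus at most $1$.

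I would imitate this inductively in $X$. At stage $n$, the vector $x_n$ is built from a fresh almost-$\ell_\infty^N$ block placed far out in the weakly null sequence, and its new coordinates are arranged to witness separation from each earlier $x_m$ simultaneously. The delicate point is that the earlier $x_m$'s were fixed before the later blocks existed. Following Elton--Odell, one therefore works with a doubly-indexed array: the coordinate blocks for the pair $(m,n)$ are reserved in advance inside an $\ell_\infty$-like sum, using the uniform $c_0$-upper estimate $\|\sum a^k e_k\|\le1$ and a perturbation of size $\delta_k$ with $\sum\delta_k$ small. The check that norms remain $\le1+\delta$ while the toroidal gap remains at least $\sqrt3-\delta$ is where I expect the real work.

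An alternative I would try if this degenerate analysis resists is a dichotomy on cotype. Finite cotype is already known to give a uniform margin (the hypotheses listed in P1), so only trivial cotype remains, where Maurey--Pisier provides $\ell_\infty^n$'s uniformly. This feeds the same $c_0$ construction.
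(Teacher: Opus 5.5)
\textbf{There is a genuine gap, in the degenerate branch.} Your non-degenerate branch is fine: compactness of $\mathbb T$, the spreading-model limits on a finite net of phases, and Lipschitz dependence on $\theta$ give a uniform margin for far-out pairs.

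The degenerate branch, however, rests on a false implication. The condition $\min_{\theta}\|e_1-\theta e_2\|=1$ does not force $\|\sum_{k\le N}a^ke_k\|$ to stay bounded, and it does not place $\ell_\infty^N$'s uniformly in $X$. Take the completion of $c_{00}$ under $\|x\|=\max\{\|x\|_\infty,2^{-1/2}\|x\|_2\}$. This space is isomorphic to $\ell_2$. Its unit vector basis is normalized, weakly null and symmetric, hence its own spreading model, and $\|e_1-\theta e_2\|=1$ for every $\theta\in\mathbb T$. Yet $\|\sum_{k\le N}a^ke_k\|=\max\{1,\sqrt{N/2}\}$ for every unimodular $a$. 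In this space separation only appears on blocks: $x_n=e_{2n-1}+e_{2n}$ are unit vectors with $\distT(x_n,x_m)=\sqrt2$. So degeneracy of one spreading model forces you to pass to normalized blocks and try again. Over $\C$ the minimizing phase may move from one level of blocking to the next, and obtaining a uniform margin despite this is the real content of the problem. Your plan has no mechanism for it.

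The remedy you sketch also cannot work without a genuine copy of $c_0$. Already in $c_0$ your construction fails as written. Each $x_n$ takes the values $1,1,1$ on private coordinates for every later $m$, so $x_n\notin c_0$. The paper instead uses $u_n=e_1+\dots+e_n-e_{n+1}$, which shares coordinates. In general, a spreading model only controls a fixed finite number of sufficiently spread-out terms. It therefore cannot support vectors that must carry a reserved block for each of infinitely many later indices. The cotype fallback has the same defect. Maurey--Pisier gives only finite-dimensional $\ell_\infty^n$'s (Tsirelson's $T^*$ has trivial cotype and contains no $c_0$). That yields arbitrarily long finite separated families, whereas gluing them into one infinite sequence is precisely the difficulty.

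What is missing is a way to convert failure of uniform separation into a global, infinite obstruction. The paper does this with the rapid flat-block alternative. In an asymptotically monotone basic sequence, failure of separation gives, on each tail, a finite maximal family of almost-flat unit blocks that are pairwise toroidally $(1+\eta_j)$-separated. Chaining these covers backwards produces successive flat blocks $d_j$ with $\|d_j\|\to1$ and phases $\alpha_j$ with $\sup_N\|\sum_{j\le N}\alpha_jd_j\|<\infty$. This is contradicted by bounded completeness in the reflexive case. In the nonreflexive case without $c_0$ or $\ell_1$, Rosenthal's $c_0$ theorem supplies a strongly summing sequence, and strong summability gives the contradiction. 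The cases $X\supset c_0$ and $X\supset\ell_1$ are handled by explicit sequences and James' distortion theorem.
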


The model produced two different proofs of the theorem. This is itself noteworthy: although the proofs share some techniques, their central arguments are sufficiently distinct to be regarded as different proof methods rather than variations of a single argument. We include only the first proof, which has been manually verified and edited. The second has not yet undergone the level of manual verification required for inclusion in the paper, but is made publicly available on the \website[project website] as part of the record of the exploratory process.

\section{Proof strategy and organisation}

The proof is organised as a sequence of reductions. In \Cref{p1-first:sec:easy-c0-l1}, we treat the two classical easy cases. If $X$ contains an isomorphic copy of $c_0$ or $\ell_1$, then the desired toroidally separated sequence is obtained from explicit sequences in $c_0$ and $\ell_1$, and then transferred to $X$ by James' distortion theorem.

In \Cref{p1-first:sec:flat-block-alternative}, we introduce asymptotically monotone bases and flat blocks. In \Cref{p1-first:sec:rapid-flat-block-alternative}, we prove the main combinatorial ingredient, the rapid flat-block alternative, \Cref{p1-first:prop:rapid}. Roughly speaking, it says that, inside the closed span of an asymptotically monotone basic sequence, failure of toroidal separation forces a rigid obstruction: there must exist successive almost-flat blocks $d_1<d_2<\ldots$ and phases $\alpha_j\in\T$ such that
\begin{equation*}
        \sup_{N\in\N}\left\|\sum_{j=1}^{N}\alpha_jd_j\right\|<\infty.
\end{equation*}

In \Cref{p1-first:sec:excluding-bad-alternative}, we show that this obstruction cannot occur in the two remaining structural cases needed later. First, if $X$ contains a boundedly complete basic sequence, we pass to an asymptotically monotone boundedly complete block basis and apply \Cref{p1-first:prop:rapid}; the bounded twisted partial sums produced by the proposition contradict bounded completeness. Second, if $X$ contains a non-weakly-convergent strongly summing basic sequence, we form normalised differences of far-apart terms. These differences are weakly null, so we pass to an asymptotically monotone basic subsequence and again apply \Cref{p1-first:prop:rapid}. The resulting bounded flat-block sums are then expanded back in the original strongly summing sequence, forcing convergence of a scalar series whose terms do not tend to zero.

Finally, in \Cref{p1-first:sec:banach-proof}, we assemble the cases. If $X$ contains $c_0$ or $\ell_1$, \Cref{p1-first:sec:easy-c0-l1} applies. If $X$ is reflexive, then it contains a boundedly complete basic sequence, so \Cref{p1-first:sec:excluding-bad-alternative} applies. If $X$ is nonreflexive and contains neither $c_0$ nor $\ell_1$, Rosenthal's $\ell_1$ theorem gives a non-weakly-convergent weak-Cauchy sequence, and Rosenthal's $c_0$ theorem gives a strongly summing subsequence; hence \Cref{p1-first:sec:excluding-bad-alternative} applies again. This exhausts all infinite-dimensional Banach spaces.

\section{The easy $c_0$ and $\ell_1$ cases}\label{p1-first:sec:easy-c0-l1}

We first treat the simple case where the space $X$ contains an isomorphic copy of either $c_0$ or $\ell_1$. We shall use the following elementary transfer estimate.

\begin{lemma}[Almost isometric transfer]
\label{p1-first:lem:transfer}
Let $E$ and $X$ be normed spaces, and let $(u_n)_{n=1}^{\infty}\subset S_E$ satisfy
\begin{equation*}
        \inf_{\substack{n,m\in\N\\ n\ne m}}\distT(u_n,u_m)\ge \sigma>1.
\end{equation*}
Suppose that $T\colon E\to X$ is linear and that, for some $0<\delta<1$,
\begin{equation*}
        (1-\delta)\|u\|\leq \|Tu\|\leq (1+\delta)\|u\| \qquad(u\in E).
\end{equation*}
For $n\in\N$, set
\begin{equation*}
        x_n=\frac{Tu_n}{\|Tu_n\|}.
\end{equation*}
Then
\begin{equation*}
        \distT(x_n,x_m)\ge \frac{(1-\delta)\sigma-2\delta}{1+\delta} \qquad(n,m\in\N,\ n\ne m).
\end{equation*}
In particular, if $\delta<(\sigma-1)/(\sigma+3)$, then the right-hand side is strictly larger than $1$.
\end{lemma}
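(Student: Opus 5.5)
Fix $n\ne m$ and $\theta\in\T$. The plan is to compare $\|x_n-\theta x_m\|$ with $\|T(u_n-\lambda u_m)\|$ for a suitable unimodular $\lambda$, and then pull the estimate back to $E$ through the lower bound on $T$. The starting observation is $\|Tu_n\|,\|Tu_m\|\in[1-\delta,1+\delta]$, because $\|u_n\|=\|u_m\|=1$. Write $a=\|Tu_n\|$ and $b=\|Tu_m\|$, so that $x_n=Tu_n/a$ and $x_m=Tu_m/b$.

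The key step is to replace each normalised vector by the unnormalised image and pay for it with the triangle inequality:
\begin{equation*}
\|x_n-\theta x_m\|\ge \|Tu_n-\theta Tu_m\|-\|x_n-Tu_n\|-\|\theta x_m-\theta Tu_m\|.
\end{equation*}
For the error terms, $\|x_n-Tu_n\|=|1-a|\le\delta$, and likewise $\|x_m-Tu_m\|=|1-b|\le\delta$. For the main term, the lower bound on $T$ and the toroidal separation hypothesis give
\begin{equation*}
\|Tu_n-\theta Tu_m\|=\|T(u_n-\theta u_m)\|\ge(1-\delta)\|u_n-\theta u_m\|\ge(1-\delta)\sigma .
\end{equation*}
Combining these yields $\|x_n-\theta x_m\|\ge(1-\delta)\sigma-2\delta$. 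This is in fact at least the stated bound $\frac{(1-\delta)\sigma-2\delta}{1+\delta}$, since dividing by $1+\delta>1$ only weakens a positive quantity. If the quantity is nonpositive, the stated inequality is trivial. Taking the infimum over $\theta$ gives the claim.

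An alternative route, which likely explains the factor $1+\delta$ in the statement, is to normalise by $\frac{1}{a}$ and estimate $\|Tu_n-\theta\frac{a}{b}Tu_m\|\ge\|Tu_n-\theta Tu_m\|-|1-\frac{a}{b}|\,b$. Either way works, and I would present the simpler version above.

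For the final assertion, solve $\frac{(1-\delta)\sigma-2\delta}{1+\delta}>1$. This is equivalent to $(1-\delta)\sigma-2\delta>1+\delta$, that is, $\sigma-1>\delta(\sigma+3)$, which is exactly $\delta<(\sigma-1)/(\sigma+3)$.

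There is no real obstacle in this argument. The only point needing care is that the hypothesis on $(u_n)$ is a toroidal one, which is why $\theta$ is carried inside $T$ by linearity. Complex linearity of $T$ is essential here, since it is what gives $T(\theta u_m)=\theta Tu_m$.
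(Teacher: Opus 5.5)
Your proof is correct and is essentially the paper's argument: the same lower bound $\|Tu_n-\theta Tu_m\|\ge(1-\delta)\sigma$, combined with a triangle inequality that absorbs the normalisation error. The paper writes $Tu_n-\theta Tu_m=a_nx_n-\theta a_mx_m$ and bounds this by $a_n\|x_n-\theta x_m\|+|a_n-a_m|$. That is exactly your ``alternative route'', and it is where the factor $1+\delta$ comes from. Your direct comparison of $x_k$ with $Tu_k$ instead gives the slightly sharper bound $(1-\delta)\sigma-2\delta$, which implies the stated one.
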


\begin{proof}
For each $k\in\N$, put
\begin{equation*}
        a_k=\|Tu_k\|.
\end{equation*}
Since $\|u_k\|=1$, the hypothesis on $T$ gives
\begin{equation*}
        1-\delta\leq a_k\leq 1+\delta \qquad(k\in\N).
\end{equation*}
In particular, $a_k>0$, and $Tu_k=a_kx_k$.

Fix distinct indices $n,m\in\N$ and fix $\theta\in\T$. On the one hand,
\begin{equation*}
        \|Tu_n-\theta Tu_m\|\geq (1-\delta)\|u_n-\theta u_m\|\geq (1-\delta)\sigma.
\end{equation*}
On the other hand,
\begin{equation*}
\begin{aligned}
        \|Tu_n-\theta Tu_m\|&=\|a_nx_n-\theta a_mx_m\|\\
        &\leq a_n\|x_n-\theta x_m\|+|a_n-a_m|\\
        &\leq (1+\delta)\|x_n-\theta x_m\|+2\delta.
\end{aligned}
\end{equation*}
Combining the two estimates gives
\begin{equation*}
        \|x_n-\theta x_m\|\geq \frac{(1-\delta)\sigma-2\delta}{1+\delta}.
\end{equation*}
Taking the infimum over $\theta\in\T$ proves the asserted estimate.
\end{proof}

As an easy consequence, we get the following.

\begin{lemma}
\label{p1-first:lem:c0-l1}
If a Banach space $X$ contains an isomorphic copy of $c_0$ or an isomorphic copy of $\ell_1$, then $X$ satisfies \Cref{p1-first:thm:main}.
\end{lemma}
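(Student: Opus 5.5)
The plan is to exhibit explicit toroidally separated sequences in $c_0$ and in $\ell_1$ with separation constant $\sigma=2$. We then transfer them into $X$ using James' distortion theorems together with the almost isometric transfer estimate of \Cref{p1-first:lem:transfer}.

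\emph{Step 1: model sequences.} In $\ell_1$ take the unit vector basis $u_n=e_n$. For $n\neq m$ and $|\theta|=1$ we have $\|e_n-\theta e_m\|_1=1+|\theta|=2$, so $\distT(u_n,u_m)=2$.

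In $c_0$ the basis itself fails, since $\|e_n-\theta e_m\|_\infty=1$. Instead I take vectors built from the Rademacher sign patterns. Fix a large $N$ and consider the $2^N$ sign vectors $\varepsilon\in\{\pm1\}^N$. This only gives finitely many vectors, so for an infinite sequence I use a Rademacher-type matrix: let $u_n=\sum_k r_n(k)e_k$, where $(r_n(k))_k$ ranges over $\pm1$ patterns chosen so that, for every $n\neq m$, the pair $(r_n(k),r_m(k))$ attains all four sign combinations. Concretely, index coordinates by finite subsets, or take $r_n(k)$ to be the $n$-th binary digit pattern, and place the vectors in $\ell_\infty$ over a countable index set. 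To stay in $c_0$ I truncate, making $u_n$ supported on a finite block $B_n$ of coordinates, with all four sign pairs realised on $B_n\cap B_m$; this can be arranged by letting $u_n$ carry, on disjoint finite blocks assigned to each pair $(n,m)$ with $m<n$, suitable patterns.

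For $|\theta|=1$, some coordinate has $r_n(k)=1$ and $r_m(k)=-\mathrm{sgn}\operatorname{Re}\theta$ (taking $+1$ if $\operatorname{Re}\theta=0$). There $|r_n(k)-\theta r_m(k)|=|1+|\operatorname{Re}\theta|\pm i\operatorname{Im}\theta|\ge\sqrt2$. The same argument gives $\distT(u_n,u_m)\ge\sqrt2$ with only the sign pairs $(1,1)$ and $(1,-1)$ needed, so $\sigma=\sqrt2>1$ suffices. A simple explicit choice is $u_n=e_{2n}+\sum_{m<n}e_{a(m,n)}-\dots$. I will fix the bookkeeping carefully but expect it to be routine: each pair $(m,n)$ with $m<n$ receives two fresh coordinates, on which $u_m$ takes the values $(1,1)$ and $u_n$ takes $(1,-1)$, while all other vectors vanish there. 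Each $u_n$ then has finite support and sup norm $1$.

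\emph{Step 2: transfer.} If $X$ contains $\ell_1$ (respectively $c_0$), James' distortion theorem gives, for every $\delta>0$, a subspace of $X$ that is $(1+\delta)$-isomorphic to $\ell_1$ (respectively $c_0$). After rescaling this yields an operator $T$ with $(1-\delta)\|u\|\le\|Tu\|\le(1+\delta)\|u\|$; here James gives a block sequence with the required two-sided estimates, which is exactly this form. Choose $\delta<(\sigma-1)/(\sigma+3)$ and apply \Cref{p1-first:lem:transfer}. The resulting $x_n$ satisfy $\distT(x_n,x_m)\ge 1+\eps$ with $\eps=\frac{(1-\delta)\sigma-2\delta}{1+\delta}-1>0$.

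The main obstacle is the $c_0$ construction, since the naive basis has separation exactly $1$. The key point is to give each pair of vectors coordinates with both agreeing and opposing signs, so that no unimodular rotation can simultaneously cancel both.
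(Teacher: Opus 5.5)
Your $\ell_1$ case and the transfer step (James' distortion theorem, then \Cref{p1-first:lem:transfer} with $\delta<(\sigma-1)/(\sigma+3)$) match the paper. Your unimodular estimate $|1+|\operatorname{Re}\theta|\pm i\operatorname{Im}\theta|\geq\sqrt2$ is also the paper's bound $\max\{|1-\theta|,|1+\theta|\}\geq\sqrt2$.

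The gap is in the $c_0$ model sequence. In your concrete bookkeeping, each pair $(m,n)$ with $m<n$ receives two fresh coordinates on which $u_m$ equals $1$. Since there are infinitely many $n>m$, the vector $u_m$ equals $1$ on infinitely many coordinates, so $u_m\notin c_0$. Your claim that each $u_n$ has finite support is therefore false for the smaller index of every pair.

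This cannot be repaired by tidier bookkeeping within a fresh-coordinates-per-pair scheme. Off the block reserved for $(m,n)$, at least one of $u_m,u_n$ vanishes, so there $|u_n(k)-\theta u_m(k)|\leq1$. Hence the coordinate witnessing $\distT(u_n,u_m)\geq\sigma>1$ must lie in the $(m,n)$-block. At that coordinate, $\|u_n\|_\infty,\|u_m\|_\infty\leq1$ forces $|u_m(k)|\geq\sigma-1$. With pairwise disjoint blocks, $u_m$ is therefore bounded away from zero on infinitely many coordinates.

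Passing to $\ell_\infty$, as you briefly suggest, does not help either. James' theorem only provides an almost isometric copy of $c_0$, so \Cref{p1-first:lem:transfer} can only be applied to vectors lying in $c_0$.

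The missing idea is to reuse coordinates. The later vector should realise both an agreeing and an opposing sign on coordinates already in the finite support of the earlier vector. The paper takes $u_n=e_1+\dots+e_n-e_{n+1}$. For $n<m$, coordinate $1$ gives $|1-\theta|$, and coordinate $n+1$ (where $u_n=-1$ and $u_m=1$) gives $|1+\theta|$. Hence $\distT(u_n,u_m)\geq\sqrt2$, with finitely supported $u_n\in S_{c_0}$. With this replacement the rest of your argument goes through unchanged.
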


\begin{proof}
In $\ell_1$, the unit vector basis $(e_n)_{n=1}^{\infty}$ satisfies
\begin{equation*}
        \|e_n-\theta e_m\|_1=2 \qquad(n,m\in\N,\ n\ne m,\ |\theta|=1).
\end{equation*}
Hence the conclusion follows for every Banach space containing $\ell_1$ from James' distortion theorem \cite{James1964} and \Cref{p1-first:lem:transfer}.

In $c_0$, define
\begin{equation*}
        u_n=e_1+\ldots+e_n-e_{n+1} \qquad(n\in\N).
\end{equation*}
Then $(u_n)_{n=1}^{\infty}\subset S_{c_0}$. If $n<m$, then
\begin{equation*}
        \|u_n-\theta u_m\|_\infty\geq \max\{|1-\theta|,|1+\theta|\} \qquad(|\theta|=1).
\end{equation*}
Since
\begin{equation*}
        \inf_{|\theta|=1}\max\{|1-\theta|,|1+\theta|\}=\sqrt2,
\end{equation*}
we have $\distT(u_n,u_m)\geq\sqrt2$ whenever $n\ne m$. Again, James' distortion theorem \cite{James1964} and \Cref{p1-first:lem:transfer} give the conclusion for every Banach space containing $c_0$.
\end{proof}

\section{Asymptotically monotone bases and flat blocks}\label{p1-first:sec:flat-block-alternative}

We shall need the following definitions.

\begin{definition}
\label{p1-first:def:asymptotically-monotone}
Let $(e_n)_{n=1}^{\infty}$ be a normalized basic sequence. For $N\in\N$, set
\begin{equation*}
        M_N=\sup\left\{
        \frac{\left\|\sum_{i=p}^{q} a_i e_i\right\|}
             {\left\|\sum_{i=p}^{r} a_i e_i\right\|}:
        N\leq p\leq q\leq r,\ (a_i)_{i=p}^{r}\subset\C,\ 
        \sum_{i=p}^{r}a_i e_i\neq 0
        \right\}.
\end{equation*}
We say that $(e_n)_{n=1}^{\infty}$ is \emph{asymptotically monotone} if
\begin{equation*}
        M_N\longrightarrow 1 \qquad(N\to\infty).
\end{equation*}
Equivalently, for every $\gamma>0$ there is $N_0\in\N$ such that, whenever $N_0\leq p\leq q\leq r$ and $(a_i)_{i=p}^{r}\subset\C$,
\begin{equation*}
        \left\|\sum_{i=p}^{q} a_i e_i\right\|\leq (1+\gamma)\left\|\sum_{i=p}^{r} a_i e_i\right\|.
\end{equation*}
\end{definition}

This tail-initial form of asymptotic monotonicity is the only projection property used below.

\begin{definition}[Flat blocks]
\label{p1-first:def:flat-blocks}
Let $(e_n)_{n=1}^{\infty}$ be a basic sequence. A \emph{flat block} of $(e_n)_{n=1}^{\infty}$ is a nonzero vector of the form
\begin{equation*}
        d=\sum_{i\in F}\lambda_i e_i,
\end{equation*}
where $F\subset\N$ is finite and nonempty, and $|\lambda_i|=1$ for every $i\in F$.

If $0<\eta<1$, an \emph{$\eta$-flat unit block} is a unit vector of the form
\begin{equation*}
        b=\frac{d}{\|d\|},
\end{equation*}
where $d$ is a flat block satisfying
\begin{equation*}
        |\|d\|-1|\leq \eta.
\end{equation*}
For two finite-support vectors $u$ and $v$, write $u<v$ if $\max\supp u<\min\supp v$.
\end{definition}

Observe that, in the previous definition, the finite set $F$ need not be an interval of integers and thus gaps are allowed.

\section{The rapid flat-block alternative}\label{p1-first:sec:rapid-flat-block-alternative}

The next proposition is the main combinatorial step in the proof. It says that, if toroidal separation fails inside the closed span of an asymptotically monotone basic sequence, then this failure forces a very rigid alternative: one can find successive flat blocks $d_1<d_2<\ldots$, almost of norm one, and phases $\alpha_j\in\T$ whose partial sums are uniformly bounded. The idea is that failure of separation gives, on each sufficiently far-out tail, a finite cover of the almost-flat unit blocks, up to multiplication by unimodular scalars. These finite covers are then chained backwards: starting from a far-out block, one repeatedly chooses an earlier cover element close to the normalised block already built and subtracts a suitable phase multiple of it. This produces finite phased sums with uniformly bounded initial pieces; a diagonal argument then gives the infinite sequence. The rest of the proof is devoted to showing that this alternative is impossible in the cases we need.

\begin{proposition}[Rapid flat-block alternative]
\label{p1-first:prop:rapid}
Let $(e_n)_{n=1}^{\infty}$ be a normalized asymptotically monotone basic sequence. Suppose that, for every $\eps>0$, there is no sequence $(x_n)_{n=1}^{\infty}\subset \spancl{e_n:n\in\N}$ of unit vectors satisfying
\begin{equation*}
        \distT(x_n,x_m)>1+\eps \qquad(n,m\in\N,\ n\ne m).
\end{equation*}
Then there are successive flat blocks $(d_j)_{j=1}^{\infty}$ and phases $(\alpha_j)_{j=1}^{\infty}\subset\T$ such that
\begin{equation*}
        \|d_j\|\longrightarrow 1
\end{equation*}
and
\begin{equation*}
        \sup_{N\in\N}\left\|\sum_{j=1}^{N}\alpha_jd_j\right\|<\infty.
\end{equation*}
\end{proposition}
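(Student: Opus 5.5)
The plan is to turn the failure of separation into a finite covering statement on tails, and then to build long phased sums of flat blocks by chaining backwards through these covers; a compactness (diagonal) argument then produces the infinite sequence.

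\emph{Step 1 (finite toroidal covers).} Fix a decreasing sequence $\eta_j\downarrow 0$ with $\eta_1<1/2$. By asymptotic monotonicity, choose integers $N_1<N_2<\ldots$ so that $M_{N_j}\le 1+\gamma_j$, where $\gamma_j>0$ is taken with $(1+\gamma_j)^{-1}\ge 1-\eta_j$. Let $A_j$ be the set of $\eta_j$-flat unit blocks supported in $[N_j,\infty)$. By hypothesis (applied with $\eps=\eta_j$), every $(1+\eta_j)$-toroidally separated subset of $S_{\spancl{e_n}}$ is finite. Hence a maximal such subset $C_j\subset A_j$ is finite. By maximality, for every $b\in A_j$ there are $c\in C_j$ and $\theta\in\T$ with $\|b-\theta c\|\le 1+\eta_j$. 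Let $R_j=\max\bigcup_{c\in C_j}\supp c$. Enlarging $N_{j+1}$ if necessary, we may assume $N_{j+1}>R_j$, so that every element of $C_j$ lies strictly to the left of every element of $A_{j+1}$.

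\emph{Step 2 (backward chaining).} Fix $K$, and start with any flat block $w_K$ of norm $1$ supported beyond $R_K$. Inductively, suppose $w_j$ is a flat block supported in $[N_{j+1},\infty)$ with $|\|w_j\|-1|\le\eta_{j+1}\le\eta_j$. We work with the unnormalised vector here, the window being built into the definition of $A_j$. Then $w_j/\|w_j\|\in A_j$, so there are $c_j\in C_j$ and $\theta_j\in\T$ with $\|w_j/\|w_j\|-\theta_j c_j\|\le 1+\eta_j$.

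We then set $w_{j-1}=w_j-\theta_j\|w_j\|\,c_j/\|c_j\|$. Since the supports are disjoint and $c_j$ lies to the left, $w_{j-1}$ is again a flat block once the coefficients are unimodular. To keep exact unimodularity, I expect to instead use $w_{j-1}=w_j-\theta_j d_j$ with $d_j$ the flat block underlying $c_j$, absorbing the factors $\|w_j\|$ and $\|d_j\|$, which lie within $O(\eta_j)$ of $1$, into a slightly worse bound such as $\|w_{j-1}\|\le 1+3\eta_j$. This forces reindexing the windows by a fixed factor, for instance by requiring $3\eta_{j+1}\le\eta_j$.

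The lower bound $\|w_{j-1}\|\ge(1+\gamma_j)^{-1}\|d_j\|\ge 1-O(\eta_j)$ comes from asymptotic monotonicity, because $\theta_j d_j$ is an initial segment of $w_{j-1}$ starting at or beyond $N_j$. After $K$ steps this yields
\begin{equation*}
w_0=w_K-\sum_{j=1}^{K}\theta_j d_j ,
\end{equation*}
of norm at most $2$, with $d_1<d_2<\ldots<d_K<w_K$. Every partial sum $\sum_{j\le N}(-\theta_j)d_j$ is an initial segment of $w_0$ starting at or beyond $N_1$. Hence asymptotic monotonicity bounds it by $(1+\gamma_1)\cdot 2$, uniformly in $N$ and $K$.

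\emph{Step 3 (diagonalisation).} For each $K$ we have data $(d^K_j,\theta^K_j)_{j\le K}$ with $d^K_j$ ranging over the finite set underlying $C_j$ and $\theta^K_j\in\T$. A diagonal argument, using the finiteness of each $C_j$ and the compactness of $\T$, gives a subsequence of $K$'s along which $d^K_j=d_j$ is eventually constant and $\theta^K_j\to\theta_j$, for every $j$. Put $\alpha_j=-\theta_j$. Each partial sum $\sum_{j\le N}\alpha_jd_j$ is a limit of vectors of norm at most $2(1+\gamma_1)$, since only finitely many terms and phases are involved. This gives the uniform bound, while $\|d_j\|\to1$ because $|\|d_j\|-1|\le\eta_j$ and the $d_j$ are successive.

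The main obstacle is Step 2: keeping the norm of the growing flat block inside the shrinking window $[1-\eta_j,1+\eta_j]$ at every stage, while preserving exact unimodular coefficients. The cover gives an upper bound only for the normalised vectors, and asymptotic monotonicity has to supply the matching lower bound. I would handle this by choosing the sequences $\eta_j$ and $N_j$ (hence $\gamma_j$) together, with enough geometric decay that the multiplicative errors at each step are absorbed by the next, larger window.
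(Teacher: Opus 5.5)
Your proposal is correct and follows essentially the paper's route: finite maximal toroidal covers on tails, backward chaining in which asymptotic monotonicity supplies the lower norm bound, the initial-segment bound on partial sums, and a diagonal argument over the finite covers and $\T$. The remaining differences are bookkeeping. The paper controls the accumulated error with tail sums $\rho_j=\sum_{k\ge j}(2\eta_k+\gamma_k)$ subject to $\rho_{j+1}\le\eta_j$, which amounts to your geometric-decay fix once the inductive window is taken to be $|\|w_j\|-1|\le 3\eta_{j+1}$ rather than $\eta_{j+1}$. The paper also multiplies the accumulated tail by the phase instead of the new block, which produces product phases $(-1)^{j-1}\theta_1\cdots\theta_{j-1}$ where yours are simply $-\theta_j$.
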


\begin{proof}
Choose two positive null sequences $(\eta_j)_{j=1}^{\infty}$ and $(\gamma_j)_{j=1}^{\infty}$ such that
\begin{equation*}
        0<\eta_j<2^{-j-10}
        \qquad\text{and}\qquad
        0<\gamma_j<2^{-j-10}
        \qquad(j\in\N).
\end{equation*}
For $j\in\N$, set
\begin{equation*}
        \rho_j=\sum_{k=j}^{\infty}(2\eta_k+\gamma_k).
\end{equation*}

By choosing the terms recursively and sufficiently small at each step, we may assume that
\begin{equation*}
        \rho_{j+1}\leq \eta_j \qquad(j\in\N).
\end{equation*}

We first construct, by induction on $j\in\N$, integers $m_j$ and finite families
\begin{equation*}
        B_j=\{b_{j,1},\ldots,b_{j,r_j}\}
\end{equation*}
of $\eta_j$-flat unit blocks, satisfying the following properties:

\begin{enumerate}[label=\textup{(\roman*)}]
\item For every $j\in\N$, the tail after $m_j$ is $(1+\gamma_j)$-monotone: whenever $m_j\leq p\leq q\leq r$ and $a_p,\ldots,a_r\in\C$,
\begin{equation}
\label{p1-first:eq:tail-projection}
        \left\|\sum_{i=p}^{q}a_ie_i\right\|\leq (1+\gamma_j)\left\|\sum_{i=p}^{r}a_ie_i\right\|.
\end{equation}

\item\label{p1-first:item:maximal-family} For every $j\in\N$, the family $B_j$ is a finite maximal family of $\eta_j$-flat unit blocks supported after $m_j$ such that
\begin{equation*}
        \distT(b_{j,i},b_{j,\ell})>1+\eta_j \qquad(1\leq i<\ell\leq r_j).
\end{equation*}

\item The families are successive: every member of $B_j$ is supported before $m_{j+1}$.
\end{enumerate}

Here, a vector is supported after $m_j$ if its support is contained in $\{m_j+1,m_j+2,\ldots\}$. Maximality in \ref{p1-first:item:maximal-family} is meant with respect to inclusion among families satisfying the displayed separation condition. Thus, once $B_j$ has been chosen, no further $\eta_j$-flat unit block supported after $m_j$ can be added to $B_j$ while preserving pairwise toroidal separation greater than $1+\eta_j$. Equivalently, every $\eta_j$-flat unit block supported after $m_j$ lies within distance at most $1+\eta_j$, up to multiplication by a unimodular scalar, of some member of $B_j$.

We now carry out the construction. By asymptotic monotonicity, for each $j\in\N$ there is an integer $N_j$ such that \eqref{p1-first:eq:tail-projection} holds whenever $N_j\leq p\leq q\leq r$.

Choose $m_1\geq N_1$. Having chosen $m_j$, consider the class of all $\eta_j$-flat unit blocks supported after $m_j$. By the standing assumption, this class contains no infinite subfamily whose distinct members are pairwise at $\distT$-distance greater than $1+\eta_j$. Hence a greedy selection process gives a finite maximal family
\begin{equation*}
        B_j=\{b_{j,1},\ldots,b_{j,r_j}\}
\end{equation*}
with the separation property in \ref{p1-first:item:maximal-family}. For each $1\leq i\leq r_j$, choose a flat block $d_{j,i}$ such that
\begin{equation}
\label{p1-first:eq:representing-block}
        b_{j,i}=\frac{d_{j,i}}{\|d_{j,i}\|}, \qquad |\|d_{j,i}\|-1|\leq \eta_j.
\end{equation}
Since $B_j$ is finite, we may choose $m_{j+1}\geq N_{j+1}$ larger than every index appearing in the supports of the vectors in $B_j$. This defines $B_j$ and $m_{j+1}$, and hence completes the recursive construction.

The maximality of $B_j$ gives the following covering property. If $u$ is any $\eta_j$-flat unit block supported after $m_j$, then there are $1\leq i\leq r_j$ and $\theta\in\T$ such that
\begin{equation}
\label{p1-first:eq:covering}
        \|b_{j,i}-\theta u\|\leq 1+\eta_j.
\end{equation}
Indeed, otherwise $u$ could be added to $B_j$, contradicting maximality.

This finishes the construction of the finite covers $B_j$. We now use them to build finite flat block combinations with uniformly bounded partial sums. The construction is done first at a fixed finite length $L$, and then a diagonal argument will let $L\to\infty$.

Fix $L\in\N$. We shall construct, by backward induction on $j=L,L-1,\ldots,1$, indices
\begin{equation*}
        \iota_j^{(L)}\in\{1,\ldots,r_j\},
\end{equation*}
phases $\theta_j^{(L)}\in\T$ for $1\leq j<L$, and flat blocks
\begin{equation*}
        z_L^{(L)},z_{L-1}^{(L)},\ldots,z_1^{(L)}
\end{equation*}
such that the following hold.

\begin{enumerate}[label=\textup{(\alph*)}]
\item\label{p1-first:item:last-level} At the last level,
\begin{equation*}
        z_L^{(L)}=d_{L,\iota_L^{(L)}}.
\end{equation*}

\item\label{p1-first:item:backward-recursion} For every $1\leq j<L$, if
\begin{equation*}
        u_{j+1}^{(L)}=\frac{z_{j+1}^{(L)}}{\|z_{j+1}^{(L)}\|},
\end{equation*}
then
\begin{equation}
\label{p1-first:eq:covering-step}
        \left\|b_{j,\iota_j^{(L)}}-\theta_j^{(L)}u_{j+1}^{(L)}\right\|\leq 1+\eta_j,
\end{equation}
and
\begin{equation}
\label{p1-first:eq:z-recursion}
        z_j^{(L)}=d_{j,\iota_j^{(L)}}-\theta_j^{(L)}z_{j+1}^{(L)}.
\end{equation}

\item\label{p1-first:item:norm-control} For every $1\leq j\leq L$,
\begin{equation}
\label{p1-first:eq:z-error-target}
        |\|z_j^{(L)}\|-1|\leq \rho_j.
\end{equation}
\end{enumerate}

We start at level $L$. Choose any $\iota_L^{(L)}\in\{1,\ldots,r_L\}$ and set
\begin{equation*}
        z_L^{(L)}=d_{L,\iota_L^{(L)}}.
\end{equation*}
This gives \ref{p1-first:item:last-level}. Moreover, by \eqref{p1-first:eq:representing-block},
\begin{equation*}
        |\|z_L^{(L)}\|-1|\leq \eta_L\leq \rho_L,
\end{equation*}
so \ref{p1-first:item:norm-control} holds at level $L$.

Now suppose that $1\leq j<L$ and that the construction has been completed at level $j+1$. In particular, \ref{p1-first:item:norm-control} gives
\begin{equation*}
        |\|z_{j+1}^{(L)}\|-1|\leq \rho_{j+1}\leq \eta_j.
\end{equation*}
Since the families $B_1,B_2,\ldots$ are successive, $z_{j+1}^{(L)}$ is supported after $m_j$. Hence the normalized vector
\begin{equation*}
        u_{j+1}^{(L)}=\frac{z_{j+1}^{(L)}}{\|z_{j+1}^{(L)}\|}
\end{equation*}
is an $\eta_j$-flat unit block supported after $m_j$. By the covering property \eqref{p1-first:eq:covering}, there are $\iota_j^{(L)}\in\{1,\ldots,r_j\}$ and $\theta_j^{(L)}\in\T$ such that \eqref{p1-first:eq:covering-step} holds. We then define
\begin{equation*}
        z_j^{(L)}=d_{j,\iota_j^{(L)}}-\theta_j^{(L)}z_{j+1}^{(L)}.
\end{equation*}
Thus \ref{p1-first:item:backward-recursion} holds at level $j$. Since the two terms have successive disjoint supports and unimodular coefficients on their supports, $z_j^{(L)}$ is again a flat block.

It remains to verify \ref{p1-first:item:norm-control} at level $j$. Put
\begin{equation*}
        s_j^{(L)}=\|d_{j,\iota_j^{(L)}}\|,
        \qquad
        t_{j+1}^{(L)}=\|z_{j+1}^{(L)}\|.
\end{equation*}
By \eqref{p1-first:eq:representing-block} and the induction hypothesis,
\begin{equation*}
        |s_j^{(L)}-1|\leq\eta_j,
        \qquad
        |t_{j+1}^{(L)}-1|\leq\rho_{j+1}.
\end{equation*}
By \ref{p1-first:item:backward-recursion}, and by the definitions of $s_j^{(L)}$, $t_{j+1}^{(L)}$ and $u_{j+1}^{(L)}$, we have
\begin{equation*}
        z_j^{(L)}=s_j^{(L)}b_{j,\iota_j^{(L)}}-\theta_j^{(L)}t_{j+1}^{(L)}u_{j+1}^{(L)}.
\end{equation*}
Therefore,
\begin{equation}
\label{p1-first:eq:upper-z}
\begin{aligned}
        \|z_j^{(L)}\|
        &=\left\|s_j^{(L)}b_{j,\iota_j^{(L)}}-\theta_j^{(L)}t_{j+1}^{(L)}u_{j+1}^{(L)}\right\|\\
        &\leq \left\|b_{j,\iota_j^{(L)}}-\theta_j^{(L)}u_{j+1}^{(L)}\right\|+|s_j^{(L)}-1|+|t_{j+1}^{(L)}-1|\\
        &\leq 1+\eta_j+\eta_j+\rho_{j+1}\\
        &=1+2\eta_j+\rho_{j+1}.
\end{aligned}
\end{equation}
For the lower estimate, $d_{j,\iota_j^{(L)}}$ is an initial interval projection of $z_j^{(L)}$. Since $m_j\geq N_j$, \eqref{p1-first:eq:tail-projection} gives
\begin{equation*}
        \|d_{j,\iota_j^{(L)}}\|\leq (1+\gamma_j)\|z_j^{(L)}\|.
\end{equation*}
Therefore
\begin{equation}
\label{p1-first:eq:lower-z}
        \|z_j^{(L)}\|\geq \frac{1-\eta_j}{1+\gamma_j}\geq 1-\eta_j-\gamma_j,
\end{equation}
where in the last inequality we used the elementary estimate
\begin{equation*}
        \frac{1-a}{1+b}\geq 1-a-b \qquad(a,b>0).
\end{equation*}
Combining \eqref{p1-first:eq:upper-z} and \eqref{p1-first:eq:lower-z}, we obtain
\begin{equation*}
        |\|z_j^{(L)}\|-1|\leq 2\eta_j+\gamma_j+\rho_{j+1}=\rho_j.
\end{equation*}
Thus \ref{p1-first:item:norm-control} holds at level $j$. This proves the induction step and hence completes the backward construction.

Unwinding \eqref{p1-first:eq:z-recursion}, and setting
\begin{equation*}
        \alpha_1^{(L)}=1, \qquad \alpha_j^{(L)}=(-1)^{j-1}\theta_1^{(L)}\cdots\theta_{j-1}^{(L)} \quad(2\leq j\leq L),
\end{equation*}
we obtain
\begin{equation}
\label{p1-first:eq:finite-expansion}
        z_1^{(L)}=\sum_{j=1}^{L}\alpha_j^{(L)}d_{j,\iota_j^{(L)}}.
\end{equation}
In particular, by \eqref{p1-first:eq:z-error-target},
\begin{equation}
\label{p1-first:eq:z1-bound}
        \|z_1^{(L)}\|\leq 1+\rho_1 \qquad(L\in\N).
\end{equation}

Thus, for each fixed $L\in\N$, the construction gives a finite sequence
\begin{equation*}
        d_{1,\iota_1^{(L)}}<d_{2,\iota_2^{(L)}}<\ldots<d_{L,\iota_L^{(L)}}
\end{equation*}
and phases $\alpha_1^{(L)},\ldots,\alpha_L^{(L)}\in\T$. If $N\leq L$, then the partial sum over the first $N$ levels is an initial interval projection of $z_1^{(L)}$. Applying \eqref{p1-first:eq:tail-projection} at level $1$, we get
\begin{equation}
\label{p1-first:eq:finite-partial-bound}
        \left\|\sum_{j=1}^{N}\alpha_j^{(L)}d_{j,\iota_j^{(L)}}\right\|\leq (1+\gamma_1)(1+\rho_1)=:C.
\end{equation}

Finally, we pass from finite lengths to an infinite sequence. For each fixed $j\in\N$ and $L \geq j$, the indices $\iota_j^{(L)}$ take values in the finite set $\{1,\ldots,r_j\}$, while the phases $\alpha_j^{(L)}$ lie in the compact set $\T$. A diagonal compactness argument gives a subsequence $L_k\to\infty$ such that, for each fixed $j\in\N$,
\begin{equation*}
        \iota_j^{(L_k)}=\iota_j \text{ eventually},
        \qquad
        \alpha_j^{(L_k)}\to\alpha_j\in\T.
\end{equation*}
Set
\begin{equation*}
        d_j=d_{j,\iota_j}\qquad(j\in\N).
\end{equation*}
Then $d_1<d_2<\ldots$, and \eqref{p1-first:eq:representing-block} gives $\|d_j\|\to1$. Fixing $N\in\N$ in \eqref{p1-first:eq:finite-partial-bound} and passing to the limit along $L_k$ yields
\begin{equation*}
        \left\|\sum_{j=1}^{N}\alpha_jd_j\right\|\leq C.
\end{equation*}
Since $N\in\N$ was arbitrary,
\begin{equation*}
        \sup_{N\in\N}\left\|\sum_{j=1}^{N}\alpha_jd_j\right\|<\infty.
\end{equation*}
This proves the proposition.
\end{proof}

\section{Excluding the bad alternative}\label{p1-first:sec:excluding-bad-alternative}

The rapid flat-block alternative leaves one obstruction: bounded twisted partial sums of successive almost-flat blocks. We now record the selection principle that allows us to arrange asymptotic monotonicity without losing the structural properties needed later.

\begin{lemma}[Asymptotically monotone selection]
\label{p1-first:lem:am-selection}
The following holds.
\begin{enumerate}[label=\textup{(\roman*)}]
\item\label{p1-first:item:am-selection-weakly-null} Every seminormalized weakly null sequence has a subsequence whose termwise normalization is an asymptotically monotone basic sequence.

\item\label{p1-first:item:am-selection-bc} Every boundedly complete basic sequence has a normalised asymptotically monotone block basis. Moreover, the block basis is boundedly complete.
\end{enumerate}
\end{lemma}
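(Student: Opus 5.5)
The plan is to prove both parts with one device: a small perturbation of a basic sequence, with summable errors, whose span is the closed span of vectors $y_n$ such that each $y_n$ is normed almost exactly by a single functional vanishing on all later vectors. Asymptotic monotonicity, in the tail-initial form of \Cref{p1-first:def:asymptotically-monotone}, then reduces to the following condition. For every $\gamma>0$ there is $N_0$ such that, for $N_0\le p\le q\le r$, each vector $\sum_{i=p}^q a_ie_i$ is normed, up to $1+\gamma$, by a functional that annihilates $e_{q+1},\dots,e_r$. This is a ``Mazur-type'' selection: at each stage we fix a finite-dimensional space $E$ already built and choose the next vector almost orthogonal to $E$ in the strong sense that $\|x+y\|\ge(1-\delta)\|x\|$ for all $x\in E$ and all $y$ in the span of the remaining vectors.

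For \ref{p1-first:item:am-selection-weakly-null}, let $(x_n)$ be seminormalized and weakly null, and normalize it to $e_n=x_n/\|x_n\|$, which is still weakly null. Choose summable tolerances $\delta_k\downarrow0$. Inductively, given $n_1<\dots<n_k$, let $E_k=\spann\{e_{n_1},\dots,e_{n_k}\}$. Take a finite $\delta_k$-net of $S_{E_k}$ and norming functionals $f_1,\dots,f_M$ for its points. Since $(e_n)$ is weakly null, we can pick $n_{k+1}>n_k$ with $|f_i(e_{n_{k+1}})|$ tiny for all $i$. This alone controls only one new vector, not the whole tail. The standard fix is the Mazur/Bessaga--Pełczyński argument: we require that $|f(e_{n_l})|<\delta_k 2^{-l}$ for all $l>k$ and all $f$ in the $k$-th finite family, which is possible because every subsequent choice is made after all earlier finite families are fixed. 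Then for $x\in S_{E_k}$ with $f$ norming a nearby net point, and $y$ in the span of the later vectors, we get $\|x+y\|\ge |f(x+y)|\ge 1-2\delta_k-|f(y)|$.

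To bound $|f(y)|$ we need the coefficients of $y$ to be controlled by $\|y\|$. We get this from the basis constant: once the subsequence is basic with constant $K$, each coefficient satisfies $|a_l|\le 2K\|y\|$. Hence $|f(y)|\le 2K\|y\|\sum_{l>k}\delta_k 2^{-l}$, which is $O(\delta_k)\|y\|$. If $\|y\|$ is large, the inequality is trivial anyway, since $\|x+y\|\ge\|y\|-1$. This gives $\|x\|\le(1+c\delta_k)\|x+y\|$ with $x$ starting anywhere in $E_k$, which covers initial pieces starting at any $p\ge1$ and in particular the tails. As $\delta_k\to0$ we get $M_N\to1$. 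The basicity with a uniform constant $K$ comes from the same construction with $\delta_k$ small, because the product $\prod(1+c\delta_k)$ converges. This circularity, namely bounding the coefficients while proving basicity, is resolved by bootstrapping inductively on finite lengths, in the usual way.

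For \ref{p1-first:item:am-selection-bc}, let $(u_n)$ be boundedly complete with biorthogonal functionals $(u_n^*)$. We build normalized blocks $w_k$ of $(u_n)$ with increasing supports, again using finite nets of $S_{E_k}$ with norming functionals $f$. We replace each $f$ by a functional in $\spann\{u_n^*\}$ that almost norms the net point. This is possible because $[u_n^*]$ is norming for $[u_n]$ up to the basis constant, and, after passing to a further block basis, $[u_n^*]$ can be made almost isometrically norming. We take this as the main obstacle. The cleanest fix is to renorm via the equivalent norm $\sup\{|f(x)|: f\in B_{[u_n^*]}\}$. This norm is not available to us, however, because we need the original norm. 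Instead we use a weak$^*$-compactness argument: for finitely many vectors of $E_k$, the norming functionals can be approximated on $E_k$ by functionals $g$ for which the tail $g\circ(I-P_m)$ has small norm for large $m$. This holds because the functional $P_m^*g$ restricted to a finite-dimensional space converges, and we can choose $m$ beyond the supports. Then the next block $w_{k+1}$ is chosen supported after $m$, so that $g$ almost vanishes on all later blocks, with error at most $\|g\circ(I-P_m)\|$. We need this tail norm to be small, that is, $g$ almost in $[u_n^*]$. This is the delicate point where we may have to use that $(u_n)$ is boundedly complete, so that $[u_n]$ is a dual space $([u_n^*])^*$ and, by the local reflexivity-type fact that a finite set of vectors in the dual is normed by the predual, $g$ can be taken in $[u_n^*]$. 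Finally, any block basis of a boundedly complete basis is boundedly complete: bounded partial sums of $\sum c_kw_k$ give bounded partial sums in $(u_n)$ along the block endpoints, and by basis projections at all indices. Hence the series converges.
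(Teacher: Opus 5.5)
Your part (i) is sound: it is the Mazur/Bessaga--Pe{\l}czy\'nski selection. The paper itself gives no argument and simply cites Barroso's Lemma~3.1 for (i) and Lemma~2.4 of H\'ajek--Kania--Russo for (ii). The coefficient bootstrap is unnecessary. The case $l=k+1$ of your requirement already gives $\|x\|\le(1+c\delta_k)\|x+te_{n_{k+1}}\|$ for $x\in E_k$, where $|t|=\|te_{n_{k+1}}\|$, so no basis constant enters. Iterating yields $\|P_k\|\le\prod_{j\ge k}(1+c\delta_j)\to1$, which gives basicity and asymptotic monotonicity at the same time.

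Part (ii), however, has a genuine gap. You need functionals $g$ with small tails $\|g\circ(I-P_m)\|$, i.e.\ essentially lying in $[u_n^*]$, that almost norm the net points in the \emph{original} norm. You justify this by bounded completeness together with ``a vector in a dual space is normed by the predual''. But bounded completeness only makes $[u_n]$ \emph{isomorphic} to $[u_n^*]^*$. In general $[u_n^*]$ is only $1/K$-norming, with $K$ the basis constant, and the predual norms vectors only in the equivalent dual norm. For instance, renorm $\ell_1$ by $\|x\|=\max\{\|x\|_1,2|\sum_n x_n|\}$. The unit vector basis is still boundedly complete, and $[u_n^*]$ is $c_0$ as a set. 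Yet every element of its unit ball has modulus at most $4/3$ at $e_1$, while $\|e_1\|=2$. With such $g$ your estimate gives only $M_N\lesssim K$, not $M_N\to1$.

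The obstacle is illusory, though. The fix is the scheduling you already used in (i), now with exact vanishing. When $w_l$ is chosen, only finitely many functionals have been fixed, say $d$ of them, coming from the families attached to $E_1,\dots,E_{l-1}$. Their common kernel therefore meets $\operatorname{span}\{u_{m+1},\dots,u_{m+d+1}\}$ nontrivially for every $m$. Take $w_l$ there, normalised and supported after $w_{l-1}$. Then every $y\in\operatorname{span}\{w_l:l>k\}$ is annihilated by the $k$-th family. Hence, for $x\in S_{E_k}$ and $f$ norming a net point within $\delta_k$ of $x$, we get $\|x+y\|\ge|f(x)|\ge1-\delta_k$. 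This gives $M_N\le\sup_{q\ge N}(1-\delta_q)^{-1}\to1$, with no tail estimates, no coefficient bounds and no use of bounded completeness. Bounded completeness is needed only for the ``moreover'' clause, and your final argument for that clause is correct.
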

\begin{proof}
\ref{p1-first:item:am-selection-weakly-null} is precisely the asymptotically monotone selection principle; see \cite[Lemma 3.1]{Barroso2019}. \ref{p1-first:item:am-selection-bc} follows from \cite[Lemma 2.4]{HajekKaniaRusso2018}. Finally, normalising a block basis does not change its canonical projections, and bounded completeness is preserved under such normalisation.
\end{proof}

Using this, we are ready to prove the boundedly complete case.

\begin{lemma}[Boundedly complete case]
\label{p1-first:lem:boundedly-complete}
Let $X$ contain a boundedly complete basic sequence. Then $X$ satisfies \Cref{p1-first:thm:main}.
\end{lemma}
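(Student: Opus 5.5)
We argue by contradiction. Suppose $X$ contains a boundedly complete basic sequence, but \Cref{p1-first:thm:main} fails for $X$: for no $\eps>0$ is there a sequence of unit vectors in $X$ with pairwise toroidal distances exceeding $1+\eps$.

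By \Cref{p1-first:lem:am-selection}\ref{p1-first:item:am-selection-bc}, there is a normalised, asymptotically monotone, boundedly complete block basis $(e_n)_{n=1}^\infty$ of the given sequence. Set $Y=\spancl{e_n:n\in\N}\subset X$. A toroidally separated sequence in $Y$ would also be one in $X$. Hence $Y$ contains no sequence of unit vectors with $\distT(x_n,x_m)>1+\eps$ for $n\ne m$, whatever $\eps>0$. So the hypothesis of \Cref{p1-first:prop:rapid} holds for $(e_n)$. It yields successive flat blocks $d_1<d_2<\ldots$ with $\|d_j\|\to1$ and phases $\alpha_j\in\T$ such that
\begin{equation*}
\sup_{N\in\N}\left\|\sum_{j=1}^N\alpha_jd_j\right\|<\infty .
\end{equation*}

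Now write $\sum_{j=1}^N\alpha_jd_j=\sum_{i\in F_1\cup\cdots\cup F_N}a_ie_i$, where $F_j=\supp d_j$ and every coefficient $a_i$ is unimodular. Let $K=\sup_N\|\sum_{j\le N}\alpha_jd_j\|$ and let $C$ be the basis constant of $(e_n)$. Any partial sum $\sum_{i\le n}a_ie_i$ of the formal series $\sum_i a_ie_i$ (with $a_i=0$ off $\bigcup_jF_j$) is an initial projection of $\sum_{j\le N}\alpha_jd_j$ for large $N$. Its norm is therefore at most $CK$. Thus the formal series $\sum_i a_ie_i$ has bounded partial sums.

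Bounded completeness of $(e_n)$ then forces $\sum_ia_ie_i$ to converge in $Y$. In particular its tails tend to zero, so the block sums $\alpha_jd_j$ must tend to zero in norm: each one is a difference of two partial sums of a convergent series. This contradicts $\|d_j\|\to1$. Hence the standing assumption was false, and $X$ satisfies \Cref{p1-first:thm:main}.

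The main point to check is the passage from bounded block partial sums to bounded partial sums of the full coordinate series. It is handled by the uniform boundedness of the basis projections, and asymptotic monotonicity would also suffice on tails. The other point to check is that the lemma's block basis is indeed boundedly complete and normalised. Both follow from \Cref{p1-first:lem:am-selection}. Everything else is a direct invocation of \Cref{p1-first:prop:rapid}.
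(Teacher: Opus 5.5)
Your proof is correct and follows the paper's argument. You pass to a normalised, asymptotically monotone, boundedly complete block basis, apply the rapid flat-block alternative, and contradict bounded completeness using $\|d_j\|\to 1$. The only difference is cosmetic: the paper cites that a block basis of a boundedly complete basis is boundedly complete, while you check the needed convergence directly by expanding $\sum_j\alpha_jd_j$ in the coordinates $e_i$ and using the basis constant, which is exactly the standard proof of that fact.
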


\begin{proof}
Assume toward a contradiction that no sequence of unit vectors in $X$ is toroidally $(1+\eps)$-separated for any $\eps>0$. Choose a boundedly complete basic sequence in $X$. By \Cref{p1-first:lem:am-selection}\ref{p1-first:item:am-selection-bc}, pass to a normalised asymptotically monotone boundedly complete block basis $(e_n)_{n=1}^{\infty}$.

Apply \Cref{p1-first:prop:rapid} to $(e_n)_{n=1}^{\infty}$. We obtain successive flat blocks $(d_j)_{j=1}^{\infty}$ with $\|d_j\|\to1$ and phases $(\alpha_j)_{j=1}^{\infty}\subset\T$ such that
\begin{equation*}
        \sup_{N \in \N} \left\|\sum_{j=1}^N \alpha_j d_j\right\|<\infty .
\end{equation*}
Since $(d_j)_{j=1}^{\infty}$ is a seminormalised block basic sequence of the boundedly complete basis $(e_n)_{n=1}^{\infty}$, it is boundedly complete. Hence the series $\sum_{j=1}^{\infty} \alpha_j d_j$ converges. Its terms must then tend to $0$, contradicting
\begin{equation*}
        \|\alpha_j d_j\|=\|d_j\|\longrightarrow 1
        \qquad (j\to\infty).
\end{equation*}
Thus, the desired toroidally separated sequence must exist.
\end{proof}

We shall need the following definition.

\begin{definition}
\label{p1-first:def:strongly-summing}
A basic sequence $(s_n)_{n=1}^{\infty}$ is called \emph{strongly summing} if it is weak-Cauchy and, whenever a scalar sequence $(c_n)_{n=1}^{\infty}$ satisfies
\begin{equation*}
        \sup_{N\in\N}\left\|\sum_{n=1}^{N} c_n s_n\right\|<\infty,
\end{equation*}
the scalar series
\begin{equation*}
        \sum_{n=1}^{\infty} c_n
\end{equation*}
converges.
\end{definition}

We now treat the case in which the Banach space $X$ contains a nontrivial strongly summing basic sequence.

\begin{lemma}[Strongly summing case]
\label{p1-first:lem:strongly-summing}
Suppose that a Banach space $X$ contains a strongly summing basic sequence $(s_n)_{n=1}^{\infty}$ which is not weakly convergent. Then $X$ satisfies \Cref{p1-first:thm:main}.
\end{lemma}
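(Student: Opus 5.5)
We argue by contradiction, following the outline in the strategy section. Assume that no sequence of unit vectors in $X$ is toroidally $(1+\eps)$-separated for any $\eps>0$. By a subspace argument the same then holds inside any closed subspace. Since $(s_n)$ is weak-Cauchy but not weakly convergent, it is not norm-Cauchy. We may also assume it is seminormalised: boundedness comes from weak-Cauchyness, and being basic and not weakly null prevents the norms from tending to $0$ along a subsequence. Since $(s_n)$ is not norm-Cauchy, we can choose increasing indices $p_1<q_1<p_2<q_2<\ldots$ with $\|s_{q_k}-s_{p_k}\|\ge\delta>0$ for all $k$. Put
\begin{equation*}
w_k=\frac{s_{q_k}-s_{p_k}}{\|s_{q_k}-s_{p_k}\|}.
\end{equation*}
Because $(s_n)$ is weak-Cauchy, the differences $s_{q_k}-s_{p_k}$ are weakly null and bounded, so $(w_k)$ is a normalised weakly null sequence. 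By \Cref{p1-first:lem:am-selection}\ref{p1-first:item:am-selection-weakly-null} we pass to a subsequence (still denoted $w_k$, with coefficients $\beta_k=\|s_{q_k}-s_{p_k}\|^{-1}\in[1/(2K),1/\delta]$, $K=\sup\|s_n\|$) which is an asymptotically monotone basic sequence.

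Next we apply \Cref{p1-first:prop:rapid} to $(w_k)$; the hypothesis holds since its closed span lies in $X$. This gives successive flat blocks $d_j=\sum_{k\in F_j}\lambda_k w_k$ with $|\lambda_k|=1$ and $\|d_j\|\to1$, together with phases $\alpha_j\in\T$, such that $\sup_N\|\sum_{j\le N}\alpha_jd_j\|<\infty$. Concatenating, the coefficients $\mu_k=\alpha_j\lambda_k$ for $k\in F_j$ are unimodular, and $\mu_k=0$ off $\bigcup F_j$. The partial sums $\sum_{k\le M}\mu_kw_k$ along block endpoints are bounded. We then expand back in $(s_n)$:
\begin{equation*}
\sum_{k}\mu_k w_k=\sum_k \mu_k\beta_k s_{q_k}-\sum_k\mu_k\beta_k s_{p_k}.
\end{equation*}
This is a series $\sum_n c_n s_n$ with $c_{q_k}=\mu_k\beta_k$, $c_{p_k}=-\mu_k\beta_k$, and $c_n=0$ otherwise. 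The partial sums of $\sum c_ns_n$ over the interval ending at $q_M$ coincide with the $w$-partial sums, and the intermediate ones differ by at most one term $\mu_k\beta_ks_{p_k}$, which is bounded by $K/\delta$. Hence $\sup_N\|\sum_{n\le N}c_ns_n\|<\infty$.

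Strong summability now forces $\sum_n c_n$ to converge. We derive the contradiction from the terms: the terms $c_{p_k}=-\mu_k\beta_k$ satisfy $|c_{p_k}|\ge1/(2K)$ for every $k\in\bigcup_jF_j$, an infinite set. Thus the terms of a convergent scalar series do not tend to zero, which is impossible. Hence a toroidally $(1+\eps)$-separated sequence exists.

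The main obstacle is the bookkeeping in the middle step. The boundedness from \Cref{p1-first:prop:rapid} concerns only sums over whole blocks. Passing to all partial sums in $(s_n)$ requires controlling the sums that stop partway inside a block $d_j$. I would first try to handle this using the asymptotic monotonicity of $(w_k)$: initial pieces of a tail are bounded by $(1+\gamma)$ times the whole, which controls partial sums in $w$ at every $k$, not only at block endpoints. The splitting between $s_{p_k}$ and $s_{q_k}$ then adds at most one bounded term, as above. A secondary point to check is that the contradiction hypothesis passes to $\spancl{w_k}$, which is immediate since that subspace lies in $X$.
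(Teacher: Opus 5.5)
Your proposal is correct and follows essentially the same route as the paper: normalised differences of far-apart terms, the asymptotically monotone selection, \Cref{p1-first:prop:rapid}, expansion back in $(s_n)$, and a contradiction from $|c_{p_k}|\geq 1/(2K)$ on infinitely many indices. The only divergence is cosmetic. For partial sums stopping inside a block, the paper just uses the basis constant of $(w_k)$: one has $\sum_{k\leq m}\mu_kw_k=P_m\bigl(\sum_{j\leq N}\alpha_jd_j\bigr)$ whenever $\max F_N\geq m$. This is more direct than your asymptotic-monotonicity route, which also works once the fixed head before $N_0$ is split off, and your seminormalisation step is not needed.
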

\begin{proof}
Assume, toward a contradiction, that no sequence of unit vectors in $X$ is toroidally $(1+\eps)$-separated for any $\eps>0$.

Since $(s_n)_{n=1}^{\infty}$ is weak-Cauchy and not weakly convergent, it is not norm Cauchy. Hence there are $r>0$ and indices
\begin{equation*}
        p_1<q_1<p_2<q_2<\ldots
\end{equation*}
such that
\begin{equation}
\label{p1-first:eq:diff-lower}
        \|s_{p_n}-s_{q_n}\|\geq r \qquad(n\in\N).
\end{equation}
Define
\begin{equation*}
        y_n=\frac{s_{p_n}-s_{q_n}}{\|s_{p_n}-s_{q_n}\|} \qquad(n\in\N).
\end{equation*}
Then $(y_n)_{n=1}^{\infty}$ is normalized and weakly null. Indeed, fix $f\in X^*$. Since $(s_n)_{n=1}^{\infty}$ is weak-Cauchy, the scalar sequence $(f(s_n))_{n=1}^{\infty}$ is Cauchy. As $p_n,q_n\to\infty$, it follows that
\begin{equation*}
        f(s_{p_n})-f(s_{q_n})\longrightarrow 0.
\end{equation*}
Since the denominators in the definition of $y_n$ are bounded below by $r$, we get $f(y_n)\to0$. Thus $(y_n)_{n=1}^{\infty}$ is weakly null.

By \Cref{p1-first:lem:am-selection} \ref{p1-first:item:am-selection-weakly-null}, pass to a normalized asymptotically monotone basic subsequence of $(y_n)_{n=1}^{\infty}$. Relabelling this subsequence and the corresponding pairs, we may assume that $(y_n)_{n=1}^{\infty}$ itself is asymptotically monotone.

Apply \Cref{p1-first:prop:rapid} to $(y_n)_{n=1}^{\infty}$. We obtain successive flat blocks
\begin{equation*}
        d_j=\sum_{n\in F_j}\lambda_{j,n}y_n, \qquad |\lambda_{j,n}|=1 \quad(n\in F_j), \qquad F_1<F_2<\ldots,
\end{equation*}
and phases $(\alpha_j)_{j=1}^{\infty}\subset\T$ such that
\begin{equation}
\label{p1-first:eq:bad-strong}
        A:=\sup_{N\in\N}\left\|\sum_{j=1}^{N}\alpha_jd_j\right\|<\infty.
\end{equation}
Define scalars $(a_n)_{n=1}^{\infty}$ by
\begin{equation*}
        a_n=\alpha_j\lambda_{j,n} \quad(n\in F_j), \qquad a_n=0 \quad\left(n\notin\bigcup_{j=1}^{\infty}F_j\right).
\end{equation*}
This is well defined because the sets $F_j$ are successive, hence disjoint. Moreover, every nonzero $a_n$ has modulus $1$, and there are infinitely many nonzero $a_n$.

Let $K$ be the basis constant of $(y_n)_{n=1}^{\infty}$. If $P_m$ denotes the $m$-th coordinate projection relative to $(y_n)_{n=1}^{\infty}$, then $\|P_m\|\leq K$ for every $m\in\N$. For each $m\in\N$, choose $N$ large enough that $m\leq \max F_N$. Then
\begin{equation*}
        \sum_{n=1}^{m}a_ny_n=P_m\left(\sum_{j=1}^{N}\alpha_jd_j\right).
\end{equation*}
Thus \eqref{p1-first:eq:bad-strong} gives
\begin{equation}
\label{p1-first:eq:y-partials-bounded}
        B:=\sup_{m\in\N}\left\|\sum_{n=1}^{m}a_ny_n\right\|\leq KA<\infty.
\end{equation}

We now expand these sums in the original strongly summing sequence. For each $n\in\N$,
\begin{equation*}
        a_ny_n=\frac{a_n}{\|s_{p_n}-s_{q_n}\|}s_{p_n}-\frac{a_n}{\|s_{p_n}-s_{q_n}\|}s_{q_n}.
\end{equation*}
Define scalar coefficients $(c_k)_{k=1}^{\infty}$ by
\begin{equation*}
        c_{p_n}=\frac{a_n}{\|s_{p_n}-s_{q_n}\|}, \qquad c_{q_n}=-\frac{a_n}{\|s_{p_n}-s_{q_n}\|} \qquad(n\in\N),
\end{equation*}
and set $c_k=0$ whenever $k\notin\{p_n,q_n:n\in\N\}$. If $a_n=0$, then the two displayed coefficients are also $0$.

We claim that the vector partial sums $\sum_{k=1}^{m}c_ks_k$ are bounded. Since
\begin{equation*}
        p_1<q_1<p_2<q_2<\ldots,
\end{equation*}
every partial sum stops either after a completed pair or halfway through the next pair. More precisely, suppose first that, for some $n\in\N$,
\begin{equation*}
        q_n\leq m<p_{n+1}.
\end{equation*}
Then the pairs $(p_i,q_i)$ have been completely included in the partial sum for $1\leq i\leq n$, and no later pair has begun. Hence
\begin{equation*}
        \sum_{k=1}^{m}c_ks_k=\sum_{i=1}^{n}\left(c_{p_i}s_{p_i}+c_{q_i}s_{q_i}\right)=\sum_{i=1}^{n}a_iy_i,
\end{equation*}
so the norm is at most $B$ by \eqref{p1-first:eq:y-partials-bounded}.

Suppose instead that, for some $n\in\N$,
\begin{equation*}
        p_n\leq m<q_n.
\end{equation*}
Then the pairs $(p_i,q_i)$ have been completely included for $1\leq i<n$, while the $n$-th pair has contributed only its first term. Therefore
\begin{equation*}
        \sum_{k=1}^{m}c_ks_k=\sum_{i=1}^{n-1}a_iy_i+\frac{a_n}{\|s_{p_n}-s_{q_n}\|}s_{p_n}.
\end{equation*}
Since $(s_n)_{n=1}^{\infty}$ is weak-Cauchy, it is bounded. Writing $M=\sup_{n\in\N}\|s_n\|<\infty$, the second term has norm at most $M/r$ by \eqref{p1-first:eq:diff-lower}. Thus every partial sum has norm at most $B+M/r$, and hence
\begin{equation}
\label{p1-first:eq:s-partials-bounded}
        \sup_{m\in\N}\left\|\sum_{k=1}^{m}c_ks_k\right\|<\infty.
\end{equation}

Strong summability of $(s_n)_{n=1}^{\infty}$ implies that the scalar series $\sum_{k=1}^{\infty}c_k$ converges. This is impossible. Indeed, for every $n\in\N$ such that $a_n\ne0$,
\begin{equation*}
        |c_{p_n}|=\frac{1}{\|s_{p_n}-s_{q_n}\|}\geq \frac{1}{2M}.
\end{equation*}
There are infinitely many such $n$, so the scalar sequence $(c_k)_{k=1}^{\infty}$ does not converge to $0$. This contradicts the convergence of $\sum_{k=1}^{\infty}c_k$. Therefore, the assumed failure of toroidal separation is impossible.
\end{proof}

\section{Proof of the main result}\label{p1-first:sec:banach-proof}

We can finally obtain the proof of \Cref{p1-first:thm:main}.

\begin{proof}[Proof of \Cref{p1-first:thm:main}]
Assume first that $X$ is an in\-fi\-nite-di\-men\-sion\-al Banach space. If $X$ contains an isomorphic copy of $c_0$ or $\ell_1$, then \Cref{p1-first:lem:c0-l1} applies. Hence assume that $X$ contains neither $c_0$ nor $\ell_1$.

If $X$ is reflexive, then, by the basic sequence theorem, $X$ contains a basic sequence. Its closed linear span is reflexive, and therefore the basis is boundedly complete. \Cref{p1-first:lem:boundedly-complete} applies.

It remains to consider the case when $X$ is nonreflexive and contains neither $c_0$ nor $\ell_1$. Since $X$ is nonreflexive, its closed unit ball is not weakly compact. By the Eberlein--Šmulian theorem, the closed unit ball is therefore not weakly sequentially compact. Hence there is a bounded sequence $(x_n)_{n=1}^{\infty}\subset X$ with no weakly convergent subsequence.

Since $X$ contains no copy of $\ell_1$, the complex version of Rosenthal's $\ell_1$ theorem due to Dor \cite{Dor1975}, gives a weak-Cauchy subsequence of $(x_n)_{n=1}^{\infty}$. This weak-Cauchy subsequence is not weakly convergent, because otherwise $(x_n)_{n=1}^{\infty}$ would have a weakly convergent subsequence. Since $X$ contains no copy of $c_0$, Rosenthal's $c_0$ theorem \cite{Rosenthal1994} gives a strongly summing subsequence. \Cref{p1-first:lem:strongly-summing} applies.

A standard completion-and-density argument now yields the result for an arbitrary in\-fi\-nite-di\-men\-sion\-al, not necessarily complete, normed space.
\end{proof}

\end{problempaperbody}

\clearpage
\providecommand{\PtwoB}{\mathcal B}
\providecommand{\PtwoK}{\mathcal K}
\providecommand{\PtwoQ}{\mathcal Q}
\providecommand{\PtwoS}{\mathcal S}
\providecommand{\PtwoC}{\mathfrak c}
\providecommand{\Ptwodens}{\mathop{\mathrm{dens}}}
\providecommand{\Ptworan}{\mathop{\mathrm{ran}}}

\problempaper{Problem 2. Non-Calkin unital Banach algebras}
\label{probpaper:calkin-nonrepresentation}

\begin{problemabstract}
We construct unital complex Banach algebras which are not isomorphic, as Banach algebras, to $\PtwoB(X)/\PtwoK(X)$ for any Banach space $X$. We give two constructions. The first uses a large Leavitt-type quotient and an $\ell_1$ matrix-unit obstruction. The second uses a large shift algebra on $c_0(\Gamma^{<\omega})$ and forces a copy of $c_0$ inside a nonseparable space.
\end{problemabstract}

\problemcontents

\begin{problempaperbody}

\section{Statement and notation}

All Banach spaces and Banach algebras are complex. For a Banach space $X$, write
\begin{equation*}
        \PtwoB(X)=\{T \colon X\to X:T\text{ is bounded and linear}\},
\end{equation*}
and let $\PtwoK(X)$ be the closed ideal of compact operators on $X$. The Calkin algebra of $X$ is
\begin{equation*}
        \PtwoQ(X)=\PtwoB(X)/\PtwoK(X).
\end{equation*}
A Banach-algebra isomorphism means a bounded complex-linear algebra isomorphism; by the open mapping theorem, its inverse is then bounded.

\begin{theorem}
\label{p2:thm:main}
There is a unital Banach algebra $A$ which is not isomorphic, as a Banach algebra, to $\PtwoB(X)/\PtwoK(X)$ for any Banach space $X$.
\end{theorem}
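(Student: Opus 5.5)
We plan to follow the strategy outlined in the problem summary: we construct $A$ with large density and topological simplicity, together with algebraic relations that produce a noncompact operator with separable range. The construction uses a large index set $\Gamma$ with $|\Gamma|>\PtwoC$, and a Leavitt-type quotient. Concretely, we take the Banach algebra $\ell_1$-completion of the free algebra on generators $s_\gamma, s_\gamma^*$ ($\gamma\in\Gamma$) subject to the Cuntz--Leavitt relations $s_\gamma^* s_\delta=\delta_{\gamma\delta}1$. We then pass to a quotient by a maximal proper closed ideal, which exists because the algebra is unital. This quotient $A$ is unital and topologically simple. The relations survive in the quotient because $1\neq 0$ there, so $p_\gamma=s_\gamma s_\gamma^*$ are nonzero, mutually orthogonal idempotents, equivalent to $1$. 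Hence $A$ has density at least $|\Gamma|$: the quotient map is bounded, so $\|p_\gamma-p_\delta\|$ is bounded below, since $p_\gamma(p_\gamma-p_\delta)=p_\gamma$.

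Suppose now that $\phi\colon A\to\PtwoQ(X)$ is an isomorphism. The first step is the separable case. If $X$ is separable, then $\PtwoB(X)$ has cardinality at most $\PtwoC$, so $\PtwoQ(X)$ cannot contain $|\Gamma|>\PtwoC$ distinct elements. This rules out every $X$ of density at most $\PtwoC$ as well, by the same cardinality count $|\PtwoB(X)|\le \mathrm{dens}(X)^{\aleph_0}$, provided we choose $|\Gamma|>\mathrm{dens}(X)^{\aleph_0}$. This shows that density alone cannot settle the matter, so the second step handles the large case. Let $\PtwoS$ be the closed ideal of operators on $X$ with separable range. It contains $\PtwoK(X)$, so its image in $\PtwoQ(X)$ is a closed ideal. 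Topological simplicity of $A$ forces this image to be $\{0\}$ or everything. It cannot be everything, because that would put the identity in $\PtwoS+\PtwoK=\PtwoS$, forcing $X$ separable. Hence $\PtwoS=\PtwoK(X)$: every separable-range operator on $X$ is compact.

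The third step, which is the main obstacle, is to use the relations to manufacture a noncompact separable-range operator. We lift countably many of the elements, say $s_{\gamma_n}, s_{\gamma_n}^*$, to operators $S_n,T_n$ on $X$. These satisfy $T_nS_m=\delta_{nm}I+K_{nm}$ with $K_{nm}$ compact. The plan is to pick unit vectors $x_n$ inductively and perturb away the compact errors, using that compacts are small on suitable finite-codimensional subspaces. This should yield a sequence $y_n=S_nx_n$ with functionals $T_n$ detecting it, so that $(y_n)$ is seminormalized and $\ell_1$-like or basic with a bounded biorthogonal system. We then define $R=\sum_n 2^{-n}(\cdot)$ or, better, an operator $U=\sum_n S_n E_n$, where $E_n$ are rank-one maps built from $x_n$ and the functionals. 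For this we need the $\ell_1$ norm structure of $A$ to control $\sum\|S_n\|$-type sums; this is why we use an $\ell_1$ completion, which gives a uniform bound on the lifts that we can arrange by bounded lifting through the open mapping theorem. The operator $U$ has separable range and fixes a seminormalized sequence up to compact perturbation, so it is not compact. This contradicts the second step.

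If the uniform control over the countably many lifts proves too delicate, the fallback is the second construction: a shift algebra acting on $c_0(\Gamma^{<\omega})$. There we would use the lifted shifts to embed $c_0$ into $X$ via an operator with separable range, which is again a noncompact operator in $\PtwoS$.
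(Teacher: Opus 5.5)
Your first two steps are sound and agree with the paper. The orthogonal idempotents $p_\gamma$ give the density bound; your observation that $p_\gamma\neq0$ because $s_\gamma^*p_\gamma s_\gamma=1$ is slightly quicker than the paper's injectivity lemma. Topological simplicity then forces every separable-range operator on a nonseparable $X$ to be compact.

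The gap is in the third step, and it is caused by your choice of norm. To produce a \emph{bounded}, noncompact, separable-range operator on all of $X$, you need a quantitative upper estimate on infinite families in $A$. For instance, you need the row bound
\[
\Bigl\|\sum_{j\in F}a_j\,s_{\gamma_1}s_{\gamma_j}^*\Bigr\|\le C\max_{j\in F}|a_j|\qquad(F\subset\mathbb{N}\text{ finite}),
\]
or, in the shift-algebra route, $\|\sum_j a_js_{\gamma_j}\|\le C\max_j|a_j|$. An $\ell_1$-completion of the algebra gives exactly the opposite: there $\|\sum_{j\in F}a_js_{\gamma_1}s_{\gamma_j}^*\|=\sum_{j\in F}|a_j|$. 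Passing to $B/M$ for an uncontrolled maximal ideal $M$ gives you no better bound.

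Uniform bounds on the individual lifts $S_n,T_n$ cannot replace this estimate. Take rank-one maps $E_n=g_n\otimes x_n$. The weighted sum $\sum_n 2^{-n}S_nE_n$ converges in norm, so it is compact. The unweighted $\sum_n S_nE_n$ is bounded only under a summability condition such as $\sum_n|g_n(x)|\le C\|x\|$, or an upper $c_0$-estimate for $(S_nx_n)$; these are precisely the missing sum estimates. Moreover, choosing the $x_n$ inductively lets you make only finitely many of the countably many errors $K_{mn}x_n$ small at each stage. The operator, however, must be defined and bounded on the whole of $X$.

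The paper gets the estimate from the \emph{space} on which the algebra acts, not from an $\ell_1$-norm on the algebra. It represents the Leavitt algebra on $\ell_1(W)$ by prefix and deletion shifts and takes the operator-norm closure. There $\sum_j\|T_{\alpha_j}x\|_1\le\|x\|_1$ gives $\|\sum_ja_jS_{\alpha_1}T_{\alpha_j}\|\le\max_j|a_j|$. It then absorbs countably many compact errors into a separable closed subspace $N$ invariant under all lifts. These errors are the relation defects and near-optimal compact corrections of the rational-coefficient row sums. On $X/N$ the matrix-unit relations then hold exactly and the row bound survives. Consequently $\Theta x=(\varphi(\overline{E}_{1j}\pi x))_j$ is bounded from $X$ into $\ell_1$, with $\Theta x_i=e_i$ for a bounded sequence $(x_i)$. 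Composing with $\ell_1\to X$, $e_i\mapsto w_i$, where $(w_i)$ is bounded with no convergent subsequence, gives the required operator.

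Your fallback is also incomplete:
\begin{itemize}
\item A copy of $c_0$ in $X$ is not yet an operator on $X$; the paper composes the embedding with a Josefson--Nissenzweig map $X\to c_0$.
\item The shift algebra must again carry the operator norm from $c_0(W)$ to get the needed sum estimate.
\item Killing countably many compact operators at a single vector requires $\mathrm{dens}(X)>2^{\aleph_0}$. This forces $|\Gamma|$ to be a strong limit cardinal such as $\beth_\omega$, because the correct bound is $\mathrm{dens}\,\mathcal{Q}(X)\le 2^{\mathrm{dens}(X)}$, not $\mathrm{dens}(X)^{\aleph_0}$. Also, $\Gamma$ cannot be chosen after $X$.
\end{itemize}
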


We present two proofs. Both constructions follow the same strategy. We first build a Banach algebra which is large enough to rule out being the Calkin algebra of a separable Banach space, and which is topologically simple. If such an algebra were the Calkin algebra of a nonseparable Banach space, topological simplicity would force the ideal of separable-range operators to coincide with the compact operators. The two proofs then use different algebraic structures to contradict this conclusion: in the first, a matrix-unit configuration forces a noncompact separable-range operator through a map into $\ell_1$; in the second, the structure forces a copy of $c_0$ to appear in the underlying Banach space.

\section{First proof}

\subsection{Proof strategy and organisation}

The first proof starts from a large Leavitt-type algebra. In \Cref{p2-first:sec:quotient-preliminaries}, we record two elementary facts about quotient norms and about absorbing countably many compact errors into a separable invariant subspace. In \Cref{p2-first:sec:leavitt-construction}, we construct a topologically simple Banach algebra $A_\kappa$ from the one-vertex graph with $\kappa=(2^{\aleph_0})^+$ loops. In \Cref{p2-first:sec:density}, we compute its density character.

The obstruction to being a Calkin algebra is proved in \Cref{p2-first:sec:matrix-obstruction}. If $X$ is nonseparable and $\PtwoQ(X)$ is topologically simple, then separable-range operators on $X$ must coincide with compact operators. We then show that this is incompatible with a countable matrix-unit system whose first row has uniformly bounded $c_0$-type finite sums. Finally, \Cref{p2-first:lem:Akappa-has-matrix-units} shows that $A_\kappa$ contains precisely such a matrix-unit system, and \Cref{p2-first:sec:first-conclusion} completes the proof.

\subsection{Quotient-norm preliminaries}
\label{p2-first:sec:quotient-preliminaries}

We shall use the following elementary quotient-norm observation to pass from estimates on a corrected operator on $X$ to estimates for the induced operator on $X/N$. The point is that adding an operator whose range lies in the quotient kernel changes the lift but not the quotient action.

\begin{lemma}[Induced operator norm from a corrected lift]
\label{p2-first:lem:quotient-correction}
Let $X$ be a Banach space, let $N\subset X$ be a closed subspace, and let $\pi \colon X\to X/N$ be the quotient map. Suppose that $U,K\in\PtwoB(X)$ satisfy $U(N)\subset N$ and $K(X)\subset N$. Then $U+K$ also leaves $N$ invariant, the operators induced by $U$ and $U+K$ on $X/N$ are equal, and
\begin{equation*}
        \|\overline U\|\leq \|U+K\|,
\end{equation*}
where $\overline U$ denotes the operator induced by $U$ on $X/N$.
\end{lemma}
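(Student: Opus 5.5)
The plan is to verify the three assertions in turn, directly from the definitions of the quotient map $\pi \colon X\to X/N$ and the quotient norm $\|\pi x\|=\inf_{n\in N}\|x+n\|$.

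For invariance, take $n\in N$. Then $(U+K)n=Un+Kn$, where $Un\in N$ because $U(N)\subset N$, and $Kn\in N$ because $K(X)\subset N$. Hence $U+K$ leaves $N$ invariant. Consequently both $U$ and $U+K$ induce well-defined bounded operators on $X/N$, given by
\begin{equation*}
        \overline U(\pi x)=\pi(Ux), \qquad \overline{U+K}(\pi x)=\pi((U+K)x).
\end{equation*}
Well-definedness follows from invariance: if $\pi x=\pi x'$, then $x-x'\in N$, so $U(x-x')\in N$.

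For equality of the induced operators, note that
\begin{equation*}
        \pi((U+K)x)=\pi(Ux)+\pi(Kx)=\pi(Ux),
\end{equation*}
since $Kx\in N=\ker\pi$. Thus $\overline{U+K}=\overline U$.

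For the norm bound, fix $\xi\in X/N$ with $\|\xi\|<1$ and choose a lift $x\in X$ with $\pi x=\xi$ and $\|x\|<1$; such a lift exists by the definition of the quotient norm. Then, using $\|\pi\|\leq 1$,
\begin{equation*}
        \|\overline U\xi\|=\|\pi((U+K)x)\|\leq\|(U+K)x\|\leq\|U+K\|.
\end{equation*}
Taking the supremum over the open unit ball of $X/N$ gives $\|\overline U\|\leq\|U+K\|$.

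There is no real obstacle here. The only point needing care is the use of strict inequality $\|x\|<1$ when lifting, since the quotient infimum need not be attained; working with the open unit ball avoids this.
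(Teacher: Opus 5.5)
Your proof is correct and follows the paper's argument: invariance comes from $K(N)\subset K(X)\subset N$, the induced maps coincide because $Kx\in\ker\pi$, and the norm bound comes from lifting. The paper only cites the general fact that an induced operator has norm at most that of any operator inducing it, and your open-ball lifting argument proves that fact explicitly.
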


\begin{proof}
Since $K(X)\subset N$, in particular $K(N)\subset N$, so $U+K$ leaves $N$ invariant. For every $x\in X$,
\begin{equation*}
        \pi((U+K)x)=\pi(Ux)+\pi(Kx)=\pi(Ux).
\end{equation*}
Thus $U$ and $U+K$ induce the same operator on $X/N$. The norm of an induced operator is bounded above by the norm of any operator inducing it, and the estimate follows.
\end{proof}

We shall also need a simple separability device which absorbs countably many compact ranges while remaining invariant under a prescribed countable family of operators.

\begin{lemma}[Absorbing countably many compact errors]
\label{p2-first:lem:absorb}
Let $X$ be a Banach space, let $(R_n)_{n=1}^{\infty}\subset\PtwoB(X)$, and let $(K_m)_{m=1}^{\infty}\subset\PtwoK(X)$. Then there is a separable closed subspace $N\subset X$ such that
\begin{equation*}
        K_m(X)\subset N \qquad(m\in\N),
\end{equation*}
and
\begin{equation*}
        R_n(N)\subset N \qquad(n\in\N).
\end{equation*}
\end{lemma}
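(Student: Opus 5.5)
The plan is a closure argument. First I would produce a countable dense subset of the combined compact ranges, and then close it under the countable semigroup generated by the operators $R_n$.

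Since each $K_m$ is compact, $K_m(B_X)$ is relatively compact, hence totally bounded and separable. Therefore $K_m(X)=\bigcup_{k\in\N}K_m(kB_X)$ is separable. For each $m$, I will choose a countable set $D_m\subset K_m(X)$ that is dense in $K_m(X)$, and put $D=\bigcup_m D_m$, which is countable.

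Next, let $\mathcal W$ be the set of all finite products $R_{n_1}R_{n_2}\cdots R_{n_k}$ with $k\geq0$, where the empty product is the identity. This set is countable, being a countable union of countable sets indexed by finite words. Define
\begin{equation*}
        N=\spancl{Wd: W\in\mathcal W,\ d\in D}.
\end{equation*}
Then $N$ is a closed subspace. It is separable, because it is the closure of the rational (complex-rational) linear span of a countable set.

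Two properties remain to be checked.

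\emph{$N$ absorbs the compact ranges.} Because the identity lies in $\mathcal W$, we have $D\subset N$. Since $N$ is closed, it follows that $K_m(X)\subset\overline{D_m}\subset N$ for every $m$.

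\emph{$N$ is invariant under each $R_n$.} For $W\in\mathcal W$ we have $R_nW\in\mathcal W$, so $R_n$ maps the generating set of $N$ into itself. By linearity, $R_n$ then maps the linear span of the generating set into that span. By continuity of $R_n$, it maps the closure into the closure, so $R_n(N)\subset N$.

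The only step requiring any care is the separability of the compact ranges. This follows from total boundedness of $K_m(B_X)$ together with the fact that $K_m(X)$ is a countable union of dilates of $K_m(B_X)$. Everything else is routine.
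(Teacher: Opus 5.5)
Your proof is correct and follows essentially the same route as the paper: both close the compact ranges under the countable semigroup of finite words in the $R_n$ and take the closed linear span. The only cosmetic difference is that the paper starts from the separable subspace $N_0=\overline{\operatorname{span}}\bigcup_m K_m(X)$ and uses that bounded operators map separable sets to separable sets, whereas you first pick a countable dense set $D$; you also spell out the invariance check that the paper leaves as ``by construction.''
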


\begin{proof}
For each compact operator $K_m$, the closed linear span of $K_m(X)$ is separable. Indeed, $K_m(B_X)$ has compact norm closure, hence is separable, and
\begin{equation*}
        K_m(X)=\bigcup_{r=1}^{\infty}rK_m(B_X).
\end{equation*}
Let $N_0$ be the closed linear span of $\bigcup_mK_m(X)$. Then $N_0$ is separable. Let $N$ be the closed linear span of all vectors of the form
\begin{equation*}
        R_{i_1}R_{i_2}\ldots R_{i_k}x,
\end{equation*}
where $k\geq0$, $i_1,\ldots,i_k\in\N$, and $x\in N_0$. There are only countably many finite words in the countable family $(R_n)_{n \in \N}$, and the image of a separable space under a bounded operator is separable. Hence $N$ is separable. By construction, $N$ contains every $K_m(X)$ and is invariant under every $R_n$.
\end{proof}

\subsection{The Leavitt-type quotient}
\label{p2-first:sec:leavitt-construction}

Let
\begin{equation*}
        \PtwoC=2^{\aleph_0}, \qquad \kappa=\PtwoC^+.
\end{equation*}
We shall use a very special Leavitt path algebra. The general theory may be found in \cite[Chapter 1]{AbramsAraSiles2017} and in the original papers \cite{AbramsArandaPino2005,AraMorenoPardo2007}; for arbitrary graphs and infinite emitters, see also \cite{Goodearl2009,Tomforde2007}. In the present case, however, all the needed features are quite elementary, and we record the details.

Let $E_\kappa$ be the directed graph with one vertex $v$ and $\kappa$ loops. Informally, the associated Leavitt path algebra is the algebra generated by the loops, together with formal reverse loops, subject to the relations saying that a reverse loop cancels the corresponding loop and annihilates the others.

We first describe the algebra explicitly. Let $L_\kappa$ be the unital complex algebra generated by symbols
\begin{equation*}
        s_\alpha,\ t_\alpha \qquad(\alpha<\kappa),
\end{equation*}
subject to the relations
\begin{equation}\label{eq:levit-relations}
        t_\alpha s_\beta=\delta_{\alpha\beta}1 \qquad(\alpha,\beta<\kappa).
\end{equation}
Equivalently, $L_\kappa$ is the quotient of the free unital complex algebra on the symbols $s_\alpha,t_\alpha$, $\alpha<\kappa$, by the two-sided ideal generated by the elements
\begin{equation*}
        t_\alpha s_\beta-\delta_{\alpha\beta}1 \qquad(\alpha,\beta<\kappa).
\end{equation*}
Thus $t_\alpha$ is a left inverse for $s_\alpha$, while $t_\alpha$ annihilates $s_\beta$ whenever $\beta\ne\alpha$.

This is precisely the Leavitt path algebra $L_{\C}(E_\kappa)$ of the graph $E_\kappa$ with one vertex $v$ and $\kappa$ loops. In that graph-theoretic language, the loop corresponding to $\alpha<\kappa$ is $s_\alpha$, and its formal reverse edge is $t_\alpha$. The vertex $v$ gives the identity element, which we denote by $1$. Since there is only one vertex, all paths begin and end at $v$, so the usual source and range relations simply say that
\begin{equation*}
        1s_\alpha=s_\alpha1=s_\alpha, \qquad 1t_\alpha=t_\alpha1=t_\alpha \qquad(\alpha<\kappa).
\end{equation*}
The Cuntz--Krieger cancellation relation is exactly
\begin{equation*}
        t_\alpha s_\beta=\delta_{\alpha\beta}1 \qquad(\alpha,\beta<\kappa).
\end{equation*}

The only possible additional Cuntz--Krieger relation would be the finite-emitter relation
\begin{equation*}
        1=\sum_{s(e)=v}ee^*.
\end{equation*}
However, the unique vertex emits infinitely many loops. In the definition of a Leavitt path algebra, this relation is imposed only at vertices emitting a finite nonzero number of edges. Therefore no relation of the form
\begin{equation*}
        1=\sum_{\alpha\in F}s_\alpha t_\alpha
\end{equation*}
is imposed for a finite set $F\subset\kappa$.

We shall use the following notation for words. Let $W_\kappa$ denote the set of all finite sequences
\begin{equation*}
        \mu=(\alpha_1,\ldots,\alpha_n)
\end{equation*}
with entries in $\kappa$, together with the empty word $\varnothing$. We write such a word as $\mu=\alpha_1\ldots\alpha_n$, and define its length by $|\mu|=n$. For $\mu=\alpha_1\ldots\alpha_n$, set
\begin{equation*}
        s_\mu=s_{\alpha_1}\ldots s_{\alpha_n}, \qquad t_\mu=t_{\alpha_n}\ldots t_{\alpha_1}.
\end{equation*}
For the empty word, set
\begin{equation*}
        s_\varnothing=t_\varnothing=1.
\end{equation*}
The order in the definition of $t_\mu$ is reversed because $t_\mu$ represents the formal inverse path to $\mu$.

We shall use the standard spanning description of Leavitt path algebras; see \cite[Lemma 1.2.12(i),(iii)]{AbramsAraSiles2017}. In the present one-vertex case, it says that $L_\kappa$ is linearly spanned by the elements
\begin{equation*}
        s_\mu t_\nu \qquad(\mu,\nu\in W_\kappa).
\end{equation*}
Concretely, products of these monomials are computed by repeatedly replacing adjacent subwords $t_\alpha s_\beta$ with
\begin{equation*}
        t_\alpha s_\beta=\delta_{\alpha\beta}1.
\end{equation*}
Thus every product in the generators $s_\alpha,t_\alpha$ reduces to a finite linear combination of terms of the form
\begin{equation*}
        s_{\alpha_1}\ldots s_{\alpha_m}t_{\beta_n}\ldots t_{\beta_1}=s_\mu t_\nu,
\end{equation*}
where $\mu=\alpha_1\ldots\alpha_m$ and $\nu=\beta_1\ldots\beta_n$.

\begin{lemma}
\label{p2-first:lem:leavitt-simple}
The algebra $L_\kappa$ is algebraically simple.
\end{lemma}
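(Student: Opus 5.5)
The plan is to show that every nonzero two-sided ideal $I$ of $L_\kappa$ contains $1$. Fix a nonzero element $x\in I$. By the spanning description, we can write
\begin{equation*}
        x=\sum_{k=1}^{n}c_k s_{\mu_k}t_{\nu_k},
\end{equation*}
with $c_k\neq 0$ and the pairs $(\mu_k,\nu_k)$ distinct. We then reduce $x$ in two stages, multiplying on the right to remove the $t$'s and on the left to isolate a single coefficient.

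\emph{Right reduction.} Let $\nu$ be a word of maximal length among the $\nu_k$, and consider $x s_\nu$. For each $k$, the product $t_{\nu_k}s_\nu$ is computed by cancellation. If $\nu_k$ is a prefix of $\nu$, it equals $s_{\nu'}$, where $\nu=\nu_k\nu'$; otherwise it vanishes, since a mismatch $t_\alpha s_\beta$ with $\alpha\ne\beta$ occurs and $|\nu_k|\le|\nu|$. Hence
\begin{equation*}
        xs_\nu=\sum_{k:\,\nu_k\preceq\nu}c_k s_{\mu_k\nu_k'},
\end{equation*}
which is a polynomial in the $s$'s alone. The term with $\nu_k=\nu$ survives. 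We must still check that the result is nonzero, i.e.\ that distinct words $\mu_k\nu_k'$ remain distinct or at least do not cancel entirely. Since the pairs $(\mu_k,\nu_k)$ with $\nu_k$ a prefix of $\nu$ determine $\nu_k'$ from $|\nu_k|$, two such terms give the same word only if $|\mu_k|-|\nu_k|$ agree and the words coincide. To avoid this delicate point, I would instead argue using a normal form: the monomials $s_\mu t_\nu$ are linearly independent. This holds because there is no finite-emitter relation, so $L_\kappa$ is the ``Cohn-type'' algebra, and its basis is standard (\cite[Lemma 1.2.12]{AbramsAraSiles2017}, or a representation on $\ell_2$-type path space / the free monoid algebra). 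Granting linear independence, coincidences $\mu_k\nu_k'=\mu_l\nu_l'$ may occur, so I would choose $\nu$ more carefully. The simplest fix is to use a representation: let $L_\kappa$ act on the vector space with basis $W_\kappa$, with $s_\alpha\mu=\alpha\mu$ and $t_\alpha(\beta\mu)=\delta_{\alpha\beta}\mu$, $t_\alpha\varnothing=0$. This respects the relations. Writing $0\ne y=xs_\nu$ requires faithfulness, which is the main obstacle; I would rather avoid it as follows.

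\emph{Alternative cleaner route.} Choose $\gamma<\kappa$ not occurring in any $\mu_k,\nu_k$; this is possible since only finitely many letters occur. Pick $(\mu,\nu)=(\mu_{k_0},\nu_{k_0})$ with $|\nu|$ maximal, and among those, $|\mu|$ maximal. Compute
\begin{equation*}
        t_\gamma t_{\mu}\, x\, s_{\nu} s_\gamma .
\end{equation*}
On the right, $t_{\nu_k}s_\nu s_\gamma$ equals $s_{\nu_k'}s_\gamma$ if $\nu_k\preceq\nu$, and $0$ otherwise. On the left, $t_\gamma t_\mu s_{\mu_k}s_{\nu_k'}s_\gamma$ must cancel $\mu$ against the word $\mu_k\nu_k'\gamma$, and then $\gamma$ against the next letter. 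Since $\gamma$ appears only once, at the end, this is nonzero only if $\mu_k\nu_k'=\mu$ exactly, giving $1$. If $\mu_k\nu_k'$ is a proper extension of $\mu$, the next letter is not $\gamma$, so the term vanishes. If $\mu_k\nu_k'$ is a proper prefix, a $t$ meets $\gamma$ or a wrong letter, so it vanishes, or there is leftover $t$'s times... which again vanishes since it ends in $t_\beta s_\gamma=0$. Thus the product equals $\sum_{k\in S}c_k\cdot 1$, where $S=\{k:\nu_k\preceq\nu,\ \mu_k\nu_k'=\mu\}$. By maximality of $|\nu|$ with $|\mu|$ fixed, $k\in S$ forces $|\nu_k'|=0$ after comparing lengths ($|\mu_k|=|\mu|-|\nu_k'|$); if $\nu_k'\neq\varnothing$ then $|\mu_k|<|\mu|$, which is not excluded. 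So I would instead choose $(\mu,\nu)$ to maximise $|\mu|-|\nu|$, then $|\nu|$, so that every $k\in S$ has $\nu_k'=\varnothing$, hence $k=k_0$. The product is then $c_{k_0}\neq0$, so $1\in I$. The key verification, and the step needing care, is this bookkeeping of which terms survive. Note also that nonzero-ness of $x$ was used only to have some term; linear independence is not needed, since a nonzero element has a nonempty expression. Finally, $L_\kappa\neq0$ follows from the path-space representation, where $1$ acts as the identity.
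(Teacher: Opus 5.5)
There is a genuine gap in your final selection rule. The sandwich computation in your last route is fine. The factor $s_\gamma$ on the right kills the terms where $\nu$ is a proper prefix of $\nu_k$; this matters because $\nu$ is no longer of maximal length. The factor $t_\gamma$ on the left kills the terms with $\mu_k\nu_k'\neq\mu$. The computation gives
\[
t_\gamma t_\mu\, x\, s_\nu s_\gamma=\sum_{k\in S}c_k, \qquad S=\{k:\ \nu=\nu_k w \text{ and } \mu=\mu_k w \text{ for some word } w\}.
\]
For $k\in S$ you have $|\mu_k|-|\nu_k|=|\mu|-|\nu|$ exactly, and $|\nu_k|=|\nu|-|w|\le|\nu|$. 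So maximising $|\mu|-|\nu|$ and then $|\nu|$ cannot exclude $w\neq\varnothing$. The terms that must be ruled out are precisely the \emph{shorter} pairs obtained from $(\mu,\nu)$ by deleting a common suffix. Concretely, take $x=s_\alpha t_\alpha-1$, which is nonzero (it sends $\delta_\varnothing$ to $-\delta_\varnothing$ in the path-space representation). Your rule selects $(\mu,\nu)=(\alpha,\alpha)$, and then $t_\gamma t_\alpha x s_\alpha s_\gamma=t_\gamma(1-1)s_\gamma=0$, so no conclusion is reached.

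The fix is to reverse the direction: choose $k_0$ with $|\nu_{k_0}|$ minimal, or, as in the paper, with $|\mu_{k_0}|+|\nu_{k_0}|$ minimal. Then any $k\in S$ with $w\neq\varnothing$ would be strictly shorter, contradicting the choice. Hence $S=\{k_0\}$ by distinctness of the pairs, the sandwich equals $c_{k_0}1$, and $1\in I$.

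With this correction your argument is essentially the paper's. The paper forms $t_\mu a s_\nu=c_{r_0}1+b$ using minimality of $|\mu|+|\nu|$, then kills $b$ with a letter $\lambda$. So it evaluates the same element $t_\lambda t_\mu a s_\nu s_\lambda$. Your choice of $\gamma$ not occurring anywhere in $x$ is a slightly simpler way to pick that fresh letter. Your remarks that linear independence of the monomials is not needed, and that $L_\kappa\neq0$ via the path-space representation, are correct.
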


\begin{proof}
This also follows immediately from the simplicity criterion for Leavitt path algebras of arbitrary graphs~\cite[Theorem~2.9.1]{AbramsAraSiles2017}: since $E_\kappa^0=\{v\}$, its only hereditary saturated subsets are $\varnothing$ and $E_\kappa^0$, and every cycle has an exit, because it uses only finitely many of the $\kappa$ loops and hence there is another loop available.  We give the direct argument in this special case.

Let $I$ be a nonzero two-sided ideal of $L_\kappa$, and choose $0\ne a\in I$. Write $a$ in normal form,
\begin{equation*}
        a=\sum_{r=1}^{n} c_r s_{\mu_r}t_{\nu_r},
\end{equation*}
where $c_r\ne0$ and the pairs $(\mu_r,\nu_r)$ are distinct. Choose an index $r_0$ such that $|\mu_{r_0}|+|\nu_{r_0}|$ is minimal among the pairs appearing in this expression. Put $\mu=\mu_{r_0}$ and $\nu=\nu_{r_0}$. Then
\begin{equation*}
        t_\mu a s_\nu=c_{r_0}1+b,
\end{equation*}
where $b$ is a finite linear combination of monomials $s_\sigma t_\tau$ with $(\sigma,\tau)\ne(\varnothing,\varnothing)$. Indeed, the chosen term gives $c_{r_0}1$, and the minimality of $|\mu|+|\nu|$ prevents any other term from contributing another scalar term.

Only finitely many non-empty words occur in the monomials appearing in $b$. Since $\kappa$ is infinite, choose a letter $\lambda<\kappa$ which is not the first letter of any of these non-empty words. Then every non-scalar term in $b$ is killed by multiplying on the left by $t_\lambda$ and on the right by $s_\lambda$. Hence
\begin{equation*}
        t_\lambda(t_\mu a s_\nu)s_\lambda=c_{r_0}t_\lambda s_\lambda=c_{r_0}1.
\end{equation*}
Thus $1\in I$, and so $I=L_\kappa$. Therefore $L_\kappa$ is algebraically simple.
\end{proof}

Now put
\begin{equation*}
        Y_\kappa=\ell_1(W_\kappa).
\end{equation*}
For $\alpha<\kappa$, define operators $S_\alpha,T_\alpha\in\PtwoB(Y_\kappa)$ on the unit vector basis by
\begin{equation*}
        S_\alpha\delta_w=\delta_{\alpha w} \qquad(w\in W_\kappa),
\end{equation*}
and
\begin{equation*}
        T_\alpha\delta_{\beta w}=
        \begin{cases}
        \delta_w, & \beta=\alpha,\\
        0, & \beta\ne\alpha,
        \end{cases}
        \qquad
        T_\alpha\delta_\varnothing=0.
\end{equation*}
Thus $S_\alpha$ prefixes a word by the letter $\alpha$, while $T_\alpha$ removes an initial $\alpha$ when it is present and sends the vector to zero otherwise.

The operator $S_\alpha$ is an isometry on $\ell_1(W_\kappa)$. The operator $T_\alpha$ is contractive, since it keeps only the coordinates indexed by words beginning with $\alpha$ and then relabels them. Since $T_\alpha S_\alpha=I$, both operators have norm one:
\begin{equation*}
        \|S_\alpha\|=\|T_\alpha\|=1.
\end{equation*}
Moreover, for $\alpha,\beta<\kappa$,
\begin{equation*}
        T_\alpha S_\beta=\delta_{\alpha\beta}I.
\end{equation*}
Indeed, if $\alpha=\beta$, then $S_\beta$ adds the first letter $\beta$ and $T_\alpha$ removes it; if $\alpha\ne\beta$, then $T_\alpha$ kills the vector.

By the universal property of $L_\kappa$, these operators induce a unital homomorphism
\begin{equation*}
        \rho\colon L_\kappa\to\PtwoB(Y_\kappa),
        \qquad
        \rho(s_\alpha)=S_\alpha,
        \qquad
        \rho(t_\alpha)=T_\alpha.
\end{equation*}
Since $\rho(1)=I\ne0$, the kernel of $\rho$ is a proper two-sided ideal of $L_\kappa$. By \Cref{p2-first:lem:leavitt-simple}, this kernel must be zero. Hence $\rho$ is faithful.

Let
\begin{equation*}
        B_\kappa=\overline{\rho(L_\kappa)}\subset\PtwoB(Y_\kappa)
\end{equation*}
be the norm-closed unital Banach algebra generated by $\rho(L_\kappa)$.

We shall use the standard Zorn-lemma fact that every unital Banach algebra has maximal proper closed two-sided ideals; for completeness we recall the proof.

\begin{lemma}
\label{p2-first:lem:maximal-ideal}
Let $A$ be a unital Banach algebra. Then $A$ has a maximal proper closed two-sided ideal.
\end{lemma}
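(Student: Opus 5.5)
The plan is the standard Zorn's lemma argument, with the unit used to keep proper ideals away from the identity. Consider the family $\mathcal F$ of all proper closed two-sided ideals of $A$, partially ordered by inclusion. It is nonempty, since $\{0\}\in\mathcal F$ (we assume $A\neq\{0\}$, i.e.\ $1\neq0$, as is implicit for a unital Banach algebra). The aim is to show that every chain in $\mathcal F$ has an upper bound in $\mathcal F$. Zorn's lemma then gives a maximal element. This element is, by definition, a maximal proper closed two-sided ideal.

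The key step, and the only real obstacle, is that the closure of an increasing union of proper ideals remains proper. Here the unit is essential. If $J$ is a proper two-sided ideal, then $J$ contains no invertible element, since otherwise $1=u^{-1}u\in J$ and $J=A$. By the Neumann series, the open ball $\{a\in A:\|1-a\|<1\}$ consists of invertible elements. Hence $\|1-j\|\geq1$ for every $j\in J$, so $\operatorname{dist}(1,J)\geq1$. In particular $1\notin\overline J$, and the closure of a proper ideal is again a proper ideal; it is still a two-sided ideal by continuity of multiplication and of the linear operations.

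Now let $\mathcal C\subset\mathcal F$ be a nonempty chain and put $J=\bigcup_{I\in\mathcal C}I$. Because $\mathcal C$ is totally ordered, any two elements of $J$ lie in a common member of $\mathcal C$. Therefore $J$ is closed under addition, under scalar multiplication, and under left and right multiplication by elements of $A$, so it is a two-sided ideal. It does not contain $1$, since no member of $\mathcal C$ does, so $J$ is proper.

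By the previous paragraph, $\overline J$ is a proper closed two-sided ideal. It contains every member of $\mathcal C$, so it is an upper bound for $\mathcal C$ in $\mathcal F$. (The empty chain is bounded by $\{0\}$.) Zorn's lemma now yields a maximal element $M$ of $\mathcal F$. Finally, any proper closed two-sided ideal containing $M$ lies in $\mathcal F$, so by maximality it equals $M$. Thus $M$ is the required maximal proper closed two-sided ideal.
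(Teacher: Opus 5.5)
Your proposal is correct and follows essentially the same route as the paper: Zorn's lemma on proper closed two-sided ideals, with the closure of the union of a chain kept proper because $\|1-x\|<1$ forces $x$ to be invertible. Your explicit remark that $A\neq\{0\}$ is needed is a small point the paper leaves implicit.
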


\begin{proof}
Partially order the proper closed two-sided ideals of $A$ by inclusion. Let $(I_j)_{j\in J}$ be a chain of proper closed two-sided ideals, and set
\begin{equation*}
        I=\overline{\bigcup_{j\in J} I_j}.
\end{equation*}
Then $I$ is a closed two-sided ideal of $A$. We claim that $I$ is proper. If $1\in I$, then there is some $x\in\bigcup_{j\in J}I_j$ such that
\begin{equation*}
        \|1-x\|<1.
\end{equation*}
But then $x$ is invertible, so the ideal containing $x$ must be all of $A$, contradicting the properness of the ideals in the chain. Hence $I$ is proper. Zorn's lemma gives a maximal proper closed two-sided ideal.
\end{proof}

Applying Lemma~\ref{p2-first:lem:maximal-ideal} to $A=B_\kappa$, choose a maximal proper closed two-sided ideal $M$ of $B_\kappa$ and define
\begin{equation*}
        A_\kappa=B_\kappa/M.
\end{equation*}
Let
\begin{equation*}
        q \colon B_\kappa\to A_\kappa
\end{equation*}
be the quotient map. Then $A_\kappa$ is a unital Banach algebra. Moreover, it has no nonzero proper closed two-sided ideals. Indeed, if $J$ is a closed two-sided ideal of $A_\kappa$, then $q^{-1}(J)$ is a closed two-sided ideal of $B_\kappa$ containing $M$. By maximality of $M$, we have $q^{-1}(J)=M$ or $q^{-1}(J)=B_\kappa$, and hence $J=\{0\}$ or $J=A_\kappa$.

We now verify that the maximal ideal chosen above does not meet the embedded Leavitt algebra nontrivially, so the Leavitt algebra survives faithfully in the quotient.

\begin{lemma}
\label{p2-first:lem:core-survives}
We have
\begin{equation*}
        M\cap\rho(L_\kappa)=\{0\}.
\end{equation*}
Consequently, $q\circ\rho:L_\kappa\to A_\kappa$ is injective.
\end{lemma}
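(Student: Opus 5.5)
The key observation is that $M\cap\rho(L_\kappa)$ is a two-sided ideal of the algebra $\rho(L_\kappa)$, and that $\rho(L_\kappa)\cong L_\kappa$ is algebraically simple by \Cref{p2-first:lem:leavitt-simple}. The plan is therefore to transfer the ideal back to $L_\kappa$ and show it cannot be everything.

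First I would check that $J:=\rho^{-1}(M)=\{a\in L_\kappa:\rho(a)\in M\}$ is a two-sided ideal of $L_\kappa$. This holds because $\rho$ is a unital homomorphism into $B_\kappa$ and $M$ is a two-sided ideal of $B_\kappa$. For $a\in J$ and $b,c\in L_\kappa$ we have
\begin{equation*}
\rho(bac)=\rho(b)\rho(a)\rho(c)\in M,
\end{equation*}
since $\rho(b),\rho(c)\in B_\kappa$. Linearity is clear.

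Next I would show that $J$ is proper. Since $\rho(1)=I$ is the unit of $B_\kappa$ and $M$ is a proper ideal, $I\notin M$, so $1\notin J$. By \Cref{p2-first:lem:leavitt-simple}, $J=\{0\}$. Because $\rho$ is faithful, this gives
\begin{equation*}
M\cap\rho(L_\kappa)=\rho(J)=\{0\}.
\end{equation*}

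Finally, injectivity of $q\circ\rho$ follows at once. If $q(\rho(a))=0$, then $\rho(a)\in M\cap\rho(L_\kappa)=\{0\}$, and faithfulness of $\rho$ gives $a=0$.

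I do not expect any real obstacle. The only point needing care is to apply simplicity to an ideal of the non-closed algebra $L_\kappa$, rather than trying to use topological simplicity of $B_\kappa$ (which is not known). The ideal $M$ is closed, but closedness plays no role in this argument.
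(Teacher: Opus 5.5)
Your proof is correct and is essentially the paper's argument: the paper uses simplicity of $\rho(L_\kappa)$ to write $1=\sum_r x_r a y_r$ with $x_r,y_r\in\rho(L_\kappa)$ for a nonzero $a\in M\cap\rho(L_\kappa)$, which is the pushed-forward form of your observation that $\rho^{-1}(M)$ is a proper ideal of $L_\kappa$. One small remark: faithfulness of $\rho$ is not needed for $\rho(J)=\{0\}$, and since $\rho^{-1}(M)=\ker(q\circ\rho)$, your argument gives injectivity of $q\circ\rho$ (and hence faithfulness of $\rho$) directly.
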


\begin{proof}
Suppose that $0\ne a\in M\cap\rho(L_\kappa)$. Since $\rho$ is faithful and $L_\kappa$ is algebraically simple, the algebra $\rho(L_\kappa)$ is algebraically simple. Therefore the algebraic two-sided ideal generated by $a$ inside $\rho(L_\kappa)$ is all of $\rho(L_\kappa)$. Hence there exist $x_1,\ldots,x_n,y_1,\ldots,y_n\in\rho(L_\kappa)$ such that
\begin{equation*}
        1=\sum_{r=1}^{n}x_ray_r.
\end{equation*}
The right-hand side belongs to $M$, because $a\in M$ and $M$ is a two-sided ideal of $B_\kappa$. Thus $1\in M$, contradicting the properness of $M$. Hence $M\cap\rho(L_\kappa)=\{0\}$. Finally, if $z\in\ker(q\circ\rho)$, then $\rho(z)\in M\cap\rho(L_\kappa)=\{0\}$. Since $\rho$ is faithful, $z=0$. Thus $q\circ\rho$ is injective.
\end{proof}

\subsection{Density of the quotient algebra}
\label{p2-first:sec:density}

We now prove that passing to the quotient has not changed the intended size of the algebra. The next lemma computes the density character of $A_\kappa$.

\begin{lemma}
\label{p2-first:lem:density}
The Banach algebra $A_\kappa$ has density character
\begin{equation*}
        \Ptwodens(A_\kappa)=\kappa=\PtwoC^+.
\end{equation*}
\end{lemma}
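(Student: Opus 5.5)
Two inequalities are needed, and the proof is a direct computation of both.

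For the upper bound $\Ptwodens(A_\kappa)\le\kappa$, we use that $q$ is a contractive surjection. Hence $\Ptwodens(A_\kappa)\le\Ptwodens(B_\kappa)$. Now $B_\kappa$ is the closure of $\rho(L_\kappa)$, and $L_\kappa$ is spanned by the monomials $s_\mu t_\nu$ with $\mu,\nu\in W_\kappa$. Since $\kappa$ is infinite, $|W_\kappa|=\kappa$, so there are $\kappa$ such monomials. The rational (complex-rational) linear combinations of the images $\rho(s_\mu t_\nu)$ therefore form a dense subset of $B_\kappa$ of cardinality $\kappa$. This gives $\Ptwodens(A_\kappa)\le\kappa$.

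For the lower bound we exhibit a uniformly separated family of size $\kappa$ in $A_\kappa$. The natural candidates are the elements $q(S_\alpha)$ for $\alpha<\kappa$. For $\alpha\ne\beta$, the relations give
\begin{equation*}
        T_\alpha(S_\alpha-S_\beta)=I.
\end{equation*}
Applying $q$ and using $\|q(T_\alpha)\|\le\|T_\alpha\|=1$, we get
\begin{equation*}
        1\le\|q(1)\|=\|q(T_\alpha)q(S_\alpha-S_\beta)\|\le\|q(S_\alpha)-q(S_\beta)\|.
\end{equation*}
Here $q(1)\neq0$ because $M$ is proper, and $\|q(1)\|\ge1$ holds since $q(1)$ is a nonzero idempotent in a Banach algebra. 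Thus $\{q(S_\alpha):\alpha<\kappa\}$ is $1$-separated. Any dense subset must then contain, for each $\alpha$, an element within distance $1/2$ of $q(S_\alpha)$, and these elements are necessarily distinct for distinct $\alpha$. Hence every dense subset has cardinality at least $\kappa$, and $\Ptwodens(A_\kappa)\ge\kappa$.

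The only delicate point is that passing to the quotient by the maximal ideal $M$ could, a priori, collapse the algebra. The separation estimate above handles this using only the algebraic identity $T_\alpha(S_\alpha-S_\beta)=I$ and the properness of $M$; this is essentially the same mechanism as in \Cref{p2-first:lem:core-survives}. Everything else is cardinal arithmetic, namely $|W_\kappa|=\kappa$ for infinite $\kappa$.
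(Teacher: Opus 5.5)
Your proof is correct. The upper bound is essentially the paper's, but the lower bound takes a different route.

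\textbf{Lower bound.} The paper separates the images of the orthogonal idempotents $p_\alpha=s_\alpha t_\alpha$, using $p(p-q)=p$ to get $\|p_\alpha-p_\beta\|\geq 1$. This requires $p_\alpha\neq0$ in $A_\kappa$, which the paper takes from the injectivity of $q\circ\rho$ in \Cref{p2-first:lem:core-survives}, a fact that itself rests on the algebraic simplicity of $L_\kappa$.

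You instead separate the shifts $q(S_\alpha)$ via the left-inverse identity $T_\alpha(S_\alpha-S_\beta)=I$. This needs only $1\notin M$, contractivity of $q$, and the fact that a nonzero idempotent has norm at least one. So your argument does not depend on \Cref{p2-first:lem:core-survives} at all, despite your closing remark. It is exactly the argument the paper itself uses for $A_\lambda$ in \Cref{p2-second:lem:density-A}. The paper's version has the advantage of fitting the matrix-unit and idempotent theme of the first construction; yours is more self-contained.

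\textbf{Upper bound.} The paper observes that $\rho(L_\kappa)$ itself is dense with $|L_\kappa|\leq\kappa$, using $\kappa\geq|\C|$. You use complex-rational combinations of the $\kappa$ monomials $\rho(s_\mu t_\nu)$. Your version works for any infinite $\kappa$, but here both are equally valid.
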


\begin{proof}
For $\alpha<\kappa$, put $p_\alpha=s_\alpha t_\alpha\in L_\kappa$, and use the same notation for its image in $A_\kappa$. By \Cref{p2-first:lem:core-survives}, each $p_\alpha$ is nonzero in $A_\kappa$. The relations give pairwise orthogonal idempotents:
\begin{equation*}
        p_\alpha p_\beta=\delta_{\alpha\beta}p_\alpha.
\end{equation*}
If $p$ and $q$ are nonzero orthogonal idempotents in a normed algebra, then $\|p-q\|\geq1$, because $p(p-q)=p$. Thus $A_\kappa$ contains a $1$-separated set of cardinality $\kappa$, and hence $\Ptwodens(A_\kappa)\geq\kappa$.

Conversely, $L_\kappa$ has cardinality at most $\kappa$: its elements are finite complex linear combinations of finite words in $\kappa$ many generators, and $\kappa\geq|\C|$. Since $B_\kappa$ is the norm closure of $\rho(L_\kappa)$, we have $\Ptwodens(B_\kappa)\leq\kappa$, and therefore $\Ptwodens(A_\kappa)\leq\kappa$.
\end{proof}

We shall use the following elementary fact about the Calkin algebra of a separable Banach space.

\begin{lemma}
\label{p2-first:lem:separable-density}
If $X$ is separable, then
\begin{equation*}
        \Ptwodens(\PtwoQ(X))\leq\PtwoC.
\end{equation*}
\end{lemma}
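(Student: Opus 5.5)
The plan is to bound the density character of $\PtwoB(X)$ itself by $\PtwoC$ and then pass to the quotient. Density character does not increase under quotient maps: if $D\subset\PtwoB(X)$ is dense, its image in $\PtwoQ(X)$ is dense and has cardinality at most $|D|$. So it suffices to show $\Ptwodens(\PtwoB(X))\leq\PtwoC$.

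First I fix a countable dense subset $\{x_k:k\in\N\}$ of $X$. A bounded operator $T$ is determined by the sequence $(Tx_k)_{k\in\N}\in X^{\N}$, by continuity and density. This gives an injection of $\PtwoB(X)$ into $X^{\N}$.

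Next I estimate $|X|$. Every element of $X$ is a limit of a sequence from the countable dense set, so
\begin{equation*}
        |X|\leq \aleph_0^{\aleph_0}=\PtwoC.
\end{equation*}
Hence
\begin{equation*}
        |\PtwoB(X)|\leq \PtwoC^{\aleph_0}=2^{\aleph_0\cdot\aleph_0}=\PtwoC.
\end{equation*}
Since density character is at most cardinality, $\Ptwodens(\PtwoB(X))\leq\PtwoC$, and the first paragraph gives $\Ptwodens(\PtwoQ(X))\leq\PtwoC$.

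There is no real obstacle here; the only point needing care is the cardinal arithmetic $\PtwoC^{\aleph_0}=\PtwoC$. Note that the operator norm topology on $\PtwoB(X)$ is typically nonseparable, for instance for $X=\ell_2$. This is why the argument bounds the cardinality of $\PtwoB(X)$ rather than trying to produce a small dense set of operators directly.
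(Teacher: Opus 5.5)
Your proposal is correct and follows essentially the paper's argument. You bound $|X|\leq\mathfrak c$, inject $\mathcal B(X)$ into $X^{\N}$ via a countable dense set to get $|\mathcal B(X)|\leq\mathfrak c^{\aleph_0}=\mathfrak c$, and pass to the quotient; the only cosmetic difference is that the paper bounds the density of the Calkin algebra by its cardinality directly, whereas you push a dense subset of $\mathcal B(X)$ through the quotient map.
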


\begin{proof}
A separable Banach space has cardinality at most $\PtwoC$. Fix a countable dense subset $D\subset X$. Every bounded operator $T\in\PtwoB(X)$ is determined by its restriction to $D$, since $T$ is continuous. Therefore
\begin{equation*}
        |\PtwoB(X)|\leq |X|^{|D|}\leq \PtwoC^{\aleph_0}=\PtwoC.
\end{equation*}
It follows that $\Ptwodens(\PtwoQ(X))\leq|\PtwoQ(X)|\leq\PtwoC$.
\end{proof}

\subsection{The matrix-unit obstruction}
\label{p2-first:sec:matrix-obstruction}

We isolate the obstruction which will rule out representing the quotient algebra on a nonseparable Calkin algebra. In a topologically simple Calkin algebra, a uniformly controlled system of matrix units would force an impossible separable reduction.

\begin{lemma}[The $\ell_1$ matrix-unit obstruction]
\label{p2-first:lem:matrix-obstruction}
Let $X$ be a nonseparable Banach space. Suppose that $\PtwoQ(X)$ has no nonzero proper closed two-sided ideals. Then $\PtwoQ(X)$ cannot contain elements $(e_{ij})_{i,j\in\N}$ satisfying the following conditions:
\begin{enumerate}[label=\textup{(\roman*)}]
\item\label{p2-first:matrix-obstruction:matrix-units} $e_{ij}e_{kl}=\delta_{jk}e_{il}$ for all $i,j,k,l\in\N$;
\item\label{p2-first:matrix-obstruction:nonzero-corner} $e_{11}\ne0$;
\item\label{p2-first:matrix-obstruction:row-bound} there is $C_r<\infty$ such that, for every finite $F\subset\N$ and every scalar family $(a_j)_{j\in F}$,
\begin{equation*}
        \left\|\sum_{j\in F}a_je_{1j}\right\|\leq C_r\sup_{j\in F}|a_j|;
\end{equation*}
\item\label{p2-first:matrix-obstruction:column-bound} there is $C_c<\infty$ such that $\sup_{i \in \N} \|e_{i1}\|\leq C_c$.
\end{enumerate}
\end{lemma}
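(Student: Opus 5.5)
The plan is to derive a contradiction by producing an operator on $X$ with separable range that is not compact. The first step is to show that topological simplicity of $\PtwoQ(X)$ forces every separable-range operator to be compact. Let $\PtwoS(X)$ be the set of operators with separable range. It is a two-sided ideal of $\PtwoB(X)$, it is norm closed (a norm limit of separable-range operators has range inside the closed span of countably many separable ranges), and it contains $\PtwoK(X)$. Hence $\PtwoS(X)/\PtwoK(X)$ is a closed two-sided ideal of $\PtwoQ(X)$. It is proper, since $I\notin\PtwoS(X)$ when $X$ is nonseparable. By simplicity it is zero, i.e.\ $\PtwoS(X)=\PtwoK(X)$.

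The second step passes to an exact matrix-unit system on a quotient $Z=X/N$ with $N$ separable. Fix lifts $E_{ij}\in\PtwoB(X)$ of $e_{ij}$. The relations in \ref{p2-first:matrix-obstruction:matrix-units} hold modulo countably many compact errors $E_{ij}E_{kl}-\delta_{jk}E_{il}$. For each finite $F$ and each family of rational-complex coefficients $(a_j)_{j\in F}$, the quotient-norm bound \ref{p2-first:matrix-obstruction:row-bound} gives a compact $K_{F,a}$ with
\begin{equation*}
\Bigl\|\sum_{j\in F}a_jE_{1j}+K_{F,a}\Bigr\|\leq 2C_r\sup_j|a_j|.
\end{equation*}
Similarly there are compact $K_i$ with $\|E_{i1}+K_i\|\leq 2C_c$. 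This is a countable family of compact operators in total. Apply \Cref{p2-first:lem:absorb} to the countable family $(E_{ij})$ and these compact operators to get a separable $N$ that contains all their ranges and is invariant under all $E_{ij}$. The induced operators $P_{ij}$ on $Z=X/N$ then satisfy $P_{ij}P_{kl}=\delta_{jk}P_{il}$ exactly. By \Cref{p2-first:lem:quotient-correction} together with density of rational coefficients, we also get
\begin{equation*}
\Bigl\|\sum_{j\in F} a_jP_{1j}\Bigr\|\leq 2C_r\sup_j|a_j|\quad\text{and}\quad\|P_{i1}\|\leq 2C_c .
\end{equation*}
Moreover $P_{11}\neq 0$. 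Otherwise $E_{11}(X)\subset N$ would be separable, so $E_{11}$ would be compact by the first step, contradicting $e_{11}\ne0$.

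The third step builds an $\ell_1$-sequence and the offending operator. Pick $z\in Z$ with $P_{11}z\neq0$ and $\psi\in Z^*$ with $\psi(P_{11}z)=1$. Define $\varphi_k=\psi\circ P_{1k}$. The row bound gives $\bigl|\sum_{k\in F}a_k\varphi_k(w)\bigr|\le 2C_r\|\psi\|\sup|a_k|\,\|w\|$. Hence $\Phi(w)=(\varphi_k(w))_k$ defines a bounded map $\Phi\colon Z\to\ell_1$, and $\Phi(P_{k1}z)=\delta_k$ because $P_{1j}P_{k1}=\delta_{jk}P_{11}$. Choose $x_k\in X$ with $\pi x_k=P_{k1}z$ and $\|x_k\|\le 4C_c\|z\|$. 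Then $J\colon\ell_1\to X$, $J\delta_k=x_k$, is bounded. Put $T=J\Phi\pi\in\PtwoB(X)$. Its range lies in $\spancl{x_k:k\in\N}$, which is separable. Also $Tx_k=x_k$, while $(\pi x_k)$ is equivalent to the unit vector basis of $\ell_1$, since $\Phi$ maps it onto $(\delta_k)$. So $(x_k)$ is bounded and $(Tx_k)$ has no Cauchy subsequence, i.e.\ $T$ is not compact. This contradicts the first step.

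I expect the main obstacle to be the second step: transferring the quotient-norm estimate \ref{p2-first:matrix-obstruction:row-bound} to genuine operator-norm estimates on $Z$. Only countably many compact corrections may be absorbed, which is why I restrict to rational coefficients and use continuity in the coefficients. One must also check that the compact ranges absorbed into $N$ really make the induced matrix relations exact.
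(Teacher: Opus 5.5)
Your proposal is correct and follows the paper's proof essentially step by step. Topological simplicity forces $\mathcal S(X)=\mathcal K(X)$; countably many compact errors are absorbed into a separable invariant subspace $N$ to obtain exact matrix units with the row and column bounds on $X/N$ (your factor-$2$ slack in place of the paper's $+1$ is harmless because $C_r,C_c\geq\|e_{11}\|>0$); and the first row gives a bounded map into $\ell_1$ sending the lifted column vectors to the unit vectors. The only difference is at the end: the paper composes with an arbitrary bounded sequence in $X$ having no norm-convergent subsequence, whereas you send $\delta_k$ back to the lifts $x_k$ themselves, which makes $T$ an idempotent onto a copy of $\ell_1$, and either choice yields the required noncompact separable-range operator.
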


\begin{proof}
We proceed by contradiction, and suppose that such a family $(e_{ij})_{i,j\in\N}$ exists. Let $q \colon \PtwoB(X)\to\PtwoQ(X)$ be the quotient map. Define
\begin{equation*}
        \PtwoS(X)=\{T\in\PtwoB(X):\overline{T(X)}\text{ is separable}\}.
\end{equation*}
This is a closed two-sided ideal of $\PtwoB(X)$, it contains $\PtwoK(X)$, and it is proper because $X$ is nonseparable. Hence $\PtwoS(X)/\PtwoK(X)$ is a closed ideal of $\PtwoQ(X)$. Since $\PtwoQ(X)$ is topologically simple, this ideal is either $0$ or all of $\PtwoQ(X)$. It is not all of $\PtwoQ(X)$, because its inverse image is $\PtwoS(X)\ne\PtwoB(X)$. Therefore
\begin{equation}
\label{p2-first:eq:separable-range-compact}
        \PtwoS(X)=\PtwoK(X).
\end{equation}

Choose arbitrary lifts $E_{ij}\in\PtwoB(X)$ with $q(E_{ij})=e_{ij}$. By \ref{p2-first:matrix-obstruction:matrix-units},
\begin{equation*}
        D_{ij,kl}=E_{ij}E_{kl}-\delta_{jk}E_{il}\in\PtwoK(X)
\end{equation*}
for all $i,j,k,l\in\N$. For each finite $F\subset\N$ and each rational complex tuple $a=(a_j)_{j\in F}\in(\mathbb Q+i\mathbb Q)^F$, set
\begin{equation*}
        U_{F,a}=\sum_{j\in F}a_jE_{1j}.
\end{equation*}
By \ref{p2-first:matrix-obstruction:row-bound} and the definition of the quotient norm, choose $C_{F,a}\in\PtwoK(X)$ such that
\begin{equation}
\label{p2-first:eq:row-correction}
        \|U_{F,a}+C_{F,a}\|\leq(C_r+1)\sup_{j\in F}|a_j|.
\end{equation}
Similarly, by \ref{p2-first:matrix-obstruction:column-bound}, for each $i\in\N$ choose $H_i\in\PtwoK(X)$ such that
\begin{equation}
\label{p2-first:eq:column-correction}
        \|E_{i1}+H_i\|\leq C_c+1.
\end{equation}

The compact operators
\begin{equation*}
        D_{ij,kl}\ (i,j,k,l\in\N),\qquad
        C_{F,a}\ (F\subset\N\text{ finite},\ a\in(\mathbb Q+i\mathbb Q)^F),\qquad
        H_i\ (i\in\N)
\end{equation*}
form a countable family. By \Cref{p2-first:lem:absorb}, applied to the countable family $(E_{ij})_{i,j \in \N}$ and to this countable family of compact operators, there is a separable closed subspace $N\subset X$ such that $E_{ij}(N)\subset N$ for all $i,j$, and such that the ranges of all compact errors just listed are contained in $N$.

Let $Z=X/N$, and let $\pi \colon X\to Z$ be the quotient map. Since each $E_{ij}$ leaves $N$ invariant, it induces an operator $\overline E_{ij}\in\PtwoB(Z)$ defined by
\begin{equation*}
        \overline E_{ij}\pi x=\pi E_{ij}x
        \qquad(x\in X).
\end{equation*}
Since $D_{ij,kl}(X)\subset N$, the induced operators satisfy exact matrix-unit relations:
\begin{equation}
\label{p2-first:eq:exact-matrix-units}
        \overline E_{ij}\overline E_{kl}=\delta_{jk}\overline E_{il}.
\end{equation}
By \Cref{p2-first:lem:quotient-correction}, \eqref{p2-first:eq:row-correction} implies, first for rational scalar families and then by density, that
\begin{equation}
\label{p2-first:eq:induced-row-bound}
        \left\|\sum_{j\in F}a_j\overline E_{1j}\right\|\leq(C_r+1)\sup_{j\in F}|a_j|
\end{equation}
for every finite $F\subset\N$ and every scalar family $(a_j)_{j\in F}$. Likewise, \eqref{p2-first:eq:column-correction} gives
\begin{equation}
\label{p2-first:eq:induced-column-bound}
        \sup_{i \in \N} \|\overline E_{i1}\|\leq C_c+1.
\end{equation}

We next show that $\overline E_{11}\ne0$. If $\overline E_{11}=0$, then $E_{11}(X)\subset N$, so $E_{11}$ has separable range. By \eqref{p2-first:eq:separable-range-compact}, $E_{11}$ is compact, and hence $e_{11}=q(E_{11})=0$, contradicting \ref{p2-first:matrix-obstruction:nonzero-corner}.

Choose $y\in Z$ such that $\overline E_{11}y\ne0$, and put $z_0=\overline E_{11}y$. Then $z_0\ne0$ and $\overline E_{11}z_0=z_0$, by \eqref{p2-first:eq:exact-matrix-units}. Choose $\varphi\in Z^*$ with $\varphi(z_0)=1$.

We claim that, if we define
\begin{equation*}
        \Theta x=\bigl(\varphi(\overline E_{1j}\pi x)\bigr)_{j=1}^{\infty},
\end{equation*}
then $\Theta$ is a well-defined bounded operator from $X$ to $\ell_1$. Indeed, for $x\in X$ and $n\in\N$, \eqref{p2-first:eq:induced-row-bound} gives
\begin{equation*}
\begin{aligned}
        \sum_{j=1}^{n}|\varphi(\overline E_{1j}\pi x)|
        &=\sup_{|a_j|\leq1}\left|\varphi\left(\sum_{j=1}^{n}a_j\overline E_{1j}\pi x\right)\right|\\
        &\leq(C_r+1)\|\varphi\|\,\|\pi x\|
        \leq(C_r+1)\|\varphi\|\,\|x\|.
\end{aligned}
\end{equation*}
Taking the supremum over $n$ shows that $\Theta x\in\ell_1$ and that
\begin{equation*}
        \|\Theta x\|_{\ell_1}\leq(C_r+1)\|\varphi\|\,\|x\|.
\end{equation*}
Thus $\Theta\colon X\to\ell_1$ is bounded, and linearity is trivial.

For $i\in\N$, put $z_i=\overline E_{i1}z_0$. By \eqref{p2-first:eq:induced-column-bound}, the sequence $(z_i)_{i \in \N}$ is bounded. Choose lifts $x_i\in X$ with $\pi x_i=z_i$ and $\|x_i\|\leq\|z_i\|+1$. Then $(x_i)_{i \in \N}$ is bounded. For $i,k\in\N$, \eqref{p2-first:eq:exact-matrix-units} gives
\begin{equation*}
        \varphi(\overline E_{1k}z_i)=\varphi(\overline E_{1k}\overline E_{i1}z_0)=\delta_{ki}\varphi(\overline E_{11}z_0) = \delta_{ki} \varphi(z_0) =\delta_{ki}.
\end{equation*}
Hence $\Theta x_i$ is the $i$-th unit vector of $\ell_1$. Since $(x_i)_{i \in \N}$ is bounded and the unit vector basis of $\ell_1$ has no norm-convergent subsequence, $\Theta$ is not compact.

Choose a bounded sequence $(w_i)_{i=1}^{\infty}\subset X$ with no norm-convergent subsequence, and define
\begin{equation*}
        R:\ell_1\to X,\qquad R((a_i)_{i \in \N})=\sum_{i=1}^{\infty}a_iw_i.
\end{equation*}
Then $R$ is bounded. The operator $T=R\Theta$ has separable range, since $T(X)\subset\overline{\spann}\{w_i:i\in\N\}$. However, $Tx_i=w_i$ for every $i$, so $T$ is not compact. This contradicts \eqref{p2-first:eq:separable-range-compact}, which finishes the proof.
\end{proof}

It remains to connect the concrete algebra $A_\kappa$ with the abstract obstruction above. We do this by extracting a countable system of matrix units from the Leavitt generators.

\begin{lemma}
\label{p2-first:lem:Akappa-has-matrix-units}
The algebra $A_\kappa$ contains elements $(e_{ij})_{i,j\in\N}$ satisfying the hypotheses of \Cref{p2-first:lem:matrix-obstruction}.
\end{lemma}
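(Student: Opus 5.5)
The natural choice is to take matrix units built from countably many of the Leavitt generators. Fix distinct ordinals $\alpha_1,\alpha_2,\ldots<\kappa$ and write $s_i=s_{\alpha_i}$, $t_i=t_{\alpha_i}$. In $L_\kappa$ define
\begin{equation*}
        e_{ij}=s_i t_j \qquad(i,j\in\N),
\end{equation*}
and use the same notation for their images $q(\rho(e_{ij}))$ in $A_\kappa$. The relations \eqref{eq:levit-relations} give
\begin{equation*}
e_{ij}e_{kl}=s_i(t_js_k)t_l=\delta_{jk}s_it_l=\delta_{jk}e_{il}.
\end{equation*}
This identity holds in $L_\kappa$ and hence in $A_\kappa$, so \ref{p2-first:matrix-obstruction:matrix-units} of \Cref{p2-first:lem:matrix-obstruction} follows. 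For \ref{p2-first:matrix-obstruction:nonzero-corner}, note that $e_{11}=s_1t_1\neq0$ in $L_\kappa$, since $t_1(s_1t_1)s_1=1$. By \Cref{p2-first:lem:core-survives}, $q\circ\rho$ is injective, so $e_{11}\ne0$ in $A_\kappa$.

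The norm estimates will be proved in $B_\kappa\subset\PtwoB(Y_\kappa)$. Since $q$ is contractive, any bound there passes to $A_\kappa$. The column bound \ref{p2-first:matrix-obstruction:column-bound} is immediate:
\begin{equation*}
\|\rho(e_{i1})\|=\|S_{\alpha_i}T_{\alpha_1}\|\leq1.
\end{equation*}

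The main obstacle is the row bound \ref{p2-first:matrix-obstruction:row-bound}, namely controlling $\sum_{j\in F}a_jS_{\alpha_1}T_{\alpha_j}$ by $\sup_j|a_j|$. Since $S_{\alpha_1}$ is an isometry, it suffices to bound $\bigl\|\sum_{j\in F}a_jT_{\alpha_j}\bigr\|$ on $\ell_1(W_\kappa)$. This works because the operators $T_{\alpha_j}$ act on disjoint coordinate blocks. Write $x=\sum_w x_w\delta_w$. Then
\begin{equation*}
\sum_j a_jT_{\alpha_j}x=\sum_{w}\Bigl(\sum_j a_j x_{\alpha_j w}\Bigr)\delta_w=\sum_w\sum_j a_jx_{\alpha_jw}\,\delta_w.
\end{equation*}
Hence
\begin{equation*}
\Bigl\|\sum_j a_jT_{\alpha_j}x\Bigr\|_1\le \sup_j|a_j|\sum_j\sum_w|x_{\alpha_jw}|\le\sup_j|a_j|\,\|x\|_1,
\end{equation*}
since the words $\alpha_jw$ are pairwise distinct for distinct pairs $(j,w)$. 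So we may take $C_r=1$. This is precisely where the choice of $\ell_1(W_\kappa)$ as the representation space matters: the first row behaves like a $c_0$-sum, which is exactly what the obstruction lemma exploits. Collecting these observations verifies all four hypotheses of \Cref{p2-first:lem:matrix-obstruction}.
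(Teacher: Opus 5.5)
Your proposal is correct and follows essentially the same route as the paper: the same matrix units $q\rho(s_{\alpha_i}t_{\alpha_j})$, nonvanishing of $e_{11}$ via injectivity of $q\circ\rho$, and the same disjoint-support estimate on $\ell_1(W_\kappa)$ giving $C_r=C_c=1$. Your explicit check that $s_{\alpha_1}t_{\alpha_1}\neq0$ in $L_\kappa$, via $t_{\alpha_1}(s_{\alpha_1}t_{\alpha_1})s_{\alpha_1}=1$, is a small detail the paper leaves implicit.
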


\begin{proof}
Choose distinct $\alpha_1,\alpha_2,\ldots\in\kappa$, viewed as loop labels in the one-vertex graph defining $L_\kappa$, and define
\begin{equation*}
        e_{ij}=q\rho(s_{\alpha_i}t_{\alpha_j})\in A_\kappa
        \qquad(i,j\in\N).
\end{equation*}

We verify the four hypotheses of \Cref{p2-first:lem:matrix-obstruction}. Since $q\circ\rho$ is injective by \Cref{p2-first:lem:core-survives}, we have $e_{11}\ne0$, so \ref{p2-first:matrix-obstruction:nonzero-corner} holds. The Leavitt relations \eqref{eq:levit-relations} give
\begin{equation*}
        e_{ij}e_{kl}=\delta_{jk}e_{il}
        \qquad(i,j,k,l\in\N),
\end{equation*}
so \ref{p2-first:matrix-obstruction:matrix-units} holds.

We next verify \Cref{p2-first:matrix-obstruction:row-bound}. Fix a finite set $F\subset\N$ and scalars $(a_j)_{j\in F}$. Consider the concrete operator
\begin{equation*}
        U=\sum_{j\in F}a_jS_{\alpha_1}T_{\alpha_j}\in\PtwoB(Y_\kappa).
\end{equation*}
Since $\rho(s_{\alpha_1}t_{\alpha_j})=S_{\alpha_1}T_{\alpha_j}$ and $e_{1j}=q\rho(s_{\alpha_1}t_{\alpha_j})$, we have
\begin{equation*}
        q(U)=\sum_{j\in F}a_je_{1j}.
\end{equation*}
Thus, it suffices to estimate $\|U\|$.
If $x=(c_w)_{w\in W_\kappa}\in\ell_1(W_\kappa)$, then
\begin{equation*}
        \|Ux\|_1\leq \sup_{j\in F}|a_j|\sum_{w\in W_\kappa}\sum_{j\in F}|c_{\alpha_jw}|\leq \sup_{j\in F}|a_j|\,\|x\|_1.
\end{equation*}
Hence
\begin{equation*}
        \left\|\sum_{j\in F}a_je_{1j}\right\|\leq \sup_{j\in F}|a_j|.
\end{equation*}
This proves \ref{p2-first:matrix-obstruction:row-bound}, with $C_r=1$.

Finally, since $\|S_{\alpha_i}T_{\alpha_1}\|\leq1$ for every $i\in\N$ and $e_{i1}=q\rho(s_{\alpha_i}t_{\alpha_1})$, we have
\begin{equation*}
        \sup_{i\in\N}\|e_{i1}\|\leq1.
\end{equation*}
This proves \ref{p2-first:matrix-obstruction:column-bound}, with $C_c=1$. Thus the elements $(e_{ij})_{i,j\in\N}$ satisfy the hypotheses of \Cref{p2-first:lem:matrix-obstruction}.
\end{proof}

\subsection{Conclusion of the first proof}
\label{p2-first:sec:first-conclusion}

We can now finish the first construction: density rules out separable Calkin algebras, while the matrix-unit obstruction rules out the nonseparable case.

\begin{proof}[First proof of \Cref{p2:thm:main}]
Let $A=A_\kappa$. Assume, toward a contradiction, that $A_\kappa\cong\PtwoQ(X)$ for some Banach space $X$.

If $X$ is separable, then \Cref{p2-first:lem:separable-density} and \Cref{p2-first:lem:density} give
\begin{equation*}
        \Ptwodens(\PtwoQ(X))\leq\PtwoC<\PtwoC^+=\Ptwodens(A_\kappa),
\end{equation*}
which is a contradiction.

Thus $X$ is nonseparable. Since $A_\kappa$ is topologically simple, so is $\PtwoQ(X)$. Let $\Phi\colon A_\kappa\to\PtwoQ(X)$ be a Banach-algebra isomorphism. By \Cref{p2-first:lem:Akappa-has-matrix-units}, choose elements $(e_{ij})_{i,j\in\N}\subset A_\kappa$ satisfying the hypotheses of \Cref{p2-first:lem:matrix-obstruction}. The elements $\Phi(e_{ij})$ are matrix units in $\PtwoQ(X)$, and $\Phi(e_{11})\ne0$. Moreover, the row and column estimates are preserved up to the factor $\|\Phi\|$, that is
\begin{equation*}
        \left\|\sum_{j\in F}a_j\Phi(e_{1j})\right\|\leq \|\Phi\|\sup_{j\in F}|a_j|,
\end{equation*}
and
\begin{equation*}
        \sup_{i \in \N} \|\Phi(e_{i1})\|\leq\|\Phi\|.
\end{equation*}
This contradicts \Cref{p2-first:lem:matrix-obstruction} and finishes the proof.
\end{proof}

\section{Second proof}

\subsection{Proof strategy and organisation}

The second proof is organised as follows. In \Cref{p2-second:sec:shift-algebra}, we avoid the Leavitt-path algebra core and instead construct a very large shift algebra on a space of the form $c_0(W)$. In \Cref{p2-second:sec:density-trap}, the density of this algebra is chosen so large that, if it were isomorphic to a Calkin algebra $\PtwoQ(X)$, then the Banach space $X$ would have to have density strictly larger than the continuum.

The next step is to exploit the shift relations inside the alleged Calkin algebra. In \Cref{p2-second:sec:compact-killing}, we remove countably many compact errors so that the relevant relations can be witnessed on a carefully chosen subspace. In \Cref{p2-second:sec:column-forces-c0}, these relations are used to force $X$ to contain a copy of $c_0$.

Finally, \Cref{p2-second:sec:c0-destroys-simplicity} shows that a nonseparable Banach space containing $c_0$ has a nonsimple Calkin algebra: the separable-range operators give a nonzero proper closed ideal in $\PtwoQ(X)$. This contradicts the topological simplicity of the algebra constructed in \Cref{p2-second:sec:shift-algebra}. The contradiction is assembled in \Cref{p2-second:sec:second-conclusion}.

\subsection{The shift algebra}
\label{p2-second:sec:shift-algebra}

Let $(\beth_n)_{n=0}^{\infty}$ be the Beth sequence, defined by
\begin{equation*}
        \beth_0=\aleph_0, \qquad \beth_{n+1}=2^{\beth_n}.
\end{equation*}
Set
\begin{equation*}
        \lambda=\beth_\omega=\sup_{n<\omega}\beth_n.
\end{equation*}
Then $\lambda$ is a strong-limit cardinal, and in particular
\begin{equation*}
        \lambda>2^{\aleph_0}.
\end{equation*}

Let $\Gamma$ be a set of cardinality $\lambda$. We write
\begin{equation*}
        W=\Gamma^{<\omega}
\end{equation*}
for the set of finite words over the alphabet $\Gamma$, including the empty word $\varnothing$. Put
\begin{equation*}
        E=c_0(W),
\end{equation*}
and denote by $e_w$, $w\in W$, the canonical unit vector basis of $c_0(W)$.

For each $\alpha\in\Gamma$, define two operators $S_\alpha,T_\alpha\in\PtwoB(E)$ as follows. The operator $S_\alpha$ prefixes a word by $\alpha$:
\begin{equation*}
        S_\alpha e_w=e_{\alpha w} \qquad(w\in W).
\end{equation*}
The operator $T_\alpha$ removes an initial $\alpha$ when it is present, and sends the vector to zero otherwise:
\begin{equation*}
        T_\alpha e_{\beta w}=
        \begin{cases}
        e_w, & \beta=\alpha,\\
        0, & \beta\ne\alpha,
        \end{cases}
        \qquad
        T_\alpha e_\varnothing=0.
\end{equation*}
Thus $S_\alpha$ is an isometric embedding of $c_0(W)$ onto the coordinate subspace supported on the cylinder $\alpha W$, while $T_\alpha$ is the corresponding contractive retraction. Hence
\begin{equation*}
        \|S_\alpha\|=\|T_\alpha\|=1.
\end{equation*}
Moreover,
\begin{equation*}
        T_\alpha S_\beta=\delta_{\alpha\beta}I_E \qquad(\alpha,\beta\in\Gamma).
\end{equation*}

We shall use the following simple consequence of the $c_0$ geometry. If $F\subset\Gamma$ is finite and $(a_\alpha)_{\alpha\in F}\subset\C$, then
\begin{equation}
\label{p2-second:eq:shift-c0-estimate}
        \left\|\sum_{\alpha\in F}a_\alpha S_\alpha\right\|=\max_{\alpha\in F}|a_\alpha|.
\end{equation}
Indeed, the vectors $S_\alpha x$, $\alpha\in F$, have pairwise disjoint supports, since they are supported on the disjoint sets $\alpha W$. Hence, whenever $\|x\|\leq1$,
\begin{equation*}
        \left\|\sum_{\alpha\in F}a_\alpha S_\alpha x\right\|\leq \max_{\alpha\in F}|a_\alpha|.
\end{equation*}
This gives one inequality. For the reverse inequality, choose $\alpha_0\in F$ such that
\begin{equation*}
        |a_{\alpha_0}|=\max_{\alpha\in F}|a_\alpha|.
\end{equation*}
Then, for any $w\in W$,
\begin{equation*}
        \left\|\sum_{\alpha\in F}a_\alpha S_\alpha e_w\right\|\geq |a_{\alpha_0}|.
\end{equation*}
Therefore equality holds in \eqref{p2-second:eq:shift-c0-estimate}.

We now define the Banach algebra, which will be the counterexample. Let $B_\lambda$ be the closed unital subalgebra of $\PtwoB(E)$ generated by
\begin{equation*}
        I_E,\qquad S_\alpha,\qquad T_\alpha \qquad(\alpha\in\Gamma).
\end{equation*}
Thus $B_\lambda$ is the norm-closed algebra generated by the identity and by the forward and backward shifts attached to the alphabet $\Gamma$.

By \Cref{p2-first:lem:maximal-ideal}, choose a maximal proper closed two-sided ideal $M$ of $B_\lambda$, and define
\begin{equation}
\label{p2-second:eq:definition-of-A-lambda}
        A_\lambda=B_\lambda/M.
\end{equation}
This quotient is the Banach algebra which will serve as the counterexample. By the maximality of $M$, the algebra $A_\lambda$ is topologically simple.

\begin{lemma}[Density of the quotient algebra]
\label{p2-second:lem:density-A}
The Banach algebra $A_\lambda$ has density character
\begin{equation*}
        \Ptwodens(A_\lambda)=\lambda.
\end{equation*}
\end{lemma}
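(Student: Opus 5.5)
The density of $A_\lambda$ is computed by the same two-sided scheme as in \Cref{p2-first:lem:density}: a large separated family for the lower bound, and a counting argument for the upper bound. The one new ingredient is to check that the relevant idempotents survive in the quotient by $M$. In the first proof this came from the algebraic simplicity of the Leavitt core, via \Cref{p2-first:lem:core-survives}. Here I would argue directly, since $B_\lambda$ was generated by the operators themselves rather than as the closure of a faithful image of $L_\kappa$.

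\emph{Upper bound.} The elements $I_E$, $S_\alpha$, $T_\alpha$ ($\alpha\in\Gamma$) form a generating set of cardinality $\lambda$. Finite products of these generators with coefficients in $\mathbb Q+i\mathbb Q$ therefore give a set of cardinality $\lambda$, since $\lambda$ is infinite. Its linear span over $\mathbb Q+i\mathbb Q$ is dense in $B_\lambda$, so $\Ptwodens(B_\lambda)\leq\lambda$. The quotient map is continuous and surjective, so it does not increase density, and hence $\Ptwodens(A_\lambda)\leq\lambda$.

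\emph{Lower bound.} Put $p_\alpha=S_\alpha T_\alpha$. The relation $T_\alpha S_\beta=\delta_{\alpha\beta}I_E$ gives $p_\alpha p_\beta=\delta_{\alpha\beta}p_\alpha$. It remains to show that each $q(p_\alpha)$ is nonzero in $A_\lambda$. Suppose $p_\alpha\in M$. Then
\begin{equation*}
        T_\alpha p_\alpha S_\alpha=(T_\alpha S_\alpha)(T_\alpha S_\alpha)=I_E
\end{equation*}
lies in $M$, because $M$ is a two-sided ideal of $B_\lambda$ and $S_\alpha,T_\alpha\in B_\lambda$. This contradicts the properness of $M$.

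So $(q(p_\alpha))_{\alpha\in\Gamma}$ is a family of $\lambda$ nonzero, pairwise orthogonal idempotents in $A_\lambda$. As observed in the proof of \Cref{p2-first:lem:density}, if $p$ and $p'$ are nonzero orthogonal idempotents, then $p(p-p')=p$, hence $\|p\|\leq\|p\|\,\|p-p'\|$ and $\|p-p'\|\geq1$. The family is therefore $1$-separated. A $1$-separated set of cardinality $\lambda$ forces $\Ptwodens(A_\lambda)\geq\lambda$: every dense set must meet the pairwise disjoint balls of radius $1/2$ around these points.

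\emph{Main obstacle.} The only step that needs care is the survival of the idempotents in the quotient. The identity $T_\alpha p_\alpha S_\alpha=I_E$ resolves it immediately, so no faithfulness or simplicity statement for an algebraic core is needed.
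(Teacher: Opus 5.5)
Your proof is correct and follows the paper's scheme. The upper bound is the same counting argument over $\mathbb Q+i\mathbb Q$-polynomials in the $\lambda$ generators, and the lower bound uses a $1$-separated family of size $\lambda$, as the paper does. The only difference is the choice of family: the paper uses $\sigma_\alpha=S_\alpha+M$ directly, noting $\tau_\alpha(\sigma_\alpha-\sigma_\beta)=1_{A_\lambda}$ with $\|\tau_\alpha\|\leq1$, which needs no separate survival check, whereas your idempotents $S_\alpha T_\alpha$ together with the check $T_\alpha(S_\alpha T_\alpha)S_\alpha=I_E$ are equally valid.
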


\begin{proof}
The algebra $B_\lambda$ is generated, as a closed algebra, by $\lambda$ many elements. Noncommutative polynomials with coefficients in $\mathbb Q+i\mathbb Q$ and variables from finite subsets of the generators form a dense subset of cardinality at most $\lambda$. Hence $\Ptwodens(B_\lambda)\leq\lambda$, and therefore $\Ptwodens(A_\lambda)\leq\lambda$.

For the reverse inequality, write
\begin{equation*}
        \sigma_\alpha=S_\alpha+M,\qquad \tau_\alpha=T_\alpha+M
\end{equation*}
inside $A_\lambda$. If $\alpha\ne\beta$, then
\begin{equation*}
        \tau_\alpha(\sigma_\alpha-\sigma_\beta)=1_{A_\lambda}.
\end{equation*}
Also $\|\tau_\alpha\|\leq1$ and $\|1_{A_\lambda}\|=1$. Therefore
\begin{equation*}
        1\leq\|\sigma_\alpha-\sigma_\beta\|.
\end{equation*}
Thus $A_\lambda$ contains a $1$-separated set of cardinality $\lambda$, so $\Ptwodens(A_\lambda)\geq\lambda$.
\end{proof}

\subsection{A density trap}
\label{p2-second:sec:density-trap}

We next record the cardinal estimate which explains the choice of $\lambda=\beth_\omega$. A Banach space of density $\kappa$ has at most $2^\kappa$ many bounded operators up to density, and hence its Calkin algebra also has density at most $2^\kappa$. Since $\lambda$ is a strong-limit cardinal and $\Ptwodens(A_\lambda)=\lambda$, this forces any Banach space $X$ with $\PtwoQ(X)\cong A_\lambda$ to have density at least $\lambda$.

\begin{lemma}
\label{p2-second:lem:density-trap}
Let $X$ be a Banach space. If $\Ptwodens(X)=\kappa$ is infinite, then
\begin{equation*}
        \Ptwodens(\PtwoQ(X))\leq2^\kappa.
\end{equation*}
Consequently, if $\PtwoQ(X)$ is Banach-algebra isomorphic to $A_\lambda$, then
\begin{equation*}
        \Ptwodens(X)\geq\lambda>2^{\aleph_0}.
\end{equation*}
\end{lemma}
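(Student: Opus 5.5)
We argue by cardinality counting, as in \Cref{p2-first:lem:separable-density}, with the countable dense set replaced by a dense set of cardinality $\kappa$.

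\textbf{Bounding $\PtwoB(X)$.} Fix a dense subset $D\subset X$ with $|D|=\kappa$. The space $X$ is the set of limits of sequences from $D$, so
\begin{equation*}
|X|\leq|D|^{\aleph_0}=\kappa^{\aleph_0}\leq 2^{\kappa}.
\end{equation*}
A bounded operator is determined by its restriction to $D$, hence
\begin{equation*}
|\PtwoB(X)|\leq |X|^{|D|}\leq (2^\kappa)^\kappa=2^{\kappa}.
\end{equation*}
The quotient map onto $\PtwoQ(X)$ is surjective, so $|\PtwoQ(X)|\leq 2^\kappa$. Density character never exceeds cardinality, which gives $\Ptwodens(\PtwoQ(X))\leq 2^\kappa$.

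\textbf{The consequence.} Suppose $\PtwoQ(X)\cong A_\lambda$ as Banach algebras. A Banach-algebra isomorphism is a linear homeomorphism, so it preserves density character. By \Cref{p2-second:lem:density-A}, $\Ptwodens(\PtwoQ(X))=\lambda$.

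First, $X$ is infinite-dimensional. Otherwise $\PtwoQ(X)=0$, while $A_\lambda$ is a nonzero unital algebra. Hence $\kappa=\Ptwodens(X)$ is infinite, and the first part applies.

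Now suppose, for contradiction, that $\kappa<\lambda$. Since $\lambda=\beth_\omega=\sup_n\beth_n$, there is $n$ with $\kappa\leq\beth_n$. Then
\begin{equation*}
2^\kappa\leq 2^{\beth_n}=\beth_{n+1}<\lambda.
\end{equation*}
This gives $\lambda=\Ptwodens(\PtwoQ(X))\leq 2^\kappa<\lambda$, which is absurd. Therefore $\Ptwodens(X)\geq\lambda$. Finally, $\lambda>\beth_1=2^{\aleph_0}$ was noted when $\lambda$ was defined.

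\textbf{Where care is needed.} None of these steps is difficult. The two points to handle carefully are the bound $|X|\leq\kappa^{\aleph_0}$, which relies on $X$ being complete so that its points are limits of sequences from $D$, and the strong-limit property of $\beth_\omega$. That property is exactly what rules out $2^\kappa\geq\lambda$ for $\kappa<\lambda$.
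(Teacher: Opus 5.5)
Your proof is correct and follows the same route as the paper. You bound $|X|\le\kappa^{\aleph_0}$, count operators through their restrictions to a dense set of size $\kappa$ to get $|\mathcal{B}(X)|\le 2^\kappa$, and then use the strong-limit property of $\beth_\omega$, which you verify directly, to force $\kappa\ge\lambda$. Your explicit check that $X$ is infinite-dimensional, so that the first part applies, is a point the paper leaves implicit.

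One minor correction to your closing remark: the bound $|X|\le\kappa^{\aleph_0}$ does not use completeness. In any metric space every point is the limit of a sequence from a dense subset, and limits are unique.
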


\begin{proof}
A metric space of density $\kappa$ has cardinality at most $\kappa^{\aleph_0}$. Thus $|X|\leq\kappa^{\aleph_0}$. Every operator in $\PtwoB(X)$ is determined by its values on a fixed dense subset of cardinality $\kappa$, and so
\begin{equation*}
        |\PtwoB(X)|\leq |X|^\kappa\leq(\kappa^{\aleph_0})^\kappa=\kappa^\kappa=2^\kappa.
\end{equation*}
Therefore $\Ptwodens(\PtwoQ(X))\leq2^\kappa$.

Now suppose that $\PtwoQ(X)$ is Banach-algebra isomorphic to $A_\lambda$. Since density character is preserved by Banach-space isomorphism, \Cref{p2-second:lem:density-A} gives
\begin{equation*}
        \Ptwodens(\PtwoQ(X))=\Ptwodens(A_\lambda)=\lambda.
\end{equation*}
On the other hand, the estimate just proved gives
\begin{equation*}
        \lambda=\Ptwodens(\PtwoQ(X))\leq 2^\kappa.
\end{equation*}
We claim that this forces $\kappa\geq\lambda$. Indeed, if $\kappa<\lambda$, then the strong-limit property of $\lambda=\beth_\omega$ gives
\begin{equation*}
        2^\kappa<\lambda.
\end{equation*}
This contradicts $\lambda\leq2^\kappa$. Therefore $\kappa\geq\lambda$, as required.
\end{proof}

\subsection{Killing countably many compact errors}
\label{p2-second:sec:compact-killing}

We shall next need a simple consequence of nonseparability at very large density. Compact operators have separable behaviour on bounded sets: the image of the unit ball under a compact operator is norm-compact, hence separable. Therefore, a countable family of compact operators can only see a separable amount of information. If the ambient space has density strictly larger than the continuum, we can choose a unit vector on which all of these compact operators are simultaneously small.

\begin{lemma}
\label{p2-second:lem:compact-killing}
Let $X$ be a Banach space with $\Ptwodens(X)>2^{\aleph_0}$. Let $(K_n)_{n=1}^{\infty}$ be a countable family of compact operators on $X$, and let $(\eps_n)_{n=1}^{\infty}$ be a sequence of positive real numbers. Then there is $x\in X$ such that $\|x\|=1$ and
\begin{equation*}
        \|K_nx\|\leq\eps_n \qquad(n\in\N).
\end{equation*}
\end{lemma}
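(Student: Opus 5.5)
The plan is to find a unit vector annihilated by a suitable countable family of functionals whose norms control the $K_n$. Fix $n$. Since $K_n$ is compact, $K_n(B_X)$ is relatively norm-compact, so for each $k\in\N$ there is a finite $(1/k)$-net of $K_n(B_X)$. Choose norming functionals for the net points: for each net point $y$ pick $g\in Y^*$ with $\|g\|=1$ and $g(y)=\|y\|$, with $Y=X$. A standard compactness argument then gives, for each $n$, a sequence $(g_{n,k})_{k\in\N}\subset S_{X^*}$ with
\begin{equation*}
  \|K_nx\|=\sup_{k}|g_{n,k}(K_nx)|\qquad(x\in X).
\end{equation*}
Indeed, the closure of $K_n(X)$ is separable, and a separable subspace admits a countable norming set of functionals (restrictions to a dense countable set of the range, extended by Hahn--Banach). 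Setting $f_{n,k}=K_n^*g_{n,k}\in X^*$, we get
\begin{equation*}
  \|K_nx\|=\sup_k|f_{n,k}(x)|.
\end{equation*}

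Next, consider the countable family $\{f_{n,k}:n,k\in\N\}$ and the closed subspace
\begin{equation*}
  Z=\bigcap_{n,k}\ker f_{n,k}.
\end{equation*}
Every $x\in Z$ satisfies $K_nx=0$ for all $n$, which is stronger than required. It therefore suffices to show $Z\neq\{0\}$, and then normalise. The linear map
\begin{equation*}
  \Psi\colon X\to\C^{\N\times\N},\qquad \Psi(x)=(f_{n,k}(x))_{n,k},
\end{equation*}
has kernel $Z$. If $Z=\{0\}$, then $\Psi$ is injective, so $|X|\leq|\C^{\N\times\N}|=2^{\aleph_0}$. But a Banach space with $\Ptwodens(X)>2^{\aleph_0}$ has cardinality at least its density, hence $|X|>2^{\aleph_0}$, which is a contradiction. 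Thus there is a nonzero $x\in Z$, and $x/\|x\|$ works with $K_n x=0\le\eps_n$ for every $n$. Note that the quantitative $\eps_n$ is not even needed; the lemma holds in the strong form $K_nx=0$.

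The only step needing care is the countable norming family for $K_n$, i.e.\ the separability of $\overline{K_n(X)}$. This holds because $\overline{K_n(X)}$ is the closed span of the separable set $K_n(B_X)$, as in \Cref{p2-first:lem:absorb}. Choose a countable dense set $\{y_k\}$ in $\overline{K_n(X)}$ and norming functionals $g_{n,k}\in S_{X^*}$ with $g_{n,k}(y_k)=\|y_k\|$. The standard estimate $\|y\|\leq \sup_k|g_{n,k}(y)|+2\|y-y_k\|$ then shows that these functionals norm every $y\in\overline{K_n(X)}$. The cardinality comparison $|X|\geq\Ptwodens(X)$ is trivial, since $X$ itself is dense in $X$.
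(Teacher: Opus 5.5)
Your proof is correct, and it takes a genuinely different and slightly stronger route than the paper.

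\textbf{The paper's route.} It uses Schauder's theorem. Compactness of $K^*$ gives a finite net $K^*g_1,\dots,K^*g_m$ of $K^*(B_{X^*})$. Hence $K$ has norm at most $\varepsilon$ on the finite-codimensional subspace $\bigcap_r\ker(K^*g_r)$. The paper then intersects these countably many finite-codimensional subspaces $Y_n$. It rules out $\bigcap_n Y_n=\{0\}$ by injecting $X$ into $\prod_n X/Y_n$, a set of cardinality at most $2^{\aleph_0}$.

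\textbf{Your route.} You use only that each $K_n$ has separable range. This produces countably many functionals $f_{n,k}=K_n^*g_{n,k}$ with $\bigcap_k\ker f_{n,k}=\ker K_n$. The same cardinality argument, now via $\Psi\colon X\to\mathbb{C}^{\mathbb{N}\times\mathbb{N}}$, shows $\bigcap_n\ker K_n\neq\{0\}$.

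\textbf{What this buys.} You get the exact conclusion $K_nx=0$ for all $n$, so the tolerances $\varepsilon_n$ are superfluous. This would also remove the error bookkeeping with $R_{ij}$ and $K_a$ in the subsequent $c_0$ lemma: there one gets $T_iS_jx=\delta_{ij}x$ exactly. Your argument also applies to any countable family of operators with separable range, not just compact ones.

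\textbf{A further simplification.} The norming functionals are dispensable. If $\bigcap_n\ker K_n=\{0\}$, then $x\mapsto(K_nx)_n$ injects $X$ into $\prod_n\overline{K_n(X)}$. Since separable metric spaces have cardinality at most $2^{\aleph_0}$, this product has cardinality at most $(2^{\aleph_0})^{\aleph_0}=2^{\aleph_0}$.

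\textbf{Comparison.} The paper's finite-codimensional subspaces give a quantitative, local statement. However, that extra structure is used only to bound $|X/Y_n|\leq 2^{\aleph_0}$, and $|X/\ker K_n|$ obeys the same bound.
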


\begin{proof}
Fix a compact operator $K\in\PtwoK(X)$ and $\eps>0$. By Schauder's theorem, $K^*:X^*\to X^*$ is compact. Hence $K^*(B_{X^*})$ is norm-totally bounded. Choose $g_1,\ldots,g_m\in B_{X^*}$ such that for every $f\in B_{X^*}$ there is $1\leq r\leq m$ with
\begin{equation*}
        \|K^*f-K^*g_r\|<\eps.
\end{equation*}
Set $Y=\bigcap_{r=1}^{m}\ker(K^*g_r)$. Then $Y$ has finite codimension, and for $y\in Y$,
\begin{equation*}
        \|Ky\|=\sup_{f\in B_{X^*}}|K^*f(y)|\leq\eps\|y\|.
\end{equation*}
Thus for each $K_n$ there is a finite-codimensional closed subspace $Y_n\subset X$ such that $\|K_n|_{Y_n}\|\leq\eps_n$.

Let $Y_\infty=\bigcap_{n=1}^{\infty}Y_n$. If $Y_\infty=\{0\}$, then the map
\begin{equation*}
        X\to\prod_{n=1}^{\infty}X/Y_n,\qquad x\mapsto(x+Y_n)_n
\end{equation*}
is injective. Each quotient $X/Y_n$ is finite-dimensional over $\C$, so the product has cardinality at most $(2^{\aleph_0})^{\aleph_0}=2^{\aleph_0}$. This contradicts $\Ptwodens(X)>2^{\aleph_0}$. Hence $Y_\infty\ne\{0\}$. Choose $0\ne x\in Y_\infty$, and normalise it; this gives the result.
\end{proof}

\subsection{The column system forces $c_0$}
\label{p2-second:sec:column-forces-c0}

We now turn the shift relations in the Calkin algebra back into geometry inside the Banach space $X$. The elements $s_j$ should be thought of as the first column of a system of matrix units, while the elements $t_j$ are left inverses modulo compact operators. The estimate on the sums of the $s_j$ says that this column behaves like the unit vector basis of $c_0$ in the quotient algebra. The point of the next lemma is that, when the density of $X$ is large enough, the compact errors can be made simultaneously small; the quotient-level $c_0$ behaviour then lifts to an actual copy of $c_0$ inside $X$.

\begin{lemma}
\label{p2-second:lem:column-forces-c0}
Let $X$ be a Banach space with $\Ptwodens(X)>2^{\aleph_0}$. Suppose that there are elements $(s_j)_{j=1}^{\infty}$ and $(t_j)_{j=1}^{\infty}$ in $\PtwoQ(X)$ satisfying the following conditions.

\begin{enumerate}[label=\textup{(\roman*)}]
\item\label{p2-second:item:column-relations} We have
\begin{equation*}
        t_is_j=\delta_{ij}1_{\PtwoQ(X)} \qquad(i,j\in\N).
\end{equation*}

\item\label{p2-second:item:column-c0-upper} There is a constant $C<\infty$ such that
\begin{equation*}
        \left\|\sum_{j=1}^{m}a_js_j\right\|\leq C\max_{1\leq j\leq m}|a_j|
\end{equation*}
for every $m\in\N$ and every choice of scalars $a_1,\ldots,a_m\in\C$.

\item\label{p2-second:item:left-inverses-bounded} We have
\begin{equation*}
        \sup_{j\in\N}\|t_j\|<\infty.
\end{equation*}
\end{enumerate}

Then $X$ contains an isomorphic copy of $c_0$.
\end{lemma}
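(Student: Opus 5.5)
Fix lifts $S_j,T_j\in\PtwoB(X)$ of $s_j,t_j$. By \ref{p2-second:item:left-inverses-bounded} and the definition of the quotient norm, we may choose them with $\sup_j\|T_j\|\le D<\infty$. By \ref{p2-second:item:column-c0-upper}, for each finite rational tuple $a=(a_1,\ldots,a_m)$ choose a compact $C_a$ with $\|\sum_j a_jS_j+C_a\|\le (C+1)\max|a_j|$. By \ref{p2-second:item:column-relations}, the operators $K_{ij}=T_iS_j-\delta_{ij}I$ are compact. The compact operators $C_a$ and $K_{ij}$ form a countable family. The plan is to apply \Cref{p2-second:lem:compact-killing} to this family, with tolerances $\eps_{ij}$ tiny and summable, and $\eps_a$ tiny relative to $\max|a_j|$. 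This produces a single unit vector $x$ on which all these compact errors are small. Then put $x_j=S_jx$ and show that $(x_j)$ is equivalent to the unit vector basis of $c_0$.

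\textbf{Upper estimate.} For a rational tuple $a$ we have
\begin{equation*}
\Bigl\|\sum_j a_jx_j\Bigr\|\le \Bigl\|\Bigl(\sum_j a_jS_j+C_a\Bigr)x\Bigr\|+\|C_ax\|\le (C+2)\max_j|a_j|,
\end{equation*}
provided $\eps_a\le\max|a_j|$. This extends to all complex tuples by density.

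\textbf{Lower estimate and basicness.} We have
\begin{equation*}
T_i\Bigl(\sum_j a_jx_j\Bigr)=a_ix+\sum_j a_jK_{ij}x.
\end{equation*}
Hence
\begin{equation*}
D\Bigl\|\sum_j a_jx_j\Bigr\|\ge |a_i|-\max_j|a_j|\sum_j\eps_{ij}.
\end{equation*}
Choosing $\sum_j\eps_{ij}\le 1/2$ for every $i$ and taking $i$ to maximise $|a_i|$ gives $\|\sum_j a_jx_j\|\ge \max_j|a_j|/(2D)$. The two-sided estimate says the linear map $c_{00}\to X$, $e_j\mapsto x_j$, is an isomorphism onto its image. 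It therefore extends to an isomorphic embedding of $c_0$, since $X$ is complete. No separate basic-sequence argument is needed: the estimates directly give equivalence with the $c_0$ norm on finitely supported vectors.

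\textbf{Main obstacle.} The only delicate point is the bookkeeping of tolerances. The $C_a$ are indexed by rational tuples whose norms vary, so each tolerance must be set relative to $\max|a_j|$; this is fine because \Cref{p2-second:lem:compact-killing} allows an arbitrary positive tolerance per operator. The family $(K_{ij})$ requires summability in $j$ for each fixed $i$, for instance $\eps_{ij}=2^{-j-1}$. Both choices are available because the family is countable, and the hypothesis $\Ptwodens(X)>2^{\aleph_0}$ is used precisely there.
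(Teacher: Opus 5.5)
Your proposal is correct and follows essentially the same route as the paper. You lift with uniformly bounded $T_j$, correct each rational combination $\sum_j a_jS_j$ by a compact operator, apply \Cref{p2-second:lem:compact-killing} with $\eps_{ij}=2^{-j-1}$ and $\eps_a=\max_j|a_j|$, and read off the two $c_0$ estimates for $x_j=S_jx$ exactly as the paper does (apart from the harmless point that the zero tuple must be excluded from the family, since its tolerance would not be positive).
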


\begin{proof}
Choose lifts $S_j\in\PtwoB(X)$ of $s_j$. By \ref{p2-second:item:left-inverses-bounded}, we may choose lifts $T_j\in\PtwoB(X)$ of $t_j$ such that
\begin{equation*}
        \sup_{j\in\N}\|T_j\|<\infty.
\end{equation*}
Put
\begin{equation*}
        D=\max\left\{1,\sup_{j\in\N}\|T_j\|\right\}<\infty.
\end{equation*}
For $i,j\in\N$, define
\begin{equation*}
        R_{ij}=T_iS_j-\delta_{ij}I_X.
\end{equation*}
By \ref{p2-second:item:column-relations}, we have
\begin{equation*}
        R_{ij}\in\PtwoK(X) \qquad(i,j\in\N).
\end{equation*}

Let $\mathbb Q(i)=\mathbb Q+i\mathbb Q$. For every nonzero finitely supported family
\begin{equation*}
        a=(a_1,\ldots,a_m,0,0,\ldots)\in c_{00}(\mathbb Q(i)),
\end{equation*}
condition \ref{p2-second:item:column-c0-upper} gives
\begin{equation}
\label{p2-second:eq:quotient-column-bound}
        \left\|\sum_{j=1}^{m}a_js_j\right\|_{\PtwoQ(X)}
        \leq C\max_{1\leq j\leq m}|a_j|.
\end{equation}
Since $\sum_{j=1}^{m}a_jS_j$ is a lift of $\sum_{j=1}^{m}a_js_j$, the definition of the quotient norm and \eqref{p2-second:eq:quotient-column-bound} allow us to choose $K_a\in\PtwoK(X)$ such that
\begin{equation}
\label{p2-second:eq:Ka-choice}
        \left\|\sum_{j=1}^{m}a_jS_j+K_a\right\|
        \leq (C+1)\max_{1\leq j\leq m}|a_j|.
\end{equation}

Apply \Cref{p2-second:lem:compact-killing} to the countable family consisting of all operators $R_{ij}$, $i,j\in\N$, and all operators $K_a$, $0\ne a\in c_{00}(\mathbb Q(i))$.

For $i,j\in\N$, set
\begin{equation*}
        \varepsilon_{ij}=2^{-j-1}.
\end{equation*}
For $0\ne a=(a_1,\ldots,a_m,0,0,\ldots)\in c_{00}(\mathbb Q(i))$, set
\begin{equation*}
        \varepsilon_a=\max_{1\leq j\leq m}|a_j|.
\end{equation*}
The lemma gives a unit vector $x\in X$ such that
\begin{equation}
\label{p2-second:eq:Rij-small}
        \|R_{ij}x\|\leq \varepsilon_{ij}=2^{-j-1} \qquad(i,j\in\N),
\end{equation}
and
\begin{equation}
\label{p2-second:eq:Ka-small}
        \|K_ax\|\leq \varepsilon_a=\max_{1\leq j\leq m}|a_j|
\end{equation}
whenever $0\ne a=(a_1,\ldots,a_m,0,0,\ldots)\in c_{00}(\mathbb Q(i))$.

Set
\begin{equation*}
        x_j=S_jx \qquad(j\in\N).
\end{equation*}
We claim that $(x_j)_{j=1}^{\infty}$ is equivalent to the canonical basis of $c_0$.

Let
\begin{equation*}
        0\ne a=(a_1,\ldots,a_m,0,0,\ldots)\in c_{00}(\mathbb Q(i)).
\end{equation*}
By \eqref{p2-second:eq:Ka-choice} and \eqref{p2-second:eq:Ka-small},
\begin{equation*}
\begin{aligned}
        \left\|\sum_{j=1}^{m}a_jx_j\right\|
        &=\left\|\sum_{j=1}^{m}a_jS_jx\right\|\\
        &\leq\left\|\left(\sum_{j=1}^{m}a_jS_j+K_a\right)x\right\|+\|K_ax\|\\
        &\leq (C+2)\max_{1\leq j\leq m}|a_j|.
\end{aligned}
\end{equation*}
For the lower estimate, choose $i\in\{1,\ldots,m\}$ such that
\begin{equation*}
        |a_i|=\max_{1\leq j\leq m}|a_j|.
\end{equation*}
Then
\begin{equation*}
        T_i\left(\sum_{j=1}^{m}a_jx_j\right)=a_ix+\sum_{j=1}^{m}a_jR_{ij}x.
\end{equation*}
Using \eqref{p2-second:eq:Rij-small} and $\|x\|=1$, we get
\begin{equation*}
        \left\|T_i\left(\sum_{j=1}^{m}a_jx_j\right)\right\|\geq \max_{1\leq j\leq m}|a_j|-\max_{1\leq j\leq m}|a_j|\sum_{j=1}^{m}2^{-j-1}.
\end{equation*}
Since
\begin{equation*}
        \sum_{j=1}^{m}2^{-j-1}\leq \sum_{j=1}^{\infty}2^{-j-1}=\frac12,
\end{equation*}
we obtain
\begin{equation*}
        \left\|T_i\left(\sum_{j=1}^{m}a_jx_j\right)\right\|\geq \frac12\max_{1\leq j\leq m}|a_j|.
\end{equation*}
Since $\|T_i\|\leq D$, it follows that
\begin{equation*}
        \left\|\sum_{j=1}^{m}a_jx_j\right\|\geq \frac{1}{2D}\max_{1\leq j\leq m}|a_j|.
\end{equation*}

Since the estimates are continuous in the finitely many coefficients involved, they extend from $c_{00}(\mathbb Q(i))$ to all finitely supported complex scalar families. Hence, for every $m\in\N$ and every $a_1,\ldots,a_m\in\C$,
\begin{equation*}
        \frac{1}{2D}\max_{1\leq j\leq m}|a_j|
        \leq
        \left\|\sum_{j=1}^{m}a_jx_j\right\|
        \leq
        (C+2)\max_{1\leq j\leq m}|a_j|.
\end{equation*}
Define $U:c_{00}\to X$ by
\begin{equation*}
        Ue_j=x_j \qquad(j\in\N).
\end{equation*}
The upper estimate shows that $U$ is bounded for the $c_0$ norm, so it extends uniquely to a bounded operator $U:c_0\to X$. The lower estimate shows that $U$ is bounded below, hence injective with closed range. Therefore $U(c_0)$ is a subspace of $X$ isomorphic to $c_0$. This finishes the proof.
\end{proof}

\subsection{A nonseparable space containing $c_0$ has nonsimple Calkin algebra}
\label{p2-second:sec:c0-destroys-simplicity}

The previous subsection shows that the shift relations force a copy of $c_0$ inside any sufficiently large Banach space whose Calkin algebra realises the model algebra. We now explain why this is incompatible with topological simplicity of the Calkin algebra. The point is that, in a nonseparable Banach space, the separable-range operators form a proper closed operator ideal larger than the compact operators as soon as $X$ contains a copy of $c_0$. After quotienting by the compact operators, this gives a nonzero proper closed two-sided ideal in the Calkin algebra.

\begin{lemma}
\label{p2-second:lem:c0-destroys-simplicity}
Let $X$ be a nonseparable Banach space. If $X$ contains an isomorphic copy of $c_0$, then $\PtwoQ(X)$ has a nonzero proper closed two-sided ideal.
\end{lemma}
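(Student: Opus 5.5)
The natural candidate is the ideal of separable-range operators, taken modulo the compacts. Let
\begin{equation*}
        \PtwoS(X)=\{T\in\PtwoB(X):\overline{T(X)}\text{ is separable}\}.
\end{equation*}
As in the proof of \Cref{p2-first:lem:matrix-obstruction}, this is a closed two-sided ideal of $\PtwoB(X)$. It is closed under left and right composition, and norm limits of separable-range operators have separable range, since the closed span of countably many separable ranges is separable. It contains $\PtwoK(X)$, and it is proper because $I_X\notin\PtwoS(X)$ when $X$ is nonseparable. Hence $\PtwoS(X)/\PtwoK(X)$ is a closed two-sided ideal of $\PtwoQ(X)$. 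It is proper, because its preimage under the quotient map is $\PtwoS(X)\neq\PtwoB(X)$. It remains to show that it is nonzero, that is, to produce a noncompact operator with separable range.

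The key step is to use $c_0$ to build such an operator. Let $U\colon c_0\to X$ be an isomorphic embedding, and set $x_j=Ue_j$; this is a bounded sequence with no norm-convergent subsequence. I would aim for a bounded operator $P\colon X\to c_0$ with $P$ noncompact. Then $UP$ has separable range, contained in $U(c_0)$. Moreover, if $P y_j=e_j$ for some bounded $(y_j)$, then $UPy_j=x_j$, so $UP$ is not compact.

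To obtain $P$, I would use Sobczyk's theorem, which is only available for separable superspaces. So first choose a separable closed subspace $Z\subset X$ containing $U(c_0)$; for instance, $Z=U(c_0)$ itself. The difficulty is that $Z$ need not be complemented in the nonseparable space $X$. The fix is to extend the functionals directly rather than project. The biorthogonal functionals $e_j^*\circ U^{-1}$ on $U(c_0)$ form a bounded, weak$^*$-null sequence in $U(c_0)^*$. Extend them by Hahn--Banach to $f_j\in X^*$ with $\sup_j\|f_j\|<\infty$; the obstacle is that the $f_j$ need not be weak$^*$-null on $X$, so $x\mapsto(f_j(x))_j$ need not map into $c_0$.

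This is the main point. Instead of insisting on a $c_0$-valued map, I would take $\Theta\colon X\to\ell_\infty$, $\Theta x=(f_j(x))_j$, which is bounded. Then compose with a map $R\colon\ell_\infty\to X$ that is bounded and has separable range. The problem is that $\ell_\infty$ is nonseparable, so an embedding $\ell_\infty\to X$ need not exist, and in any case would not give separable range. The alternative, following Sobczyk's argument, is to modify the $f_j$ so that they become weak$^*$-null on $X$. Since $X$ is nonseparable, the weak$^*$ topology on $B_{X^*}$ need not be metrizable, so the usual argument fails. What does work is to restrict to separable subspaces. For each separable $W\supseteq U(c_0)$, Sobczyk gives a projection $W\to U(c_0)$ of norm at most $2$. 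A compactness (ultrafilter or Tychonoff) argument over the directed family of separable subspaces then yields a norm-at-most-$2$ operator $P\colon X\to U(c_0)^{**}\cong\ell_\infty$ extending the identity. This again lands in $\ell_\infty$, not in $c_0$.

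Given this obstacle, the more robust route is to avoid complementation entirely and instead produce a noncompact separable-range operator $X\to X$ via weak$^*$ sequences. If $X^*$ contains a bounded weak$^*$-null sequence $(g_j)$ and a bounded sequence $(y_j)$ in $X$ with $|g_j(y_j)|\geq\delta>0$, then $x\mapsto\sum_j g_j(x)\,x_j$ is bounded into $U(c_0)$, because $(g_j(x))_j\in c_0$ and $(x_j)$ is $c_0$-equivalent. This operator has separable range and is noncompact. The weak$^*$-null sequence is the hard part. I would try to obtain it by passing to a subsequence of $(f_j)$ using Rosenthal's disjointification lemma applied to the $\ell_\infty$-valued map $\Theta$, extracting a subsequence along which the functionals converge weak$^*$ on $X$ to some $f$. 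Then $g_j=f_{2j}-f_{2j+1}$ is weak$^*$-null, and $g_j(x_{2j})=1$ gives the required bound below. If weak$^*$ sequential compactness of $B_{X^*}$ fails, this step is exactly where I expect the argument to need care, possibly by replacing $X$ with a suitable quotient or subspace.
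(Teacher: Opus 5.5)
Your reduction is correct and matches the paper. The ideal is $\PtwoS(X)/\PtwoK(X)$, properness is immediate, and everything comes down to a noncompact operator on $X$ with separable range. Your ``robust route'' $x\mapsto U\bigl((g_j(x))_j\bigr)$ is exactly the operator $iJ$ of the paper.

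The genuine gap is the one you flag yourself: you never produce a bounded weak$^*$-null sequence $(g_j)\subset X^*$ with $\inf_j\|g_j\|>0$, and your proposed mechanism cannot produce it. For nonseparable $X$ the ball $B_{X^*}$ need not be weak$^*$ sequentially compact, and Rosenthal's disjointification lemma does not yield weak$^*$-convergent subsequences. Take $X=\ell_\infty$ with $U$ the inclusion of $c_0$. The coordinate functionals are admissible Hahn--Banach extensions, and no subsequence of them converges weak$^*$. Worse, your normalisation $g_j(x_{2j})=1$ fails for every choice of extensions. Since $\ell_\infty$ is a Grothendieck space, a weak$^*$-null sequence in $\ell_\infty^*$ is weakly null. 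Its restrictions to $c_0$ are then weakly null in $\ell_1$, hence norm null by the Schur property, which forces $g_j(x_{2j})\to0$. So the vectors on which the $g_j$ are large must in general lie outside $U(c_0)$. Passing to a subspace of $X$ is no remedy, since the operator must be defined on all of $X$.

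The missing ingredient is the Josefson--Nissenzweig theorem: every infinite-dimensional Banach space $X$ admits $(f_n)\subset X^*$ with $\|f_n\|=1$ and $f_n\to0$ weak$^*$. The paper applies it directly. The map $Jx=(f_n(x))_n$ is bounded from $X$ into $c_0$. It is noncompact, because $\sup_{\|x\|\leq1}|(Jx)_n|=\|f_n\|=1$ for every $n$, whereas relatively compact subsets of $c_0$ have uniformly vanishing tails. Composing with the embedding of $c_0$ gives a noncompact operator with separable range.

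An input of this strength cannot be avoided. Suppose $T\colon X\to Y$ is noncompact with $Y$ separable; then $T^*$ is noncompact. Choose $(y_n^*)\subset B_{Y^*}$ such that $(T^*y_n^*)$ has no norm-convergent subsequence. Pass to a weak$^*$-convergent subsequence, which is possible since $B_{Y^*}$ is weak$^*$-metrizable, and subtract its limit $y^*$. After a further subsequence, the vectors $T^*(y_n^*-y^*)$ form a seminormalised weak$^*$-null sequence in $X^*$. So the step you left open already contains the conclusion of Josefson--Nissenzweig for $X$, and the Sobczyk and $\ell_\infty$-valued detours can be discarded.
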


\begin{proof}
Let $i:c_0\hookrightarrow X$ be an isomorphic embedding. By the Josefson-Nissenzweig theorem, choose $(f_n)_{n=1}^{\infty}\subset X^*$ such that $\|f_n\|=1$ for every $n$ and $f_n(x)\to0$ for every $x\in X$. Define
\begin{equation*}
        J:X\to c_0,\qquad Jx=(f_n(x))_{n=1}^{\infty}.
\end{equation*}
This is bounded. It is not compact: compact subsets of $c_0$ have uniformly vanishing tails, whereas
\begin{equation*}
        \sup_{\|x\|\leq1}|(Jx)_n|=\|f_n\|=1
\end{equation*}
for every $n\in\N$. Hence $iJ:X\to X$ is a noncompact operator with separable range.

Let
\begin{equation*}
        \PtwoS(X)=\{T\in\PtwoB(X):\overline{T(X)}\text{ is separable}\}.
\end{equation*}
Then $\PtwoS(X)$ is a closed two-sided ideal of $\PtwoB(X)$, and $\PtwoK(X)\subset\PtwoS(X)$. Since $X$ is nonseparable, $I_X\notin\PtwoS(X)$, so $\PtwoS(X)\ne\PtwoB(X)$. Since $iJ\in\PtwoS(X)\setminus\PtwoK(X)$, we have
\begin{equation*}
        \PtwoK(X)\subsetneq\PtwoS(X)\subsetneq\PtwoB(X).
\end{equation*}
Thus $\PtwoS(X)/\PtwoK(X)$ is a nonzero proper closed two-sided ideal of $\PtwoQ(X)$.
\end{proof}

\subsection{Conclusion of the second proof}
\label{p2-second:sec:second-conclusion}

\begin{proof}[Second proof of \Cref{p2:thm:main}]
Let $A=A_\lambda$. By \Cref{p2-second:lem:density-A}, $\Ptwodens(A)=\lambda$, and by construction $A$ is topologically simple. Suppose, toward a contradiction, that there is a Banach-algebra isomorphism
\begin{equation*}
        \Phi:A\to\PtwoQ(X)
\end{equation*}
for some Banach space $X$. Then $\PtwoQ(X)$ is topologically simple. \Cref{p2-second:lem:density-trap} gives
\begin{equation*}
        \Ptwodens(X)\geq\lambda>2^{\aleph_0}.
\end{equation*}

Choose pairwise distinct elements $\alpha_j\in\Gamma$, $j\in\N$, and define
\begin{equation*}
        \sigma_j=S_{\alpha_j}+M, \qquad \tau_j=T_{\alpha_j}+M \qquad(j\in\N),
\end{equation*}
as elements of $A=B_\lambda/M$. Then, for all $i,j\in\N$,
\begin{equation*}
        \tau_i\sigma_j=\delta_{ij}1_A.
\end{equation*}
Moreover, if $m\in\N$ and $a_1,\ldots,a_m\in\C$, then \eqref{p2-second:eq:shift-c0-estimate}, followed by passage to the quotient, gives
\begin{equation*}
        \left\|\sum_{j=1}^{m}a_j\sigma_j\right\|
        \leq
        \left\|\sum_{j=1}^{m}a_jS_{\alpha_j}\right\|
        =
        \max_{1\leq j\leq m}|a_j|.
\end{equation*}
Also,
\begin{equation*}
        \sup_{j\in\N}\|\tau_j\|\leq1.
\end{equation*}

Set
\begin{equation*}
        s_j=\Phi(\sigma_j), \qquad t_j=\Phi(\tau_j) \qquad(j\in\N).
\end{equation*}
Then, for all $i,j\in\N$,
\begin{equation*}
        t_is_j=\delta_{ij}1_{\PtwoQ(X)}.
\end{equation*}
Furthermore, for every $m\in\N$ and every $a_1,\ldots,a_m\in\C$,
\begin{equation*}
        \left\|\sum_{j=1}^{m}a_js_j\right\|
        =
        \left\|\Phi\left(\sum_{j=1}^{m}a_j\sigma_j\right)\right\|
        \leq
        \|\Phi\|\max_{1\leq j\leq m}|a_j|.
\end{equation*}
Finally,
\begin{equation*}
        \sup_{j\in\N}\|t_j\|\leq \|\Phi\|.
\end{equation*}
\Cref{p2-second:lem:column-forces-c0} implies that $X$ contains an isomorphic copy of $c_0$. Since $\Ptwodens(X)>2^{\aleph_0}$, the space $X$ is nonseparable. Therefore \Cref{p2-second:lem:c0-destroys-simplicity} implies that $\PtwoQ(X)$ is not topologically simple. This contradicts the topological simplicity of $\PtwoQ(X)$, and completes the proof.
\end{proof}

\end{problempaperbody}

\clearpage
\providecommand{\PfourField}{\mathbb K}
\providecommand{\PfourF}{\mathbb F}
\providecommand{\Pfourran}{\operatorname{ran}}
\providecommand{\Pfourcodim}{\operatorname{codim}}

\problempaper{Problem 3. Strict cosingularity and adjoints}
\label{probpaper:separable-range-strict-cosingularity}

\begin{problemabstract}
Let $X$ and $Y$ be Banach spaces and let $T \colon X\to Y$ be bounded. We prove that, when $Y$ is separable, $T$ is strictly cosingular if and only if $T^*:Y^*\to X^*$ is strictly singular. 
\end{problemabstract}

\problemcontents

\begin{problempaperbody}

\section{Statement and notation}
\label{p4:sec:statement}

Throughout, we work over the scalar field $\mathbb K$, which is either $\R$ or $\C$. Unless explicitly stated otherwise, the infinite-dimensional subspaces used in the operator-theoretic definitions and theorem statements are norm closed.

If $E\subset X^*$ is a subspace, its \emph{preannihilator} in $X$ is
\begin{equation*}
        E^\perp=\{x\in X:e(x)=0\text{ for all }e\in E\}.
\end{equation*}
If $M\subset X$ is a subspace, its \emph{annihilator} in $X^*$ is
\begin{equation*}
        M^\perp=\{x^*\in X^*:x^*(m)=0\text{ for all }m\in M\}.
\end{equation*}
We write $\sigma(X^*,X)$ for the weak-star topology on $X^*$.

\begin{definition}
\label{p4:def:strictly-singular-cosingular}
Let $S \colon E\to F$ be a bounded operator between Banach spaces. We say that $S$ is \emph{strictly singular} if $S$ is not bounded below on any infinite-dimensional subspace of $E$.

Let $T \colon X\to Y$ be bounded. We say that $T$ is \emph{strictly cosingular} if, for every infinite-codimensional subspace $M\subset Y$, the operator
\begin{equation*}
        Q_M T \colon X\to Y/M
\end{equation*}
is not surjective, where $Q_M:Y\to Y/M$ denotes the quotient map.
\end{definition}

The terminology goes back to Pe{\l}czy\'nski \cite{Pelczynski1965}. We shall use the standard dual formulation of strict cosingularity: $T \colon X\to Y$ is strictly cosingular if and only if $T^*$ is not bounded below on any infinite-dimensional weak-star closed subspace of $Y^*$. This is recalled, for instance, in \cite[Section~3]{AndroulakisBeanland2008}. In particular,
\begin{equation*}
        T^*\text{ strictly singular}\quad\Longrightarrow\quad T\text{ strictly cosingular}.
\end{equation*}
The converse is false in general. The classical counterexample, due to Pe{\l}czy\'nski, is the canonical inclusion $c_0\hookrightarrow \ell_\infty$: it is strictly cosingular, while its adjoint is not strictly singular; see \cite{Pelczynski1965} and also \cite[Section~3]{AndroulakisBeanland2008}. The result below shows that the converse does hold when the range space is separable.

\begin{theorem}
\label{p4:thm:main}
    Let $X$ be a Banach space, let $Y$ be a separable Banach space, and let $T \colon X\to Y$ be bounded. Then $T$ is strictly cosingular if and only if $T^*$ is strictly singular.
\end{theorem}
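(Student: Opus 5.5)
The plan is to prove only the nontrivial implication. As recalled above, $T$ is strictly cosingular if and only if $T^*$ is not bounded below on any infinite-dimensional weak-star closed subspace of $Y^*$, so $T^*$ strictly singular already implies $T$ strictly cosingular. For the converse, assume that $T^*$ is bounded below, say $\|T^*f\|\geq c\|f\|$, on some infinite-dimensional closed subspace $E\subset Y^*$. The goal is to produce an infinite-dimensional \emph{weak-star closed} subspace $W\subset Y^*$ on which $T^*$ is still bounded below.

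\emph{Reduction to a restriction map.} Let $Z=\overline{T(X)}\subset Y$, which is separable, and let $R\colon Y^*\to Z^*$ be the restriction map. For every $g\in Y^*$,
\begin{equation*}
        \|T^*g\|=\sup_{x\in B_X}|g(Tx)|=\|Rg\|_{Z^*}.
\end{equation*}
So the hypothesis says that $R$ is bounded below on $E$. Moreover $R$ is weak-star to weak-star continuous. Being bounded below by $T^*$ is the same as being bounded below by $R$, so we may work entirely with the separable preduals $Y$ and $Z$.

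\emph{Extracting a weak-star null sequence.} Since $E$ is infinite-dimensional, Riesz's lemma gives a $1/2$-separated sequence in $S_E$. Since $Y$ is separable, $B_{Y^*}$ is weak-star sequentially compact, so we may pass to a weak-star convergent subsequence. Its consecutive differences $d_n$ then lie in $E$, are weak-star null, and satisfy $1/2\leq\|d_n\|\leq 2$. Hence $(Rd_n)$ is a seminormalized weak-star null sequence in $Z^*$, with $\|Rd_n\|\geq c/2$.

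\emph{Weak-star basic subsequence with a common biorthogonal system.} Apply the Johnson--Rosenthal theorem in $Z^*$ (with $Z$ separable). It yields a subsequence, still denoted $(Rd_n)$, that is weak-star basic. That is, there are $z_i\in Z$ with $Rd_n(z_i)=\delta_{ni}$, and the projections $P'_ng=\sum_{i\leq n}g(z_i)Rd_i$ satisfy $\sup_n\|P'_n\|\leq K$ on the weak-star closed span of $(Rd_n)$. Since $d_n(z_i)=Rd_n(z_i)$, the same $z_i\in Z\subset Y$ are biorthogonal to $(d_n)$ in $Y$. I would refine the Johnson--Rosenthal construction (or apply it directly in $Y^*$, using the same perturbation argument) so that $(d_n)$ is simultaneously weak-star basic in $Y^*$ with these coefficient functionals. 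In particular, every $f$ in the weak-star closed span $W$ of $(d_n)$ would be the weak-star limit of the bounded sequence $\sum_{i\leq n}f(z_i)d_i$.

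For $f\in W$, $R$ maps these partial sums to $P'_n(Rf)$, because $R$ is weak-star continuous and linear. Since $R$ is bounded below on $\operatorname{span}\{d_n\}\subset E$, we get
\begin{equation*}
        \Bigl\|\sum_{i\leq n}f(z_i)d_i\Bigr\|\leq c^{-1}\bigl\|P'_n Rf\bigr\|\leq c^{-1}K\|Rf\|.
\end{equation*}
Weak-star lower semicontinuity of the norm then gives $\|f\|\leq c^{-1}K\|T^*f\|$ for every $f\in W$. So $T^*$ is bounded below on the infinite-dimensional weak-star closed subspace $W$, and $T$ is not strictly cosingular.

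\emph{Main obstacle.} The hard step is the simultaneous weak-star basicity. We need every element of the weak-star closed span in $Y^*$ to be recovered as the weak-star limit of its own partial sums, with coefficients computed by the \emph{same} vectors $z_i$ that give bounded projections in $Z^*$. Without this, lower semicontinuity points the wrong way and the estimate does not pass to the weak-star closure. I would first try to run the Johnson--Rosenthal gliding-hump and perturbation construction once, choosing finite-dimensional pieces of $Z$ (hence of $Y$) that norm the relevant finite-dimensional subspaces of both $Y^*$ and $Z^*$ at each stage. As a fallback, I would take $W$ to be the annihilator in $Y^*$ of a suitable subspace of $Y$, namely the intersection of the kernels of the coefficient functionals off the chosen block.
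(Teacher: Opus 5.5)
\textbf{Verdict: there is a genuine gap.} The reduction to the restriction map is wrong, and the step you call the main obstacle is where the whole content of the theorem lies.

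\textbf{The reduction to $R$ rests on a false identity.} For $g\in Y^*$ one has $\|T^*g\|=\sup_{x\in B_X}|g(Tx)|$. This equals $\|g|_Z\|_{Z^*}$ for all $g$ only when $\overline{T(B_X)}=B_Z$. In general you only have $\|T^*g\|\le\|T\|\,\|Rg\|$. That inequality does show $R$ is bounded below on $E$, with constant $c/\|T\|$ rather than $c$. But it points the wrong way for your last line, where $\|f\|\le c^{-1}K\|Rf\|$ becomes $\|f\|\le c^{-1}K\|T^*f\|$. To see that nothing survives, take $T$ with dense range. Then $Z=Y$ and $R$ is the identity of $Y^*$. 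Your Johnson--Rosenthal step then only says that $(d_n)$ has a weak-star basic subsequence, and the final estimate says nothing about $T^*$. So the argument never transfers the lower bound of $T^*$ from $\operatorname{span}\{d_n\}$ to its weak-star closure.

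\textbf{Repairing it needs the unproved step.} You would have to work with $T^*$ itself. Extract from $(T^*d_n)$ a subsequence with biorthogonal vectors $a_i\in X$ and uniformly bounded partial-sum projections in the $X^*$-norm; note that $X$ need not be separable. Then show that $(d_n)$ is weak-star basic in $Y^*$ with biorthogonal system $(Ta_i)$, modulo the preannihilator of the weak-star closed span. This is not a routine refinement. Weak-star basicity of $(d_n)$ plus a lower bound for $T^*$ on its norm-closed span $[d_n]$ does not suffice.

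For example, let $Y=\ell_1$, $d_n=e_n^*\in\ell_\infty$, and $T\colon\ell_1(\{(i,k):i<k\})\to\ell_1$ with $Te_{(i,k)}=e_i-e_k$. Then $\|T^*a\|_\infty=\sup_{i<k}|a_i-a_k|\ge\|a\|_\infty$ for $a\in c_0=[d_n]$. Yet the weak-star closed span of $(d_n)$ is $\ell_\infty$, which contains $\mathbf{1}\in\ker T^*$. Only for a co-infinite subsequence $M$, whose biorthogonals can be taken as $e_n-e_k\in T(X)$ with $k\notin M$, is $T^*$ bounded below on $\ell_\infty(M)$. The subsequence and the biorthogonals must be chosen compatibly with $T$, and your proposal gives no mechanism for doing so.

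\textbf{How the paper does it.} The paper avoids bases altogether. Set $U=(T^*|_E)^{-1}$ on $T^*(E)$. Diagonalise over a countable dense rational-linear subset $\{y_n\}$ of $Y$:
\begin{itemize}
\item choose $x_n\in X$ with $e(x_n)=(Ue)(y_n)$ for $e\in\operatorname{span}\{e_1,\dots,e_{n-1}\}$ and $\|x_n\|\le2\|U\|\,\|y_n\|$;
\item then choose a new unit vector $e_n\in T^*(E)$ in the common kernel of the functionals $e\mapsto(Ue)(y_j)-e(x_j)$, $j\le n$.
\end{itemize}
On $E_0=\overline{\operatorname{span}}\{e_n\}$ these identities then hold for every $j$. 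So $y_n\mapsto qx_n$ extends to a bounded $A\colon Y\to P=X/(E_0)^\perp$, where $(E_0)^\perp\subset X$ is the preannihilator and $q$ the quotient map, with $A^*Je=Ue$ on $E_0$. Weak-star density of $J(E_0)$ in $P^*$ then gives $AT=q$. The weak-star closed witness is $W=A^*(P^*)=(\ker A)^\perp$, which is essentially the annihilator your fallback was reaching for.
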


\section{Proof strategy and organisation}
\label{p4:sec:strategy}

The proof is organised around the following idea. Strict cosingularity is naturally detected by the adjoint on weak-star closed subspaces of $Y^*$, while strict singularity asks about arbitrary infinite-dimensional subspaces of $Y^*$. Thus the difficult implication is to start with an arbitrary subspace on which $T^*$ is bounded below and manufacture a weak-star closed witness.

In \Cref{p4:sec:preliminaries}, we record two elementary reductions. First, surjectivity of an operator is equivalent to its adjoint being bounded below. Second, strict cosingularity of $T$ is equivalent to saying that $T^*$ is not bounded below on any infinite-dimensional weak-star closed subspace of $Y^*$.

In \Cref{p4:sec:preadjoint-extraction}, we prove the main technical lemma. It uses separability of $Y$ to diagonalise over a countable dense linear subspace of $Y$. The conclusion is that, after passing to an infinite-dimensional subspace $E_0\subset X^*$, a given operator $U \colon E\to Y^*$ becomes the restriction of an adjoint $A^*$, where $A \colon Y\to X/(E_0)^\perp$.

In \Cref{p4:sec:factorization}, we apply this lemma with $E=T^*(Z)$, where $T^*$ is bounded below on an infinite-dimensional subspace $Z\subset Y^*$. The extracted adjoint identity gives a quotient factorization
\begin{equation*}
        AT=q:X\to X/(E_0)^\perp.
\end{equation*}
Since $q$ is onto an infinite-dimensional quotient, this produces an infinite-codimensional quotient of $Y$ on which $T$ is onto. Hence $T$ is not strictly cosingular.

\section{Preliminaries}
\label{p4:sec:preliminaries}

We start recording two standard facts which will be used throughout the proof. Both are classical and elementary consequences of basic Banach space duality; we include the details for completeness and to fix the precise form in which they will be used.

\begin{lemma}[Surjectivity and the adjoint]
\label{p4:lem:surjectivity-adjoint}
Let $S \colon X\to Z$ be bounded. Then $S$ is surjective if and only if $S^*$ is bounded below.
\end{lemma}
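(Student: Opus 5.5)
The plan is to prove the two implications separately, both resting on the closed range theorem and the Hahn--Banach theorem.

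For ``surjective $\Rightarrow$ bounded below'', assume $S$ is onto. By the open mapping theorem there is $c>0$ such that every $z\in B_Z$ has the form $z=Sx$ with $\|x\|\leq c$. Given $z^*\in Z^*$, this yields
\begin{equation*}
        \|z^*\|=\sup_{z\in B_Z}|z^*(z)|\leq\sup_{\|x\|\leq c}|z^*(Sx)|=c\|S^*z^*\|.
\end{equation*}
Hence $S^*$ is bounded below with constant $1/c$.

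For the converse, assume $\|S^*z^*\|\geq\delta\|z^*\|$ for all $z^*\in Z^*$. Injectivity of $S^*$ gives $\overline{S(X)}=Z$: if the range were not dense, Hahn--Banach would supply a nonzero $z^*$ vanishing on $S(X)$, and then $S^*z^*=0$. It remains to show that $S(X)$ is closed, and this is the step I expect to need the most care.

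I would first try to show $\delta B_Z\subset\overline{S(B_X)}$. Suppose $z_0\in\delta B_Z$ lies outside the closed absolutely convex set $\overline{S(B_X)}$. Hahn--Banach separation gives $z^*$ with
\begin{equation*}
        \operatorname{Re}z^*(z_0)>\sup_{x\in B_X}|z^*(Sx)|=\|S^*z^*\|\geq\delta\|z^*\|.
\end{equation*}
This contradicts $|z^*(z_0)|\leq\delta\|z^*\|$.

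The standard successive-approximation argument from the proof of the open mapping theorem then upgrades $\delta B_Z\subset\overline{S(B_X)}$ to $(\delta/2)B_Z\subset S(2B_X)$, using completeness of $X$, and surjectivity follows. Alternatively, one could quote the closed range theorem directly: $S^*$ bounded below has closed range, so $S$ has closed range, which together with density gives surjectivity.
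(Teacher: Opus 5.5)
Your proof is correct. The forward direction matches the paper's open-mapping estimate exactly.

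For the converse, the paper does not argue directly. It observes that a bounded-below $S^*$ is injective with closed range, invokes the closed range theorem to conclude that $\operatorname{ran} S$ is closed, and then uses $(\operatorname{ran} S)^\perp=\ker S^*=\{0\}$ to get density. That is precisely your ``alternatively'' route.

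Your primary route instead reproves the relevant half of the closed range theorem by hand. Separating a point from the closed absolutely convex set $\overline{S(B_X)}$ gives $\delta B_Z\subset\overline{S(B_X)}$. The open-mapping iteration, which is where completeness of $X$ enters, then removes the closure.

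This buys two things:
\begin{itemize}
\item \emph{Self-containment, with only Hahn--Banach and the iteration used.}
\item \emph{A quantitative converse.} Every $z\in Z$ has a preimage of norm at most a fixed multiple of $\|z\|/\delta$, mirroring the constant in the forward direction.
\end{itemize}
It also makes your separate density step superfluous, since the inclusion already yields surjectivity. The paper's route is shorter, but it outsources exactly this work to the closed range theorem, a standard proof of which runs through essentially the same separation-plus-iteration argument.

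One harmless remark: run carefully, the iteration gives $\delta B_Z\subset S(2B_X)$. Your stated $(\delta/2)B_Z\subset S(2B_X)$ is true but loses a factor of $2$, which does not affect the conclusion.
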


\begin{proof}
Suppose first that $S$ is surjective. By the open mapping theorem, there is $C>0$ such that, for every $z\in Z$, one can choose $x\in X$ with $Sx=z$ and $\|x\|\leq C\|z\|$. Hence, for every $z^*\in Z^*$,
\begin{equation*}
        \|z^*\|=\sup_{\|z\|\leq 1}|z^*(z)|\leq C\sup_{\|x\|\leq 1}|z^*(Sx)|=C\|S^*z^*\|.
\end{equation*}
Thus $S^*$ is bounded below.

Conversely, suppose that $S^*$ is bounded below. Then $S^*$ is injective and has closed range. By the closed range theorem, $\Pfourran S$ is closed. Also
\begin{equation*}
        (\Pfourran S)^\perp=\ker S^*=\{0\}.
\end{equation*}
Therefore $\overline{\Pfourran S}=Z$, and since $\Pfourran S$ is closed, $S$ is surjective.
\end{proof}

\begin{proposition}[Weak-star characterization of strict cosingularity]
\label{p4:prop:wstar-characterization}
Let $T \colon X\to Y$ be bounded. Then the following are equivalent.
\begin{enumerate}[label=\textup{(\roman*)}]
\item\label{p4:item:T-strictly-cosingular} The operator $T$ is strictly cosingular.
\item\label{p4:item:Tstar-no-wstar-below} The operator $T^*$ is not bounded below on any infinite-dimensional weak-star closed subspace of $Y^*$.
\end{enumerate}
\end{proposition}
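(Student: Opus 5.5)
The plan is to pass through the correspondence between closed infinite-codimensional subspaces $M\subset Y$ and infinite-dimensional weak-star closed subspaces $W\subset Y^*$, given by $M\mapsto M^\perp$ and $W\mapsto W^\perp$. I will first record the standard duality facts. For a closed subspace $M\subset Y$, the space $M^\perp$ is weak-star closed, $(Y/M)^*$ is isometrically identified with $M^\perp$ via $Q_M^*$, and $\dim M^\perp=\dim (Y/M)$. In particular, $M$ is infinite-codimensional if and only if $M^\perp$ is infinite-dimensional. Conversely, every weak-star closed subspace $W\subset Y^*$ satisfies $W=(W_\perp)^\perp$ by the bipolar theorem (a Hahn--Banach separation argument in the weak-star topology, whose dual is $Y$). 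Hence every infinite-dimensional weak-star closed $W$ arises as $M^\perp$ with $M=W_\perp$ closed and infinite-codimensional. One can restrict to closed $M$ in the definition of strict cosingularity, since $Y/M$ is meant as a normed quotient, and replacing $M$ by $\overline M$ does not affect the relevant statements.

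The key computation is the adjoint of $Q_MT$. It is $(Q_MT)^*=T^*Q_M^*$, and under the identification $(Y/M)^*\cong M^\perp$ this is exactly the restriction $T^*|_{M^\perp}\colon M^\perp\to X^*$. Since $Q_M^*$ is an isometry onto $M^\perp$, the operator $(Q_MT)^*$ is bounded below if and only if $T^*$ is bounded below on $M^\perp$.

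The equivalence then follows from \Cref{p4:lem:surjectivity-adjoint}. The operator $Q_MT$ is surjective if and only if $T^*|_{M^\perp}$ is bounded below. For (i)$\Rightarrow$(ii), suppose $T^*$ is bounded below on some infinite-dimensional weak-star closed $W$. Set $M=W_\perp$. Then $M$ is infinite-codimensional with $M^\perp=W$, so $Q_MT$ is surjective and $T$ is not strictly cosingular. For (ii)$\Rightarrow$(i), suppose $Q_MT$ is onto for some infinite-codimensional closed $M$. Then $T^*$ is bounded below on $M^\perp$, which is an infinite-dimensional weak-star closed subspace, contradicting (ii).

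The only step needing real care is the identification of $(Y/M)^*$ with $M^\perp$, including the dimension count, together with the bipolar identity $W=(W_\perp)^\perp$ for weak-star closed $W$. Both are standard, and I would just cite or briefly sketch them.
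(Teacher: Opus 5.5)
Your proposal is correct and follows the paper's proof essentially step for step. Both arguments identify $(Y/M)^*$ with $M^\perp$ via $Q_M^*$, compute $(Q_MT)^*=T^*Q_M^*$, and apply \Cref{p4:lem:surjectivity-adjoint}. Both then use the bipolar correspondence $W=(W_\perp)^\perp$ to match closed infinite-codimensional subspaces of $Y$ with infinite-dimensional weak-star closed subspaces of $Y^*$.

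One small caveat concerns your aside that replacing $M$ by $\overline M$ is harmless. That is not accurate in general: closure can destroy infinite codimension, for example when $M$ is dense. Nothing in your argument depends on it, though, since the definition is meant for closed $M$, exactly as you and the paper assume.
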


\begin{proof}
Let $M\subset Y$ be closed, and let
\begin{equation*}
        Q_M \colon Y\to Y/M
\end{equation*}
be the quotient map. The adjoint $Q_M^*$ identifies $(Y/M)^*$ isometrically and weak-star homeomorphically with $M^\perp\subset Y^*$. Moreover,
\begin{equation*}
        (Q_MT)^*=T^*Q_M^*.
\end{equation*}
By \Cref{p4:lem:surjectivity-adjoint}, the map $Q_MT$ is surjective if and only if $(Q_MT)^*$ is bounded below. Through the above identification of $(Y/M)^*$ with $M^\perp$, this is equivalent to $T^*$ being bounded below on $M^\perp$.

It remains only to translate the subspaces. Every $\sigma(Y^*,Y)$-closed subspace $W\subset Y^*$ has the form
\begin{equation*}
        W=(W^\perp)^\perp,
\end{equation*}
and $W$ is infinite-dimensional if and only if $Y/W^\perp$ is infinite-dimensional. Thus closed subspaces $M\subset Y$ of infinite codimension correspond exactly to infinite-dimensional weak-star closed subspaces of $Y^*$. The equivalence follows.
\end{proof}

The next lemma is a finite-dimensional form of Goldstine's theorem. We include a short direct proof.

\begin{lemma}[Finite-dimensional interpolation]
\label{p4:lem:finite-dimensional-interpolation}
Let $F\subset X^*$ be finite-dimensional and let $\varphi\in F^*$. For every $\eta>0$, there exists $x\in X$ such that
\begin{equation*}
        f(x)=\varphi(f) \qquad(f\in F),
\end{equation*}
and
\begin{equation*}
        \|x\|\leq (1+\eta)\|\varphi\|.
\end{equation*}
If $\varphi=0$, one may take $x=0$.
\end{lemma}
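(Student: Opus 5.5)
The plan is a standard duality argument via a quotient of $X$, which gives the exact interpolation at once. Write $F^\perp=\{x\in X: f(x)=0 \text{ for all } f\in F\}$; this is a closed subspace of finite codimension. Let $q\colon X\to X/F^\perp$ be the quotient map. The adjoint $q^*$ identifies $(X/F^\perp)^*$ isometrically with $(F^\perp)^\perp$. Since $F$ is finite-dimensional, hence weak-star closed, the bipolar theorem gives $(F^\perp)^\perp=F$. Therefore $X/F^\perp$ is finite-dimensional, and its dual is isometrically $F$.

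Finite-dimensional spaces are reflexive. So the functional $\varphi\in F^*$, transported through the identification $F\cong (X/F^\perp)^*$, is given by evaluation at a unique element $\xi\in X/F^\perp$. Explicitly,
\begin{equation*}
        f(\xi)=\varphi(f)\qquad(f\in F),
\end{equation*}
where each $f\in F$ is regarded as a functional on $X/F^\perp$; this is legitimate because $f$ vanishes on $F^\perp$. The canonical embedding into the bidual is isometric, so $\|\xi\|_{X/F^\perp}=\|\varphi\|_{F^*}$.

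Next we lift $\xi$. If $\varphi=0$, then $\xi=0$ and we take $x=0$. Otherwise $\|\xi\|>0$, and the definition of the quotient norm gives $x\in X$ with $q(x)=\xi$ and $\|x\|\leq(1+\eta)\|\xi\|=(1+\eta)\|\varphi\|$. For $f\in F$ we then have $f(x)=f(q x)=f(\xi)=\varphi(f)$, which is the required identity.

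The only step needing care is the isometric identification of $F$ with the dual of $X/F^\perp$ via $q^*$ and the bipolar identity $(F^\perp)^\perp=F$. We use that a finite-dimensional subspace of $X^*$ is weak-star closed, so that the annihilator of its preannihilator returns $F$ itself and not a larger space. Everything else is routine.
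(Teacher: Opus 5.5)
Your proof is correct and is essentially the paper's argument. The paper's quotient norm $\|\cdot\|_q$ on $F^*$, defined via $R_F\colon X\to F^*$ (whose kernel is $F^\perp$), is exactly the norm of $X/F^\perp$ transported through $R_F$; where the paper verifies surjectivity of $R_F$ and that $\|\cdot\|_q$ induces the $X^*$-norm on $F$ by hand, you obtain the same facts from the standard isometry $(X/F^\perp)^*\cong (F^\perp)^\perp=F$ and finite-dimensional reflexivity, and then lift with the same $(1+\eta)$ slack.
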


\begin{proof}
Define
\begin{equation*}
        R_F \colon X\to F^*, \qquad (R_Fx)(f)=f(x).
\end{equation*}
The adjoint $R_F^* \colon F^{**}\to X^*$ is, under the canonical identification $F^{**}=F$, the inclusion of $F$ into $X^*$. In particular, $R_F^*$ is an isometry.

First, $R_F$ is onto. Indeed, if $\operatorname{ran}R_F$ were a proper subspace of the finite-dimensional space $F^*$, then there would be a nonzero element $f\in (F^*)^*=F$ which annihilates $\operatorname{ran}R_F$. This would mean that
\begin{equation*}
        0=f(R_Fx)=(R_Fx)(f)=f(x) \qquad(x\in X),
\end{equation*}
which is impossible for a nonzero element $f\in F\subset X^*$.

Equip $F^*$ with the quotient norm
\begin{equation*}
        \|\psi\|_q=\inf\{\|x\|:R_Fx=\psi\}.
\end{equation*}
Thus $\|\psi\|_q$ is the least norm, up to an arbitrarily small error, of a vector $x\in X$ which represents $\psi$ by evaluation on $F$.

We compare this quotient norm with the original dual norm on $F^*$. Let $f\in F$. The dual norm of $f$, when $F^*$ is equipped with $\|\cdot\|_q$, is
\begin{equation*}
        \|f\|_{q,*}=\sup_{\|\psi\|_q\leq 1}|\psi(f)|.
\end{equation*}
By the definition of the quotient norm, taking the supremum over $\|\psi\|_q\leq1$ is the same as taking the supremum over vectors of the form $\psi=R_Fx$ with $\|x\|\leq1$, up to an arbitrarily small enlargement of the unit ball. Therefore
\begin{equation*}
        \|f\|_{q,*}=\sup_{\|x\|\leq 1}|(R_Fx)(f)|.
\end{equation*}
Since $(R_Fx)(f)=f(x)$, we get
\begin{equation*}
        \|f\|_{q,*}=\sup_{\|x\|\leq 1}|f(x)|=\|f\|.
\end{equation*}
Thus the quotient norm on $F^*$ induces on $(F^*)^*=F$ exactly the original norm inherited from $X^*$.

The original norm on $F^*$ also induces this same dual norm on $F$. Since $F^*$ is finite-dimensional, a norm is determined by its dual norm. Hence the quotient norm $\|\cdot\|_q$ and the original norm on $F^*$ coincide. Consequently,
\begin{equation*}
        \inf\{\|x\|:R_Fx=\varphi\}=\|\varphi\|.
\end{equation*}
The desired choice of $x$ follows.
\end{proof}

\section{The preadjoint extraction lemma}
\label{p4:sec:preadjoint-extraction}

The next lemma is the main point of the proof. It says that, after passing to a further infinite-dimensional subspace $E_0\subset E$, the operator $U \colon E\to Y^*$ behaves as though it had a preadjoint. More precisely, once we quotient $X$ by the common kernel $(E_0)^\perp$, each functional $e\in E_0$ becomes a functional $Je$ on $P=X/(E_0)^\perp$, and the lemma constructs an operator $A \colon Y\to P$ such that
\begin{equation*}
        A^*Je=Ue \qquad(e\in E_0).
\end{equation*}
Thus, on $E_0$, the map $U$ is realised by the adjoint of an operator defined on $Y$.

The idea of the proof is a diagonal construction. Since $Y$ is separable, we choose a countable dense linear set $\{y_1,y_2,\ldots\}\subset Y$. At stage $n$, we choose a vector $x_n\in X$ which represents the values of $Ue$ at $y_n$ for all previously chosen functionals $e_1,\ldots,e_{n-1}$. We then choose the next functional $e_n\in E$ so that all earlier interpolation identities remain valid. After this recursive construction, the rule $Ay_n=qx_n$ is well defined on the dense set, because all linear relations among the $y_n$ are killed by the functionals in $E_0$. The resulting operator $A \colon Y\to P$ has adjoint agreeing with $U$ on $E_0$.

\begin{lemma}[Preadjoint extraction]
\label{p4:lem:preadjoint-extraction}
Let $Y$ be separable, $E\subset X^*$ be a closed infinite-dimensional subspace, and $U \colon E\to Y^*$ be bounded. Then there are a closed infinite-dimensional subspace $E_0\subset E$ and a bounded operator
\begin{equation*}
        A\colon Y\to P:=X/(E_0)^\perp
\end{equation*}
such that $\|A\|\leq 2\|U\|$ and
\begin{equation}
\label{p4:eq:preadjoint-identity}
        A^*J e=Ue \qquad(e\in E_0),
\end{equation}
where $J \colon E_0\to P^*$ is the canonical map defined by
\begin{equation*}
        (Je)(qx)=e(x) \qquad(e\in E_0,\ x\in X),
\end{equation*}
and $q \colon X\to P$ is the quotient map.
\end{lemma}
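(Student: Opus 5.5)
We construct the subspace $E_0=\overline{\operatorname{span}}\{e_1,e_2,\ldots\}$ and vectors $x_1,x_2,\ldots\in X$ by one diagonal recursion. The goal is the two-sided interpolation identity
\begin{equation*}
        e_k(x_n)=(Ue_k)(y_n)\qquad(k,n\in\N),
\end{equation*}
where $(y_n)_{n=1}^{\infty}$ enumerates a countable dense subset $D$ of $Y$. We take $D$ to be the $\mathbb Q$-linear span (or the $(\mathbb Q+i\mathbb Q)$-linear span when $\mathbb K=\C$) of a countable dense set. This choice is deliberate: every finite rational combination of the $y_n$ is itself some $y_m$. We may assume $U\ne0$, since otherwise $A=0$ works.

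At stage $n$, suppose $e_1,\ldots,e_{n-1}$ and $x_1,\ldots,x_{n-1}$ have been chosen. First choose $x_n$ by \Cref{p4:lem:finite-dimensional-interpolation}. We apply it with $F=\operatorname{span}\{e_1,\ldots,e_{n-1}\}$, with the functional $\varphi(f)=(Uf)(y_n)$, and with $\eta=1$. This gives $e_k(x_n)=(Ue_k)(y_n)$ for $k<n$. Moreover, since $\|\varphi\|\le\|U\|\,\|y_n\|$, we get $\|x_n\|\le2\|U\|\,\|y_n\|$.

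Next choose $e_n$ so that the identity also holds in the other direction. Consider the finitely many bounded functionals $e\mapsto e(x_j)-(Ue)(y_j)$ on $E$, for $j\le n$. Their common kernel has finite codimension in the infinite-dimensional space $E$. Hence it contains a vector $e_n$ linearly independent of $e_1,\ldots,e_{n-1}$. This yields $e_n(x_j)=(Ue_n)(y_j)$ for $j\le n$, and so the identity holds for all $k,n$. The subspace $E_0$ is then infinite-dimensional. Both sides of $e(x_n)=(Ue)(y_n)$ are bounded and linear in $e$, so the identity extends to all $e\in E_0$ and all $n$.

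Now define $A$ on $D$ by $Ay_n=qx_n$. The key point, and the step I expect to need the most care, is that this rule is well defined, rationally linear and bounded without any control of the quotient norm on $P$ through $E_0$. Well-definedness and linearity come from the interpolation identity. Suppose $\sum_n c_ny_n=y_m$ is a finite rational relation. Then every $e\in E_0$ satisfies
\begin{equation*}
        e\Bigl(\sum_n c_nx_n-x_m\Bigr)=(Ue)\Bigl(\sum_n c_ny_n-y_m\Bigr)=0,
\end{equation*}
so $\sum_n c_nx_n-x_m\in(E_0)^\perp$. Thus $A$ is rationally linear on $D$; in particular $y_n=y_m$ forces $qx_n=qx_m$. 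Boundedness then needs no extra work, because each point of $D$ is itself some $y_m$ and therefore
\begin{equation*}
        \|Ay_m\|\le\|x_m\|\le2\|U\|\,\|y_m\|.
\end{equation*}
Hence $A$ extends by uniform continuity to a bounded operator $Y\to P$ with $\|A\|\le2\|U\|$. The extension is $\mathbb K$-linear, because rational linearity passes to the closure and the rational scalars are dense in $\mathbb K$.

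Finally, for $e\in E_0$ we have
\begin{equation*}
        (A^*Je)(y_n)=(Je)(qx_n)=e(x_n)=(Ue)(y_n).
\end{equation*}
Here $J$ is well defined since $e$ vanishes on $(E_0)^\perp$. Both $A^*Je$ and $Ue$ are continuous functionals on $Y$ that agree on the dense set $D$, so $A^*Je=Ue$, which is \eqref{p4:eq:preadjoint-identity}.
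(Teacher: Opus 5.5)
Your proposal is correct and follows essentially the same route as the paper. It uses the same diagonal recursion: $x_n$ is interpolated via the finite-dimensional lemma with $\eta=1$, and $e_n$ is chosen in the common kernel of the defect functionals $e\mapsto (Ue)(y_j)-e(x_j)$. It takes the same countable rational-linear dense $D$, so that the bound $\|x_m\|\le 2\|U\|\,\|y_m\|$ directly bounds $A$ on $D$, and it uses the same density argument to conclude $A^*Je=Ue$.
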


\begin{proof}
Let
\begin{equation*}
        \PfourF=
        \begin{cases}
        \mathbb Q, & \PfourField=\R,\\
        \mathbb Q+i\mathbb Q, & \PfourField=\C.
        \end{cases}
\end{equation*}
Choose a countable dense $\PfourF$-linear subspace $D\subset Y$, and enumerate it as
\begin{equation*}
        D=\{y_1,y_2,\ldots\}.
\end{equation*}

We recursively construct linearly independent unit vectors $(e_n)_{n=1}^{\infty}$ in $E$ and vectors $(x_n)_{n=1}^{\infty}$ in $X$. Suppose that $e_1,\ldots,e_{n-1}$ have already been chosen, and put
\begin{equation*}
        F_{n-1}=\operatorname{span}\{e_1,\ldots,e_{n-1}\}.
\end{equation*}
Define $\varphi_n\in F_{n-1}^*$ by
\begin{equation*}
        \varphi_n(e)=(Ue)(y_n) \qquad(e\in F_{n-1}).
\end{equation*}
Then
\begin{equation*}
        \|\varphi_n\|\leq \|U\|\|y_n\|.
\end{equation*}
By \Cref{p4:lem:finite-dimensional-interpolation}, choose $x_n\in X$ such that
\begin{equation}
\label{p4:eq:interpolation-step}
        e(x_n)=(Ue)(y_n) \qquad(e\in F_{n-1}),
\end{equation}
and
\begin{equation}
\label{p4:eq:xn-bound}
        \|x_n\|\leq 2\|U\|\|y_n\|.
\end{equation}
If $y_n=0$, we take $x_n=0$.

Having already defined $h_1, \dots, h_{n-1} \in E^*$, define $h_n\in E^*$ by
\begin{equation*}
        h_n(e)=(Ue)(y_n)-e(x_n) \qquad(e\in E).
\end{equation*}
The subspace
\begin{equation*}
        H_n=\bigcap_{j=1}^{n}\ker h_j
\end{equation*}
has finite codimension in the infinite-dimensional space $E$, and is therefore infinite-dimensional. Choose
\begin{equation*}
        e_n\in H_n\setminus F_{n-1}, \qquad \|e_n\|=1.
\end{equation*}
This completes the recursion.

Set
\begin{equation*}
        E_0=\overline{\operatorname{span}}\{e_n:n\in\N\}\subset E.
\end{equation*}
Then $E_0$ is closed and infinite-dimensional. We claim that
\begin{equation}
\label{p4:eq:diagonal-identity}
        (Ue)(y_j)=e(x_j) \qquad(e\in E_0,\ j\in\N).
\end{equation}
Equivalently, for each fixed $j\in\N$, we must show that $h_j$ vanishes on $E_0$.

It is enough first to check this on the vectors $e_i$. If $i<j$, then $e_i\in F_{j-1}$, so \eqref{p4:eq:interpolation-step} gives
\begin{equation*}
        e_i(x_j)=(Ue_i)(y_j),
\end{equation*}
that is, $h_j(e_i)=0$. If $i\geq j$, then $e_i\in H_i\subset\ker h_j$, so again $h_j(e_i)=0$. Thus each $h_j$ vanishes on every $e_i$, and by continuity each $h_j$ vanishes on
\begin{equation*}
        E_0=\overline{\operatorname{span}}\{e_i:i\in\N\}.
\end{equation*}
This proves \eqref{p4:eq:diagonal-identity}.

Let
\begin{equation*}
        P=X/(E_0)^\perp
\end{equation*}
and let $q\colon X\to P$ be the quotient map. Define $A_0 \colon D\to P$ by
\begin{equation}
\label{p4:eq:A0-definition}
        A_0y_n=qx_n \qquad(n\in\N).
\end{equation}
We verify that this is $\PfourF$-linear. In other words, we must check that every $\PfourF$-linear relation among the vectors $y_n$ is respected by the images $qx_n$. Therefore, whenever
\begin{equation*}
        \sum_{j=1}^{m}a_jy_{n_j}=0,
\end{equation*}
with $a_1,\ldots,a_m\in\PfourF$, we must prove that
\begin{equation*}
        \sum_{j=1}^{m}a_jqx_{n_j}=0
\end{equation*}
in $P$. Thus, suppose that
\begin{equation*}
        \sum_{j=1}^{m}a_jy_{n_j}=0, \qquad a_1,\ldots,a_m\in\PfourF.
\end{equation*}
For every $e\in E_0$, equation \eqref{p4:eq:diagonal-identity} gives
\begin{equation*}
        e\left(\sum_{j=1}^{m}a_jx_{n_j}\right)=\sum_{j=1}^{m}a_j(Ue)(y_{n_j})=(Ue)\left(\sum_{j=1}^{m}a_jy_{n_j}\right)=0.
\end{equation*}
Therefore $\sum_{j=1}^{m}a_jx_{n_j}\in(E_0)^\perp$, and hence
\begin{equation*}
        \sum_{j=1}^{m}a_jqx_{n_j}=0.
\end{equation*}
Thus all $\PfourF$-linear relations in $D$ are respected.

If $y\in D$, choose $n\in\N$ with $y=y_n$. By \eqref{p4:eq:xn-bound},
\begin{equation*}
        \|A_0y\|=\|qx_n\|\leq \|x_n\|\leq 2\|U\|\|y\|.
\end{equation*}
Thus $A_0$ extends uniquely to a bounded $\PfourF$-linear map
\begin{equation*}
        A \colon Y\to P
\end{equation*}
with $\|A\|\leq 2\|U\|$. The extension is $\PfourField$-linear: in the real case this follows by approximating real scalars by rationals, and in the complex case by approximating complex scalars by elements of $\mathbb Q+i\mathbb Q$.

Recall that $J\colon E_0\to P^*$ is defined by
\begin{equation*}
        (Je)(qx)=e(x) \qquad(e\in E_0,\ x\in X).
\end{equation*}
This is well defined: if $qx=qx'$, then $x-x'\in(E_0)^\perp$, and since $e\in E_0$ we have $e(x-x')=0$. Hence $e(x)=e(x')$.

Now let $e\in E_0$. For every $n\in\N$, since $A$ extends $A_0$ and $A_0y_n=qx_n$, we have
\begin{equation*}
        (A^*Je)(y_n)=(Je)(Ay_n)=(Je)(qx_n)=e(x_n)=(Ue)(y_n).
\end{equation*}
The set $D= \{y_n:n\in\N\}$ is dense in $Y$, so the continuous functionals $A^*Je$ and $Ue$ agree on all of $Y$. This proves \eqref{p4:eq:preadjoint-identity}.
\end{proof}

\begin{remark}
\label{p4:rem:no-basis-needed}
No basis property is required of the sequence $(e_n)_{n=1}^{\infty}$. Linear independence is enough, because the argument only uses that $E_0=\overline{\operatorname{span}}\{e_n:n\in\N\}$ is infinite-dimensional.
\end{remark}

\section{Quotient factorization and proof of the theorem}
\label{p4:sec:factorization}

We first prove the nontrivial implication in a factorized form.

\begin{theorem}[Quotient factorization]
\label{p4:thm:quotient-factorization}
Let $Y$ be separable and let $T\colon X\to Y$ be bounded. If $T^*$ is bounded below on some infinite-dimensional subspace of $Y^*$, then there exist an infinite-dimensional Banach space $P$, a quotient map $q\colon X\to P$, and an operator $A\colon Y\to P$ such that
\begin{equation*}
        AT=q.
\end{equation*}
In particular, $T$ is not strictly cosingular.
\end{theorem}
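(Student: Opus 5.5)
We take $Z\subset Y^*$ to be an infinite-dimensional subspace on which $T^*$ is bounded below, say $\|T^*z\|\geq c\|z\|$ for $z\in Z$. Replacing $Z$ by its norm closure does not affect this, so we may assume $Z$ is closed. The plan is to feed the inverse of $T^*|_Z$ into \Cref{p4:lem:preadjoint-extraction}.

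Put $E=T^*(Z)\subset X^*$. This is closed, because $T^*$ is bounded below on the closed subspace $Z$, and it is infinite-dimensional. Let $U\colon E\to Y^*$ be the inverse of $T^*|_Z$, viewed as a map into $Y^*$. Then $\|U\|\leq c^{-1}$ and $T^*Ue=e$ for every $e\in E$. Since $Y$ is separable, \Cref{p4:lem:preadjoint-extraction} gives:
\begin{itemize}
\item a closed infinite-dimensional subspace $E_0\subset E$;
\item the space $P=X/(E_0)^\perp$ with quotient map $q$;
\item an operator $A\colon Y\to P$ with $A^*Je=Ue$ for all $e\in E_0$.
\end{itemize}
The space $P$ is infinite-dimensional: $J$ is injective, since $Je=0$ forces $e(x)=0$ for all $x$, and so $P^*\supset J(E_0)$ is infinite-dimensional.

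The key step is to verify $AT=q$. Fix $x\in X$ and $e\in E_0$. Then
\[
(Je)(ATx)=(A^*Je)(Tx)=(Ue)(Tx)=(T^*Ue)(x)=e(x)=(Je)(qx).
\]
Hence $ATx-qx$ is annihilated by $J(E_0)$. We must conclude that $ATx-qx=0$, which requires $J(E_0)$ to separate the points of $P$.

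This separation is the only point needing care. Let $p=qx'\in P$ with $(Je)(p)=e(x')=0$ for all $e\in E_0$. Then $x'\in(E_0)^\perp$, so $p=0$. Thus $J(E_0)$ is total on $P$, and $AT=q$. The weak-star closure of $J(E_0)$ is in fact $(E_0)^{\perp\perp}$, but only totality is needed here.

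For the final assertion, $q$ is surjective, so $AT$ maps $X$ onto $P$. Let $M=\ker A$, which is a closed subspace of $Y$. The induced map $\bar A\colon Y/M\to P$ is injective, and $\bar A Q_MT=AT=q$ is surjective. Hence $\bar A$ is a continuous bijection onto the Banach space $P$ and an isomorphism by the open mapping theorem. It follows that $Y/M\cong P$ is infinite-dimensional, so $M$ has infinite codimension. Moreover $Q_MT=\bar A^{-1}q$ is surjective, so $T$ is not strictly cosingular.

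An alternative route is to note that $(Q_MT)^*$ is bounded below and invoke \Cref{p4:prop:wstar-characterization}. The direct argument above already suffices.
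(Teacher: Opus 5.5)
Your proof is correct and follows the paper's route: the same $E=T^*(Z)$ and $U=(T^*|_Z)^{-1}$ are fed into the preadjoint extraction lemma, and you reach the conclusion through $M=\ker A$ and the open mapping theorem, as the paper does. The only difference is the verification of $AT=q$: you check it pointwise, using that $J(E_0)$ separates the points of $P=X/(E_0)^\perp$, which is immediate; the paper instead compares $(AT)^*$ and $q^*$ on $J(E_0)$ and extends by weak-star density via the bipolar theorem. These are dual forms of the same fact, and yours is slightly more elementary. One small fix to your parenthetical: the weak-star closure of $J(E_0)$ is all of $P^*$, and it is $q^*$ that carries this onto $(E_0)^{\perp\perp}$.
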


\begin{proof}
Suppose that $T^*$ is bounded below on an infinite-dimensional subspace of $Y^*$. Passing to the norm closure of that subspace, choose a closed infinite-dimensional subspace $Z\subset Y^*$ and a constant $c>0$ such that
\begin{equation*}
        \|T^*z\|\geq c\|z\| \qquad(z\in Z).
\end{equation*}
Then
\begin{equation*}
        E=T^*(Z)\subset X^*
\end{equation*}
is closed and infinite-dimensional, and
\begin{equation*}
        U=(T^*|_Z)^{-1}:E\to Z\subset Y^*
\end{equation*}
is bounded.

By \Cref{p4:lem:preadjoint-extraction}, there is a closed infinite-dimensional subspace $E_0\subset E$ such that, if
\begin{equation*}
        P=X/(E_0)^\perp
\end{equation*}
and $q\colon X\to P$ denotes the quotient map, then there is an operator $A\colon Y\to P$ satisfying
\begin{equation}
\label{p4:eq:preadjoint-identity-used}
        A^*Je=Ue \qquad(e\in E_0).
\end{equation}

For $e\in E_0$, it follows from \eqref{p4:eq:preadjoint-identity-used} that
\begin{equation}
\label{p4:eq:adjoint-equality-on-E0}
        (AT)^*Je=T^*A^*Je=T^*Ue=e=q^*Je.
\end{equation}
Here the last equality follows directly from the definition of $J$.

It remains to extend \eqref{p4:eq:adjoint-equality-on-E0} from $E_0$ to all of $P^*$. The adjoint $q^*$ is an isometric weak-star homeomorphism from $P^*$ onto
\begin{equation*}
        q^*(P^*)=((E_0)^\perp)^\perp.
\end{equation*}

Since $q^*Je=e$ for every $e\in E_0$, we have $q^*(J(E_0))=E_0$. Also $q^*$ identifies $P^*$ weak-star homeomorphically with $((E_0)^\perp)^\perp$, and by the annihilator bipolar identity this latter space is $\overline{E_0}^{\,\sigma(X^*,X)}$. Hence $q^*(J(E_0))$ is weak-star dense in $q^*(P^*)$. Pulling back through the weak-star homeomorphism $q^*$, we conclude that $J(E_0)$ is weak-star dense in $P^*$.

The operators $(AT)^*\colon P^*\to X^*$ and $q^*\colon P^*\to X^*$ are weak-star-to-weak-star continuous. By \eqref{p4:eq:adjoint-equality-on-E0}, they agree on $J(E_0)$. Since $J(E_0)$ is weak-star dense in $P^*$, they agree on all of $P^*$. Hence
\begin{equation*}
        (AT)^*p^*=q^*p^* \qquad(p^*\in P^*).
\end{equation*}
Since two bounded operators into $P$ are equal whenever their adjoints agree, it follows that
\begin{equation*}
        AT=q.
\end{equation*}
This proves the first part of the theorem. We now show that this factorization implies that $T$ is not strictly cosingular.

The quotient map $q$ is surjective, so $AT=q$ forces $A$ to be surjective. Put
\begin{equation*}
        M=\ker A.
\end{equation*}
Then $A$ induces an isomorphism
\begin{equation*}
        \widetilde A\colon Y/M\to P.
\end{equation*}
The space $P$ is infinite-dimensional, since $P^*$ contains the infinite-dimensional subspace $J(E_0)$. Hence $M$ has infinite codimension in $Y$. Let $Q_M\colon Y\to Y/M$ be the quotient map. Then we have
\begin{equation*}
        \widetilde A Q_M T=AT=q.
\end{equation*}
It follows that
\begin{equation*}
        Q_M T=\widetilde A^{-1}q.
\end{equation*}
Since $q$ is surjective and $\widetilde A^{-1}$ is onto $Y/M$, the operator $Q_MT$ is surjective. Thus $T$ is not strictly cosingular.
\end{proof}

Using the previous result, we can now prove the main duality result. 

\begin{proof}[Proof of \Cref{p4:thm:main}]
Assume first that $T^*$ is strictly singular. Then $T^*$ is not bounded below on any infinite-dimensional subspace of $Y^*$, and in particular it is not bounded below on any infinite-dimensional weak-star closed subspace of $Y^*$. By \Cref{p4:prop:wstar-characterization}, $T$ is strictly cosingular.

For the converse, we prove the contrapositive. Suppose that $T^*$ is not strictly singular. Then $T^*$ is bounded below on some infinite-dimensional subspace of $Y^*$. By the quotient factorization theorem, \Cref{p4:thm:quotient-factorization}, the operator $T$ is not strictly cosingular. Therefore, if $T$ is strictly cosingular, then $T^*$ must be strictly singular. This proves the equivalence.
\end{proof}

The same argument gives the following geometric formulation.

\begin{corollary}[Weak-star extraction]
\label{p4:cor:wstar-extraction}
Let $Y$ be separable and let $T\colon X\to Y$ be bounded. If $T^*$ is bounded below on an infinite-dimensional norm-closed subspace of $Y^*$, then $T^*$ is bounded below on an infinite-dimensional weak-star closed subspace of $Y^*$.
\end{corollary}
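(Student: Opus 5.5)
The plan is to read the corollary off the quotient factorization already established, by passing through strict cosingularity. Suppose $T^*$ is bounded below on an infinite-dimensional norm-closed subspace $Z\subset Y^*$. By \Cref{p4:thm:quotient-factorization}, $T$ is then not strictly cosingular. Applying \Cref{p4:prop:wstar-characterization} in the direction \ref{p4:item:T-strictly-cosingular}$\Leftrightarrow$\ref{p4:item:Tstar-no-wstar-below}, the failure of strict cosingularity means exactly that $T^*$ is bounded below on some infinite-dimensional weak-star closed subspace of $Y^*$. This is the conclusion.

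For a more explicit witness, I would use the factorization $AT=q$ directly. Put $M=\ker A$, and let $Q_M$ and $\widetilde A$ be as in the proof of \Cref{p4:thm:quotient-factorization}. Then $Q_MT=\widetilde A^{-1}q$ is surjective. By \Cref{p4:lem:surjectivity-adjoint}, $(Q_MT)^*=T^*Q_M^*$ is bounded below. Since $Q_M^*$ is an isometry of $(Y/M)^*$ onto $M^\perp$, this says that $T^*$ is bounded below on $M^\perp$. Finally, $M^\perp$ is weak-star closed, being an annihilator, and it is infinite-dimensional because $M$ has infinite codimension, since $Y/M\cong P$ is infinite-dimensional.

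There is essentially no obstacle here. All the work was done in \Cref{p4:lem:preadjoint-extraction} and in the extension argument from $J(E_0)$ to all of $P^*$ inside the factorization theorem. The only point needing a moment's care is checking that the identification $(Y/M)^*\cong M^\perp$ turns boundedness below of $(Q_MT)^*$ into boundedness below of $T^*$ restricted to $M^\perp$. This follows because $Q_M^*$ is an isometric bijection onto $M^\perp$.
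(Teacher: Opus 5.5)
Your proof is correct and is essentially the paper's. The paper uses the same witness $W=A^*(P^*)=(\ker A)^\perp=M^\perp$ obtained from the factorization $AT=q$. The only difference is how boundedness below is certified: the paper reads it off directly from $T^*A^*=q^*$ and the fact that $q^*$ is an isometry, giving $\|T^*w\|\geq\|A\|^{-1}\|w\|$ for $w\in W$. You instead pass through surjectivity of $Q_MT$ and the surjectivity--adjoint lemma, or, in your first route, simply chain the factorization theorem with the weak-star characterization of strict cosingularity.
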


\begin{proof}
Use the notation obtained in the proof of \Cref{p4:thm:quotient-factorization}. Since $A\colon Y\to P$ is surjective,
\begin{equation*}
        W=A^*(P^*)=(\ker A)^\perp
\end{equation*}
is an infinite-dimensional weak-star closed subspace of $Y^*$. The identity $AT=q$ gives
\begin{equation*}
        T^*A^*=q^*.
\end{equation*}
For $p^*\in P^*$,
\begin{equation*}
        \|T^*A^*p^*\|=\|q^*p^*\|=\|p^*\|\geq \frac{1}{\|A\|}\|A^*p^*\|.
\end{equation*}
Thus $T^*$ is bounded below on $W$.
\end{proof}

\medskip

Before we finish, let us point out where the separability hypothesis enters the argument. It is used only in \Cref{p4:lem:preadjoint-extraction}, through the choice of a countable dense $\PfourF$-linear subspace of $Y$. No separability assumption on $X$ is used. This also explains why the classical example of the inclusion $c_0\hookrightarrow \ell_\infty$ is not covered by the theorem: the range space $\ell_\infty$ is nonseparable, so the countable diagonal construction used above is unavailable.

Finally, the proof does not require the weak-star closed witness for $T^*$ to lie inside the original subspace on which $T^*$ is bounded below. Starting from such a subspace, the argument constructs a new weak-star closed subspace $W\subset Y^*$ on which $T^*$ is bounded below.

\end{problempaperbody}
\clearpage
\providecommand{\Pfiveaco}{\operatorname{aco}}
\providecommand{\PfiveDelta}{\Delta}

\problempaper{Problem 4. Weakly compact basis factorization}
\label{probpaper:weakly-compact-basis-factorization}

\begin{problemabstract}
We prove a basis-valued refinement of the Davis--Figiel--Johnson--Pelczynski factorization theorem. If $Y$ is separable and has the bounded approximation property, then every weakly compact operator $T\colon X\to Y$ factors through a reflexive Banach space with a Schauder basis. In particular, every weakly compact operator into a Banach space with a Schauder basis admits such a factorization.
\end{problemabstract}

\problemcontents

\begin{problempaperbody}

\section{Statement and notation}

Throughout, Banach spaces are over the real or complex scalar field. We write $B_E$ for the closed unit ball of a Banach space $E$, and we identify $E$ with its canonical image in $E^{**}$. Our main findings are the following.

\begin{theorem}[Weakly compact basis factorization]
\label{p5:thm:main}
Let $Y$ be a separable Banach space with the bounded approximation property. If $T\colon X\to Y$ is weakly compact, then $T$ factors through a reflexive Banach space with a Schauder basis.
\end{theorem}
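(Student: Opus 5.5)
We reduce to the case where the range space has a basis, and then carry out a DFJP-type interpolation adapted to that basis. By Pe{\l}czy\'nski's theorem \cite{Pelczynski1971}, a separable space $Y$ with the bounded approximation property is complemented in a space $Z$ with a Schauder basis. Let $\iota\colon Y\to Z$ be the inclusion and $P\colon Z\to Y$ a bounded projection. Then $\iota T\colon X\to Z$ is weakly compact. If $\iota T=BA$ with $A\colon X\to R$, $B\colon R\to Z$, and $R$ reflexive with a basis, then $T=P\iota T=(PB)A$. It therefore suffices to treat the case where $Y$ itself has a Schauder basis $(y_n)$, with partial sum projections $P_n$ and basis constant $K$.

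In that case, let $W=\overline{T(B_X)}$, which is weakly compact. The shrinking-basis argument of \cite{DavisFigielJohnsonPelczynski1974} would enlarge $W$ to $\bigcup_n P_nW$. Without shrinkingness this set need not be relatively weakly compact, since $P_n$ composed with $(I-P_m)$ does not interact well with weak limits. The plan is instead to choose a \emph{blocking} of the basis adapted to $W$. The key idea is this: the weak closure of $W$ lies in $Y$, so for each $w$ in $W$ the tails $(I-P_n)w$ tend to $0$ in norm. We would like uniformity of a weaker kind. We use Grothendieck-type compactness: for a weakly compact $W$ and bounded finite-rank $P_n$, each $P_nW$ is norm compact. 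We will select an increasing sequence $n_1<n_2<\dots$ and use the set
\begin{equation*}
        \widetilde W=\Pfiveaco\Bigl(W\cup\bigcup_{j} P_{n_j}W\Bigr).
\end{equation*}
The aim is to show it is relatively weakly compact. The point is that a net $P_{n_{j_\alpha}}w_\alpha$ with $j_\alpha\to\infty$ should have weak cluster points that are weak cluster points of $W$ plus controlled errors. If the $n_j$ are not chosen carefully this fails, which is exactly where shrinkingness was used.

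The hardest step is this weak-compactness claim. The first approach I would try is to avoid it: rather than blocking the basis of $Y$, build a new basis. Using the DFJP space $R_0$ of $W$, which is reflexive, we have $T=J_0A_0$ with $J_0\colon R_0\to Y$ injective. The question then becomes whether $R_0$, or a reflexive space through which $J_0$ factors, has a basis. A separable reflexive space with the bounded approximation property has a shrinking FDD-type structure after passing to a larger space. Concretely, a reflexive separable space embeds into a reflexive space with a basis (Zippin), so we may embed $R_0\subset R_1$ with $R_1$ reflexive with a basis. We then need to extend $J_0$ to $R_1\to Y$. Such an extension is not automatic, so instead I would aim to make the factorization compatible with approximating operators. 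Using BAP of $Y$ with commuting finite-rank approximations $F_n\to I$ strongly, apply the DFJP construction to the set
\begin{equation*}
        \Pfiveaco\Bigl(W\cup\bigcup_n F_nW\Bigr).
\end{equation*}
Each $F_nW$ is norm compact, and $F_nw\to w$ uniformly on norm-compact sets but only pointwise on $W$. I would try to pass to a subsequence of $(F_n)$ so that $\sup_{w\in W}\operatorname{dist}(F_nw,\, \overline{\mathrm{conv}}^{w}W+\varepsilon_nB_Y)$ is small. I expect this to be the genuine obstacle.

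Once relative weak compactness of the enlarged set $\widetilde W$ is secured, the remainder is routine. The DFJP space $R$ built from $\widetilde W$ is reflexive, and the operators $F_n$ (or $P_{n_j}$) leave $\widetilde W$ invariant up to uniform constants. They therefore induce uniformly bounded finite-rank operators on $R$ converging strongly to the identity. Hence $R$ has the BAP, with a commuting approximating sequence giving an FDD. A reflexive space with an FDD embeds complementably into one with a basis (by adding $\ell_2$-blocks, Pe{\l}czy\'nski's decomposition trick), which completes the factorization.
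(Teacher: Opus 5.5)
\emph{There is a genuine gap at the crux of the argument.} Your reduction to a range space with a basis via Pe{\l}czy\'nski's theorem is valid. The endgame you sketch also has the right shape: DFJP on an invariant, weakly compact enlargement, then finite-rank approximants on the interpolation space, then basisification. But you never prove that the enlarged set is relatively weakly compact, which is the step the whole theorem rests on. Moreover, the remedy you suggest (blocking the basis, or passing to a subsequence of approximants) cannot work.

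Take $Y=c_0$ with the summing basis $s_n=e_1+\dots+e_n$, and let $T\colon\ell_2\to c_0$ be the formal inclusion, so that $e_n\in W$. The partial sum projections satisfy $P_me_{m+1}=-s_m$. Hence for every choice $n_1<n_2<\cdots$, the set $\bigcup_jP_{n_j}W$ contains $(-s_{n_j})_j$, which has no weakly convergent subsequence in $c_0$. So no selection of the $P_n$ (or of BAP approximants that happen to be these projections) makes $\widetilde W$ relatively weakly compact.

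\emph{The missing idea is convexification, together with a device to recover the projection structure it destroys.} Weak compactness of $T$ only gives $T^*P_n^*y^*\to T^*y^*$ weakly. The paper first shows that $y^*\mapsto T^*y^*$ is norm-Baire-one on $(B_{Y^*},w^*)$. It then applies a parametrized Mazur argument to obtain $S_k\in\operatorname{conv}\{P_n:n\geq k\}$ with $\|T^*S_k^*y^*-T^*y^*\|\to0$ for every $y^*$, that is, $\sup_{w\in W}|y^*(S_kw-w)|\to0$. This is exactly the estimate that makes $\bigcup_kS_kW$ relatively weakly compact. In the example above, averaging tails dilutes each offending vector $-s_m$ by its averaging weight.

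Convex combinations, however, are no longer projections satisfying $P_mP_k=P_{\min(m,k)}$. So the invariance of the enlarged set and the FDD structure you rely on are lost; a merely commuting approximating sequence would not yield an FDD in any case. The paper recovers this structure as follows.
\begin{itemize}
\item It encodes $T$ as $x\mapsto(S_kTx)_k\in c(Y)$ followed by the limit map.
\item It enlarges $\overline{R(B_X)}^{\,w}$ using the freezing projections $Q_m(y_1,y_2,\ldots)=(y_1,\ldots,y_m,y_m,\ldots)$. These satisfy $Q_mQ_k=Q_{\min(m,k)}$, leave the enlarged set invariant, and become finite-rank on the interpolation space.
\end{itemize}
Finally, keeping reflexivity in the basisification step needs the Johnson--Rosenthal--Zippin theorem, which embeds the finite-dimensional blocks into spaces with uniformly bounded basis constants. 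A decomposition trick alone does not suffice.
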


As an automatic consequence we get.

\begin{corollary}
\label{p5:cor:basis-range}
Let $Y$ have a Schauder basis. If $T\colon X\to Y$ is weakly compact, then $T$ factors through a reflexive Banach space with a Schauder basis.
\end{corollary}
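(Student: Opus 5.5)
The corollary follows directly from \Cref{p5:thm:main}, so I would only check that its hypotheses hold. Let $Y$ have a Schauder basis $(y_n)_{n=1}^{\infty}$. Then $Y$ is separable, since the finite rational (or $\mathbb Q+i\mathbb Q$) linear combinations of the basis vectors form a countable dense set.

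The second hypothesis is that $Y$ has the bounded approximation property. Let $P_n\colon Y\to Y$ be the canonical basis projections,
\begin{equation*}
        P_n\Bigl(\sum_{k=1}^{\infty}a_ky_k\Bigr)=\sum_{k=1}^{n}a_ky_k .
\end{equation*}
By the standard uniform boundedness argument, these are bounded with $K=\sup_n\|P_n\|<\infty$, the basis constant. Each $P_n$ has finite rank, and $P_ny\to y$ for every $y\in Y$. A uniformly bounded sequence of operators converging pointwise converges uniformly on compact sets (an $\varepsilon/3$ argument using finite nets). Hence $(P_n)$ witnesses the $K$-bounded approximation property of $Y$.

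With both hypotheses verified, \Cref{p5:thm:main} applies to any weakly compact $T\colon X\to Y$. It gives a factorization of $T$ through a reflexive Banach space with a Schauder basis.

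No step is a real obstacle here. The only point needing any care is the pointwise-to-compact-uniform convergence, which is routine given the uniform bound $K$.
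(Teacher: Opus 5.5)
Your proposal is correct and takes the same approach as the paper. The paper presents this corollary as an automatic consequence of \Cref{p5:thm:main}, since a space with a Schauder basis is separable and has the bounded approximation property via its uniformly bounded, finite-rank basis projections, exactly as you verify.
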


\section{Proof strategy and organisation}

The proof is organised in two stages, after the preliminary results recorded in \Cref{p5:sec:preliminaries}. In \Cref{p5:sec:approximants}, we use weak compactness of $T$ and the bounded approximation property of $Y$ to produce finite-rank operators $A_n\colon X\to Y$ which converge to $T$ in two dual senses:
\begin{equation*}
A_n^*y^*\to T^*y^* \quad (y^*\in Y^*),
\qquad
A_n^{**}x^{**}\to T^{**}x^{**} \quad (x^{**}\in X^{**}).
\end{equation*}
The adjoint convergence is the only delicate point. The map $T^*\colon B_{Y^*}\to X^*$ is Baire-one for the weak-star topology on $B_{Y^*}$, while the finite-rank approximants initially give continuous maps which converge only weakly. A tail-convexification argument, using Mazur's theorem, converts this weak convergence into pointwise norm convergence.

In \Cref{p5:sec:bridge}, the sequence $(A_n)_{n=1}^\infty$ is used to encode $T$ as a map into
\begin{equation*}
c(Y)=\{(y_n)_{n=1}^{\infty}\colon y_n\text{ converges in norm in }Y\}.
\end{equation*}
The ``freezing'' projections
\begin{equation*}
Q_m(y_1,y_2,\ldots)=(y_1,\ldots,y_m,y_m,y_m,\ldots)
\end{equation*}
are arranged to be finite-rank on the eventual DFJP interpolation space. They approximate the identity there and determine a finite-dimensional decomposition. The reflexive basisification theorem from \Cref{p5:sec:preliminaries} then embeds the interpolation space as a complemented subspace of a reflexive space with a Schauder basis. The final assembly is given in \Cref{p5:sec:conclusion}.

\section{Preliminaries}
\label{p5:sec:preliminaries}

We shall use two standard structural results. We state them in the precise forms needed below. The first is the Davis--Figiel--Johnson--Pelczynski interpolation construction; see the construction preceding \cite[Lemma~1]{DavisFigielJohnsonPelczynski1974}, together with parts \textup{(i)}, \textup{(ii)}, and \textup{(iv)} of that lemma.

\begin{theorem}[Davis--Figiel--Johnson--Pelczynski interpolation]
\label{p5:thm:dfjp-interpolation}
Let $E$ be a Banach space and let $K\subset E$ be bounded, closed, absolutely convex and weakly compact. For $j\in\mathbb Z$, set
\begin{equation*}
        U_j=2^jK+2^{-j}B_E,
\end{equation*}
let $|\cdot|_j$ be the Minkowski functional of $U_j$, and define
\begin{equation*}
        \PfiveDelta(K)=\left\{z\in E\colon \sum_{j\in\mathbb Z}|z|_j^2<\infty\right\}
\end{equation*}
with norm
\begin{equation*}
        \|z\|_{\PfiveDelta(K)}=\left(\sum_{j\in\mathbb Z}|z|_j^2\right)^{1/2}.
\end{equation*}
Then $\PfiveDelta(K)$ is a reflexive Banach space, the inclusion $\PfiveDelta(K)\to E$ is bounded, and $K$ is bounded as a subset of $\PfiveDelta(K)$.
\end{theorem}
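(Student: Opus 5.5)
The plan is to verify the three assertions directly from the definition, following the classical DFJP argument. Throughout, $K\subset E$ is bounded, closed, absolutely convex and weakly compact; set $R=\sup_{k\in K}\|k\|$.

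\emph{Comparison of the gauges.} Since $2^{-j}B_E\subset U_j$, we have $|z|_j\le 2^j\|z\|$. In the other direction, $U_j\subset (2^jR+2^{-j})B_E$, so $\|z\|\le (2^jR+2^{-j})|z|_j$. Each $|\cdot|_j$ is therefore an equivalent norm on $E$. Using $j=0$ gives $\|z\|\le (R+1)|z|_0\le (R+1)\|z\|_{\PfiveDelta(K)}$, so the inclusion $\PfiveDelta(K)\to E$ is bounded. For $k\in K$ and $j\ge 0$, the relation $k\in 2^{-j}(2^jK)\subset 2^{-j}U_j$ gives $|k|_j\le 2^{-j}$. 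For $j<0$, $|k|_j\le 2^{-j}\|k\|\cdot$ is not what we want; instead use $|k|_j\le 2^{j}\|k\|\le 2^{j}R$. Summing, $\sum_j|k|_j^2\le\sum_{j\ge0}4^{-j}+R^2\sum_{j<0}4^{j}<\infty$, with a bound independent of $k$. Hence $K$ is bounded in $\PfiveDelta(K)$.

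\emph{Completeness.} Let $Z=(\bigoplus_{j\in\mathbb Z}(E,|\cdot|_j))_{\ell_2}$ and let $J\colon\PfiveDelta(K)\to Z$ be the diagonal map $z\mapsto(z,z,\ldots)$. This map is an isometry. Its range is closed: if $Jz_n\to w$ in $Z$, the coordinates converge in the equivalent norms, so every coordinate of $w$ equals the $E$-limit $z$ of $(z_n)$. Hence $w=Jz$, and $\PfiveDelta(K)$ is a Banach space.

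\emph{Reflexivity.} This is the main step. Consider $J^{**}\colon\PfiveDelta(K)^{**}\to Z^{**}=(\bigoplus E^{**})_{\ell_2}$. Because the inclusion $i\colon\PfiveDelta(K)\to E$ is bounded and each coordinate projection composed with $J$ equals $i$ (up to the norm $|\cdot|_j$), every coordinate of $J^{**}z^{**}$ equals $i^{**}z^{**}$. It therefore suffices to show that $i^{**}z^{**}\in E$ and that $J^{**}z^{**}=J(i^{**}z^{**})$, since $J^{**}$ is injective (as $J$ is an isomorphic embedding) and this forces $z^{**}$ to lie in $\PfiveDelta(K)$.

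To see that $i^{**}z^{**}\in E$, fix $j$. By Goldstine's theorem, the bidual unit ball of $(E,|\cdot|_j)$ is the weak$^*$ closure of $U_j$, and this is contained in $2^jK+2^{-j}B_{E^{**}}$. Here we use that $K$ is weakly compact, so it stays weak$^*$ closed in $E^{**}$, and that the sum of a weak$^*$ compact set and a weak$^*$ compact set is weak$^*$ closed. Writing $x^{**}=i^{**}z^{**}$, it follows that $x^{**}$ lies within $E^{**}$-distance $2^{-j}\,C$ of $E$ for every $j$, where $C$ bounds $|x^{**}|_j^{**}$ uniformly via the $\ell_2$ sum. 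Letting $j\to\infty$ gives $x^{**}\in E$. The remaining identity $J^{**}z^{**}=Jx^{**}$ then holds coordinatewise, and the $\ell_2$-summability of the coordinates is inherited from $Z^{**}$. The hardest part of this step is getting the weak$^*$ closure inclusion for $U_j$ right; it is exactly the step where weak compactness of $K$ is used.
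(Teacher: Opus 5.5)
Your proposal is correct and is essentially the classical Davis--Figiel--Johnson--Pe{\l}czy\'nski argument: the diagonal isometric embedding of $\Delta(K)$ into the $\ell_2$-sum of the renormed copies $(E,|\cdot|_j)$, injectivity of $J^{**}$, and the inclusion $\overline{U_j}^{w^*}\subset 2^jK+2^{-j}B_{E^{**}}$, which is where weak compactness of $K$ enters. The paper does not reprove this result; it simply quotes it from DFJP's Lemma~1, and your write-up differs from that proof only by a harmless garbled sentence in the $j<0$ case, where the bound you actually need and use is $|k|_j\le 2^jR$.
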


The second result is a reflexive basisification result for spaces with finite-dimensional decompositions. It follows from the finite-dimensional stabilisation theorem of Johnson--Rosenthal--Zippin; see \cite[Corollary~4.12(a)]{JohnsonRosenthalZippin1971}. It is in the same spirit as Pelczynski's basisification theorem \cite{Pelczynski1971}.

\begin{theorem}[Reflexive basisification from an FDD]
\label{p5:thm:jrz-basisification}
Let $Z$ be a reflexive Banach space with a finite-dimensional decomposition. Then $Z$ is isomorphic to a complemented subspace of a reflexive Banach space with a Schauder basis.
\end{theorem}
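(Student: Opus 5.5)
The plan is to reduce everything to a finite-dimensional lemma with uniform constants, and then assemble an $\ell_2$-type sum which is visibly reflexive. Let $(E_n)_{n=1}^{\infty}$ be the finite-dimensional decomposition of $Z$, with partial-sum projections $P_m$ and decomposition constant $K=\sup_m\|P_m\|<\infty$.

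\emph{The finite-dimensional lemma.} This is the key input, and it is the content of the Johnson--Rosenthal--Zippin stabilisation theorem underlying \cite[Corollary~4.12(a)]{JohnsonRosenthalZippin1971}. There is a universal constant $C$ with the following property: for every finite-dimensional space $E$ there is a finite-dimensional space $F$ such that $E\oplus_\infty F$ has a basis with basis constant at most $C$. I would take this from \cite{JohnsonRosenthalZippin1971} rather than reprove it. It is the step I regard as the real obstacle. A naive choice such as embedding $E$ into $\ell_\infty^N$ does not give uniform complementation, so some genuinely finite-dimensional construction is needed. If I had to prove it, I would first try a Pełczyński-type rotation argument, pairing $E$ with a suitable copy of a space with a good basis.

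\emph{The ambient space.} For each $n$, choose $F_n$ as in the lemma for $E=E_n$, and set
\begin{equation*}
W=Z\oplus\Bigl(\bigoplus_{n=1}^{\infty}F_n\Bigr)_{\ell_2},
\end{equation*}
normed by the maximum of the two summand norms. Then:
\begin{enumerate}[label=\textup{(\alph*)}]
\item $W$ is reflexive, since an $\ell_2$-sum of finite-dimensional spaces is reflexive and a finite direct sum of reflexive spaces is reflexive.
\item $Z$ is complemented in $W$ by the coordinate projection.
\item The subspaces $G_n=E_n\oplus F_n$ form a finite-dimensional decomposition of $W$. Its partial-sum projections are $P_m\oplus R_m$, where $R_m$ is the norm-one initial projection of the $\ell_2$-sum, so they have norm at most $\max\{K,1\}$. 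Completeness of the decomposition follows because $P_m\to I_Z$ and $R_m\to I$ strongly.
\end{enumerate}

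\emph{From the decomposition to a basis.} The norm that $W$ induces on $G_n$ is the maximum of the norms of $E_n$ and $F_n$, that is, exactly $E_n\oplus_\infty F_n$. So by the lemma each $G_n$ has a basis $(g_{n,i})_{i=1}^{k_n}$ with basis constant at most $C$. Concatenating these bases in the order of $n$ gives a sequence whose initial projections are compositions of an FDD partial-sum projection with an initial basis projection inside a single block. These have norm at most $\max\{K,1\}\,(1+2C\max\{K,1\})$, hence are uniformly bounded. Since the span is dense, the concatenated sequence is a Schauder basis of $W$.

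This exhibits $Z$ as a complemented subspace of the reflexive space $W$, which has a Schauder basis. Apart from the cited lemma, everything is routine bookkeeping with the constants $K$ and $C$.
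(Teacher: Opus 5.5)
Your argument is correct and is essentially the paper's proof: both take the finite-dimensional input from the Johnson--Rosenthal--Zippin corollary, adjoin to $Z$ an $\ell_2$-sum of the complementary finite-dimensional pieces, and concatenate uniformly well-conditioned block bases along the FDD $(E_n\oplus F_n)_n$. The only difference is cosmetic. The paper uses the lemma in the form ``$E_n$ is $1$-complemented in some $G_n$ with a basis of constant at most $K$'' and glues with $\oplus_2$ norms, paying the constants $\sqrt5$ and $\sqrt2$ in a norm comparison; your max-norm gluing instead makes each block isometric to $E_n\oplus_\infty F_n$ directly (the two forms of the lemma are equivalent up to a factor $4$ by taking $F=\ker\pi$).
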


\begin{proof}
Let $(E_n)_{n=1}^{\infty}$ be a finite-dimensional decomposition of $Z$, and let $(P_N)_{N=1}^{\infty}$ be its natural projections. By \cite[Corollary~4.12(a)]{JohnsonRosenthalZippin1971}, there is an absolute constant $K$ such that, for every $n\in\N$, there is a finite-dimensional Banach space $G_n$ containing $E_n$ as a $1$-complemented subspace and having a basis with basis constant at most $K$. Let $\pi_n\colon G_n\to E_n$ be a norm-one projection and put $H_n=\ker\pi_n$, equipped with the norm inherited from $G_n$. Thus $G_n=E_n\oplus H_n$ algebraically.

For $e\in E_n$ and $h\in H_n$, put $\|(e,h)\|_2=(\|e\|^2+\|h\|^2)^{1/2}$. Since $\|\pi_n\|=1$ and $\|I-\pi_n\|\leq2$, we have
\begin{equation*}
        \|(e,h)\|_2\leq\sqrt5\,\|e+h\|_{G_n},
        \qquad
        \|e+h\|_{G_n}\leq\sqrt2\,\|(e,h)\|_2.
\end{equation*}
Consequently, when the chosen basis of $G_n$ is regarded as a basis of $E_n\oplus_2H_n$, its basis constant is at most $\sqrt{10}K$.

Set
\begin{equation*}
        H=\left(\sum_{n=1}^{\infty}\oplus H_n\right)_2
        \qquad\text{and}\qquad
        V=Z\oplus_2H.
\end{equation*}
Then $V$ is reflexive. Moreover, the spaces $E_n\oplus_2H_n$ form a finite-dimensional decomposition of $V$, whose natural projections have norms bounded by
\begin{equation*}
        \max\left\{\sup_{N\in\N}\|P_N\|,1\right\}.
\end{equation*}
Since the chosen block bases have uniformly bounded basis constants, their concatenation is a Schauder basis of $V$. Finally, the embedding $\iota\colon Z\to V$ defined by $\iota z=(z,0)$ is complemented by the projection $(z,h)\mapsto(z,0)$.
\end{proof}

\section{Finite-rank approximants}
\label{p5:sec:approximants}

We shall use the bounded approximation property in the following sequential form. This is well-known, and we include a short proof for completeness.

\begin{lemma}
\label{p5:lem:strong-approximation}
Let $Y$ be separable and have the bounded approximation property. Then there are finite-rank operators $(P_n)_{n=1}^{\infty}$, $P_n\colon Y\to Y$, such that
\begin{equation*}
        \sup_{n\in\N}\|P_n\|<\infty \qquad\text{and}\qquad
        P_ny\to y \quad(y\in Y).
\end{equation*}
\end{lemma}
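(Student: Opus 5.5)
The plan is to unpack the definition of the bounded approximation property and then use separability of $Y$ to pass from a net to a sequence. Recall that $Y$ has the $\lambda$-bounded approximation property if, for every compact set $C\subset Y$ and every $\eps>0$, there is a finite-rank operator $R\colon Y\to Y$ with $\|R\|\leq\lambda$ and $\|Ry-y\|<\eps$ for all $y\in C$. Since $Y$ is separable, fix a dense sequence $(y_k)_{k=1}^{\infty}$ in $Y$.

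For each $n\in\N$, apply the definition with the finite (hence compact) set $C_n=\{y_1,\ldots,y_n\}$ and $\eps=1/n$. This gives a finite-rank operator $P_n$ with
\begin{equation*}
        \|P_n\|\leq\lambda
        \qquad\text{and}\qquad
        \|P_ny_k-y_k\|<\frac1n \qquad(1\leq k\leq n).
\end{equation*}
In particular, $\sup_n\|P_n\|\leq\lambda<\infty$, and for each fixed $k$ we have $P_ny_k\to y_k$ as $n\to\infty$.

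Convergence is then extended from the dense sequence to all of $Y$ by a standard $\eps/3$ argument using the uniform bound. Given $y\in Y$ and $\delta>0$, choose $k$ with $\|y-y_k\|<\delta$. Then
\begin{equation*}
        \|P_ny-y\|\leq\|P_n(y-y_k)\|+\|P_ny_k-y_k\|+\|y_k-y\|\leq(\lambda+1)\delta+\|P_ny_k-y_k\|,
\end{equation*}
so $\limsup_n\|P_ny-y\|\leq(\lambda+1)\delta$. Since $\delta>0$ was arbitrary, $P_ny\to y$.

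There is no real obstacle here. The only point requiring care is that the bounded approximation property is used with the uniform bound $\lambda$ on the approximating operators. It is exactly this uniform bound that allows convergence on a dense set to propagate to all of $Y$.
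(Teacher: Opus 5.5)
Your proposal is correct and follows essentially the same route as the paper: a dense sequence, the bounded approximation property applied to $\{y_1,\ldots,y_n\}$ with tolerance $1/n$, and then extension to all of $Y$ via the uniform bound $\lambda$. The only difference is that you write out the $\eps/3$ estimate explicitly, which the paper leaves as a one-line remark.
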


\begin{proof}
Choose a dense sequence $(y_j)_{j=1}^{\infty}$ in $Y$. Since $Y$ has the bounded approximation property, there is a constant $\lambda<\infty$ such that, for every $n\in\N$, there is a finite-rank operator $P_n\colon Y\to Y$ satisfying
\begin{equation*}
        \|P_n\|\leq\lambda
        \qquad\text{and}\qquad
        \|P_ny_j-y_j\|<\frac{1}{n} \quad(1\leq j\leq n).
\end{equation*}
It follows that $P_ny_j\to y_j$ for every fixed $j\in\N$. Since the operators $(P_n)_{n=1}^{\infty}$ are uniformly bounded and the set $\{y_j:j\in\N\}$ is dense in $Y$, we get $P_ny\to y$ for every $y\in Y$.
\end{proof}

We shall also use the following elementary metrizability fact for weakly compact sets in separable spaces.

\begin{lemma}
\label{p5:lem:weak-compact-metrizable}
Let $Y$ be separable and let $W\subset Y$ be weakly compact. Then $W$, endowed with the weak topology, is compact metrizable.
\end{lemma}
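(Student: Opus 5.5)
The plan is to realise the weak topology on $W$ by a countable family of functionals. Since $W$ is weakly compact, it is norm bounded, say $W\subset rB_Y$. First, using separability of $Y$, I will choose a sequence $(f_k)_{k=1}^{\infty}\subset B_{Y^*}$ that separates the points of $Y$. To do this, fix a dense sequence $(y_j)_{j=1}^\infty$ in $Y$ and, by Hahn--Banach, take $f_j\in S_{Y^*}$ with $f_j(y_j)=\|y_j\|$. If $y\ne0$, pick $y_j$ with $\|y-y_j\|<\|y\|/3$. Then $|f_j(y)|\geq \|y_j\|-\|y-y_j\|>\|y\|/3>0$, so the family separates points.

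Next, define
\begin{equation*}
        d(u,v)=\sum_{k=1}^{\infty}2^{-k}|f_k(u)-f_k(v)| \qquad(u,v\in W).
\end{equation*}
This is a metric on $W$: it is finite because $W$ is bounded, and it separates points by the choice of $(f_k)$. Each summand is weakly continuous on $W$. Since $W$ is bounded, the series converges uniformly on $W\times W$, so $d$ is continuous for the product of the weak topologies. Consequently, the identity map from $(W,\text{weak})$ to $(W,d)$ is continuous.

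The last step is the standard compact-to-Hausdorff argument. A continuous bijection from the compact space $(W,\text{weak})$ onto the Hausdorff space $(W,d)$ is a homeomorphism, because closed subsets of the domain are compact and their images are compact, hence closed. Therefore the weak topology on $W$ coincides with the topology of $d$, and $W$ is compact metrizable.

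The only point requiring care is the first step, producing a countable separating family from separability of $Y$ alone, without assuming $Y^*$ is separable. The Hahn--Banach choice along a dense sequence handles this. The remaining steps are routine.
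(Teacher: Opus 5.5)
Your proposal is correct and takes essentially the same route as the paper. Both arguments use Hahn--Banach along a norm-dense sequence to get a countable separating family of functionals in $B_{Y^*}$, and then apply the compact-to-Hausdorff argument. The differences are cosmetic: the paper works in $\overline{\operatorname{span}}\,W$ and embeds $W$ into a countable product of discs, whereas you work directly in $Y$ with an explicit metric.
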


\begin{proof}
Let $Y_0=\overline{\operatorname{span}} W$. Then $Y_0$ is separable. If $Y_0=\{0\}$, there is nothing to prove, so assume that $Y_0\ne\{0\}$. Choose a norm-dense sequence $(s_n)_{n=1}^{\infty}$ in the unit sphere of $Y_0$. By Hahn--Banach, choose functionals $(y_n^*)_{n=1}^{\infty}\subset B_{Y^*}$ such that
\begin{equation*}
        |y_n^*(s_n)|>\frac{1}{2} \qquad(n\in\N).
\end{equation*}

We claim that $(y_n^*)_{n=1}^{\infty}$ separates the points of $Y_0$. Indeed, if $0\ne y\in Y_0$, choose $n\in\N$ such that
\begin{equation*}
        \left\|s_n-\frac{y}{\|y\|}\right\|<\frac{1}{4}.
\end{equation*}
Then
\begin{equation*}
        \left|y_n^*\left(\frac{y}{\|y\|}\right)\right|\geq |y_n^*(s_n)|-\left\|s_n-\frac{y}{\|y\|}\right\|>\frac{1}{4}.
\end{equation*}
Since $W$ is weakly compact, it is norm-bounded, and let
\begin{equation*}
        M=\sup_{w\in W}\|w\|<\infty.
\end{equation*}
Thus the map
\begin{equation*}
        \Phi\colon W\to \prod_{n=1}^{\infty}\{z\in\mathbb K:|z|\leq M\}, \qquad \Phi(w)=(y_n^*(w))_{n=1}^{\infty},
\end{equation*}
is a weakly continuous injection into a compact metric space. Since $W$ is weakly compact, $\Phi$ is a homeomorphism of $W$ onto its image. Hence, $W$ is compact metrizable in its weak topology.
\end{proof}

We record a simple regularity consequence of weak compactness that will be used
below.

\begin{lemma}
\label{p5:lem:baire-one-adjoint}
Let $Y$ be separable and let $T\colon X\to Y$ be weakly compact. Let $K$ be
the closed unit ball of $Y^*$, equipped with the weak-star topology, and define
\begin{equation*}
        F\colon K\to X^*, \qquad F(y^*)=T^*y^*.
\end{equation*}
Then $F$, regarded as an $X^*$-valued map with the norm topology on $X^*$, is
Baire-one.
\end{lemma}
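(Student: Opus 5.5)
Since $Y$ is separable, $K=B_{Y^*}$ with the weak-star topology is a compact metrizable space. The plan is to write $F$ as a pointwise norm limit of a sequence of weak-star-to-norm continuous maps $K\to X^*$. Equivalently, for a map from a metrizable space into a metric space, it suffices to show that the preimage of every norm-open set is $F_\sigma$, or in the separable-range case, that $F$ is a pointwise limit of continuous maps. The first step is to note that $F$ takes values in the norm-separable set $T^*(B_{Y^*})$. Indeed, $T$ weakly compact implies $T^*$ weakly compact (Gantmacher), so $T^*(B_{Y^*})$ is relatively weakly compact in $X^*$. Moreover, $T^*$ factors through $Y^*$, and $T^*y^*$ is determined by $y^*$ on $\overline{T(X)}$, which is separable.

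The second step is to show that $F$ is weak-star-to-weak continuous. For $x^{**}\in X^{**}$ we have $\langle x^{**},T^*y^*\rangle=\langle T^{**}x^{**},y^*\rangle$. By Gantmacher, $T^{**}x^{**}\in Y$, so $y^*\mapsto\langle x^{**},F(y^*)\rangle$ is weak-star continuous. Thus $F\colon (K,w^*)\to (L,w)$ is continuous, where $L=F(K)$ is weakly compact. By \Cref{p5:lem:weak-compact-metrizable}, whose proof only uses separability of the closed span, $L$ with its weak topology is compact metrizable, because $L$ sits in a separable subspace of $X^*$. Here I would check separability of $\overline{\operatorname{span}}\,L$ via the weak compactness of $L$ and the norm separability of its span. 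The cleanest route is to observe that $L$ is the weak-continuous image of a compact metrizable space, so $(L,w)$ is metrizable, and a weakly compact metrizable set has norm-separable span by a Hahn--Banach/Mazur argument.

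The third and key step is to show that the identity map $(L,w)\to(L,\|\cdot\|)$ is Baire-one. If so, $F$, being the composition of a continuous map with a Baire-one map, is Baire-one. For a weakly compact subset $L$ of a Banach space, every norm-closed ball $\bar B(z,r)$ is convex and hence weakly closed. So every norm-open set $O\subset L$ is a countable union of sets $L\cap \bar B(z_n,r_n)$; this uses norm separability of $L$ to pick countably many centres, together with the fact that metric open sets in separable metric spaces are countable unions of closed balls. Each of these sets is weakly closed. Hence preimages under $F$ of norm-open sets are weak-star $F_\sigma$ in $K$. Since $K$ is metrizable and $L$ is separable metric, the Lebesgue--Hausdorff theorem then gives that $F$ is Baire-one in the sense of being a pointwise limit of continuous functions.

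I expect the main obstacle to be the separability bookkeeping, namely ensuring that $L$ is norm separable so that countably many balls suffice. I would handle this by noting that a weakly compact set that is weakly metrizable has a weakly dense countable subset. Its closed linear span is then weakly closed and equal to the norm closed span by Mazur, so it is norm separable.
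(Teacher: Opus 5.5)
Your proof is correct. It shares the paper's skeleton: norm-separable range, weak-star closed preimages of norm-closed balls, hence $F_\sigma$-measurability, then a Lebesgue--Hausdorff type theorem (the paper cites Stegall for this). You obtain the two middle facts by a different mechanism, though.

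The paper works inside $Y$. It takes $W=\overline{T(B_X)}^{\,w}$, which is weakly compact and weakly metrizable, and a weakly dense sequence $(w_j)$ in $W$. The single identity $\|T^*y^*-T^*z^*\|=\sup_j|(y^*-z^*)(w_j)|$ then yields two things at once: an isometric embedding of $F(K)$ into the separable space $C(W)$, and the description of $F^{-1}$ of a closed ball as a countable intersection of weak-star closed sets.

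You work inside $X^*$ instead:
\begin{itemize}
\item Gantmacher's theorem ($T^{**}(X^{**})\subset Y$) makes $F$ weak-star-to-weak continuous.
\item Hence $(L,w)$ is metrizable, as a Hausdorff continuous image of a compact metrizable space.
\item Mazur turns a countable weakly dense subset of $L$ into norm separability.
\item Weak closedness of norm-closed (convex) balls replaces the paper's explicit norm formula.
\end{itemize}
Your route isolates a reusable principle: on a weakly compact, weakly metrizable set, the identity from the weak to the norm topology is of the first Borel class. It also gives the stronger fact that $F$ is weak-star-to-weak continuous. The price is Gantmacher's theorem and the metrizability of continuous images. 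The paper's route is more elementary given the preceding metrizability lemma.

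Some of your intermediate remarks should be discarded.
\begin{itemize}
\item Your first paragraph's remark that $T^*y^*$ depends only on the restriction of $y^*$ to the separable space $\overline{T(X)}$ does not by itself give norm separability. The identity of $\ell_1$ shows this.
\item Your appeal in the second paragraph to the metrizability lemma via ``$L$ sits in a separable subspace'' is circular.
\end{itemize}
Both are superseded by your ``cleanest route'', which is what the proof should rest on.

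For the last step, the vector-valued Lebesgue--Hausdorff theorem needs a convex separable target. This is available here: $L=T^*(B_{Y^*})$ is convex, or you can pass to $\overline{\operatorname{span}}\,L$ as the paper does with its $Z$.
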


\begin{proof}
Let
\begin{equation*}
        W=\overline{T(B_X)}^{\,w}\subset Y .
\end{equation*}
By weak compactness of $T$, the set $W$ is weakly compact. By
\Cref{p5:lem:weak-compact-metrizable}, it is compact metrizable in its weak
topology. Choose a weakly dense sequence $(w_j)_{j=1}^{\infty}$ in $W$. For
$y^*,z^*\in K$,
\begin{equation*}
\begin{aligned}
        \|T^*y^*-T^*z^*\|
        &= \sup_{x\in B_X}|(y^*-z^*)(Tx)|  \\
        &= \sup_{w\in W}|(y^*-z^*)(w)|     \\
        &= \sup_{j\in\mathbb N}|(y^*-z^*)(w_j)|.
\end{aligned}
\end{equation*}
The second equality uses the weak density of $T(B_X)$ in $W$, and the third uses
the weak density of $(w_j)_{j=1}^{\infty}$ in $W$, together with weak continuity
on $W$.

It follows that $F(K)$ is norm separable, because it embeds isometrically into
the separable space $C(W)$ by restriction to $W$. Also, for fixed
$z^*\in K$ and $r\geq0$,
\begin{equation*}
\begin{aligned}
        F^{-1}\bigl(\overline B_{F(K)}(F(z^*),r)\bigr)
        &= \{y^*\in K\colon\|F(y^*)-F(z^*)\|\leq r\} \\
        &=
        \bigcap_{j\in\mathbb N}
        \{y^*\in K\colon |(y^*-z^*)(w_j)|\leq r\}.
\end{aligned}
\end{equation*}
The last expression is weak-star closed in $K$. Hence the inverse image under $F$ of every closed norm ball in $F(K)$ is closed in $K$.

Since $F(K)$ is norm separable, the Banach space
\begin{equation*}
        Z=\overline{\operatorname{span}}\,F(K)\subset X^*
\end{equation*}
is separable. Since $Y$ is separable, $K=(B_{Y^*},w^*)$ is compact metrizable. Let $U$ be a norm-open subset of $F(K)$. Since $F(K)$ is separable metric, $U$ is a countable union of closed norm balls in $F(K)$; since the inverse image under $F$ of each such closed ball is closed in $K$, it follows that $F^{-1}(U)$ is an $F_\sigma$ subset of $K$. The same conclusion holds for norm-open subsets of $Z$, because $F$ takes its values in $F(K)$. Equivalently, inverse images of norm-closed subsets of $Z$ are $G_\delta$ subsets of $K$. Hence, by \cite[Theorem~4]{Stegall1991}, applied to $F\colon K\to Z$, the map $F$ is Baire-one.
\end{proof}

We shall use the following parametrized tail form of Mazur's theorem, which converts pointwise weak convergence into pointwise norm convergence by convexifying sufficiently far out in the sequence.

\begin{lemma}[Tail convexification]
\label{p5:lem:tail-convexification}
Let $K$ be compact metrizable, let $E$ be a Banach space, and let $f_n\colon K\to E$ be uniformly bounded norm-continuous maps. Suppose that $f_n(t)\xrightarrow{w} f(t)$ for every $t\in K$, and that $f\colon K\to E$ is norm-Baire-one. Then there are $g_k\in \operatorname{conv}\{f_n\colon n\geq k\}$ such that $\|g_k(t)-f(t)\|\to0$ for every $t\in K$.
\end{lemma}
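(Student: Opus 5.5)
The plan is to reduce, via Baire-one approximation, to a convergence statement that Mazur's theorem can handle one point at a time, and then to diagonalise using a countable basis of $K$.

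\emph{Step 1.} Since $f$ is norm-Baire-one, we fix norm-continuous maps $h_m\colon K\to E$ with $h_m(t)\to f(t)$ in norm for every $t\in K$. The natural first attempt is to approximate $f$ by convex combinations of tails so that the error is small \emph{simultaneously} at all $t$. This fails: Mazur's theorem applied in $C(K,E)$ would need weak convergence of $f_n$ in $C(K,E)$. That space's dual is too large, and moreover $f$ itself is not continuous.

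\emph{Step 2.} Instead we work with the product space. Regard $\Phi_n=(f_n(t))_{t\in K}$ as bounded functions $K\to E$, and form the Banach space $\ell_\infty(K,E)$ or, better, a separable surrogate. We fix a countable dense set $D\subset K$. For each finite $F\subset D$ the map $g\mapsto (g(t))_{t\in F}$ lands in $E^F$. There, weak convergence of $(f_n(t))_{t\in F}$ to $(f(t))_{t\in F}$ holds coordinatewise, hence weakly in $E^F$. Mazur then gives, for every $k$, a convex combination of $\{f_n:n\geq k\}$ within $1/k$ of $f$ on $F_k$, where $F_k\uparrow D$. This yields $g_k\in\operatorname{conv}\{f_n:n\geq k\}$ with $g_k\to f$ pointwise on $D$ only.

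\emph{Step 3.} This is the main obstacle: passing from $D$ to all of $K$. Here the Baire-one hypothesis must enter, since $g_k$ is continuous but $f$ is not. I would try to replace pointwise convergence on a countable set by convergence with respect to countably many continuous \emph{test maps}. Fix a countable set $\mathcal D$ of continuous functions $\phi\colon K\to[0,1]$ (from a countable base of the metrizable $K$). By a Lebesgue/dominated-convergence type argument, pointwise convergence everywhere is equivalent to convergence of the upper envelopes of $\|g_k-f\|$. My plan is to use a Simons/Rainwater-type principle: for a bounded sequence of continuous functions converging pointwise on $K$, weak convergence in $C(K)$ reduces to pointwise convergence on $K$.

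Concretely, for each $x^*$ in a countable norming set of $E^*$, the scalar functions $x^*\circ f_n$ are continuous, uniformly bounded, and converge pointwise on $K$ to the Baire-one function $x^*\circ f$. Hence they converge weakly in the space $B_1(K)$ of bounded Baire-one functions, paired with measures: by dominated convergence, $\int x^*\circ f_n\,d\mu\to\int x^*\circ f\,d\mu$ for every Radon $\mu$.

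This suggests viewing $t\mapsto f_n(t)$ as elements of the Banach space $\mathcal X$ of bounded Baire-one maps $K\to E$ with the sup norm, and checking weak convergence there. Its dual is hard, so instead I would use Rainwater/Simons: a bounded sequence converges weakly if it converges on the extreme points of the dual ball. I would take the $James$-boundary given by the point evaluations composed with $B_{E^*}$ (elements $x^*\circ\delta_t$). These detect only weak convergence at each $t$, which is exactly the hypothesis. This would give weak convergence of $f_n\to f$ in the space $\mathcal X$ (after checking that evaluations form a boundary, which follows because the sup norm is attained in the sense $\|g\|=\sup_{t,x^*}|x^*(g(t))|$; Simons' inequality requires the sup to be attained, so I would work with near-attainment via a rational perturbation argument).

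Mazur in $\mathcal X$ then gives convex tail combinations converging \emph{uniformly}, hence pointwise. The verification that the boundary property holds is the delicate point, and it is precisely where Baire-one regularity of $f$ (together with compact metrizability of $K$) is used, to guarantee that $f$ lies in the space and that suprema are attained.
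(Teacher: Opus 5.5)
Your Step~3 cannot succeed, because it aims at a false intermediate claim. Suppose $f_n\to f$ weakly in the sup-normed space $\mathcal X$ of bounded Baire-one maps. Then Mazur would give $g_k\in\operatorname{conv}\{f_n\colon n\geq k\}$ with $\sup_{t\in K}\|g_k(t)-f(t)\|\to0$. Each $g_k$ is norm-continuous, so $f$ would be a uniform limit of continuous maps and hence continuous. The lemma, however, must handle discontinuous $f$. Take $K=[0,1]$, $E=\mathbb R$, $f_n(t)=\max\{0,1-nt\}$ and $f=\mathbf 1_{\{0\}}$. Every continuous $g$ has $\sup_t|g(t)-f(t)|\geq 1/2$, so $f$ is not in the norm (equivalently weak) closure of $\operatorname{conv}\{f_n\}$ in $\mathcal X$.

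Correspondingly, the functionals $x^*\circ\delta_t$ are only norming for $\mathcal X$; they are not a James boundary, since discontinuous functions need not attain their supremum. Rainwater--Simons genuinely requires attainment, so no perturbation argument can close this step. Your reason in Step~1 for abandoning $C(K,E)$ is also mistaken. There, a bounded sequence converging weakly at every point does converge weakly; the only obstruction is that $f\notin C(K,E)$. Step~2 gives convergence only on a countable set and does not contribute.

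The missing idea is to use the continuous approximants from your Step~1 index by index. Call them $u_n$ and make them uniformly bounded by composing with the radial retraction onto a ball containing $f(K)$; note that $\|f(t)\|\leq\sup_n\|f_n\|_\infty$. Put $d_n=f_n-u_n$. These maps are continuous and uniformly bounded, and $d_n(t)\to0$ weakly for every $t$.

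On the compact space $L=K\times(B_{E^*},w^*)$, define $\widehat d_n(t,e^*)=e^*(d_n(t))$. These functions are continuous, uniformly bounded and tend to $0$ pointwise. Hence they tend to $0$ weakly in $C(L)$, by the Riesz representation theorem and dominated convergence. Here the point evaluations do form a boundary, which is the correct version of your Step~3 idea.

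Mazur then gives convex combinations $\sum_{n\geq k}\alpha_{k,n}\widehat d_n$ of sup norm below $1/k$. Using the same coefficients, $g_k=\sum_{n\geq k}\alpha_{k,n}f_n$ lies uniformly within $1/k$ of $v_k=\sum_{n\geq k}\alpha_{k,n}u_n$. Moreover, $v_k(t)\to f(t)$ in norm for each $t$, because $v_k(t)$ is a convex combination of a tail of the norm-convergent sequence $(u_n(t))$. So $g_k$ is uniformly close to $v_k$, which converges to $f$ only pointwise, never uniformly. This is precisely where, and why, the Baire-one hypothesis is needed.
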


\begin{proof}
Since $f$ is bounded and Baire-one, choose uniformly bounded norm-continuous maps $u_n\colon K\to E$ such that $u_n(t)\to f(t)$ in norm for every $t\in K$. This can be done by starting with any norm-continuous approximating sequence and composing with the radial retraction onto a closed ball containing $f(K)$; the retraction fixes $f(K)$, so pointwise convergence to $f$ is preserved.

Set $h_n=f_n-u_n$ and
\begin{equation*}
        L=K\times(B_{E^*},w^*).
\end{equation*}
For $(t,e^*)\in L$, define
\begin{equation*}
        \widehat h_n(t,e^*)=e^*(h_n(t)).
\end{equation*}
Then $\widehat h_n\in C(L)$, the sequence $(\widehat h_n)_{n=1}^\infty$ is uniformly bounded, and $\widehat h_n\to0$ pointwise on $L$. By dominated convergence against regular Borel measures on $L$, we have $\widehat h_n\to0$ weakly in $C(L)$.

Fix $k$. Since $0$ lies in the weak closure of $\operatorname{conv}\{\widehat h_n\colon n\geq k\}$, Mazur's theorem gives a finite convex combination
\begin{equation*}
        \widehat v_k=\sum_{n\geq k}\alpha_{k,n}\widehat h_n
\end{equation*}
such that $\|\widehat v_k\|_{C(L)}<1/k$. Define
\begin{equation*}
        g_k=\sum_{n\geq k}\alpha_{k,n}f_n,
        \qquad
        v_k=\sum_{n\geq k}\alpha_{k,n}u_n.
\end{equation*}
Then $\sup_{t\in K}\|g_k(t)-v_k(t)\|<1/k$. For each fixed $t\in K$, the vectors $v_k(t)$ are convex combinations of tails of the norm-convergent sequence $(u_n(t))_{n = 1}^\infty$, so $v_k(t)\to f(t)$. Hence $g_k(t)\to f(t)$ in norm. This finishes the proof.
\end{proof}

We now combine the preceding Baire-one regularity with the bounded approximation property to obtain finite-rank approximants whose adjoints converge on both sides.

\begin{proposition}
\label{p5:prop:two-sided-approximants}
Let $Y$ be separable with the bounded approximation property, and let $T\colon X\to Y$ be weakly compact. Then there are finite-rank operators $A_k\colon X\to Y$ such that
\begin{equation*}
        A_k^*y^*\to T^*y^* \qquad (y^*\in Y^*)
\end{equation*}
and
\begin{equation*}
        A_k^{**}x^{**}\to T^{**}x^{**} \qquad (x^{**}\in X^{**})
\end{equation*}
in norm.
\end{proposition}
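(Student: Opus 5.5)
Start from the sequence $(P_n)_{n=1}^\infty$ of \Cref{p5:lem:strong-approximation} and put $B_n=P_nT$. These are finite-rank operators $X\to Y$ with $\sup_n\|B_n\|<\infty$. The plan is to take the $A_k$ to be tail convex combinations of the $B_n$, with the coefficients chosen by \Cref{p5:lem:tail-convexification}.

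\emph{The bidual side.} This is the easy half, and it survives any tail convexification. Since $T$ is weakly compact, $T^{**}x^{**}\in Y$ for every $x^{**}\in X^{**}$. Moreover $B_n^{**}=P_n^{**}T^{**}$, and $P_n^{**}$ agrees with $P_n$ on $Y$. Hence
\begin{equation*}
B_n^{**}x^{**}=P_n(T^{**}x^{**})\to T^{**}x^{**}
\end{equation*}
in norm. If $A_k=\sum_{n\geq k}\alpha_{k,n}B_n$ is a finite convex combination of the tail, then $A_k^{**}x^{**}$ is a convex combination of terms $B_n^{**}x^{**}$ with $n\ge k$. Each such term is within $\sup_{m\ge k}\|B_m^{**}x^{**}-T^{**}x^{**}\|$ of $T^{**}x^{**}$. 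So $A_k^{**}x^{**}\to T^{**}x^{**}$ in norm, whatever coefficients are chosen.

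\emph{The adjoint side.} This is the main obstacle: $B_n^*y^*=T^*P_n^*y^*$, and $P_n^*y^*\to y^*$ only weak-star, not in norm. Apply \Cref{p5:lem:tail-convexification} with $K=(B_{Y^*},w^*)$, which is compact metrizable because $Y$ is separable, and with $E=X^*$. Take $f_n(y^*)=B_n^*y^*$ and $f=F$, where $F(y^*)=T^*y^*$. The hypotheses are checked as follows.
\begin{itemize}
\item Each $f_n$ is weak-star-to-norm continuous: $B_n$ has finite rank, so $B_n^*y^*=\sum_i y^*(v_i)x_i^*$ for finitely many fixed $v_i\in Y$ and $x_i^*\in X^*$.
\item The $f_n$ are uniformly bounded on $K$, since $\sup_n\|B_n\|<\infty$.
\item $f$ is norm-Baire-one by \Cref{p5:lem:baire-one-adjoint}.
\item $f_n(t)\to f(t)$ weakly in $X^*$. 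For $x^{**}\in X^{**}$,
\begin{equation*}
x^{**}(B_n^*y^*)=(B_n^{**}x^{**})(y^*)\to (T^{**}x^{**})(y^*)=x^{**}(T^*y^*),
\end{equation*}
by the norm convergence from the bidual step.
\end{itemize}
The lemma then gives tail convex combinations $g_k=\sum_{n\ge k}\alpha_{k,n}f_n$ with $g_k(y^*)\to T^*y^*$ in norm for every $y^*\in B_{Y^*}$.

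\emph{Conclusion.} Define $A_k=\sum_{n\ge k}\alpha_{k,n}B_n$ using the same coefficients. Then $A_k$ has finite rank and $A_k^*y^*=g_k(y^*)$ on $B_{Y^*}$. Homogeneity extends the norm convergence $A_k^*y^*\to T^*y^*$ to all $y^*\in Y^*$. The bidual convergence $A_k^{**}x^{**}\to T^{**}x^{**}$ holds by the first step, since the $A_k$ are tail convex combinations of the $B_n$.
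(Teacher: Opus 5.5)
Your proposal is correct and follows the paper's proof essentially verbatim. The paper applies the tail convexification lemma to $F_n(y^*)=T^*P_n^*y^*$ on $(B_{Y^*},w^*)$, uses the Baire-one lemma for $F(y^*)=T^*y^*$, and sets $A_k=S_kT$ with $S_k\in\operatorname{conv}\{P_n\colon n\geq k\}$, which is exactly your $\sum_{n\geq k}\alpha_{k,n}P_nT$; the only cosmetic difference is that you get the weak convergence and the bidual convergence from $B_n^{**}x^{**}=P_nT^{**}x^{**}\to T^{**}x^{**}$, while the paper phrases the bidual step as the strong convergence $S_k\to I_Y$.
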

\begin{proof}
Choose finite-rank operators $P_n\colon Y\to Y$ as in \Cref{p5:lem:strong-approximation}. Let $K$ be the closed unit ball of $Y^*$, equipped with the weak-star topology. Define
\begin{equation*}
        F_n\colon K\to X^*, \qquad F_n(y^*)=T^*P_n^*y^*,
\end{equation*}
and
\begin{equation*}
        F\colon K\to X^*, \qquad F(y^*)=T^*y^*.
\end{equation*} 
Each $F_n\colon K\to X^*$ is continuous from the weak-star topology on $K$ to the norm topology on $X^*$, because $P_n$ has finite rank. Moreover, the sequence $(F_n)_{n=1}^\infty$ is uniformly bounded.

For $x^{**}\in X^{**}$ and $y^*\in K$,
\begin{equation*}
\begin{aligned}
        \langle F_n(y^*)-F(y^*),x^{**}\rangle
        &= \langle P_n^*y^*-y^*,T^{**}x^{**}\rangle \\
        &= \langle y^*,(P_n^{**}-I_{Y^{**}})T^{**}x^{**}\rangle .
\end{aligned}
\end{equation*}
Since $T$ is weakly compact, $T^{**}x^{**}$ belongs to the canonical copy of $Y$ in $Y^{**}$. Therefore $P_nT^{**}x^{**}\to T^{**}x^{**}$ in norm, and so
\begin{equation*}
        F_n(y^*)\xrightarrow{w}F(y^*)\qquad (y^*\in K).
\end{equation*}
By \Cref{p5:lem:baire-one-adjoint}, $F$ is norm-Baire-one. Applying \Cref{p5:lem:tail-convexification}, choose
\begin{equation*}
        S_k\in\operatorname{conv}\{P_n\colon n\geq k\}
\end{equation*}
such that
\begin{equation*}
        \|T^*S_k^*y^*-T^*y^*\|\to0\qquad (y^*\in B_{Y^*}).
\end{equation*}
By homogeneity this convergence holds for every $y^*\in Y^*$. Set
\begin{equation*}
        A_k=S_kT .
\end{equation*}
Then each $A_k$ is finite-rank and $A_k^*y^*=T^*S_k^*y^*\to T^*y^*$.

Finally, $S_ky\to y$ for every $y\in Y$, because $S_k$ is a convex combination of the tail $\{P_n\colon n\geq k\}$. For $x^{**}\in X^{**}$, weak compactness of $T$ gives $T^{**}x^{**}\in Y$. Since $S_k^{**}$ agrees with $S_k$ on the canonical copy of $Y$ in $Y^{**}$, and since $S_k\to I_Y$ strongly, we have
\begin{equation*}
        A_k^{**}x^{**}=S_k^{**}T^{**}x^{**}=S_kT^{**}x^{**}\to T^{**}x^{**}.
\end{equation*}
This finishes the proof.
\end{proof}

\section{The bridge theorem}
\label{p5:sec:bridge}

We now isolate the main factorization step. The previous results produce finite-rank approximants whose adjoints converge pointwise in norm, both on $Y^*$ and after passing to second adjoints on $X^{**}$. The bridge theorem shows that this two-sided approximation is already enough to force a genuine factorization of $T$ through a reflexive Banach space with a Schauder basis. In this way the analytic approximation information is converted into the structural conclusion needed below.

\begin{proposition}[Bridge theorem]
\label{p5:prop:bridge}
Let $T\colon X\to Y$ be a bounded operator. Suppose that there are finite-rank operators $A_n\colon X\to Y$ such that
\begin{equation}
\label{p5:eq:bridge-adjoint-convergence}
        A_n^*y^*\to T^*y^* \qquad (y^*\in Y^*)
\end{equation}
and
\begin{equation}
\label{p5:eq:bridge-biadjoint-convergence}
        A_n^{**}x^{**}\to T^{**}x^{**} \qquad (x^{**}\in X^{**})
\end{equation}
in norm. Then $T$ factors through a reflexive Banach space with a Schauder basis.
\end{proposition}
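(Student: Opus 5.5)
We follow the strategy outlined in the organisation section. Pass to a subsequence and put $A_0=0$. Define
\begin{equation*}
J\colon X\to c(Y),\qquad Jx=(A_nx)_{n=1}^\infty .
\end{equation*}
Here $c(Y)$ carries the supremum norm. This map is well defined because $A_nx=A_n^{**}x\to T^{**}x=Tx$ in norm, and it is bounded by uniform boundedness. The limit map $L\colon c(Y)\to Y$, $L(y_n)=\lim y_n$, satisfies $LJ=T$.

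Let $K$ be the closed absolutely convex hull of $J(B_X)$ together with all the frozen copies $Q_mJ(B_X)$, $m\in\N$, suitably weighted. The key claim is that $K$ is weakly compact in $c(Y)$. To check this, note that $J^{**}x^{**}=(A_n^{**}x^{**})_n$. This sequence converges in norm to $T^{**}x^{**}\in Y$, so $J^{**}(X^{**})\subset c(Y)$ and $J$ is weakly compact. The freezing projections $Q_m$ are bounded on $c(Y)$ with norm one. Moreover $Q_mJ^{**}x^{**}\to J^{**}x^{**}$ uniformly in $m$ only pointwise, so we argue via Grothendieck's criterion. Any weak$^*$ cluster point in $c(Y)^{**}$ of elements $Q_{m_i}Jx_i$ is of the form $Q_mJ^{**}x^{**}$ or a limit as $m\to\infty$, which is again $J^{**}x^{**}$. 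All of these lie in $c(Y)$, so the enlarged set is relatively weakly compact.

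Apply \Cref{p5:thm:dfjp-interpolation} to $K\subset c(Y)$ to get a reflexive $\PfiveDelta(K)$. Then $T=L\circ\iota\circ J$ factors through $\PfiveDelta(K)$, where $\iota$ is the inclusion. Since $Q_mK\subset K$, each $Q_m$ leaves every $U_j$ invariant, so it acts contractively on $\PfiveDelta(K)$.

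Next we restrict to $Z=\overline{\spann}\,J(X)$ taken in $\PfiveDelta(K)$, and we must show that the $Q_m$ form a finite-dimensional decomposition there. First, $Q_mJx=(A_1x,\ldots,A_mx,A_mx,\ldots)$ ranges over a finite-dimensional space, since the $A_n$ have finite rank. Second, the $Q_m$ commute and satisfy $Q_mQ_k=Q_{\min(m,k)}$. Third, we need $Q_mz\to z$ in the $\PfiveDelta(K)$ norm for $z\in Z$. This is where the adjoint condition \eqref{p5:eq:bridge-adjoint-convergence} enters. It gives $\sup_{x\in B_X}|y^*(A_nx-Tx)|\to0$ for each $y^*$. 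In other words, $J(B_X)$ has tails that are uniformly small against each fixed functional, that is, it is ``weakly uniformly convergent''. By Mazur and reflexivity, $Q_mz\to z$ weakly in the reflexive space $\PfiveDelta(K)$, and the $Q_m$ are uniformly bounded. Hence convex combinations of the $Q_m$ converge strongly, and the monotone structure $Q_mQ_k=Q_{\min}$ then upgrades this to $Q_mz\to z$ in norm. Rather than using $Q_m$ themselves, we may need to block the indices, replacing them by convex means $R_k$ that still commute and have finite rank, to secure an FDD.

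With this done, $Z$ is a reflexive space with an FDD. By \Cref{p5:thm:jrz-basisification}, $Z$ is complemented in a reflexive space $V$ with a Schauder basis. Then $T$ factors as $X\xrightarrow{J}Z\hookrightarrow V\to Z\to Y$.

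\textbf{Main obstacle.} The hard step is proving norm convergence $Q_mz\to z$ in $\PfiveDelta(K)$. This means controlling the Minkowski functionals $|Q_mz-z|_j$ for all $j$ at once, which is what the adjoint convergence \eqref{p5:eq:bridge-adjoint-convergence} is designed for. Weak compactness of the enlarged $K$ is the secondary difficulty.
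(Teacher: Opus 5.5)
You follow the paper's architecture ($c(Y)$, the freezing projections $Q_m$, an enlarged hull $K$, DFJP, then Johnson--Rosenthal--Zippin). There is a genuine gap, however, at the step you call secondary, and you attach the adjoint hypothesis to the wrong step.

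\textbf{The gap: weak compactness of $K$.} Consider $\bigcup_m Q_mJ(B_X)$. You assert that a weak$^*$ cluster point of $Q_{m_i}Jx_i$ with $m_i\to\infty$ is ``again $J^{**}x^{**}$''. Pointwise convergence $Q_mz\to z$ says nothing about this diagonal limit. Under the biadjoint hypothesis alone the claim is false. Take $X=\ell_2$, $Y$ the scalars, $T=0$ and $A_nx=x_n$. Then $A_n^{**}x^{**}\to0$ for every $x^{**}$. Yet $Q_mJe_m=(0,\dots,0,1,1,\dots)$ converges weak$^*$ to the element of $c^{**}$ that vanishes on $\ell_1\subset c^*$ and equals $1$ at $\lim$. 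That element is not in $c$. What you actually need is the uniform estimate
\begin{equation*}
\sup_{x\in B_X}\bigl|\Phi(Q_mJx-Jx)\bigr|\longrightarrow0\qquad(\Phi\in c(Y)^*).
\end{equation*}
Write $\Phi(z)=\eta^*(\lim_n z_n)+\sum_n y_n^*(z_n)$ with $\eta^*\in Y^*$ and $(y_n^*)\in\ell_1(Y^*)$, using $c(Y)\cong c_0(Y)\oplus_\infty Y$. The supremum is then at most $\|(A_m^*-T^*)\eta^*\|+2\sup_n\|A_n\|\sum_{n>m}\|y_n^*\|$. This tends to $0$ precisely because of the adjoint convergence hypothesis; that is where the hypothesis is used. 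Only then do $Q_{m_i}Jx_i$ and $Jx_i$ have the same weak cluster points, and these lie in $c(Y)$ because $J$ is weakly compact. Also, $K$ must be the unweighted hull. Weights tending to zero would make compactness trivial but destroy $Q_mK\subset K$.

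\textbf{The FDD is placed on the wrong space.} $\overline{\operatorname{span}}\,J(X)$ is not $Q_m$-invariant in general. For instance, with $X=Y=\mathbb R^2$, $A_1x=(x_2,0)$ and $A_n=I$ for $n\ge2$, the vector $Q_1Je_2=(e_1,e_1,\dots)$ does not lie in $J(X)$. So the $Q_m$ give no FDD there. Work on all of $\Delta(K)$ instead. If $H$ is the norm-closed span of $K$ in $c(Y)$ and $q\colon c(Y)\to c(Y)/H$ the quotient map, then $|z|_j\ge 2^j\|qz\|$, which forces $\Delta(K)\subset H$. Since $Q_mK\subset\sum_{i\le m}Q_iJ(X)$, which is finite-dimensional, every $Q_m$ has finite rank on $\Delta(K)$.

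\textbf{The step you call the main obstacle is routine.} It uses neither hypothesis. Because $2^{-j}B_{c(Y)}\subset U_j$, you get $|(I-Q_m)z|_j\le 2^j\|(I-Q_m)z\|_{c(Y)}\to0$ for each fixed $j$, since every element of $c(Y)$ is norm-convergent. Also $|(I-Q_m)z|_j\le 2|z|_j$ with $(|z|_j)_j\in\ell_2(\mathbb Z)$. Dominated convergence then gives $Q_mz\to z$ in $\Delta(K)$. Your Mazur route would also work, with no blocking needed: weak convergence in $\Delta(K)$ follows from reflexivity and injectivity of the inclusion, and $Q_mQ_k=Q_{\min(m,k)}$ upgrades it to norm convergence.
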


\begin{proof}
By \eqref{p5:eq:bridge-biadjoint-convergence} and uniform boundedness, we have
\begin{equation*}
        M=\max\{\|T\|,\sup_{n\in\mathbb N}\|A_n\|\}
        =\max\{\|T\|,\sup_{n\in\mathbb N}\|A_n^{**}\|\}<\infty .
\end{equation*}
Applying \eqref{p5:eq:bridge-biadjoint-convergence} to the canonical image of $X$ in $X^{**}$ shows that $A_nx\to Tx$ for every $x\in X$. It also shows that $T$ is weakly compact, since $T^{**}x^{**}$ is a norm limit in $Y^{**}$ of vectors in the canonical copy of $Y$.

Let
\begin{equation*}
        E=c(Y)=\{(y_n)_{n=1}^{\infty}\colon (y_n)_{n=1}^{\infty}\text{ converges in norm in }Y\}
\end{equation*}
with the supremum norm. Define
\begin{equation*}
        R\colon X\to E,\qquad Rx=(A_nx)_{n=1}^{\infty},
\end{equation*}
and
\begin{equation*}
        L\colon E\to Y,\qquad L((y_n)_{n=1}^{\infty})=\lim_{n\to\infty}y_n .
\end{equation*}
Then $T=LR$.

We first prove that $R$ is weakly compact. Every functional $\Phi\in E^*$ has a representation
\begin{equation*}
        \Phi((z_n)_{n=1}^{\infty})=\eta^*\left(\lim_{n\to\infty} z_n\right)+\sum_{n=1}^{\infty} y_n^*(z_n),
\end{equation*}
where $\eta^*\in Y^*$ and $(y_n^*)_{n=1}^{\infty}\in\ell_1(Y^*)$. This follows from the isomorphism
\begin{equation*}
        c(Y)\cong c_0(Y)\oplus_{\infty}Y,\qquad (z_n)_{n=1}^{\infty}\mapsto \left((z_n-\lim_{m\to\infty} z_m)_{n=1}^{\infty},\lim_{m\to\infty} z_m\right).
\end{equation*}

For $x^{**}\in X^{**}$, \eqref{p5:eq:bridge-biadjoint-convergence} shows that the sequence $(A_n^{**}x^{**})_{n=1}^{\infty}$ belongs to $E$ and has limit $T^{**}x^{**}$. Thus we may define
\begin{equation*}
        \widehat R\colon X^{**}\to E,\qquad \widehat R x^{**}=(A_n^{**}x^{**})_{n=1}^{\infty}.
\end{equation*}
We claim that $\widehat R$ represents $R^{**}$ inside the canonical copy of $E$ in $E^{**}$. Indeed, for $\Phi\in E^*$ and $x^{**}\in X^{**}$,
\begin{equation*}
        \Phi(\widehat R x^{**})=\eta^*(T^{**}x^{**})+\sum_{n=1}^{\infty} y_n^*(A_n^{**}x^{**})=x^{**}(R^*\Phi).
\end{equation*}
The right-hand side is precisely $(R^{**}x^{**})(\Phi)$, while the left-hand side is $(J_E\widehat R x^{**})(\Phi)$, where $J_E\colon E\to E^{**}$ is the canonical embedding. Hence
\begin{equation*}
        R^{**}x^{**}=J_E\widehat R x^{**}\qquad (x^{**}\in X^{**}).
\end{equation*}
Thus $R^{**}(X^{**})\subset J_E(E)$, and therefore $R$ is weakly compact.

For $m\in\mathbb N$, define
\begin{equation*}
        Q_m\colon E\to E,\qquad Q_m(y_1,y_2,\ldots)=(y_1,\ldots,y_m,y_m,y_m,\ldots).
\end{equation*}
Then
\begin{equation*}
        \|Q_m\|\leq1, \qquad\text{and} \qquad Q_mQ_k=Q_{\min(m,k)}.
\end{equation*}
Moreover, if $z=(y_n)_{n=1}^{\infty}\in E$, then
\begin{equation*}
        \|Q_mz-z\|_E=\sup_{n>m}\|y_m-y_n\|\to0
\end{equation*}
as $m\to\infty$, because $(y_n)_{n=1}^{\infty}$ is norm-convergent in $Y$.

Let
\begin{equation*}
        W=\overline{R(B_X)}^{\,w}\subset E .
\end{equation*}
Since $R$ is weakly compact, $W$ is weakly compact. We record the key shrinking estimate. For $x\in X$,
\begin{equation*}
        (I-Q_m)Rx=(0,\ldots,0,(A_{m+1}-A_m)x,(A_{m+2}-A_m)x,\ldots),
\end{equation*}
and this sequence has limit $(T-A_m)x$. Thus, if $\Phi\in E^*$ is represented as above then
\begin{equation*}
        R^*(I-Q_m^*)\Phi=(T^*-A_m^*)\eta^*+\sum_{n=m+1}^{\infty}(A_n^*-A_m^*)y_n^* .
\end{equation*}
Equivalently,
\begin{equation*}
        \sup_{v\in W}|\Phi(Q_mv-v)|\to0\qquad (\Phi\in E^*).
\end{equation*}

Define $K$ to be the norm-closed absolutely convex hull of the sets $Q_mW$, that is
\begin{equation*}
        K=\overline{\operatorname{aco}\bigl(\bigcup_{m=1}^{\infty}Q_mW\bigr)}^{\|\cdot\|_E}.
\end{equation*}
We claim that $K$ is weakly compact. By the Krein--Smulian theorem, it suffices to show that
\begin{equation*}
        \bigcup_{m=1}^{\infty}Q_mW
\end{equation*}
is relatively weakly compact. By the Eberlein--Smulian theorem, it is enough to prove that this union is relatively weakly sequentially compact.

Take a sequence $(Q_{m_j}w_j)_{j=1}^{\infty}\subseteq \bigcup_{m=1}^{\infty}Q_mW$, with $w_j\in W$. Passing to a subsequence, we may suppose that either $(m_j)_{j=1}^{\infty}$ is constant or $m_j\to\infty$.

Suppose first that $m_j=m$ for all $j$. Then $(Q_mw_j)_{j=1}^{\infty}$ lies in the weakly compact set $Q_mW$, and hence has a weakly convergent subsequence.

It remains to consider the case $m_j\to\infty$. Since $W$ is weakly compact, by passing to a further subsequence we may suppose that $w_j\to w$ weakly in $W$. For every $\Phi\in E^*$,
\begin{equation*}
        |\Phi(Q_{m_j}w_j-w_j)|\leq \sup_{v\in W}|\Phi(Q_{m_j}v-v)|\to0,
\end{equation*}
and therefore $Q_{m_j}w_j\to w$ weakly. This proves weak compactness of $K$.

We claim that $W\subset K$ and that each $Q_m$ leaves $K$ invariant. For the first part, note that if $w\in W$, then $Q_mw\in \bigcup_{k=1}^{\infty}Q_kW$ for every $m\in\mathbb N$, and $Q_mw\to w$ in norm. Hence $w\in K$.

For the invariance assertion, note that for $w\in W$ and $k,m\in\mathbb N$,
\begin{equation*}
        Q_m(Q_kw)=Q_{\min(m,k)}w\in \bigcup_{\ell=1}^{\infty}Q_\ell W.
\end{equation*}
Thus $Q_m$ maps the generating set $\bigcup_{k=1}^{\infty}Q_kW$ into itself. By linearity, it maps its absolutely convex hull into itself, and by norm-continuity it maps the norm closure of that hull, namely $K$, into $K$.

Apply \Cref{p5:thm:dfjp-interpolation} to $K$. For $j\in\mathbb Z$, set
\begin{equation*}
        U_j=2^jK+2^{-j}B_E,
\end{equation*}
let $|\cdot|_j$ be the Minkowski functional of $U_j$, and define
\begin{equation*}
        Z=\left\{z\in E\colon \|z\|_Z^2=\sum_{j\in\mathbb Z}|z|_j^2<\infty\right\}.
\end{equation*}
By \Cref{p5:thm:dfjp-interpolation}, $Z$ is reflexive, the inclusion $J\colon Z\to E$ is bounded, and $K$ is bounded as a subset of $Z$. Since $R(B_X)\subset W\subset K$, the map $R\colon X\to E$ lifts to a bounded operator
\begin{equation*}
        \widetilde R\colon X\to Z,\qquad J\widetilde R=R .
\end{equation*}
Therefore $T=LJ\widetilde R$, so $T$ already factors through the reflexive space $Z$.

It remains to show that $Z$ has the bounded approximation property. Since $Q_mK\subset K$ and $\|Q_m\|\leq1$ on $E$, we have
\begin{equation*}
        Q_mU_j\subset U_j \qquad (j\in\mathbb Z).
\end{equation*}
Therefore $Q_m$ is contractive for each gauge $|\cdot|_j$, and hence restricts to a contraction on $Z$.

We now show that $Q_mz\to z$ in the $Z$-norm as $m\to\infty$. For fixed $j\in\mathbb Z$ and $z\in Z$, we have
\begin{equation*}
        |(I-Q_m)z|_j\leq 2^j\|(I-Q_m)z\|_E\to0
\end{equation*}
as $m\to\infty$. On the other hand, since $Q_m$ is contractive for each gauge $|\cdot|_j$, the triangle inequality gives
\begin{equation*}
        |(I-Q_m)z|_j\leq |z|_j+|Q_mz|_j\leq2|z|_j.
\end{equation*}
Since $z\in Z$, the sequence $(|z|_j)_{j\in\mathbb Z}$ belongs to $\ell_2(\mathbb Z)$. Thus dominated convergence in the $\ell_2(\mathbb Z)$-sum gives
\begin{equation*}
        \|Q_mz-z\|_Z^2=\sum_{j\in\mathbb Z}|(I-Q_m)z|_j^2\to0
\end{equation*}
as $m\to\infty$. Hence $Q_mz\to z$ in $Z$ for every $z\in Z$.

We now show that $Q_m|_Z$ has finite rank. Let $H$ be the norm closure in $E$ of the linear span of $K$, that is
\begin{equation*}
        H=\overline{\operatorname{span} K}^{\|\cdot\|_E}.
\end{equation*}
First, we claim that $Z\subset H$. Indeed, let $z\in Z$ and let $q\colon E\to E/H$ be the quotient map. Since $q(K)=0$, we have $q(U_j)\subset 2^{-j}B_{E/H}$ for every $j\in\mathbb Z$. Hence
\begin{equation*}
        |z|_j\geq 2^j\|qz\| \qquad (j\in\mathbb Z).
\end{equation*}
If $qz\ne0$, then
\begin{equation*}
        \sum_{j\in\mathbb Z}|z|_j^2\geq \sum_{j\geq0}|z|_j^2\geq \|qz\|^2\sum_{j\geq0}4^j=\infty,
\end{equation*}
contradicting $z\in Z$. Therefore $qz=0$, and hence $z\in H$.

Fix $m\in\mathbb N$ and put
\begin{equation*}
        F_m=\sum_{i=1}^m Q_iR(X).
\end{equation*}
Each $Q_iR(X)$ is finite-dimensional, because
\begin{equation*}
        Q_iRx=(A_1x,\ldots,A_ix,A_ix,A_ix,\ldots)
\end{equation*}
and $A_1,\ldots,A_i$ have finite-dimensional ranges. Hence $F_m$ is finite-dimensional. Moreover, since $Q_iR(X)$ is finite-dimensional, it is weakly closed in $E$. Since $Q_i$ is weak-to-weak continuous and $W=\overline{R(B_X)}^{\,w}$, we have
\begin{equation*}
        Q_iW\subset \overline{Q_iR(B_X)}^{\,w}\subset \overline{Q_iR(X)}^{\,w}=Q_iR(X) \qquad (1\leq i\leq m).
\end{equation*}
Hence, for $w\in W$ and $k\geq1$, we have
\begin{equation*}
        Q_mQ_kw=Q_{\min(m,k)}w\in F_m,
\end{equation*}
because $\min(m,k)\leq m$ and $Q_{\min(m,k)}w\in Q_{\min(m,k)}R(X)\subset F_m$.

Since $Q_mQ_kw\in F_m$ for every $w\in W$ and $k\geq1$, and since $F_m$ is a linear subspace, $Q_m$ maps the absolutely convex hull of $\bigcup_{k=1}^{\infty}Q_kW$ into $F_m$. Because $F_m$ is finite-dimensional, it is norm closed in $E$; hence, by the definition of $K$ as the norm-closed absolutely convex hull,
\begin{equation*}
        Q_mK\subset F_m.
\end{equation*}
Now $Z\subset H=\overline{\operatorname{span}K}^{\|\cdot\|_E}$. Therefore, using the norm-continuity of $Q_m$ on $E$,
\begin{equation*}
        Q_mZ\subseteq Q_mH \subseteq \overline{\operatorname{span}(Q_mK)}^{\|\cdot\|_E}\subseteq F_m.
\end{equation*}
Thus $Q_m|_Z$ has range contained in the finite-dimensional space $F_m$, so $Q_m|_Z$ is finite-rank.

The finite-rank contractions $Q_m|_Z$ converge strongly to the identity on $Z$. Thus $Z$ has the metric approximation property. It is also separable, because $\bigcup_{m=1}^{\infty}Q_mZ$ is dense in $Z$ and each $Q_mZ$ is finite-dimensional. Put $Q_0=0$ and define
\begin{equation*}
        E_m=(Q_m-Q_{m-1})Z \qquad (m\in\mathbb N).
\end{equation*}
Since $Q_mQ_k=Q_{\min(m,k)}$ and $Q_mz\to z$ for every $z\in Z$, the sequence $(E_m)_{m=1}^{\infty}$ is a finite-dimensional decomposition of $Z$.

By \Cref{p5:thm:jrz-basisification}, there is a reflexive Banach space $V$ with a Schauder basis such that $Z$ is isomorphic to a complemented subspace of $V$. Let $i\colon Z\to V$ be the embedding and let $P\colon V\to iZ$ be a bounded projection. Define
\begin{equation*}
        B\colon X\to V,\qquad B=i\widetilde R,
\end{equation*}
and
\begin{equation*}
        C\colon V\to Y,\qquad C=(LJ)i^{-1}P.
\end{equation*}
Then $CB=T$. Hence $T$ factors through the reflexive Banach space $V$ with a Schauder basis.
\end{proof}

\section{Proof of the main theorem}
\label{p5:sec:conclusion}

We now put the preceding ingredients together. The approximation result supplies the two-sided finite-rank approximants required by the bridge theorem, and the bridge theorem converts them into the desired factorization.

\begin{proof}[Proof of \Cref{p5:thm:main}]
By \Cref{p5:prop:two-sided-approximants}, there are finite-rank operators $A_n\colon X\to Y$ such that
\begin{equation*}
        A_n^*y^*\to T^*y^* \qquad (y^*\in Y^*)
\end{equation*}
and
\begin{equation*}
        A_n^{**}x^{**}\to T^{**}x^{**} \qquad (x^{**}\in X^{**})
\end{equation*}
in norm. Thus the hypotheses of \Cref{p5:prop:bridge} are satisfied, and thus we obtain a factorization of $T$ through a reflexive Banach space with a Schauder basis, finishing the proof.
\end{proof}

\end{problempaperbody}

\newpage

\clearpage
\providecommand{\PthreeD}{\mathcal D}
\providecommand{\PthreeL}{\mathcal B}
\providecommand{\PthreeE}{\mathbb E}
\providecommand{\PthreeBV}{\operatorname{BV}_{\mathrm{br}}}
\providecommand{\PthreeTV}{\operatorname{TV}}
\providecommand{\Pthreeone}{\mathbf 1}
\providecommand{\Pthreeweak}{\mathrm{weak}}
\providecommand{\Pthreeip}[2]{\langle #1,#2\rangle}
\providecommand{\PthreeId}{\operatorname{Id}}

\providecommand{\crefname}[3]{}
\providecommand{\Crefname}[3]{}
\makeatletter
\@ifundefined{p3claim}{\newtheorem{p3claim}{Claim}}{}
\makeatother
\crefname{p3claim}{Claim}{Claims}
\Crefname{p3claim}{Claim}{Claims}

\problempaper{\texorpdfstring{Problem 5. Primariness of $L_p(L_1)$}{Problem 5. Primariness of Lp(L1)}}

\begin{problemabstract}
We prove that $L_p(L_1)$ has the uniform primary factorization property, and thus it is primary for every $1<p<\infty$. We also prove the corresponding scalar-compression result for the doubly cancellative product-Haar part of $L_p(L_1)$.
\end{problemabstract}

\problemcontents

\begin{problempaperbody}

\section{Main theorem, proof structure, and notation}
\label{p3:sec:statement}

Recall that a Banach space $Y$ has the \emph{uniform primary factorization property} if there is a constant $K\geq1$ such that, for every $T\in\PthreeL(Y)$, the identity on $Y$ factors through either $T$ or $\PthreeId_Y-T$ with factorization constant at most $K$. That is, for every $T\in\PthreeL(Y)$, there are $A,B\in\PthreeL(Y)$ with $\|A\|\|B\|\leq K$ such that
\begin{equation*}
        ATB=\PthreeId_Y
        \qquad\text{or}\qquad
        A(\PthreeId_Y-T)B=\PthreeId_Y.
\end{equation*}

Our main result is the following.

\begin{theorem}[Main theorem]
\label{p3:thm:main}
For every $1<p<\infty$, the space $X=L_p(L_1)$ has the uniform primary factorization property. In particular, $X$ is primary.
\end{theorem}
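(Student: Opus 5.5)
The plan follows the Lechner--Motakis--M\"uller--Schlumprecht scheme for $L_1(L_p)$. The first step is to reduce to the biparameter Haar system. We realise $X=L_p(L_1)$ on the unit square with the tensor Haar basis $(h_I\otimes h_J)$. It is unconditional in the $p$-variable but not in the $L_1$-variable. We then split $X$ into the doubly cancellative part, spanned by $h_I\otimes h_J$ with both $I,J$ nontrivial, and the pieces involving constants. Pelczy\'nski decomposition, together with the known primary-type properties of $L_p$ and $L_1$ and the fact that $L_p(L_1)\cong L_p(L_1)\oplus L_p(L_1)$, should reduce uniform primary factorization of $X$ to a corresponding statement on the doubly cancellative part. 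This is the \emph{scalar-compression result} announced in the abstract.

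That statement has two parts. First, for every bounded $T$ on the doubly cancellative part $Y$, there is a faithful biparameter Haar system $(\tilde h_I\otimes \tilde k_J)$ whose closed span $\tilde Y$ is isomorphic to $Y$ and complemented with uniform constants via conditional expectation operators. Second, the compression $P_{\tilde Y}T|_{\tilde Y}$ equals $c\,\PthreeId+R$ with $\|R\|$ arbitrarily small, in the sense of scalar compression.

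To prove this we run a two-stage diagonal stabilisation. In the first stage, for each pair $(I,J)$ we choose blocks $\tilde h_I=\sum_{K\in\mathcal B_I}\varepsilon_K h_K$, with random signs in the $L_p$-coordinate. We also choose faithful blocks $\tilde k_J$ in the $L_1$-coordinate, built from Jones/Gamlen--Gaudet type frequent Haar systems. The choices are made so that the off-diagonal matrix entries $\langle T\tilde h_I\otimes\tilde k_J,\tilde h_{I'}\otimes\tilde k_{J'}\rangle$ are summably small. This uses the weak nullness of sign-averaged blocks in $L_p$ and, in the $L_1$ variable, the standard trick of testing against $L_\infty$-normalised duals. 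In the second stage we make the diagonal entries stable, i.e.\ approximately constant. Since the diagonal is bounded, a Ramsey/pigeonhole argument over the dyadic tree yields a faithful subsystem on which the diagonal is almost constant along the tree. The resulting diagonal operator is then close to a multiplier. We then invoke the factorization results for multipliers on biparameter Haar system Hardy spaces from \cite{LechnerMotakisMullerSchlumprecht2024} to replace the multiplier by a scalar, up to factorization through the identity.

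Once $T$ compresses to $c\,\PthreeId$ on a uniformly complemented copy of $Y$, the conclusion follows. If $|c|\ge1/2$, the identity factors through $T$; otherwise it factors through $\PthreeId-T$, with constant depending only on $p$. Transferring back to $X$ through the first reduction completes the argument.

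The main obstacle is the first stage in the $L_1$ coordinate. Haar functions are not unconditional there and the dual is not separable, so we cannot diagonalise by weak-null arguments alone. Instead we first project onto $L_p(\mathrm{span}\,\tilde k_J)$ using the $L_1$ conditional expectation of the faithful system, which is a contraction. We then control the error via the $L_p$-valued square-function estimates in the first variable. Making these two-variable estimates uniform is where I expect most of the work to lie.
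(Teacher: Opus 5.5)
Your architecture matches the paper's: compress an operator on the doubly cancellative part to a faithful product Haar copy via contractive exterior maps, show that the compression is close to a bounded product Haar multiplier, apply the LMMS scalar-compression theorem, and factor the identity through $T$ or $\mathrm{Id}-T$.

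\textbf{The gap: not all off-diagonal entries can be made summably small.} Your first stage claims that every off-diagonal entry $a^{I,I'}_{J,J'}=\langle \tilde h_I^*\otimes\tilde k_J^*,T(\tilde h_{I'}\otimes\tilde k_{J'})\rangle$ can be made summably small, and this fails. Weak nullness of sign-averaged blocks and testing against $L_\infty$ duals only control entries that are one-sided, or bilinear in two distinct new blocks, in the coordinate being split. In effect they handle only the mixed entries $I\neq I'$, $J\neq J'$. The ``half-diagonal'' entries behave differently.
\begin{itemize}
\item For $T=\mathrm{Id}_{L_p^0}\otimes S$ one has $a^{I,I}_{J,J'}=\langle\tilde k_J^*,S\tilde k_{J'}\rangle$ for every $I$. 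This is an infinite constant strip, summable only if it vanishes, and no choice of $\tilde h_I$ affects it.
\item In general, if $\tilde h_I$ is born after $\tilde k_J,\tilde k_{J'}$, then $a^{I,I}_{J,J'}=\langle\tilde h_I^*,Q\tilde h_I\rangle$ with $Q=(\mathrm{Id}\otimes\tilde k_J^*)T(\,\cdot\,\otimes\tilde k_{J'})\in\mathcal B(L_p^0)$. Randomising $\tilde h_I$ makes this entry concentrate at the averaged diagonal of $Q$ over the descendants used, not at $0$.
\item Symmetrically, $a^{I,I'}_{J,J}$ concentrates at an averaged $L_1$ Haar diagonal of $R=(\tilde h_I^*\otimes\mathrm{Id})T(\tilde h_{I'}\otimes\,\cdot\,)\in\mathcal B(L_1^0)$.
\end{itemize}

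\textbf{What is needed instead.} These two families must be controlled strip by strip, and this is where the $L_1$ coordinate really bites.
\begin{itemize}
\item \emph{Case $I=I'$.} Unconditionality in $L_p$ bounds the strip by a constant times $\sup_I|a^{I,I}_{J,J'}|$. The supremum runs over all later-born $I$, so smallness must be prepared when $\tilde k_J,\tilde k_{J'}$ are chosen. The paper makes them pair small against an ultrafilter weak limit of the outer-diagonal averages $\Omega^T_{A,m}\in\mathcal B(L_1^0)$ over descendants of the current outer set $A$. That limit is $L_1$-valued, not merely in $L_1^{**}$, by a uniform integrability argument using Khintchine. This makes the averaged outer diagonals of $Q$ small along an infinite reservoir of depths. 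A variance/Markov argument then preserves these reservoirs through every later outer split.
\item \emph{Case $J=J'$.} A sup bound is useless in $L_1$: by Semenov--Uksusov the strip norm is governed by the branch variation of $J\mapsto a^{I,I'}_{J,J}$. Controlling it requires three things:
  \begin{itemize}
  \item the local $L_1$ trace theorem, namely that the averaged diagonals $\beta^B_n(R)$ converge to $\lambda_B(R)=|B|^{-1}\int_B\varphi_R$ with $\varphi_R\in L_\infty$;
  \item inner splits that keep new diagonal entries and the children's traces within summable tolerances of the parent trace;
  \item small root traces at registration, imposed through an outer operator $\mathcal I^T_B\in\mathcal B(L_p^0)$ representing these traces.
  \end{itemize}
\end{itemize}
Your projection-plus-square-function idea addresses neither issue.

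\textbf{Smaller points.} Once these controls are in place, the compression minus the three off-diagonal pieces is automatically a bounded product multiplier, and LMMS applies directly. Your Ramsey stabilisation is therefore unnecessary. It would also not suffice in the $L_1$ variable, since a symbol uniformly close to a constant need not give an operator close to a scalar. Finally, the reduction to the doubly cancellative part $X_{00}$ needs no primariness of $L_p$ or $L_1$. Compress to $P_{00}T|_{X_{00}}$ and use that $X$ is isometric to a $1$-complemented subspace of $X_{00}$.
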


In fact, \Cref{p3:thm:main} is obtained as a consequence of a stronger factorisation result on the doubly cancellative product-Haar part $X_{00}$. We prove that for every operator $T\in\PthreeL(X_{00})$ and every $\varepsilon>0$, there are operators $A,B\in\PthreeL(X_{00})$ and a scalar $c\in\mathbb R$ such that
\begin{equation*}
        AB=\PthreeId_{X_{00}},
        \qquad
        \|ATB-c\PthreeId_{X_{00}}\|<\varepsilon .
\end{equation*}
This scalar-compression statement is then applied to the cancellative compression of an arbitrary projection on $L_p(L_1)$, and the usual Pe{\l}czy\'nski decomposition argument gives primarity.

\subsection{Organization of the proof}
\label{p3:sec:proof-organization}

The main step is to show that every operator $T\in\PthreeL(X_{00})$ can be reduced, after passing to suitable faithful product Haar systems, to a product Haar multiplier. This is the key reduction identified by Lechner, Motakis, M\"uller, and Schlumprecht \cite{LechnerMotakisMullerSchlumprecht2024}: once arbitrary operators can be compressed to multipliers, the scalar compression theorem for multipliers can be applied. Thus the central task of the proof is to construct faithful Haar systems for which all off diagonal coefficients of the compressed operator become small.

We introduce the doubly cancellative subspace $X_{00}$ in \Cref{p3:sec:notation}, and record the one and two parameter multiplier facts needed later in \Cref{p3:sec:multiplier-terminology,p3:sec:su-result,p3:sec:lmms-result}. The first new ingredient is the local $L_1$ trace theorem, proved in \Cref{p3:sec:local-trace}. It assigns to every operator on $L_1^0$ a limiting averaged Haar diagonal on each finite equal-dyadic set, and represents these limits by an $L_\infty$ density. We then introduce faithful outer and inner Haar systems in \Cref{p3:sec:faithful-systems}. The associated exterior maps allow us to compress an operator to a faithful product Haar copy while keeping a left inverse.

The construction of the faithful systems is carried out in \Cref{p3:sec:multiplier-reduction-construction}. Before that, the finite one step tools needed for the construction are established in \Cref{p3:sec:finite-one-step-constructions}, with the outer and inner versions separated in \Cref{p3:sec:outer-one-step-construction,p3:sec:inner-one-step-construction}. The construction itself is an alternating outer and inner tree construction. At each half stage, signs and depths are chosen so as to control three types of off diagonal coefficients: same outer strips, same inner strips, and mixed coefficients. The finite choices required at the two half stages are isolated as the technical claims in \Cref{p3:sec:technical-claims-multiplier-reduction}. Once the limiting faithful systems are built, the off diagonal part of the compressed operator is the sum of three controlled pieces, while the remaining diagonal part is a bounded product Haar multiplier. This proves the reduction theorem, \Cref{p3:thm:reduction-to-multiplier}.

Finally, the LMMS scalar compression theorem is applied to the resulting product Haar multiplier on $X_{00}$. This gives scalar compression on the doubly cancellative part. The passage from $X_{00}$ back to the full mixed norm space is then obtained using a complemented copy of $L_p(L_1)$ inside $X_{00}$, and Pe{\l}czy\'nski's decomposition method yields primarity.

\section{Notation and preliminary results}
\label{p3:sec:preliminary-results}

We adhere to standard notation and record below the conventions needed throughout the proof. For notational convenience, all Banach spaces are over the real scalar field, and all operators are assumed to be bounded and linear unless explicitly stated otherwise. The complex case is obtained in the usual way, by inserting conjugates in coefficient pairings and using the complex versions of the multiplier results quoted below.

\subsection{Notation and basic definitions}
\label{p3:sec:notation}

Here and below, $L_p(L_1)$ denotes the Bochner space $L_p([0,1];L_1[0,1])$, and we write $X=L_p(L_1)$. For nonnegative quantities $A$ and $B$, we write $A\lesssim B$ if $A\leq CB$ for an absolute constant $C$, and $A\approx B$ if $A\lesssim B$ and $B\lesssim A$. Dependence of the implicit constant on additional parameters is indicated by subscripts; for example, $A\lesssim_p B$ allows the constant to depend on $p$.

Let $\PthreeD$ be the collection of dyadic intervals in $[0,1)$. For $m\geq 0$, we write
\begin{equation*}
        \PthreeD_m=\{I\in\PthreeD\colon |I|=2^{-m}\}
\end{equation*}
for the dyadic intervals at level $m$. If $B\in\PthreeD$ and $n\geq 0$, we write
\begin{equation*}
        \PthreeD_n(B)=\{J\in\PthreeD\colon J\subset B,\ |J|=2^{-n}|B|\}
\end{equation*}
for the dyadic descendants of $B$ at relative depth $n$.

A measurable set $B\subset[0,1)$, together with a specified integer $m_B\geq 0$, is called a \emph{finite equal-dyadic set} if $B$ is a finite disjoint union of intervals in $\PthreeD_{m_B}$. Thus
\begin{equation*}
        B=\bigsqcup_{\ell=1}^r B_\ell,\qquad B_\ell\in\PthreeD_{m_B}.
\end{equation*}
The integer $m_B$ is called the \emph{decomposition level} of $B$. We usually suppress $m_B$ from the notation and write simply $B$, but the decomposition level is always recorded implicitly. We extend the descendant notation relative to this specified level by
\begin{equation*}
        \PthreeD_n(B)=\bigcup_{\ell=1}^r\PthreeD_n(B_\ell)=\{J\in\PthreeD_{m_B+n}:J\subset B\}\qquad(n\geq 0).
\end{equation*}
 
For $I\in\PthreeD$, let $I^+$ and $I^-$ denote the two dyadic children of $I$, chosen so that the usual sign-valued Haar function supported on $I$ satisfies
\begin{equation*}
        h_I=\Pthreeone_{I^+}-\Pthreeone_{I^-}.
\end{equation*}
For $1\leq q<\infty$, we write
\begin{equation*}
        L_q^0[0,1]=\overline{\operatorname{span}\{h_I\colon I\in\PthreeD\}}^{\|\cdot\|_{L_q}}=\left\{f\in L_q[0,1]\colon \int_0^1 f=0\right\}.
\end{equation*}
When no confusion is possible, we write $L_q^0$.

If $f$ and $g$ are functions on $[0,1]$, then
\begin{equation*}
        (f\otimes g)(s,t)=f(s)g(t)\qquad (s,t\in[0,1]).
\end{equation*}
In particular, $h_I\otimes h_J$ denotes the function $(s,t)\longmapsto h_I(s)h_J(t)$. We write
\begin{equation*}
        X_{00}=\overline{\operatorname{span}}\{h_I\otimes h_J\colon I,J\in\PthreeD\}
        \subset X .
\end{equation*}
This is the doubly cancellative product-Haar part of $X$.

We use the normalized vectors
\begin{equation*}
        e_I=|I|^{-1/p}h_I,\qquad e_I^*=|I|^{-1/p'}h_I
\end{equation*}
in the outer coordinate, and
\begin{equation*}
        f_J=|J|^{-1}h_J,\qquad f_J^*=h_J
\end{equation*}
in the inner coordinate. Thus
\begin{equation*}
        u_{I,J}=e_I\otimes f_J,\qquad u_{I,J}^*=e_I^*\otimes f_J^*
\end{equation*}
is a biorthogonal product-Haar system on the algebraic product Haar span.

\begin{lemma}[The doubly cancellative projection]
\label{p3:lem:x00-projection}
For $f\in X=L_p(L_1)$, define
\begin{equation*}
        (\mathbb E_2f)(s,t)=\int_0^1 f(s,u)\,du
\end{equation*}
and
\begin{equation*}
        (\mathbb E_1f)(s,t)=\int_0^1 f(r,t)\,dr,
\end{equation*}
where the second integral is a Bochner integral in $L_1[0,1]$.  Then $\mathbb E_1$ and $\mathbb E_2$ are commuting contractive projections on $X$, and
\begin{equation*}
        P_{00}=(\PthreeId-\mathbb E_1)(\PthreeId-\mathbb E_2)
\end{equation*}
is a projection with $\|P_{00}\|\leq4$.  Moreover,
\begin{equation*}
        X_{00}=P_{00}X
        =\{f\in X:\mathbb E_1f=0\text{ and }\mathbb E_2f=0\}.
\end{equation*}
In particular, if $x\in L_p^0$ and $g\in L_1^0$, then $x\otimes g\in X_{00}$.
\end{lemma}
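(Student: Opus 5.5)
We will prove the lemma by direct verification, treating $X=L_p(L_1)$ as Bochner-measurable functions $s\mapsto f(s,\cdot)\in L_1$. The plan is first to check that $\mathbb E_1$ and $\mathbb E_2$ are well defined contractive projections, then to check that they commute, and finally to identify the range of $P_{00}$.

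For $\mathbb E_2$: for a.e.\ $s$, $(\mathbb E_2f)(s,\cdot)$ is the constant function with value $\int_0^1 f(s,u)\,du$. Its $L_1$ norm is at most $\|f(s,\cdot)\|_{L_1}$, so $\|\mathbb E_2f\|_X\leq\|f\|_X$. Measurability in $s$ follows because $g\mapsto(\int g)\Pthreeone$ is a bounded operator on $L_1$ applied pointwise. Idempotence is clear, since the average of a constant is the constant.

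For $\mathbb E_1$: the Bochner integral $\int_0^1 f(r,\cdot)\,dr$ exists in $L_1$ because $L_p([0,1];L_1)\subset L_1([0,1];L_1)$. The resulting constant-in-$s$ function has $X$-norm $\|\int f(r)\,dr\|_{L_1}\leq\int\|f(r)\|\,dr\leq\|f\|_X$, by Jensen/Hölder. It is idempotent.

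Commutation follows because the bounded operator $g\mapsto(\int g)\Pthreeone$ on $L_1$ commutes with the Bochner integral. Both compositions equal the constant $\iint f$. Hence $\PthreeId-\mathbb E_1$ and $\PthreeId-\mathbb E_2$ are commuting projections of norm at most $2$. Their product $P_{00}$ is therefore a projection with $\|P_{00}\|\leq4$.

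For the range identity, write $P_{00}=\PthreeId-\mathbb E_1-\mathbb E_2+\mathbb E_1\mathbb E_2$. Then $\mathbb E_1P_{00}=0=\mathbb E_2P_{00}$. Conversely, if $\mathbb E_1f=\mathbb E_2f=0$, then $P_{00}f=f$. This gives $P_{00}X=\{f:\mathbb E_1f=\mathbb E_2f=0\}$.

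It remains to show $X_{00}$ equals this space. For the inclusion $X_{00}\subset P_{00}X$: each $h_I\otimes h_J$ is annihilated by both $\mathbb E_i$, since Haar functions have mean zero. The kernel is closed, so $X_{00}$ lies in it. The same computation shows that $x\otimes g$ with $x\in L_p^0$ and $g\in L_1^0$ is annihilated by both $\mathbb E_i$, which is the final assertion.

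The reverse inclusion $P_{00}X\subset X_{00}$ is the main (mild) obstacle. Simple tensors $x\otimes g$ with $x\in L_p$ and $g\in L_1$ span a dense subspace of $L_p(L_1)$. Since $P_{00}$ is bounded, $P_{00}X$ is the closure of the span of the vectors
\begin{equation*}
P_{00}(x\otimes g)=\Bigl(x-\int x\Bigr)\otimes\Bigl(g-\int g\Bigr)=x_0\otimes g_0,
\end{equation*}
with $x_0\in L_p^0$ and $g_0\in L_1^0$. Now approximate $x_0$ in $L_p$ by finite Haar sums, and $g_0$ in $L_1$ by finite Haar sums. This uses the density of the Haar span in $L_q^0$ recorded in the notation. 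Then use $\|a\otimes b\|_X=\|a\|_{L_p}\|b\|_{L_1}$ together with bilinearity to conclude $x_0\otimes g_0\in X_{00}$. Hence $P_{00}X\subset X_{00}$.
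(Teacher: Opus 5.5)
Your proposal is correct and follows essentially the same route as the paper: contractivity, commutation and the norm bound $\|P_{00}\|\leq 4$, the kernel description of the range, and then a density argument for $P_{00}X\subset X_{00}$. The only minor difference is in that last density step. The paper approximates by dyadic rectangle indicators $\mathbf 1_A\otimes\mathbf 1_B$, whose images $(\mathbf 1_A-|A|)\otimes(\mathbf 1_B-|B|)$ are already finite Haar sums. You instead use arbitrary elementary tensors and then approximate each mean-zero factor by Haar sums via $\|a\otimes b\|_X=\|a\|_{L_p}\|b\|_{L_1}$; this costs one extra approximation step, but either version works.
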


\begin{proof}
For almost every $s$,
\begin{equation*}
        \|\mathbb E_2f(s,\cdot)\|_1
        =\left|\int_0^1 f(s,u)\,du\right|
        \leq \|f(s,\cdot)\|_1,
\end{equation*}
so $\mathbb E_2$ is contractive on $X$.  Also,
\begin{equation*}
        \|\mathbb E_1f\|_{L_p(L_1)}
        =\left\|\int_0^1 f(r,\cdot)\,dr\right\|_1
        \leq\int_0^1\|f(r,\cdot)\|_1\,dr
        \leq\|f\|_{L_p(L_1)},
\end{equation*}
because the outer measure is one.  Thus $\mathbb E_1$ is contractive.  Clearly $\mathbb E_1^2=\mathbb E_1$ and $\mathbb E_2^2=\mathbb E_2$, and the two projections commute by Fubini's theorem. Hence $P_{00}$ is a bounded projection and $\|P_{00}\|\leq4$. 

The range of $P_{00}$ is exactly the intersection of the kernels of $\mathbb E_1$ and $\mathbb E_2$.  Every product Haar function $h_I\otimes h_J$ belongs to this intersection, so $X_{00}\subset P_{00}X$.

Conversely, let $f\in P_{00}X$.  Choose dyadic rectangular simple functions $g_n$ with $g_n\to f$ in $L_p(L_1)$.  Since $P_{00}$ is bounded, $P_{00}g_n\to f$.  It is enough to check that $P_{00}g_n$ belongs to the closed span of the product Haar functions.  By linearity it suffices to consider a rectangle $\Pthreeone_A\otimes\Pthreeone_B$, where $A$ and $B$ are dyadic intervals.  Then
\begin{equation*}
        P_{00}(\Pthreeone_A\otimes\Pthreeone_B)
        =(\Pthreeone_A-|A|\Pthreeone_{[0,1)})
        \otimes
        (\Pthreeone_B-|B|\Pthreeone_{[0,1)}).
\end{equation*}
Each factor is a dyadic step function of integral zero and hence lies in the finite linear span of the one-parameter Haar functions.  Therefore the tensor above lies in the algebraic span of $\{h_I\otimes h_J:I,J\in\PthreeD\}$.  This proves $P_{00}X\subset X_{00}$.

Finally, if $x\in L_p^0$ and $g\in L_1^0$, then both one-variable means vanish, so $x\otimes g\in P_{00}X=X_{00}$.
\end{proof}

\subsection{Multiplier terminology}
\label{p3:sec:multiplier-terminology}

We shall need the following definition.

\begin{definition}
Let $a=(a_I)_{I\in\PthreeD}$ be a scalar family. The \emph{scalar Haar multiplier with symbol $a$} is the linear map
\begin{equation*}
        M_a\colon \operatorname{span}\{h_I\colon I\in\PthreeD\}\longrightarrow
        \operatorname{span}\{h_I\colon I\in\PthreeD\}
\end{equation*}
defined by
\begin{equation*}
        M_a h_I=a_Ih_I\qquad (I\in\PthreeD).
\end{equation*}
We say that $M_a$ is \emph{bounded on $L_q^0$} if it extends to a bounded operator $L_q^0\to L_q^0$; in that case we use the same symbol for the extension, $1 \leq q < \infty$.

Similarly, for a scalar array $d=(d_{I,J})_{I,J\in\PthreeD}$, the \emph{product Haar multiplier with symbol $d$} is the linear map
\begin{equation*}
        M_d\colon \operatorname{span}\{h_I\otimes h_J\colon I,J\in\PthreeD\}\longrightarrow
        \operatorname{span}\{h_I\otimes h_J\colon I,J\in\PthreeD\}
\end{equation*}
defined by
\begin{equation*}
        M_d(h_I\otimes h_J)=d_{I,J}h_I\otimes h_J
        \qquad (I,J\in\PthreeD).
\end{equation*}
We say that $M_d$ is \emph{bounded on $X_{00}$} if it extends to a bounded operator on $X_{00}$. Equivalently, the normalized vectors $u_{I,J}$ are eigenvectors with eigenvalues $d_{I,J}$.
\end{definition}

We have the following elementary observation.

\begin{lemma}[Bounded scalar multipliers on $L_p^0$]
\label{p3:lem:lp-haar-multiplier}
Let $1<p<\infty$. There is a constant $U_p$ such that, for every bounded scalar family $a=(a_I)_{I\in\PthreeD}$, the scalar Haar multiplier $M_a$ extends to a bounded operator on $L_p^0$ and
\begin{equation*}
        \|M_a\|_{\PthreeL(L_p^0)}\leq U_p\sup_{I\in\PthreeD}|a_I|.
\end{equation*}
\end{lemma}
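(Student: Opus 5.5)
The plan is to use the unconditionality of the Haar system in $L_p$ for $1<p<\infty$ (Burkholder / Paley--Marcinkiewicz), together with the standard reduction from unconditional to bounded multipliers.

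\textbf{Step 1: unconditional constant.} The Haar system $(h_I)_{I\in\PthreeD}$ is an unconditional basis of $L_p^0$. So there is a constant $C_p$ (Burkholder's constant $p^*-1$ in the real case) such that, for every finitely supported family $(c_I)$ and every choice of signs $\varepsilon_I\in\{\pm1\}$,
\begin{equation*}
        \Bigl\|\sum_I \varepsilon_I c_I h_I\Bigr\|_p\leq C_p\Bigl\|\sum_I c_I h_I\Bigr\|_p .
\end{equation*}
Equivalently, $\|M_\varepsilon\|\leq C_p$ for every sign symbol $\varepsilon$, acting on the algebraic span.

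\textbf{Step 2: real symbols by convexity.} Let $a$ be real-valued with $\sup_I|a_I|\leq 1$, and fix a finite set $F\subset\PthreeD$ containing the support of $f=\sum_{I\in F}c_Ih_I$. The map $a\mapsto \|M_af\|_p$ is convex on the cube $[-1,1]^F$. A convex function on a cube attains its maximum at an extreme point, and the extreme points are the sign vectors. Hence $\|M_af\|_p\leq C_p\|f\|_p$.

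Alternatively, one can write $a$ restricted to $F$ explicitly as a convex combination of sign vectors, coordinatewise via $a_I=\lambda_I(+1)+(1-\lambda_I)(-1)$ and taking the product measure. Either way there is no real obstacle here.

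\textbf{Step 3: general symbols and extension.} For complex scalars, split $a=\operatorname{Re}a+i\operatorname{Im}a$, which gives the constant $2C_p$. Alternatively, use the complex unconditional constant directly, since bounded complex multipliers with $|a_I|\leq1$ are averages of unimodular ones. Scaling by $\sup_I|a_I|$ then gives the bound on the dense span $\operatorname{span}\{h_I\}$. The operator extends by continuity to $L_p^0$, so one may take $U_p=2C_p$, or $U_p=C_p$ in the real setting the paper adopts.

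\textbf{Main obstacle.} The only substantive input is the unconditionality of the Haar basis in $L_p$ for $1<p<\infty$. I would cite it, as Burkholder's martingale transform inequality, rather than reprove it. The remaining steps are the routine convexity reduction from signs to bounded symbols.
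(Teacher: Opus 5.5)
Your proposal is correct and is essentially the paper's argument. The paper simply invokes the classical unconditionality of the Haar system in $L_p[0,1]$ for $1<p<\infty$, restricted to the codimension-one subspace $L_p^0$. Your convexity reduction from sign symbols to bounded real symbols, the real/imaginary splitting in the complex case, and the extension from the dense span by continuity just make explicit the standard passage from an unconditional constant to bounded multipliers, which the paper's citation leaves implicit.
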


\begin{proof}
This is the classical unconditionality of the Haar system in $L_p[0,1]$ for $1<p<\infty$, restricted to the closed codimension-one subspace $L_p^0$; see, for instance, \cite[Chapter~6]{AlbiacKalton2016}.
\end{proof}

We also use the factorization terminology of Lechner--Motakis--M\"uller--Schlumprecht \cite[Definition~2.2]{LechnerMotakisMullerSchlumprecht2024}. 

\begin{definition}
    Let $E$ be a Banach space, let $S,T\in\PthreeL(E)$, let $C\geq 1$, and let $\eta\geq 0$. We say that $S$ \emph{projectionally $C$-factors through $T$ with error $\eta$} if there are operators $A,B\in\PthreeL(E)$ such that
    \begin{equation*}
            AB=\PthreeId_E,\qquad
            \|S-ATB\|\leq\eta,\qquad
            \|A\|\,\|B\|\leq C .
    \end{equation*}
\end{definition}

We will recall the two main multiplier results used in the proof: the one-parameter theorem of Semenov--Uksusov \cite[Theorem~3]{SemenovUksusov2012} and the bi-parameter scalar-compression theorem of Lechner--Motakis--M\"uller--Schlumprecht \cite[Theorem~2.3]{LechnerMotakisMullerSchlumprecht2024}. We abbreviate the latter authors as LMMS.

\subsection{The one-parameter $L_1$ Haar multiplier theorem}
\label{p3:sec:su-result}

Let $M_a$ be a scalar Haar multiplier on $L_1^0[0,1]$, so that
\begin{equation*}
        M_a h_I=a_Ih_I\qquad (I\in\PthreeD).
\end{equation*}
For the coefficient family $a=(a_I)_{I\in\PthreeD}$, write
\begin{equation*}
        \|a\|_\infty=\sup_{I\in\PthreeD}|a_I|,
\end{equation*}
and define the finite-chain variation norm
\begin{equation*}
        \|a\|_W =
        \sup\left\{
        \sum_{r=0}^{m-1}|a_{I_{r+1}}-a_{I_r}|:
        I_0\supset I_1\supset\ldots\supset I_m,\ m\geq 1
        \right\}.
\end{equation*}

We start by recalling the one-parameter multiplier theorem of Semenov and Uksusov \cite[Theorem~3]{SemenovUksusov2012}, restricted to the cancellative Haar subspace $L_1^0$. This only removes the one-dimensional constant coordinate and does not change the boundedness criterion, up to absolute constants.

\begin{theorem}[Semenov--Uksusov]
\label{p3:thm:su-finite-chain}
Let $M_a$ be a scalar Haar multiplier on $L_1^0$. Then $M_a$ is bounded on $L_1^0$ if and only if
\begin{equation*}
        \|a\|_W+\|a\|_\infty<\infty .
\end{equation*}
Moreover,
\begin{equation*}
        \|M_a\|\approx \|a\|_W+\|a\|_\infty,
\end{equation*}
where $\|M_a\|$ denotes the operator norm on $L_1^0$.
\end{theorem}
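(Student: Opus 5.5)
This is quoted from \cite[Theorem~3]{SemenovUksusov2012}. The only work is to check that passing to $L_1^0$ costs absolute constants. I would also sketch a direct argument, because it shows where the finite-chain variation $\|a\|_W$ comes from.

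\emph{Reduction from $L_1$.} The map $f\mapsto f-\int_0^1 f$ is a projection of norm at most $2$ from $L_1[0,1]$ onto $L_1^0$. Its kernel is the constants. If $M_a$ is bounded on $L_1^0$, extend it to $L_1$ by letting it act as $a_{[0,1)}$ on constants. The extension has norm at most $2\|M_a\|+\|a\|_\infty$, so it is comparable to $\|M_a\|$. Conversely, restricting a bounded multiplier on $L_1$ to $L_1^0$ does not increase the norm. The one-dimensional constant coordinate enters $\|a\|_W$ only through one extra chain link, of size at most $2\|a\|_\infty$. Hence the two-sided estimate on $L_1$ transfers to $L_1^0$.

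\emph{Direct upper bound.} The closed unit ball of $L_1$ is the closed absolutely convex hull of the functions $|J|^{-1}\mathbf 1_J$, $J\in\mathcal D$. Applying the projection above, the unit ball of $L_1^0$ lies in twice the closed absolutely convex hull of the atoms $g_J=|J|^{-1}\mathbf 1_J-\mathbf 1_{[0,1)}$. It therefore suffices to bound $\|M_ag_J\|_1$ uniformly in $J$.

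Let $[0,1)=I_0\supset I_1\supset\ldots\supset I_m=J$ be the dyadic chain down to $J$. The Haar expansion of $g_J$ is supported on $I_0,\ldots,I_{m-1}$. Write $E_r$ for the conditional expectation onto the dyadic $\sigma$-algebra of level $r$. The partial sums are then $\varphi_r=E_rg_J=|I_r|^{-1}\mathbf 1_{I_r}-1$, and each satisfies $\|\varphi_r\|_1\leq 2$.

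Abel summation gives
\begin{equation*}
        M_ag_J=a_{I_{m-1}}\varphi_m+\sum_{r=1}^{m-1}(a_{I_{r-1}}-a_{I_r})\varphi_r .
\end{equation*}
Consequently $\|M_ag_J\|_1\leq 2\|a\|_\infty+2\|a\|_W$.

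\emph{Lower bound.} Testing on single Haar functions gives $\|a\|_\infty\leq\|M_a\|$. For the variation, fix an arbitrary chain $I_0\supset\ldots\supset I_m$. By inserting intermediate intervals (which only increases the variation sum), we may assume the chain is consecutive. Consider $g=|I_m|^{-1}\mathbf 1_{I_m}-|I_0|^{-1}\mathbf 1_{I_0}$, whose $L_1$ norm is at most $2$.

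The same Abel formula exhibits $M_ag$ as explicit step functions. On each shell $I_r\setminus I_{r+1}$, $M_ag$ is constant, equal to a weighted combination of $a_{I_0},\ldots,a_{I_r}$. Comparing the values on consecutive shells should isolate the increments $a_{I_{r+1}}-a_{I_r}$, each weighted by roughly $|I_r|^{-1}$ on a set of measure about $|I_r|/2$. Summing over shells would then give
\begin{equation*}
        \sum_{r=0}^{m-1}|a_{I_{r+1}}-a_{I_r}|\lesssim\|M_ag\|_1+\|a\|_\infty\lesssim\|M_a\| .
\end{equation*}

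The hard step is this lower bound. The shell values mix all earlier increments, so extracting each $|a_{I_{r+1}}-a_{I_r}|$ without cancellation needs care. I would first try replacing $g$ by a sign-twisted version that follows $I_r^{\pm}$ according to the sign of the increment. If that fails, I would fall back on the reference \cite{SemenovUksusov2012}, which is what the paper actually relies on.
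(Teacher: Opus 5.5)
Your proposal matches the paper, which likewise just quotes \cite[Theorem~3]{SemenovUksusov2012} and notes that discarding the constant coordinate costs only absolute constants. Your bookkeeping for that reduction is right, and your direct upper bound $\|M_a\|\le 2\|a\|_\infty+2\|a\|_W$, via the atoms $g_J$ and Abel summation, is also correct.

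Your lower-bound sketch also closes with the plain test function $g$, without any sign twisting. For a consecutive chain set $b_k=a_{I_k}-\sum_{j<k}2^{j-k}a_{I_j}$. Then $|M_ag|$ has mass exactly $\tfrac12|b_k|$ on the shell $I_k\setminus I_{k+1}$, and the recursion for these weighted averages gives $a_{I_{k+1}}-a_{I_k}=b_{k+1}-\tfrac12 b_k$. Hence, after extending the chain by one level, $\sum_k|a_{I_{k+1}}-a_{I_k}|\le 3\|M_ag\|_1\le 6\|M_a\|$.
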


The norm $\|\cdot\|_W$ is the Semenov--Uksusov variation norm, rewritten with dyadic intervals as indices. We shall use the following equivalent branch formulation.

\begin{corollary}[Branch variation form]
\label{p3:cor:su-branch}
For a scalar Haar multiplier $M_a$ on $L_1^0$, boundedness of $M_a$ is equivalent to finiteness of
\begin{equation*}
        \PthreeBV(a)=\sup_{I_0\supset I_1\supset\ldots}\left(|a_{I_0}|+\sum_{k=0}^{\infty}|a_{I_{k+1}}-a_{I_k}|\right),
\end{equation*}
where the supremum is over infinite dyadic branches starting at an arbitrary dyadic interval. Moreover,
\begin{equation*}
        \|M_a\|\approx \PthreeBV(a),
\end{equation*}
where $\|M_a\|$ denotes the operator norm on $L_1^0$.
\end{corollary}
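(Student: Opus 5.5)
The plan is to show that $\PthreeBV(a)$ and $\|a\|_W+\|a\|_\infty$ are equivalent up to absolute constants. The equivalence of boundedness and the norm estimate then follow at once from \Cref{p3:thm:su-finite-chain}. So the whole argument is a comparison of two suprema of the same kind of quantity: finite chains in one case, infinite branches with a starting term in the other.

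First I will show that $\|a\|_W+\|a\|_\infty\leq 2\,\PthreeBV(a)$. Fix any dyadic interval $I$ and an infinite branch starting at $I$. Then $|a_I|$ is at most the branch quantity, so $\|a\|_\infty\leq\PthreeBV(a)$. Next take a finite chain $I_0\supset I_1\supset\ldots\supset I_m$ of dyadic intervals. The inclusions may skip levels, so I refine the chain to a chain of consecutive dyadic descendants $I_0=J_0\supset J_1\supset\ldots\supset J_N=I_m$, and then extend it below $I_m$ to an infinite branch. By the triangle inequality,
\begin{equation*}
        \sum_{r=0}^{m-1}|a_{I_{r+1}}-a_{I_r}|\leq\sum_{k=0}^{N-1}|a_{J_{k+1}}-a_{J_k}|,
\end{equation*}
and the right-hand side is at most $\PthreeBV(a)$. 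Taking suprema gives $\|a\|_W\leq\PthreeBV(a)$.

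For the reverse inequality, fix an infinite branch $I_0\supset I_1\supset\ldots$. For each $n$, the partial sum $\sum_{k=0}^{n-1}|a_{I_{k+1}}-a_{I_k}|$ is the variation of the finite chain $I_0\supset\ldots\supset I_n$, so it is at most $\|a\|_W$ (for $n\geq1$). Letting $n\to\infty$ bounds the whole series by $\|a\|_W$, and the starting term $|a_{I_0}|$ is at most $\|a\|_\infty$. Hence $\PthreeBV(a)\leq\|a\|_W+\|a\|_\infty$.

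Combining the two inequalities gives $\PthreeBV(a)\approx\|a\|_W+\|a\|_\infty$, and \Cref{p3:thm:su-finite-chain} transfers both the boundedness criterion and the norm equivalence. The only point that needs care is the convention on branches. I will read "infinite dyadic branch" as a sequence in which each $I_{k+1}$ is a child of $I_k$, so that finite chains with level gaps must be refined before they can be compared. If the convention instead allows arbitrary strictly nested sequences, the refinement step is unnecessary and the first inequality holds directly.
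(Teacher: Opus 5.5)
Your proof is correct and follows the paper's argument. Both bound $\|a\|_\infty$ and $\|a\|_W$ by $\PthreeBV(a)$ by embedding starting points and finite chains into infinite branches, and both bound $\PthreeBV(a)\leq\|a\|_W+\|a\|_\infty$ by controlling the partial sums along a fixed branch, then transfer everything through \Cref{p3:thm:su-finite-chain}; your triangle-inequality refinement of chains that skip dyadic levels makes explicit a step the paper leaves implicit when it says every finite chain extends to an infinite branch.
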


\begin{proof}
If $\PthreeBV(a)<\infty$, then $\|a\|_\infty\leq \PthreeBV(a)$ because the branch may start at any dyadic interval.  Every finite chain can be extended to an infinite branch, so $\|a\|_W\leq \PthreeBV(a)$.

Conversely, assume $\|a\|_W+\|a\|_\infty<\infty$.  Fix an infinite branch $I_0\supset I_1\supset\ldots$.  For every $N$,
\begin{equation*}
        |a_{I_0}|+\sum_{k=0}^{N-1}|a_{I_{k+1}}-a_{I_k}|
        \leq \|a\|_\infty+\|a\|_W .
\end{equation*}
Passing to the supremum over $N$ gives
\begin{equation*}
        |a_{I_0}|+\sum_{k=0}^{\infty}|a_{I_{k+1}}-a_{I_k}|
        \leq \|a\|_\infty+\|a\|_W .
\end{equation*}
Taking the supremum over all branches yields
\begin{equation*}
        \PthreeBV(a)\leq \|a\|_\infty+\|a\|_W .
\end{equation*}
The norm equivalence follows from \Cref{p3:thm:su-finite-chain}.
\end{proof}

\subsection{The bi-parameter multiplier reduction}
\label{p3:sec:lmms-result}

We also need the following scalar compression result of Lechner--Motakis--M\"uller--Schlumprecht \cite[Theorem~2.3]{LechnerMotakisMullerSchlumprecht2024}. It says that, after passing through uniformly controlled complemented copies of $X_{00}$, every bounded product Haar multiplier is arbitrarily close to a scalar multiple of the identity.

\begin{theorem}[Lechner--Motakis--M\"uller--Schlumprecht]
\label{p3:thm:lmms}
Let $M_d$ be a bounded product Haar multiplier on $X_{00}$. Given $\eta>0$, there are
\begin{equation*}
        A_d,B_d\in\PthreeL(X_{00}),\qquad \lambda\in\mathbb R
\end{equation*}
such that
\begin{equation*}
\begin{gathered}
        A_dB_d=\PthreeId_{X_{00}}, \qquad
        \|A_d M_d B_d-\lambda\PthreeId_{X_{00}}\|<\eta,\\
        \|A_d\|\|B_d\|\leq 1+\eta .
\end{gathered}
\end{equation*}
\end{theorem}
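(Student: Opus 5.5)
The statement is quoted from \cite[Theorem~2.3]{LechnerMotakisMullerSchlumprecht2024}. If I had to reprove it, I would use a \emph{stabilization} argument: pass to faithful Haar systems in each coordinate along which the symbol $d=(d_{I,J})$ becomes almost constant, and then identify $X_{00}$ with the span of the new product system.

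\textbf{Step 1: slice bounds.} I would first record what boundedness of $M_d$ says about $d$. Testing $M_d$ on tensors $x\otimes g$ with $x\in L_p^0$ and $g\in L_1^0$ (these lie in $X_{00}$ by \Cref{p3:lem:x00-projection}) yields two families of bounds. For fixed $I$, the inner slices $(d_{I,J})_J$ are bounded scalar multipliers on $L_1^0$. By \Cref{p3:cor:su-branch}, this gives a uniform bound $\sup_I\PthreeBV((d_{I,J})_J)\lesssim\|M_d\|$. For fixed $J$, the outer slices are merely bounded families. For those I would rely only on \Cref{p3:lem:lp-haar-multiplier}, so that a small $\sup$-norm perturbation in the outer index costs at most a factor $U_p$.

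\textbf{Step 2: stabilize the inner coordinate.} Bounded branch variation means that, along any inner branch, only finitely many jumps exceed a given $\delta$. I would then build an inner faithful Haar system $(\tilde h_J)$ inductively. Each $\tilde h_J$ is a signed sum of Haar functions at a single deep level inside the set allotted to it, chosen so that averaged symbols $\tilde d_{I,J}$ are within $\delta_k$ of a limit value $\mu_I$, with total variation along every new branch below $\sum_k\delta_k$. The enumeration must be diagonal, handling finitely many outer $I$ at stage $k$. The depth is chosen by pigeonholing on the nested variation, in the same spirit as the recursive choice of $m_j$ in \Cref{p1-first:prop:rapid}.

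\textbf{Step 3: stabilize the outer coordinate.} Since $(\mu_I)$ is a bounded scalar family, a compactness (Ramsey-type) selection of outer faithful blocks $\tilde h_I$ makes $\mu_I$ converge to a single $\lambda$ as the outer system deepens. Discarding a finite top part is not possible, because $AB=\PthreeId$ must hold on the whole space. So the outer system must itself be chosen deep from the start, taking the first outer block only after the limit is nearly attained.

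\textbf{Step 4: assemble.} Products of faithful Haar systems give $B\colon u_{I,J}\mapsto\tilde u_{I,J}$ and its left inverse $A$, a product of conditional expectations followed by a coefficient relabelling. In the mixed norm, each coordinate acts isometrically, so $AB=\PthreeId$ and $\|A\|\|B\|\leq 1+\eta$; after normalization one can even aim for $1$. The compressed operator $AM_dB$ is then again a product multiplier, with symbol $\tilde d$, and $\tilde d-\lambda$ is small.

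\textbf{Main obstacle.} Smallness of $\sup|\tilde d-\lambda|$ is \emph{not} enough, because $L_1$ is not unconditional. One needs $\sup_I\PthreeBV((\tilde d_{I,J}-\lambda)_J)$ small to control the inner coordinate by \Cref{p3:cor:su-branch}. This must then be combined with outer unconditionality through a vector-valued version of \Cref{p3:lem:lp-haar-multiplier}, which applies here because operators acting only in the inner variable extend to $L_p(L_1)$ with the same norm. Arranging the diagonal construction in Step~2 so that both the starting value and the full branch variation of $\tilde d-\lambda$ are uniformly small in $I$ is where I expect the real work to lie.
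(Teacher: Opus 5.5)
There is a genuine gap, and the point you defer as ``the real work'' is exactly the missing idea. Your $\lambda$ is one iterated limit: first the inner block deepens with $I$ fixed (giving $\mu_I$), then the outer one does. The entries $\tilde d_{I,J}$ in which the outer block is born after the inner block stabilize, at best, to the \emph{other} iterated limit. Nothing in Steps 1--3 makes the two limits agree.

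Capon's projection $\mathcal C$, with symbol $\mathbf 1_{\{|I|\le|J|\}}$, passes every test you use: its inner slices have branch variation at most $2$, and its outer slices are bounded by $1$. Yet all your $\mu_I$ vanish, while every entry whose outer block lies at least as deep as its inner block equals $1$. Moreover, every compression of $\mathcal C$ through single-level faithful blocks is again a $0$--$1$ symbol of Capon type. So slice information plus stabilization can only reach a two-constant model $\lambda\mathcal C+\mu(\mathrm{Id}-\mathcal C)$.

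What rules out $\lambda\ne\mu$ on $L_p(L_1)$ is the global fact that Capon's projection is unbounded there: a bounded compression within small error of such a model with $\lambda\ne\mu$ would make a Capon-type projection bounded. This is precisely how the paper obtains the statement. It does not reprove it, but reads it off from LMMS Theorem~2.3, using LMMS Theorem~2.10 (unboundedness of Capon's projection on $L^p(L^1)$) to collapse the two constants to one. Your proposal uses neither this input nor an independent proof that the two iterated limits coincide.

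Separately, the norm estimate you plan fails. You write the error as $\sum_I P_I\otimes N_I$, with $N_I$ the inner multiplier of symbol $(\tilde d_{I,J}-\lambda)_J$, and sum over $I$ by ``outer unconditionality''. That would require the Haar system to be unconditional in the outer variable of $L_p(L_1)$, that is, $L_1$ to be UMD, which it is not. The fact you invoke, that an inner operator extends to $L_p(L_1)$ with the same norm, covers $\mathrm{Id}\otimes N$ for one fixed $N$. It does not cover a family $N_I$ varying with $I$.

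For instance, the symbol $\epsilon a_I$ with $a_I\in\{-1,1\}$ has supremum and inner branch variation $\epsilon$ for every $I$. Yet the norm of $\epsilon\,(M_a\otimes\mathrm{Id})$ on $X_{00}$ is not $O(\epsilon)$ uniformly in the signs. Hence small $\sup|\tilde d-\lambda|$ and small $\sup_I\operatorname{BV}$ do not give $\|A_dM_dB_d-\lambda\,\mathrm{Id}\|<\eta$.

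A criterion that does work is summable entrywise control, $\sum_{I,J}|\tilde d_{I,J}-\lambda|<\eta$, since each $u_{I,J}\otimes u_{I,J}^*$ has norm one. This is how the paper treats mixed coefficients in its reduction theorem. But achieving it for all pairs runs straight into the two-limit issue above.
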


In the notation of \cite{LechnerMotakisMullerSchlumprecht2024}, $\mathcal V(\delta^2)$ denotes the algebraic span of the bi-parameter Haar system \cite[Notation~2.1(e)]{LechnerMotakisMullerSchlumprecht2024}, and its completion in the $L^p(L^1)$ mixed norm is $X_{00}$ in our notation. By \cite[Theorem~2.10]{LechnerMotakisMullerSchlumprecht2024}, Capon's projection \cite{Capon1983} is unbounded on $L^p(L^1)$. Consequently, the $L^p(L^1)$ case of \cite[Definition~2.2(b),(c) and Theorem~2.3]{LechnerMotakisMullerSchlumprecht2024} gives precisely the scalar approximate projectional factorization stated above. Finally, passing from $h_I\otimes h_J$ to the normalized vectors $e_I\otimes f_J$ does not change the coefficient array of a diagonal multiplier.

\section{The local $L_1$ trace}
\label{p3:sec:local-trace}

The $L_1$ coordinate is the delicate one. We begin with a technical device that will be used repeatedly below, extracting from an arbitrary operator on $L_1^0$ a limiting averaged diagonal trace on every finite equal-dyadic set. We shall need the following definition.

\begin{definition}[Finite dyadic symmetry group]\label{p3:def:finite-dyadic-symmetry-group}
    Let $B$ be a dyadic interval and let $N\geq 0$. We denote by $G_N(B)$ the group of all bijections $\gamma$ of $[0,1)$ satisfying the following conditions.
    \begin{enumerate}[label=\textup{(\roman*)}]
        \item \label{p3:item:dyadic-symmetry-outside} $\gamma$ fixes $[0,1)\setminus B$ pointwise.
        \item \label{p3:item:dyadic-symmetry-levels} For each $0\leq k\leq N+1$, $\gamma$ maps every interval in $\PthreeD_k(B)$ onto an interval in $\PthreeD_k(B)$.
    \item \label{p3:item:dyadic-symmetry-terminal} If $L=[a,a+\ell)\in\PthreeD_{N+1}(B)$ and $\gamma L=L'=[b,b+\ell)$, then
    \begin{equation*}
            \gamma(t)=b+(t-a)\qquad (t\in L).
    \end{equation*}
    \end{enumerate}
    Equivalently, $G_N(B)$ is generated by independent swaps of the two children of each dyadic interval $K\in\PthreeD_k(B)$, $0\leq k\leq N$.
\end{definition}

For example, if $B=[0,1)$ and $N=1$, then $G_1(B)$ is generated by swapping the two halves of $[0,1)$ and, independently, swapping the two quarters inside each half. Hence, its elements permute the four intervals in $\PthreeD_2([0,1))$ in a way compatible with the dyadic tree structure. We will also need the following elementary observation, where we write
\begin{equation*}
        \PthreeD_{\leq N}(B)=\bigcup_{k=0}^N\PthreeD_k(B)
\end{equation*}
for the dyadic intervals in the finite tree below $B$ up to level $N$. For $\gamma\in G_N(B)$ and $J\in\PthreeD_{\leq N}(B)$, let $\varepsilon_\gamma(J)\in\{-1,1\}$ be defined by
\begin{equation*}
        U_\gamma f_J=\varepsilon_\gamma(J)f_{\gamma(J)},
\end{equation*}
where $U_\gamma x=x\circ\gamma^{-1}$.

\begin{lemma}[Sign flip in the finite dyadic tree]
\label{p3:lem:finite-tree-sign-flip}
Let $B$ be a dyadic interval, let $N\geq 0$, and let $J,L\in\PthreeD_{\leq N}(B)$ with $J\ne L$. Then there is an involution $\rho\in G_N(B)$ such that $\rho(J)=J$, $\rho(L)=L$, and
\begin{equation*}
        \varepsilon_\rho(J)\varepsilon_\rho(L)=-1.
\end{equation*}
\end{lemma}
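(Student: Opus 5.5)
The plan is to take $\rho$ to be a single generator of $G_N(B)$: the swap of the two children of one well-chosen interval $K\in\PthreeD_{\leq N}(B)$. Such a swap is an involution and lies in $G_N(B)$ by the description of its generators in \Cref{p3:def:finite-dyadic-symmetry-group}. Concretely, $\rho_K$ translates $K^+$ onto $K^-$ and vice versa, and fixes $[0,1)\setminus K$ pointwise. It therefore maps each terminal interval by a translation, and it preserves every level $\PthreeD_k(B)$.

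First I would record how $\rho_K$ acts on a Haar function $f_I$ with $I\in\PthreeD_{\leq N}(B)$, by splitting into four cases.
\begin{enumerate}[label=\textup{(\arabic*)}]
\item If $I\cap K=\varnothing$, then $\rho_K$ fixes $I$ pointwise, so $\rho_K(I)=I$ and $\varepsilon_{\rho_K}(I)=1$.
\item If $I\supsetneq K$, then $K$ lies inside one child of $I$. The swap permutes points within that child, on which $h_I$ is constant. Hence $U_{\rho_K}f_I=f_I$, so $\rho_K(I)=I$ and $\varepsilon_{\rho_K}(I)=1$.
\item If $I=K$, then the two children are exchanged, so $U_{\rho_K}h_K=-h_K$. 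Thus $\rho_K(K)=K$ and $\varepsilon_{\rho_K}(K)=-1$.
\item If $I\subsetneq K$, then $I$ is translated into the other child of $K$. So $\rho_K(I)\ne I$, with sign $+1$.
\end{enumerate}
These checks use only $h_I=\Pthreeone_{I^+}-\Pthreeone_{I^-}$, that $U_\gamma x=x\circ\gamma^{-1}$, and that translations preserve the ordering of children.

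Then I choose $K$ according to the relative position of $J\ne L$.
\begin{itemize}
\item If $L\subsetneq J$, take $K=L$. Then $L$ is fixed with sign $-1$ by case (3), and $J$ is fixed with sign $+1$ by case (2).
\item If $J\subsetneq L$, take $K=J$, which is the symmetric situation.
\item If $J$ and $L$ are incomparable, they are disjoint dyadic intervals. Take $K=J$: then $J$ gets sign $-1$ by case (3), and $L$ is fixed with sign $+1$ by case (1).
\end{itemize}
In every case both intervals are fixed and $\varepsilon_\rho(J)\varepsilon_\rho(L)=-1$. The requirement $K\in\PthreeD_{\leq N}(B)$ holds because $K$ is always $J$ or $L$.

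There is no real obstacle. The only point requiring care is the case analysis confirming that $\rho_K$ fixes the other interval rather than moving it. This is exactly why $K$ is chosen to be the smaller of the two intervals when they are nested: the larger interval is then either an ancestor of $K$ or disjoint from it, and never a proper descendant.
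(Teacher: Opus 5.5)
Your proposal is correct and takes essentially the same approach as the paper. In both, $\rho$ is the single child-swap at $J$ when $J$ and $L$ are disjoint, and at the smaller interval when they are nested, so the larger interval is fixed with sign $+1$ because its Haar function is constant on the child containing the smaller one. Your case analysis simply spells out the verifications that the paper states briefly.
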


\begin{proof}
Since $J$ and $L$ are dyadic intervals, they are either disjoint or one is properly contained in the other. If $J$ and $L$ are disjoint, let $\rho$ be the symmetry which flips the two children of $J$ and is trivial at all other nodes. Then $\rho$ fixes both $J$ and $L$ as sets, changes the sign of $f_J$, and leaves $f_L$ unchanged. Hence $\varepsilon_\rho(J)\varepsilon_\rho(L)=-1$.

If, say, $L\subsetneq J$, let $\rho$ flip the two children of $L$ and be trivial at all other nodes. Then $\rho$ fixes both $J$ and $L$ as sets, changes the sign of $f_L$, and leaves $f_J$ unchanged, because $f_J$ is constant on $L$. Again $\varepsilon_\rho(J)\varepsilon_\rho(L)=-1$. The case $J\subsetneq L$ is symmetric.
\end{proof}

We are now ready for the proof of our next result.

\begin{proposition}[Local $L_1$ trace theorem]
\label{p3:prop:local-l1-trace}
Let $R\in\PthreeL(L_1^0)$ and let $B$ be a finite equal-dyadic set. Then
\begin{equation*}
        \beta_n^B(R) = \frac1{\#\PthreeD_n(B)} \sum_{J\in\PthreeD_n(B)} \Pthreeip{f_J^*}{Rf_J}
\end{equation*}
converges as $n\to\infty$. Denote its limit by $\lambda_B(R)$. Then there is a function $\varphi_R\in L_\infty[0,1]$ with
\begin{equation*}
        \|\varphi_R\|_\infty\lesssim \|R\|
\end{equation*}
such that
\begin{equation*}
        \lambda_B(R)=\frac1{|B|}\int_B\varphi_R
\end{equation*}
for every finite equal-dyadic set $B$.
\end{proposition}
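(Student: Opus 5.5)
The plan has three steps: symmetrize $R$ over the finite dyadic symmetry groups, read off a uniform bound on the variation of $(\beta_n^B(R))_n$ from \Cref{p3:thm:su-finite-chain}, and then build $\varphi_R$ as the density of a finitely additive set function on dyadic intervals. I would first treat the case where $B$ is a single dyadic interval, and only at the end pass to finite equal-dyadic sets.

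\emph{Step 1: symmetrization.} Fix a dyadic interval $B$ and $N\geq 0$. Each $U_\gamma x=x\circ\gamma^{-1}$ with $\gamma\in G_N(B)$ is an isometry of $L_1$ preserving $L_1^0$, so the average
\begin{equation*}
        R_N=\frac1{\#G_N(B)}\sum_{\gamma\in G_N(B)}U_\gamma^{-1}RU_\gamma
\end{equation*}
satisfies $\|R_N\|\leq\|R\|$. Let $P$ be the projection onto $\operatorname{span}\{h_J:J\in\PthreeD_{\leq N}(B)\}$. It equals $\Pthreeone_B(\mathbb E_{N+1}-\mathbb E_0)$, where $\mathbb E_k$ is the conditional expectation onto the $\sigma$-algebra generated by $\PthreeD_k(B)$ inside $B$, so $\|P\|\leq 2$ on $L_1$. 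Consider the compression $PR_NP$ and its coefficients $\langle f_L^*,R_Nf_J\rangle$ for $J,L\in\PthreeD_{\leq N}(B)$.

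Since $G_N(B)$ is a group, $R_N$ commutes with every $U_\rho$, $\rho\in G_N(B)$. For $J\ne L$, take the involution $\rho$ of \Cref{p3:lem:finite-tree-sign-flip}. Using that $\rho$ fixes $J$ and $L$ and that $\varepsilon_\rho(J)\varepsilon_\rho(L)=-1$, one gets $\langle f_L^*,R_Nf_J\rangle=-\langle f_L^*,R_Nf_J\rangle$, so this coefficient is $0$. The diagonal coefficient is $\langle f_J^*,R_Nf_J\rangle=\beta_k^B(R)$ when $J\in\PthreeD_k(B)$, because $G_N(B)$ acts transitively on each level $\PthreeD_k(B)$ and the signs cancel in $\langle f_J^*,\cdot\,f_J\rangle$.

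Hence $PR_NP$ is the scalar Haar multiplier on $L_1^0$ with symbol $a_J=\beta_{k}^B(R)$ for $J\in\PthreeD_k(B)$, $k\leq N$, and $a_J=0$ otherwise. Its norm is at most $4\|R\|$.

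\emph{Step 2: convergence.} Apply \Cref{p3:thm:su-finite-chain} to the chain $B=I_0\supset I_1\supset\cdots\supset I_N$ with $I_k\in\PthreeD_k(B)$. This gives
\begin{equation*}
        |\beta_0^B(R)|+\sum_{k<N}|\beta_{k+1}^B(R)-\beta_k^B(R)|\lesssim\|R\|
\end{equation*}
uniformly in $N$. Therefore $(\beta_n^B(R))_n$ has bounded variation, so it converges, and its limit satisfies $|\lambda_B(R)|\lesssim\|R\|$.

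\emph{Step 3: the density.} Because $\PthreeD_n(B)=\PthreeD_{n-1}(B^+)\cup\PthreeD_{n-1}(B^-)$ with equal cardinalities, we have $\beta_n^B=\tfrac12(\beta_{n-1}^{B^+}+\beta_{n-1}^{B^-})$, and hence $\lambda_B=\tfrac12(\lambda_{B^+}+\lambda_{B^-})$. So $\mu(I)=|I|\lambda_I(R)$ defines a finitely additive set function on dyadic intervals with $|\mu(I)|\lesssim\|R\||I|$. Then $\varphi_n=\sum_{I\in\PthreeD_n}\lambda_I\Pthreeone_I$ is a martingale bounded by $C\|R\|$. It converges weak$^*$ in $L_\infty$ (indeed a.e.) to some $\varphi_R$ with $\|\varphi_R\|_\infty\lesssim\|R\|$ and $\int_I\varphi_R=\mu(I)$ for every dyadic $I$.

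For a finite equal-dyadic set $B=\bigsqcup_{\ell=1}^r B_\ell$ with $B_\ell\in\PthreeD_{m_B}$, each $\PthreeD_n(B_\ell)$ has the same size $2^n$, so $\beta_n^B$ is the plain average of the $\beta_n^{B_\ell}$. Passing to the limit, $\lambda_B=\frac1r\sum_\ell\lambda_{B_\ell}=\frac1{|B|}\int_B\varphi_R$.

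The step I expect to be the main obstacle is Step 1. One has to check carefully that averaging over $G_N(B)$ together with the compression $P$ produces exactly this diagonal symbol. This depends on the sign conventions for $\varepsilon_\gamma$, on transitivity of the action on each level, and on the boundedness of $P$ on $L_1$.
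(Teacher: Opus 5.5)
Your proposal is correct and follows the same route as the paper. Both arguments average over $G_N(B)$ and compress to the first $N+1$ Haar levels below $B$, which gives the diagonal multiplier with symbol $\beta_k^B(R)$. Both then use the Semenov--Uksusov lower bound to control the variation of $(\beta_n^B(R))_n$, use dyadic additivity of $\lambda$, and obtain $\varphi_R$ from a bounded dyadic martingale. The differences are cosmetic. You compress after averaging, which gives the same operator as the paper's $R_{N,B}$ because $P$ commutes with every $U_\gamma$. You use the finite-chain form of Semenov--Uksusov rather than the branch corollary, and you use martingale convergence in place of a weak-star cluster point.
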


\begin{proof}
First assume that $B$ is a dyadic interval. For $N\geq 0$, let $P_N^B$ denote the projection onto the finite-dimensional subspace
\begin{equation*}
         V_N(B) = \operatorname{span}\{f_J\colon J\in\PthreeD_k(B),\ 0\leq k\leq N\}.
\end{equation*}
Equivalently, $P_N^B$ is the local martingale-difference projection onto the first $N+1$ Haar levels below $B$. Let $E_{N+1}^B$ denote conditional expectation onto the functions supported on $B$ which are constant on each interval in $\PthreeD_{N+1}(B)$. For $x\in L_1[0,1]$, this projection is given by
\begin{equation*}
        P_N^Bx=E_{N+1}^B(\Pthreeone_Bx)-\frac1{|B|}\left(\int_Bx\right)\Pthreeone_B,
\end{equation*}
thus $\|P_N^B\|\leq 2$ as an operator on $L_1$, uniformly in $B$ and $N$, and $P_N^B$ maps $L_1^0$ into $L_1^0$.
 
Let $G_N(B)$ be the finite dyadic symmetry group from \Cref{p3:def:finite-dyadic-symmetry-group}. Each $\gamma\in G_N(B)$ induces an isometry $U_\gamma$ on $L_1^0$ by $U_\gamma x=x\circ\gamma^{-1}$. Moreover, by \ref{p3:item:dyadic-symmetry-levels} and \ref{p3:item:dyadic-symmetry-terminal}, for every $J\in\PthreeD_k(B)$ with $0\leq k\leq N$, we have
\begin{equation*}
        U_\gamma f_J=\varepsilon_\gamma(J)f_{\gamma(J)}, \qquad \text{for some} \qquad \varepsilon_\gamma(J)\in\{-1,1\}.
\end{equation*}
Define the Reynolds average
\begin{equation*}
        R_{N,B}=\frac1{|G_N(B)|}\sum_{\gamma\in G_N(B)}U_\gamma^{-1}P_N^BRP_N^BU_\gamma .
\end{equation*}
Since each $U_\gamma$ is an isometry and $\|P_N^B\|\leq 2$, we have
\begin{equation*}
        \|R_{N,B}\|\leq 4\|R\|.
\end{equation*}
Moreover, $R_{N,B}$ maps $V_N(B)$ into itself and annihilates the complementary Haar levels. Fix $J,L\in\PthreeD_{\leq N}(B)$. Then
\begin{equation*}
        \Pthreeip{f_L^*}{R_{N,B}f_J}=\frac1{|G_N(B)|}\sum_{\gamma\in G_N(B)}\varepsilon_\gamma(J)\varepsilon_\gamma(L)\Pthreeip{f_{\gamma(L)}^*}{Rf_{\gamma(J)}}.
\end{equation*}
Suppose first that $J\ne L$. By \Cref{p3:lem:finite-tree-sign-flip}, choose an involution $\rho\in G_N(B)$ such that $\rho(J)=J$, $\rho(L)=L$, and
\begin{equation*}
        \varepsilon_\rho(J)\varepsilon_\rho(L)=-1.
\end{equation*}
We pair the term indexed by $\gamma$ with the term indexed by $\gamma\rho$. Since $\rho$ fixes $J$ and $L$ as sets, we have $(\gamma\rho)(J)=\gamma(J)$ and $(\gamma\rho)(L)=\gamma(L)$, while
\begin{equation*}
        \varepsilon_{\gamma\rho}(J)\varepsilon_{\gamma\rho}(L)=-\varepsilon_\gamma(J)\varepsilon_\gamma(L).
\end{equation*}
Thus the two terms cancel. Therefore
\begin{equation*}
        \Pthreeip{f_L^*}{R_{N,B}f_J}=0\qquad (J,L\in\PthreeD_{\leq N}(B),\ J\ne L).
\end{equation*}

It remains to identify the diagonal coefficients. Let $J\in\PthreeD_k(B)$ for some $0\leq k\leq N$. The orbit of $J$ under $G_N(B)$ is precisely $\PthreeD_k(B)$, with each interval occurring equally often. Hence
\begin{equation*}
        \Pthreeip{f_J^*}{R_{N,B}f_J}=\frac1{\#\PthreeD_k(B)}\sum_{M\in\PthreeD_k(B)}\Pthreeip{f_M^*}{Rf_M}=\beta_k^B(R).
\end{equation*}
Thus $R_{N,B}$ is the finite Haar multiplier with coefficient $\beta_k^B(R)$ on every node in $\PthreeD_k(B)$, $0\leq k\leq N$, and coefficient zero on Haar functions outside $\PthreeD_{\leq N}(B)$.

Regard $R_{N,B}$ as a Haar multiplier on the whole space $L_1^0$, with symbol equal to $\beta_k^B(R)$ on every node in $\PthreeD_k(B)$ for $0\leq k\leq N$, and equal to zero on all other Haar nodes. Choose any infinite dyadic branch beginning at $B$. Along this branch, the symbol has one entry jump from $0$ to $\beta_0^B(R)$, then the successive jumps
\begin{equation*}
        |\beta_1^B(R)-\beta_0^B(R)|,\ldots,|\beta_N^B(R)-\beta_{N-1}^B(R)|,
\end{equation*}
and finally the jump from $\beta_N^B(R)$ to $0$ below the truncated tree. Hence this branch has contribution
\begin{equation*}
        |\beta_0^B(R)|+\sum_{k=0}^{N-1}|\beta_{k+1}^B(R)-\beta_k^B(R)|+|\beta_N^B(R)|.
\end{equation*}
By \Cref{p3:cor:su-branch},
\begin{equation*}
        |\beta_0^B(R)|+\sum_{k=0}^{N-1}|\beta_{k+1}^B(R)-\beta_k^B(R)|+|\beta_N^B(R)|\lesssim \|R_{N,B}\|.
\end{equation*}
Since $\|R_{N,B}\|\lesssim\|R\|$, with an absolute constant independent of $B$ and $N$, we obtain
\begin{equation*}
        |\beta_0^B(R)|+\sum_{k=0}^{N-1}|\beta_{k+1}^B(R)-\beta_k^B(R)|+|\beta_N^B(R)|\lesssim\|R\|.
\end{equation*}
Hence the finite initial variations of the sequence $(\beta_n^B(R))_{n\geq 0}$ are uniformly bounded, and therefore
\begin{equation*}
        \sum_{k=0}^{\infty}|\beta_{k+1}^B(R)-\beta_k^B(R)|<\infty .
\end{equation*}
It follows that $(\beta_n^B(R))_{n\geq 0}$ is Cauchy. We write
\begin{equation*}
        \lambda_B(R)=\lim_{n\to\infty}\beta_n^B(R).
\end{equation*}
If $B_0,B_1$ are the dyadic children of $B$, then for every $n$,
\begin{equation*}
        \beta_{n+1}^B(R)=\frac12\beta_n^{B_0}(R)+\frac12\beta_n^{B_1}(R).
\end{equation*}
Passing to the limit gives
\begin{equation}
\label{p3:eq:lambda-dyadic-additivity}
        \lambda_B(R)=\frac12\lambda_{B_0}(R)+\frac12\lambda_{B_1}(R).
\end{equation}
Equivalently,
\begin{equation*}
        |B|\lambda_B(R)=|B_0|\lambda_{B_0}(R)+|B_1|\lambda_{B_1}(R).
\end{equation*}

For $m\geq 0$, define
\begin{equation*}
        \varphi_m=\sum_{B\in\PthreeD_m}\lambda_B(R)\Pthreeone_B.
\end{equation*}
By \eqref{p3:eq:lambda-dyadic-additivity}, $(\varphi_m)_{m\geq 0}$ is a dyadic martingale. Moreover,
\begin{equation*}
        \sup_{m\geq 0}\|\varphi_m\|_\infty\lesssim \|R\|.
\end{equation*}
By Banach--Alaoglu, choose a weak-star cluster point $\varphi_R\in L_\infty[0,1]=(L_1[0,1])^*$ of $(\varphi_m)_{m\geq 0}$. Fix a dyadic interval $B$. If $m$ is at least the level of $B$, then, by iterating \eqref{p3:eq:lambda-dyadic-additivity}, we have
\begin{equation*}
        \lambda_B(R)=\frac1{\#\PthreeD_{m-\ell(B)}(B)}\sum_{A\in\PthreeD_{m-\ell(B)}(B)}\lambda_A(R),
\end{equation*}
where $\ell(B)$ denotes the dyadic level of $B$. Since $\varphi_m$ is equal to $\lambda_A(R)$ on each such $A$, we get
\begin{equation*}
        \int_B\varphi_m=\sum_{A\in\PthreeD_{m-\ell(B)}(B)}|A|\lambda_A(R)=|B|\lambda_B(R).
\end{equation*}
Since weak-star convergence preserves the pairing with $\Pthreeone_B\in L_1[0,1]$, passing to the cluster point gives
\begin{equation*}
        \int_B\varphi_R=|B|\lambda_B(R).
\end{equation*}
Thus
\begin{equation*}
        \lambda_B(R)=\frac1{|B|}\int_B\varphi_R .
\end{equation*}

Finally, if $B$ is a finite equal-dyadic set, write it as a disjoint union of dyadic intervals $B_1,\ldots,B_r$ of the same length. The descendants of $B$ are the union of the descendants of the $B_\ell$, and therefore
\begin{equation*}
        \lambda_B(R) = \frac1r\sum_{\ell=1}^r\lambda_{B_\ell}(R) =
        \frac1{|B|}\int_B\varphi_R .
\end{equation*}
\end{proof}

\section{Faithful Haar systems}
\label{p3:sec:faithful-systems}

We shall replace the standard Haar system by copies supported on dyadic sets which may be geometrically scattered, but which preserve the dyadic tree structure exactly. The following definitions separate the tree structure from the normalizations used in the outer $L_p$ and inner $L_1$ coordinates.

\begin{definition}[Faithful trees]
\label{p3:def:-faithful-tree}
A \emph{rooted dyadic subtree} is a set $\mathscr F\subset\PthreeD$ such that $[0,1)\in\mathscr F$, every predecessor of every $I\in\mathscr F$ also belongs to $\mathscr F$, and, for every $I\in\mathscr F$, either both children $I^+$ and $I^-$ belong to $\mathscr F$, or neither child belongs to $\mathscr F$. If $\mathscr F$ is finite, we call it a \emph{finite rooted dyadic subtree}. We write
\begin{equation*}
        \partial\mathscr F=\{I\in\mathscr F\colon I^+\notin\mathscr F \text{ and } I^-\notin\mathscr F\}
\end{equation*}
for the \emph{terminal nodes}, or \emph{leaves}, of $\mathscr F$.

A \emph{ faithful tree indexed by $\mathscr F$} is a family $(\Gamma_I)_{I\in\mathscr F}$ of finite equal-dyadic sets such that $\Gamma_{[0,1)}=[0,1)$, $|\Gamma_I|=|I|$, and, whenever $I,I^+,I^-\in\mathscr F$, the sets $\Gamma_{I^+}$ and $\Gamma_{I^-}$ partition $\Gamma_I$ into two equal-measure subsets. If $\mathscr F$ is finite, we call $(\Gamma_I)_{I\in\mathscr F}$ a \emph{finite  faithful tree}. The \emph{terminal sets} of $(\Gamma_I)_{I\in\mathscr F}$ are the sets $\Gamma_I$ with $I\in\partial\mathscr F$.
\end{definition}

\begin{example}
A faithful tree need not be the standard dyadic tree. For instance, at the first level one may take
\begin{equation*}
        \Gamma_{[0,1)}=[0,1),
\end{equation*}
and
\begin{equation*}
        \Gamma_{[0,1/2)}=[0,1/4)\cup[1/2,3/4),\qquad
        \Gamma_{[1/2,1)}=[1/4,1/2)\cup[3/4,1).
\end{equation*}
These two sets are finite equal-dyadic sets, they partition $[0,1)$, and both have measure $1/2$, but $\Gamma_{[0,1/2)}\neq[0,1/2)$. Continuing one more level, we may split
\begin{equation*}
        \Gamma_{[0,1/2)}=\Gamma_{[0,1/4)}\sqcup\Gamma_{[1/4,1/2)}
\end{equation*}
by setting
\begin{equation*}
        \Gamma_{[0,1/4)}=[0,1/8)\cup[1/2,5/8),\qquad
        \Gamma_{[1/4,1/2)}=[1/8,1/4)\cup[5/8,3/4).
\end{equation*}
Again, both children are finite equal-dyadic sets, and both have measure $1/4$. One can split $\Gamma_{[1/2,1)}$ in the same way. Thus the indexing tree is the usual dyadic tree, but the sets $\Gamma_I$ may be scattered finite unions of equal-length dyadic intervals.
\end{example}

We shall use faithful trees in both coordinates. The underlying tree structure is the same in the outer and inner variables, but the Haar blocks are normalized according to the ambient space: the outer coordinate uses the $L_p$ normalization, while the inner coordinate uses the $L_1$ normalization.

\begin{definition}[Outer faithful Haar blocks]
\label{p3:def:outer-faithful-blocks}
Let $(\Gamma_I)_{I\in\PthreeD}$ be a faithful tree in the outer coordinate. The associated outer Haar blocks are
\begin{equation*}
        H_I=|\Gamma_I|^{-1/p}(\Pthreeone_{\Gamma_{I^+}}-\Pthreeone_{\Gamma_{I^-}}),\qquad H_I^*=|\Gamma_I|^{-1/p'}(\Pthreeone_{\Gamma_{I^+}}-\Pthreeone_{\Gamma_{I^-}}).
\end{equation*}
\end{definition}

\begin{definition}[Inner faithful Haar blocks]
\label{p3:def:inner-faithful-blocks}
Let $(\Delta_J)_{J\in\PthreeD}$ be a faithful tree in the inner coordinate. The associated inner Haar blocks are
\begin{equation*}
        K_J=|\Delta_J|^{-1}(\Pthreeone_{\Delta_{J^+}}-\Pthreeone_{\Delta_{J^-}}),\qquad K_J^*=\Pthreeone_{\Delta_{J^+}}-\Pthreeone_{\Delta_{J^-}}.
\end{equation*}
\end{definition}

The faithful trees give copies of the product Haar basis inside $X_{00}$. We shall use the following exterior maps to pass between the original product-Haar coordinates and these  copies. The map $B$ embeds the original basis into the  Haar blocks, while $A$ is the corresponding coordinate projection back onto the original basis.

\begin{lemma}[Exterior maps]
\label{p3:lem:exterior}
Let $(\Gamma_I)_{I\in\PthreeD}$ be a  faithful outer tree, with associated blocks $(H_I,H_I^*)_{I\in\PthreeD}$, and let $(\Delta_J)_{J\in\PthreeD}$ be a  faithful inner tree, with associated blocks $(K_J,K_J^*)_{J\in\PthreeD}$. Then the map
\begin{equation*}
        B u_{I,J}=H_I\otimes K_J
\end{equation*}
extends to an isometric embedding $B\colon X_{00}\to X_{00}$. Moreover, there is a contraction $A\colon X_{00}\to X_{00}$ such that $A$ restricts to the inverse isometry on $B(X_{00})$; in particular,
\begin{equation*}
        AB=\PthreeId_{X_{00}}.
\end{equation*}
For every bounded operator $T$ on $X_{00}$,
\begin{equation*}
        \Pthreeip{u_{I,J}^*}{ATB u_{I',J'}}=\Pthreeip{H_I^*\otimes K_J^*}{T(H_{I'}\otimes K_{J'})}.
\end{equation*}
\end{lemma}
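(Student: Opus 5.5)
The plan is to realize $B$ as a composition of measure-preserving rearrangements in each variable, and $A$ as $B^{-1}$ composed with a conditional expectation. Both are built one coordinate at a time, and the key point is that everything is a contraction on mixed-norm spaces.

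\emph{Step 1: the isometry $B$.} Since each $\Gamma_I$ is a finite equal-dyadic set and the tree is faithful, there is a measure-preserving bijection $\sigma$ of $[0,1)$, defined modulo null sets, with $\sigma(\Gamma_I)=I$ for all $I\in\PthreeD$. We build it level by level: at level $n$ the $2^n$ sets $\Gamma_I$, $I\in\PthreeD_n$, partition $[0,1)$ into sets of measure $2^{-n}$, refining the previous level. We then let $\sigma$ be the pointwise limit of compatible rearrangements, which exists because the generated $\sigma$-algebras increase to the Borel sets modulo null sets only after possibly enlarging; to avoid completeness issues, we instead define $B$ directly. Put $V_1x=x\circ\sigma$ on the $\sigma$-algebra $\mathcal G=\sigma(\Gamma_I:I\in\PthreeD)$. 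Concretely, $V_1$ is the unique isometry $L_q[0,1]\to L_q(\mathcal G)$ sending $\Pthreeone_I\mapsto\Pthreeone_{\Gamma_I}$. It is isometric on simple functions because it preserves the measures of all finite Boolean combinations, and this holds for every $q$. Define $V_2$ analogously for $(\Delta_J)$. Then $V_1\otimes V_2$ acts on $L_p(L_1)$ and sends $h_I\otimes h_J\mapsto(\Pthreeone_{\Gamma_{I^+}}-\Pthreeone_{\Gamma_{I^-}})\otimes(\Pthreeone_{\Delta_{J^+}}-\Pthreeone_{\Delta_{J^-}})$. Since $|\Gamma_I|=|I|$ and $|\Delta_J|=|J|$, this gives exactly $Bu_{I,J}=H_I\otimes K_J$. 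To see that the map is isometric in the mixed norm, first check that $V_2$ applied fibrewise preserves $\|f(s,\cdot)\|_1$ for each $s$. Then $V_1$ in the outer variable is an $L_p$ isometry applied to an $L_1$-valued simple function, because it is a distribution-preserving relabelling of outer level sets. Extending by density from dyadic rectangular simple functions in $X_{00}$ gives the isometric embedding.

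\emph{Step 2: the contraction $A$.} Let $\mathbb E_{\mathcal G}$ and $\mathbb E_{\mathcal H}$ denote the conditional expectations onto the outer and inner $\sigma$-algebras generated by the trees. Their tensor product $\mathbb E=\mathbb E_{\mathcal G}\otimes\mathbb E_{\mathcal H}$ is contractive on $L_p(L_1)$: the inner expectation is a positive contraction applied fibrewise, and the outer one is a Bochner conditional expectation, contractive on $L_p(E)$ for any Banach space $E$. Its range is the closed image of $V_1\otimes V_2$ on $L_p(L_1)$, on which $(V_1\otimes V_2)^{-1}$ is an isometry. Set $A=P_{00}'\,(V_1\otimes V_2)^{-1}\mathbb E$ restricted to $X_{00}$.

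We must check that $A$ maps $X_{00}$ into $X_{00}$ with no loss from $P_{00}$. Since $\Gamma_{[0,1)}=\Delta_{[0,1)}=[0,1)$, conditional expectation commutes with $\mathbb E_1$ and $\mathbb E_2$, so $\mathbb E$ preserves $X_{00}$. Also, $(V_1\otimes V_2)^{-1}$ preserves zero means in each variable. Hence no projection is needed and $A$ is a contraction. Because $\mathbb E$ fixes $B(X_{00})$, we get $AB=\PthreeId$.

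\emph{Step 3: the coefficient identity.} We have $\Pthreeip{u^*_{I,J}}{ATBu_{I',J'}}=\Pthreeip{A^*u^*_{I,J}}{TBu_{I',J'}}$, so it suffices to show $A^*u^*_{I,J}=H^*_I\otimes K^*_J$ as functionals on $X_{00}$. For this, pair $H_I^*\otimes K_J^*$ with a general $g\in X_{00}$. The function $H_I^*\otimes K_J^*$ is $\mathcal G\otimes\mathcal H$-measurable, so the pairing with $g$ equals the pairing with $\mathbb E g$. Transporting by the measure-preserving map, and using $|\Gamma_I|^{-1/p'}=|I|^{-1/p'}$ and $\int$-preservation, this equals $\Pthreeip{e^*_I\otimes f^*_J}{(V_1\otimes V_2)^{-1}\mathbb Eg}$. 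The main obstacle is the careful justification in Steps 1–2 that the rearrangement and conditional expectation are isometric/contractive in the mixed $L_p(L_1)$ norm. I would handle this via dyadic simple functions and Jensen's inequality for vector-valued conditional expectations.
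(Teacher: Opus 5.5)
Your proposal is correct and is essentially the paper's proof: $B$ is the tensor product of the measure-preserving lattice embeddings $\mathbf 1_I\mapsto\mathbf 1_{\Gamma_I}$ and $\mathbf 1_J\mapsto\mathbf 1_{\Delta_J}$, $A$ is the inverse of this map composed with the product conditional expectation onto the tree $\sigma$-algebras, and the coefficient identity is exactly $A^*u_{I,J}^*=H_I^*\otimes K_J^*$ on $X_{00}$. Your check that $A$ maps $X_{00}$ into itself is a detail the paper leaves implicit; it holds simply because conditional expectations and the inverse rearrangement preserve the mean in each variable, so the appeal to $\Gamma_{[0,1)}=[0,1)$ is not what makes it work. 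Drop both the unexplained $P_{00}'$ and the opening claim that a measure-preserving bijection $\sigma$ with $\sigma(\Gamma_I)=I$ exists, since the tree $\sigma$-algebra can be a proper sub-$\sigma$-algebra and then no such bijection exists.
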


\begin{proof}
Let $\Sigma_\Gamma$ and $\Sigma_\Delta$ be the $\sigma$-algebras generated by the faithful trees $(\Gamma_I)_{I\in\PthreeD}$ and $(\Delta_J)_{J\in\PthreeD}$, respectively. Let
\begin{equation*}
        U_\Gamma^{(r)}\colon L_r[0,1]\longrightarrow L_r(\Sigma_\Gamma)
\end{equation*}
be the isometric lattice embedding determined on dyadic simple functions by
\begin{equation*}
        U_\Gamma^{(r)}\Pthreeone_I=\Pthreeone_{\Gamma_I}\qquad (I\in\PthreeD),
\end{equation*}
and define $U_\Delta^{(r)}$ similarly. Let $V_\Gamma^{(r)}$ and $V_\Delta^{(r)}$ denote the inverse isometries on the corresponding ranges.

Define
\begin{equation*}
        B=U_\Gamma^{(p)}\otimes U_\Delta^{(1)}
\end{equation*}
on the algebraic product Haar span. Since $U_\Delta^{(1)}$ preserves the $L_1$-norm on each inner fibre and $U_\Gamma^{(p)}$ is induced by a measure-preserving lattice embedding in the outer variable, for every $f\in X_{00}$ we have
\begin{equation*}
     \bigl\|(U_\Gamma^{(p)}\otimes U_\Delta^{(1)})f\bigr\|_{L_p(L_1)} =\|f\|_{L_p(L_1)}.
\end{equation*}
Hence $B=U_\Gamma^{(p)}\otimes U_\Delta^{(1)}$ extends uniquely to an isometric embedding of $X_{00}$ into $X_{00}$. Moreover,
\begin{equation*}
        Bu_{I,J}=B(e_I\otimes f_J)=H_I\otimes K_J.
\end{equation*}

Let $E_{\Delta}^{(t)}$ be conditional expectation in the inner variable onto $\Sigma_\Delta$, and let $E_{\Gamma}^{(s)}$ be Bochner conditional expectation in the outer variable onto $\Sigma_\Gamma$.  Put
\begin{equation*}
        E_{\Gamma,\Delta}=E_{\Gamma}^{(s)}E_{\Delta}^{(t)}=E_{\Delta}^{(t)}E_{\Gamma}^{(s)}.
\end{equation*}
For almost every $s$,
\begin{equation*}
        \|E_{\Delta}^{(t)}f(s,\cdot)\|_1\leq\|f(s,\cdot)\|_1,
\end{equation*}
and therefore $E_{\Delta}^{(t)}$ is contractive on $L_p(L_1)$.  Also $E_{\Gamma}^{(s)}$ is the usual Bochner conditional expectation on $L_p(L_1)$ and is contractive.  Hence $E_{\Gamma,\Delta}$ is contractive on $L_p(L_1)$.

Define
\begin{equation*}
        A=(V_\Gamma^{(p)}\otimes V_\Delta^{(1)})E_{\Gamma,\Delta}.
\end{equation*}
The map $V_\Gamma^{(p)}\otimes V_\Delta^{(1)}$ is an isometry on the range of $E_{\Gamma,\Delta}$, so $A$ is contractive on $L_p(L_1)$, hence on $X_{00}$. Since $E_{\Gamma,\Delta}$ is the identity on $B(X_{00})$, the definition gives $AB=\PthreeId_{X_{00}}$.

Finally, for every $x\in X_{00}$,
\begin{equation*}
        \Pthreeip{u_{I,J}^*}{Ax}=\Pthreeip{H_I^*\otimes K_J^*}{x}.
\end{equation*}
Applying this with $x=TB u_{I',J'}=T(H_{I'}\otimes K_{J'})$ gives
\begin{equation*}
        \Pthreeip{u_{I,J}^*}{ATB u_{I',J'}}=\Pthreeip{H_I^*\otimes K_J^*}{T(H_{I'}\otimes K_{J'})}. \qedhere
\end{equation*}
\end{proof}

\section{Potential shadows and trace operators}\label{p3:sec:potential-packet-shadows}

The shadow averages used below must be defined before the future faithful tree has been chosen. We therefore define them from the ordinary dyadic descendants of a current terminal set, viewed as a finite equal-dyadic set.

Let $A\subset[0,1)$ be a finite equal-dyadic set and $m \geq 0$.
For $L\in\PthreeD_m(A)$, we use the standard normalized Haar vectors
\begin{equation*}
        e_L=|L|^{-1/p}h_L, \qquad e_L^*=|L|^{-1/p'}h_L.
\end{equation*}
For $x\in X_{00}$ define the outer coefficient
\begin{equation*}
        \pi_Lx=\int_0^1 e_L^*(s)x(s,\cdot)\,ds\in L_1^0.
\end{equation*}
Thus $\pi_Lx$ is the $e_L$-coefficient of $x$ in the outer variable, regarded as an element of the inner space.

\begin{definition}[Potential traces and shadows]
\label{p3:def:potential-packet-traces-shadows}
Let $A$ be a finite equal-dyadic set. For $Q\in\PthreeL(L_p^0)$ and $m\geq0$, define
\begin{equation*}
        \alpha_m^A(Q) = \frac1{\#\PthreeD_m(A)} \sum_{L\in\PthreeD_m(A)} \Pthreeip{e_L^*}{Qe_L}.
\end{equation*}

For $T\in\PthreeL(X_{00})$ and $m\geq0$, define $\Omega_{A,m}^T\in\PthreeL(L_1^0)$ by
\begin{equation*}
        \Omega_{A,m}^Tg = \frac1{\#\PthreeD_m(A)} \sum_{L\in\PthreeD_m(A)} \pi_L\bigl(T(e_L\otimes g)\bigr) \qquad (g\in L_1^0).
\end{equation*}
\end{definition}
Indeed, for each $L\in\PthreeD_m(A)$, the map $g\mapsto\pi_L(T(e_L\otimes g))$ is a bounded operator $L_1^0\to L_1^0$ of norm at most $\|T\|$, since $\|e_L\|_{L_p}=\|e_L^*\|_{L_{p'}}=1$. Hence the finite average defining $\Omega_{A,m}^T$ belongs to $\PthreeL(L_1^0)$ and satisfies $\|\Omega_{A,m}^T\|\leq\|T\|$.

These quantities depend only on the current terminal set $A$, viewed as a finite equal-dyadic set, and on the standard dyadic structure below it. They do not depend on any future choices in the faithful tree.

For a dyadic interval $L$ and $\sigma\in\{-1,1\}$, write $L^\sigma=L^+$ if $\sigma=1$ and $L^\sigma=L^-$ if $\sigma=-1$. If $A$ is a finite equal-dyadic set and $n\geq0$, let
\begin{equation*}
        \varepsilon=(\varepsilon_L)_{L\in\PthreeD_n(A)}\in\{-1,1\}^{\PthreeD_n(A)}
\end{equation*}
be a sign selection on the depth-$n$ descendants of $A$. Define
\begin{equation}
\label{p3:eq:random-split-sets}
        A_+(\varepsilon)=\bigcup_{L\in\PthreeD_n(A)}L^{\varepsilon_L},\qquad A_-(\varepsilon)=\bigcup_{L\in\PthreeD_n(A)}L^{-\varepsilon_L}.
\end{equation}
Both $A_+(\varepsilon)$ and $A_-(\varepsilon)$ are regarded as finite equal-dyadic sets with decomposition level $ m_{A_+(\varepsilon)}=m_{A_-(\varepsilon)} = m_A+n+1 $. The associated \emph{outer random Haar block} is
\begin{equation}
\label{p3:eq:outer-random-block-definition}
        H_A(\varepsilon)=|A|^{-1/p}\bigl(\Pthreeone_{A_+(\varepsilon)}-\Pthreeone_{A_-(\varepsilon)}\bigr), \hspace{3pt} H_A^*(\varepsilon)=|A|^{-1/p'}\bigl(\Pthreeone_{A_+(\varepsilon)}-\Pthreeone_{A_-(\varepsilon)}\bigr).
\end{equation}
The associated \emph{inner random Haar block} is
\begin{equation}
\label{p3:eq:inner-random-block-definition}
        K_A(\varepsilon)=|A|^{-1}\bigl(\Pthreeone_{A_+(\varepsilon)}-\Pthreeone_{A_-(\varepsilon)}\bigr),\qquad K_A^*(\varepsilon)=\Pthreeone_{A_+(\varepsilon)}-\Pthreeone_{A_-(\varepsilon)}.
\end{equation}
If $N=\#\PthreeD_n(A)$, then
\begin{equation}
\label{p3:eq:outer-random-block-expansion}
    H_A(\varepsilon)=N^{-1/p}\sum_{L\in\PthreeD_n(A)}\varepsilon_Le_L,\qquad H_A^*(\varepsilon)=N^{-1/p'}\sum_{L\in\PthreeD_n(A)}\varepsilon_Le_L^*.
\end{equation}
Similarly,
\begin{equation}
\label{p3:eq:inner-random-block-expansion}
        K_A(\varepsilon)=N^{-1}\sum_{L\in\PthreeD_n(A)}\varepsilon_Lf_L,\qquad K_A^*(\varepsilon)=\sum_{L\in\PthreeD_n(A)}\varepsilon_Lf_L^*.
\end{equation}
Consequently, for every $Q\in\PthreeL(L_p^0)$, expanding the random block, we have
\begin{equation*}
        \Pthreeip{H_A^*(\varepsilon)}{QH_A(\varepsilon)}=\frac1N\sum_{L,M\in\PthreeD_n(A)}\varepsilon_L\varepsilon_M\Pthreeip{e_L^*}{Qe_M}.
\end{equation*}
Since the signs $(\varepsilon_L)_{L\in\PthreeD_n(A)}$ are independent and symmetric,
\begin{equation*}
        \PthreeE_\varepsilon[\varepsilon_L\varepsilon_M]=\begin{cases}1, & L=M,\\ 0, & L\neq M.\end{cases}
\end{equation*}
Thus, all off-diagonal terms vanish after taking expectation, and only the diagonal terms remain, which gives
\begin{equation}
\label{p3:eq:outer-random-block-expectation}
        \PthreeE_\varepsilon\left[\Pthreeip{H_A^*(\varepsilon)}{QH_A(\varepsilon)}\right]=\frac1N\sum_{L\in\PthreeD_n(A)}\Pthreeip{e_L^*}{Qe_L}=\alpha_n^A(Q).
\end{equation}
Moreover, for $r\geq0$,
\begin{equation}
\label{p3:eq:child-trace-expectation}
        \PthreeE_\varepsilon\left[\alpha_r^{A_+(\varepsilon)}(Q)\right]=\PthreeE_\varepsilon\left[\alpha_r^{A_-(\varepsilon)}(Q)\right]=\alpha_{n+1+r}^A(Q).
\end{equation}
Indeed, $\alpha_r^{A_+(\varepsilon)}(Q)$ is the average, over $L\in\PthreeD_n(A)$, of the average of the diagonal coefficients below the randomly selected half $L^{\varepsilon_L}$. Averaging in the signs therefore averages over both halves of every $L$, which is precisely the average over $\PthreeD_{n+1+r}(A)$.

The next elementary estimate says that, after a sufficiently fine random split, the child traces concentrate around their expected values. This is the probabilistic result used later to preserve infinite tails through the construction.

\begin{lemma}[Variance of child traces]
\label{p3:lem:child-trace-variance}
Let $A$ be a finite equal-dyadic set, let $Q\in\PthreeL(L_p^0)$, and let $n,r\geq0$. Let
\begin{equation*}
        \varepsilon=(\varepsilon_L)_{L\in\PthreeD_n(A)}
\end{equation*}
be uniformly distributed on $\{-1,1\}^{\PthreeD_n(A)}$. For each sign $\theta\in\{+,-\}$, we have
\begin{equation}
\label{p3:eq:child-trace-variance}
        \operatorname{Var}_\varepsilon\left(\alpha_r^{A_\theta(\varepsilon)}(Q)\right)\leq\frac{\|Q\|^2}{\#\PthreeD_n(A)}.
\end{equation}
\end{lemma}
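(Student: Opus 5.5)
Fix $\theta=+$; the case $\theta=-$ is identical after replacing $\varepsilon$ by $-\varepsilon$. The plan is to write $\alpha_r^{A_+(\varepsilon)}(Q)$ as an average of independent bounded random variables, one for each $L\in\PthreeD_n(A)$, and then use that the variance of an average of $N$ independent variables is at most $1/N$ times the largest variance.

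\emph{Step 1: decomposition.} Put $N=\#\PthreeD_n(A)$. The set $A_+(\varepsilon)$ is the disjoint union of the $N$ children $L^{\varepsilon_L}$, and its decomposition level is $m_A+n+1$. Hence
\begin{equation*}
\PthreeD_r(A_+(\varepsilon))=\bigsqcup_{L\in\PthreeD_n(A)}\PthreeD_r(L^{\varepsilon_L}),
\end{equation*}
and every piece has the same cardinality $2^r$. Consequently
\begin{equation*}
\alpha_r^{A_+(\varepsilon)}(Q)=\frac1N\sum_{L\in\PthreeD_n(A)}X_L,
\qquad
X_L=\frac{1}{2^r}\sum_{M\in\PthreeD_r(L^{\varepsilon_L})}\Pthreeip{e_M^*}{Qe_M}.
\end{equation*}

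\emph{Step 2: independence and boundedness.} Each $X_L$ depends only on the single sign $\varepsilon_L$. Since the signs are independent, the $X_L$ are independent random variables. Moreover, $\|e_M\|_{L_p}=\|e_M^*\|_{L_{p'}}=1$, so $|\Pthreeip{e_M^*}{Qe_M}|\leq\|Q\|$, and therefore $|X_L|\leq\|Q\|$.

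\emph{Step 3: the variance bound.} For independent variables the covariances vanish, so
\begin{equation*}
\operatorname{Var}\Bigl(\frac1N\sum_L X_L\Bigr)=\frac1{N^2}\sum_L\operatorname{Var}(X_L).
\end{equation*}
For each $L$ we have $\operatorname{Var}(X_L)\leq\PthreeE[X_L^2]\leq\|Q\|^2$. Summing the $N$ terms gives
\begin{equation*}
\operatorname{Var}\Bigl(\frac1N\sum_L X_L\Bigr)\leq\frac{N\|Q\|^2}{N^2}=\frac{\|Q\|^2}{N},
\end{equation*}
which is \eqref{p3:eq:child-trace-variance}. (A sharper constant is available: $X_L$ takes two equally likely values in $[-\|Q\|,\|Q\|]$, so $\operatorname{Var}(X_L)\leq\|Q\|^2$ even without the second-moment bound. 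Either way the stated bound follows.)

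The only point requiring care is Step 1. One has to check that the descendants of $A_+(\varepsilon)$ are taken relative to its recorded decomposition level $m_A+n+1$, so that they split into equal-size blocks according to $L$. This follows directly from the definition of $\PthreeD_r$ for finite equal-dyadic sets.
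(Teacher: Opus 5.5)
Your proof is correct and is essentially the paper's argument: you use the same decomposition of $\alpha_r^{A_+(\varepsilon)}(Q)$ into the average of the independent variables $X_L(\varepsilon_L)$, each bounded by $\|Q\|$, followed by the variance bound for an average of $N$ independent terms. Your parenthetical does not actually give a sharper constant, since a two-point variable with values $a,b\in[-\|Q\|,\|Q\|]$ has variance $(a-b)^2/4\le\|Q\|^2$, which is the same bound.
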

\begin{proof}
We prove the estimate for $A_+$. Write
\begin{equation*}
        \alpha_r^{A_+(\varepsilon)}(Q)=\frac1{\#\PthreeD_n(A)}\sum_{L\in\PthreeD_n(A)}X_L(\varepsilon_L),
\end{equation*}
where
\begin{equation*}
        X_L(1)=2^{-r}\sum_{M\in\PthreeD_r(L^+)}\Pthreeip{e_M^*}{Qe_M},\qquad X_L(-1)=2^{-r}\sum_{M\in\PthreeD_r(L^-)}\Pthreeip{e_M^*}{Qe_M}.
\end{equation*}
The random variables $X_L(\varepsilon_L)$ are independent as $L$ varies. Moreover, since $\|e_M\|_{L_p}=\|e_M^*\|_{L_{p'}}=1$, we have $|\Pthreeip{e_M^*}{Qe_M}|\leq\|Q\|$, and hence $|X_L(\varepsilon_L)|\leq\|Q\|$. Therefore
\begin{equation*}
        \operatorname{Var}_\varepsilon\left(\alpha_r^{A_+(\varepsilon)}(Q)\right)=\frac1{\#\PthreeD_n(A)^2}\sum_{L\in\PthreeD_n(A)}\operatorname{Var}_\varepsilon\left(X_L(\varepsilon_L)\right)\leq\frac{\|Q\|^2}{\#\PthreeD_n(A)}.
\end{equation*}
The proof for $A_-$ is identical.
\end{proof}

We will need the following well-known elementary result; we include a proof for completeness.

\begin{lemma}[$L_1$ disjointification criterion]
\label{p3:lem:l1-disjointification}
Let $\mathcal A$ be a bounded subset of $L_1$.  If $\mathcal A$ is not uniformly integrable, then there are a sequence $(f_k)_{k\geq1}$ in $\mathcal A$, pairwise disjoint measurable sets $E_k\subset[0,1]$, and a number $\delta>0$ such that
\begin{equation*}
        \int_{E_k}|f_k|>\delta\qquad(k\geq1).
\end{equation*}
\end{lemma}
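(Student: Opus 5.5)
We argue from the failure of uniform integrability, using the standard characterization: a bounded set $\mathcal A\subset L_1$ is uniformly integrable if and only if for every $\delta>0$ there is $\eta>0$ such that $\sup_{f\in\mathcal A}\int_E|f|\leq\delta$ whenever $|E|<\eta$. Since $\mathcal A$ is bounded but not uniformly integrable, there is $\delta_0>0$ with the following property: for every $\eta>0$ there are $f\in\mathcal A$ and a measurable set $E$ with $|E|<\eta$ and $\int_E|f|>\delta_0$. We take $\delta=\delta_0/2$ and construct $(f_k)$ and disjoint sets $(E_k)$ recursively, in the spirit of the Kadec--Pe{\l}czy\'nski subsequence splitting.

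First we choose pairs $(f_k,F_k)$ with $f_k\in\mathcal A$, $\int_{F_k}|f_k|>\delta_0$, and $|F_k|$ decreasing very rapidly. Suppose $f_1,\ldots,f_{k-1}$ have been chosen. Each of these finitely many functions is integrable, so by absolute continuity of the integral there is $\eta_k>0$ such that
\begin{equation*}
        \int_G|f_j|<\delta_0 2^{-j-2}
\end{equation*}
for every $j<k$ and every measurable $G$ with $|G|<\eta_k$. We may also require $\eta_k\leq 2^{-k}$. Now pick $f_k\in\mathcal A$ and $F_k$ with $|F_k|<\eta_k$ and $\int_{F_k}|f_k|>\delta_0$. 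Since $\eta_k$ is fixed before $f_k$ is chosen, the key consequence is that for each $j$, $\int_G|f_j|<\delta_0 2^{-j-2}$ whenever $|G|<\eta_{j+1}$.

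Next we disjointify by setting
\begin{equation*}
        E_k=F_k\setminus\bigcup_{i>k}F_i.
\end{equation*}
These sets are pairwise disjoint: if $j<k$, then $E_j$ avoids $F_k\supseteq E_k$. For the lower bound, note that
\begin{equation*}
        \Bigl|\bigcup_{i>k}F_i\Bigr|\leq\sum_{i>k}\eta_i.
\end{equation*}
The step needing care is arranging $\sum_{i>k}\eta_i<\eta_{k+1}$-type control, so that the absolute-continuity bound for $f_k$ applies to this union. We enforce it by also choosing $\eta_{i}\leq\eta_{i-1}/4$; then $\sum_{i>k}\eta_i\leq\tfrac43\eta_{k+1}$. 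To cover the factor $\tfrac43$, we build slack into the absolute-continuity step: we choose $\eta_{k+1}$ so that the bound $\int_G|f_k|<\delta_0/4$ holds for every $G$ with $|G|<2\eta_{k+1}$. This gives
\begin{equation*}
        \int_{E_k}|f_k|\geq\int_{F_k}|f_k|-\int_{\bigcup_{i>k}F_i}|f_k|>\delta_0-\frac{\delta_0}{4}>\delta.
\end{equation*}

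All the other steps are bookkeeping; only the ordering of quantifiers in choosing the $\eta_k$ requires attention.
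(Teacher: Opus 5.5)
Your proposal is correct and follows essentially the same route as the paper: pick witnesses $(f_k,F_k)$ with $\int_{F_k}|f_k|>\delta_0$, make each $|F_i|$ for $i>k$ small relative to the absolute-continuity modulus of the already chosen $f_k$, and disjointify via $E_k=F_k\setminus\bigcup_{i>k}F_i$. The only difference is bookkeeping. The paper first takes witnesses with $\mu(A_n)<2^{-n}$ and passes to a subsequence with $\mu(A_j)<2^{-j}\delta_i$ for $j>i$, whereas you choose recursively with $\eta_i\leq\eta_{i-1}/4$ and a factor-$2$ slack; your initially stated $\delta_0 2^{-j-2}$ bound is never actually used.
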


\begin{proof}
Since $\mathcal A$ is not uniformly integrable, there are $\eta>0$, functions $f_n\in\mathcal A$, and measurable sets $A_n\subset[0,1]$ such that
\begin{equation*}
        \mu(A_n)<2^{-n},\qquad \int_{A_n}|f_n|>\eta\qquad(n\geq1).
\end{equation*}
We pass to a subsequence, still denoted by $(f_n,A_n)$, with the following extra property.  After $f_1,A_1,\ldots,f_k,A_k$ have been chosen, choose $\delta_i>0$ for $1\leq i\leq k$ so that
\begin{equation*}
        \mu(E)<\delta_i\quad\Longrightarrow\quad \int_E|f_i|<\eta/2
\end{equation*}
for every measurable $E$.  Since the original witnessing sets have measures tending to zero, the remaining subsequence may be chosen so that, for every $j>i$,
\begin{equation*}
        \mu(A_j)<2^{-j}\delta_i .
\end{equation*}
Consequently, for every fixed $i$,
\begin{equation*}
        \mu\left(\bigcup_{j>i}A_j\right)\leq\sum_{j>i}\mu(A_j)<\delta_i.
\end{equation*}
Now define
\begin{equation*}
        E_i=A_i\setminus\bigcup_{j>i}A_j .
\end{equation*}
The sets $E_i$ are pairwise disjoint.  Moreover,
\begin{equation*}
        \int_{E_i}|f_i|
        \geq\int_{A_i}|f_i|-\int_{\bigcup_{j>i}A_j}|f_i|
        >\eta-\eta/2=\eta/2.
\end{equation*}
Thus the conclusion holds with $\delta=\eta/2$.
\end{proof}

The next proposition is the compactness step behind the shadow construction. Although $\Omega_{A,m}^T$ is defined by averaging more and more outer dyadic descendants of $A$, its orbit on each fixed $g\in L_1^0$ remains uniformly integrable. Thus, along any free ultrafilter supported on an infinite set of depths, the weak limit is still represented by an element of $L_1^0$, rather than only by an element of $L_1^{**}$.

\begin{proposition}[Potential outer shadows are $L_1$-valued]
\label{p3:prop:shadow}
Let $A$ be a finite equal-dyadic set, let $T\in\PthreeL(X_{00})$, and let $S\subset\N$ be infinite. For every $g\in L_1^0$, the set
\begin{equation*}
        \{\Omega_{A,m}^Tg:m\in S\}
\end{equation*}
is uniformly integrable in $L_1$. Consequently, if $\mathcal U$ is a free ultrafilter containing $S$, then
\begin{equation*}
        \Omega_A^{T,\mathcal U}g = \Pthreeweak\!\lim_{m\to\mathcal U}\Omega_{A,m}^Tg
\end{equation*}
exists in $L_1^0$, and $\Omega_A^{T,\mathcal U}\colon L_1^0\to L_1^0$ is bounded with
\begin{equation*}
        \|\Omega_A^{T,\mathcal U}\|\leq \|T\| .
\end{equation*}
\end{proposition}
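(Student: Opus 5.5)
The plan is to argue by contradiction, using \Cref{p3:lem:l1-disjointification}, and to exploit that an outer random Haar block $H_A(\varepsilon)$ from \eqref{p3:eq:outer-random-block-definition} has constant modulus $|A|^{-1/p}$ on $A$. This makes dual test functions with disjoint inner supports have norm controlled by a maximum rather than a square sum. The compactness consequences at the end are then routine.

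\emph{Step 1: randomisation.} Fix $m$, let $N=\#\PthreeD_m(A)$, and build $H_A(\varepsilon)$, $H_A^*(\varepsilon)$ from signs on $\PthreeD_m(A)$ as in \eqref{p3:eq:outer-random-block-expansion}. Exactly as in \eqref{p3:eq:outer-random-block-expectation}, the off-diagonal terms vanish in expectation, so for every $g\in L_1^0$ and $\varphi\in L_\infty$ I expect
\begin{equation*}
\PthreeE_\varepsilon\bigl\langle H_A^*(\varepsilon)\otimes\varphi,\,T(H_A(\varepsilon)\otimes g)\bigr\rangle=\langle\varphi,\Omega_{A,m}^Tg\rangle .
\end{equation*}

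\emph{Step 2: disjointification.} Suppose $\{\Omega_{A,m}^Tg:m\in S\}$ is not uniformly integrable. It is bounded by $\|T\|\|g\|$. Any finite set is uniformly integrable, so the sequence from \Cref{p3:lem:l1-disjointification} involves infinitely many distinct depths. Passing to a subsequence, I get strictly increasing $m_1<m_2<\cdots$ in $S$, pairwise disjoint sets $E_k$, and $\delta>0$ with $\int_{E_k}|\Omega_{A,m_k}^Tg|>\delta$. Put $\varphi_k=\Pthreeone_{E_k}\overline{\operatorname{sgn}}(\Omega_{A,m_k}^Tg)$; these have disjoint supports and $\|\varphi_k\|_\infty\le1$.

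\emph{Step 3: testing.} Take independent uniform sign families $\varepsilon^{(k)}$ on $\PthreeD_{m_k}(A)$, write $H_k=H_A(\varepsilon^{(k)})$ and $H_k^*=H_A^*(\varepsilon^{(k)})$, and set
\begin{equation*}
x=K^{-1/2}\sum_{k\le K}H_k\otimes g,\qquad y=\sum_{k\le K}H_k^*\otimes\varphi_k .
\end{equation*}
\begin{itemize}
\item \emph{Bound on $x$.} Since the depths are strictly increasing, $(H_k)$ is a martingale difference sequence. Each $H_k$ is measurable at level $m_A+m_k+1$ and has mean zero on the atoms of level $m_A+m_k$. Burkholder's inequality (or unconditionality of martingale differences in $L_p$, $1<p<\infty$) then gives $\|x\|_{L_p(L_1)}=\|g\|_1\,\|K^{-1/2}\sum_k H_k\|_p\lesssim_p\|g\|_1$.
\item \emph{Bound on $y$.} For fixed $s$ the functions $\varphi_k$ are disjoint, so $\|y(s,\cdot)\|_\infty=\max_k|H_k^*(s)|=|A|^{-1/p'}\Pthreeone_A(s)$. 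Hence $\|y\|_{L_{p'}(L_\infty)}\le1$.
\item \emph{Expectation of the pairing.} Cross terms with $j\ne k$ have zero expectation, because $H_k^*$ and $H_j$ are independent and mean zero. By Step 1 the diagonal terms contribute $\langle\varphi_k,\Omega_{A,m_k}^Tg\rangle>\delta$.
\end{itemize}
Putting these together gives
\begin{equation*}
\delta K^{1/2}<\PthreeE\langle y,Tx\rangle\lesssim_p\|T\|\,\|g\|_1 ,
\end{equation*}
which is impossible for large $K$. I expect this step to be the crux: one must check that the disjointness of the $\varphi_k$, combined with the constant modulus of the outer blocks, really yields a max-type bound on the dual side. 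The input side gains only a square-function bound, and the mismatch between the two produces the factor $K^{1/2}$.

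\emph{Step 4: consequences.} By Dunford--Pettis, the uniformly integrable set $\{\Omega_{A,m}^Tg:m\in S\}$ is relatively weakly compact in $L_1$. Since $S\in\mathcal U$, the weak $\mathcal U$-limit therefore exists in $L_1$. Because $L_1^0$ is weakly closed, the limit lies in $L_1^0$. Linearity of $g\mapsto\Omega_A^{T,\mathcal U}g$ is clear. The bound $\|\Omega_A^{T,\mathcal U}\|\le\|T\|$ follows from $\|\Omega_{A,m}^T\|\le\|T\|$ and weak lower semicontinuity of the norm.
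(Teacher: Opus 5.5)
Your proof is correct and follows essentially the same route as the paper: a contradiction argument via \Cref{p3:lem:l1-disjointification}, the randomization identity for $\Omega_{A,m}^T$, and the mismatch between the max-type bound $\|y\|_{L_{p'}(L_\infty)}\le 1$ on the dual side (disjoint inner supports, constant-modulus outer blocks) and the square-function growth on the input side. The only difference is technical. The paper inserts auxiliary independent signs $\eta_k$ to decouple the cross terms and then applies Khintchine pointwise in the outer variable. You instead kill the cross terms directly by independence of the block sign families, and bound $\|\sum_k H_k\|_p$ deterministically through the martingale-difference structure; this is why you need strictly increasing depths, which you justify correctly.
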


\begin{proof}
Fix $g\in L_1^0$ and $m\in S$, and put $N_m=\#\PthreeD_m(A)$. We record the randomization identity used below. For a sign family $\sigma=(\sigma_L)_{L\in\PthreeD_m(A)}$, set
\begin{equation*}
        x_m^\sigma=N_m^{-1/p}\sum_{L\in\PthreeD_m(A)}\sigma_Le_L,\qquad (x_m^\sigma)^*=N_m^{-1/p'}\sum_{L\in\PthreeD_m(A)}\sigma_Le_L^*.
\end{equation*}
The supports are pairwise disjoint, so $\|x_m^\sigma\|_{L_p}=1$ and $\|(x_m^\sigma)^*\|_{L_{p'}}=1$. For every $\phi\in L_\infty$,
\begin{equation}
\label{p3:eq:shadow-sign-average}
        \Pthreeip{\phi}{\Omega_{A,m}^Tg}=\PthreeE_\sigma\left[\Pthreeip{(x_m^\sigma)^*\otimes\phi}{T(x_m^\sigma\otimes g)}\right].
\end{equation}
Indeed, after expanding the right hand side, the terms with $L\ne L'$ have mean zero, while the terms with $L=L'$ give exactly the defining average of $\Omega_{A,m}^Tg$. In particular,
\begin{equation}
\label{p3:eq:shadow-uniform-l1-bound}
        \|\Omega_{A,m}^Tg\|_1\leq\|T\|\,\|g\|_1.
\end{equation}

Suppose that $\{\Omega_{A,m}^Tg:m\in S\}$ is not uniformly integrable for some $\|g\|_1=1$. By \Cref{p3:lem:l1-disjointification}, after passing to a sequence $m_1<m_2<\ldots$ in $S$, there are pairwise disjoint measurable sets $E_k\subset[0,1]$ and a number $\delta>0$ such that
\begin{equation*}
        \int_{E_k}|\Omega_{A,m_k}^Tg|>\delta\qquad(k\geq1).
\end{equation*}
Choose $\psi_k\in L_\infty$ supported on $E_k$, with $\|\psi_k\|_\infty\leq1$, such that
\begin{equation*}
        \Pthreeip{\psi_k}{\Omega_{A,m_k}^Tg}>\delta.
\end{equation*}
Fix $M\geq 1$. For each $1\leq k\leq M$, let $\sigma^k$ be an independent sign family on $\PthreeD_{m_k}(A)$, and let $\eta_1,\ldots,\eta_M$ be independent signs, independent also of all the families $\sigma^k$. Put
\begin{equation*}
        a_k(\sigma)=(x_{m_k}^{\sigma^k})^*\otimes\psi_k,\qquad b_k(\sigma)=x_{m_k}^{\sigma^k}\otimes g.
\end{equation*}
By \eqref{p3:eq:shadow-sign-average}, for each $k$ we have
\begin{equation*}
        \delta<\PthreeE_{\sigma^k}\left[\Pthreeip{a_k(\sigma)}{Tb_k(\sigma)}\right].
\end{equation*}
Summing over $k$ gives
\begin{equation*}
        \delta M<\PthreeE_\sigma\left[\sum_{k=1}^M\Pthreeip{a_k(\sigma)}{Tb_k(\sigma)}\right].
\end{equation*}
For fixed $\sigma$, averaging in the signs $\eta_1,\ldots,\eta_M$ gives
\begin{equation*}
        \PthreeE_\eta\left[\Pthreeip{\sum_{k=1}^M\eta_ka_k(\sigma)}{T\left(\sum_{\ell=1}^M\eta_\ell b_\ell(\sigma)\right)}\right]=\sum_{k,\ell=1}^M\PthreeE_\eta[\eta_k\eta_\ell]\Pthreeip{a_k(\sigma)}{Tb_\ell(\sigma)}=\sum_{k=1}^M\Pthreeip{a_k(\sigma)}{Tb_k(\sigma)}.
\end{equation*}
Therefore
\begin{equation*}
        \delta M<\PthreeE_{\sigma,\eta}\left[\Pthreeip{\sum_{k=1}^M\eta_k(x_{m_k}^{\sigma^k})^*\otimes\psi_k}{T\left(\sum_{k=1}^M\eta_kx_{m_k}^{\sigma^k}\otimes g\right)}\right].
\end{equation*}
For fixed signs, the first vector has norm at most one in $L_{p'}(L_\infty)$. Indeed, the functions $\psi_k$ have disjoint supports in the inner variable and
\begin{equation*}
        |(x_{m_k}^{\sigma^k})^*|=|A|^{-1/p'}\Pthreeone_A.
\end{equation*}
Hence, using the normalization of $g$,
\begin{equation*}
        \left|\Pthreeip{\sum_{k=1}^M\eta_k(x_{m_k}^{\sigma^k})^*\otimes\psi_k}{T\left(\sum_{k=1}^M\eta_kx_{m_k}^{\sigma^k}\otimes g\right)}\right|\leq\|T\|\left\|\sum_{k=1}^M\eta_kx_{m_k}^{\sigma^k}\right\|_{L_p}.
\end{equation*}
The Khintchine inequality, applied pointwise in the outer variable, gives
\begin{equation*}
        \PthreeE_{\sigma,\eta}\left\|\sum_{k=1}^M\eta_kx_{m_k}^{\sigma^k}\right\|_{L_p}\leq C_pM^{1/2}.
\end{equation*}
Consequently,
\begin{equation*}
        \delta M\leq C_p\|T\|M^{1/2}\qquad(M\geq1),
\end{equation*}
which is impossible for large $M$. This proves uniform integrability.

By the Dunford--Pettis criterion for $L_1$, see for example \cite[Theorem~5.2.9]{AlbiacKalton2016}, uniform integrability implies relative weak compactness in $L_1$. Hence, the ultrafilter weak limit belongs to $L_1$. Since $L_1^0$ is weakly closed and each $\Omega_{A,m}^Tg$ lies in $L_1^0$, the limit lies in $L_1^0$. Linearity is inherited from the scalar ultralimits, and \eqref{p3:eq:shadow-uniform-l1-bound} gives
\begin{equation*}
        \|\Omega_A^{T,\mathcal U}g\|_1\leq\|T\|\,\|g\|_1.
\end{equation*}
\end{proof}

The next lemma establishes the compatibility between outer-averaged diagonals and shadow operators. If the inner coordinate is fixed as $v$ and then tested against $v^*$, the averaged outer diagonal of the resulting one-parameter operator is exactly the corresponding coefficient of the shadow.

\begin{lemma}[The same-outer identity]
\label{p3:lem:same-outer-identity}
Let $A$ be a finite equal-dyadic set in the outer variable, let $T\in\PthreeL(X_{00})$, $v\in L_1^0$, and $v^*\in L_\infty$. Define
\begin{equation*}
        Q_{v^*,v}x=(\PthreeId\otimes v^*)T(x\otimes v)\qquad (x\in L_p^0).
\end{equation*}
Then $Q_{v^*,v}\in\PthreeL(L_p^0)$ and, for every $m\geq0$,
\begin{equation}
\label{p3:eq:same-outer-identity}
        \alpha_m^A(Q_{v^*,v})=\Pthreeip{v^*}{\Omega_{A,m}^Tv}.
\end{equation}
Consequently, if $\mathcal U$ is a free ultrafilter for which $\Omega_A^{T,\mathcal U}v$ is defined, then
\begin{equation*}
        \lim_{m\to\mathcal U}\alpha_m^A(Q_{v^*,v})=\Pthreeip{v^*}{\Omega_A^{T,\mathcal U}v}.
\end{equation*}
\end{lemma}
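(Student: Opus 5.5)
The plan is a direct unwinding of definitions in three steps: boundedness of $Q_{v^*,v}$, the coefficient identity for each $L$, and averaging plus a passage to the limit.

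\emph{Boundedness of $Q_{v^*,v}$.} For $x\in L_p^0$, \Cref{p3:lem:x00-projection} gives $x\otimes v\in X_{00}$ with $\|x\otimes v\|_{L_p(L_1)}=\|x\|_p\|v\|_1$. So $T(x\otimes v)\in X_{00}$. The map $\PthreeId\otimes v^*$ pairs the inner fibre with $v^*$, that is, $y\mapsto \bigl(s\mapsto \langle v^*,y(s,\cdot)\rangle\bigr)$. It is bounded $L_p(L_1)\to L_p$ with norm at most $\|v^*\|_\infty$, by the fibrewise estimate $|\langle v^*,y(s,\cdot)\rangle|\le\|v^*\|_\infty\|y(s,\cdot)\|_1$. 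Its outer mean equals $\langle v^*,\mathbb E_1 y\rangle$, which vanishes for $y\in X_{00}$. Hence $Q_{v^*,v}$ maps $L_p^0$ into $L_p^0$ with $\|Q_{v^*,v}\|\le\|T\|\|v\|_1\|v^*\|_\infty$.

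\emph{The identity \eqref{p3:eq:same-outer-identity}, one $L$ at a time.} Fix $L\in\PthreeD_m(A)$ and compute $\langle e_L^*,Q_{v^*,v}e_L\rangle=\int_0^1 e_L^*(s)\,\langle v^*,T(e_L\otimes v)(s,\cdot)\rangle\,ds$. Write $y=T(e_L\otimes v)\in L_p(L_1)$. Since $e_L^*\in L_{p'}$ is scalar-valued, Fubini (or continuity of a bounded functional commuting with the Bochner integral) lets us interchange the outer integral with the pairing against $v^*$:
\begin{equation*}
\int_0^1 e_L^*(s)\langle v^*,y(s,\cdot)\rangle\,ds=\Bigl\langle v^*,\int_0^1 e_L^*(s)\,y(s,\cdot)\,ds\Bigr\rangle=\langle v^*,\pi_L y\rangle .
\end{equation*}
The integrability needed for the interchange holds because $\int|e_L^*(s)|\,\|y(s,\cdot)\|_1\,ds\le\|e_L^*\|_{p'}\|y\|_{L_p(L_1)}<\infty$ by Hölder.

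\emph{Averaging and the limit.} Averaging the per-$L$ identity over $L\in\PthreeD_m(A)$ and using linearity of the pairing in its second slot gives \eqref{p3:eq:same-outer-identity}. For the consequence, suppose $\Omega_A^{T,\mathcal U}v$ is defined, as guaranteed by \Cref{p3:prop:shadow}. It is the weak $L_1$ limit of $\Omega_{A,m}^Tv$ along $\mathcal U$, and $v^*\in L_\infty=L_1^*$. So $\langle v^*,\Omega_{A,m}^Tv\rangle\to\langle v^*,\Omega_A^{T,\mathcal U}v\rangle$ along $\mathcal U$, and the left-hand sides $\alpha_m^A(Q_{v^*,v})$ converge to the same value.

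No step is a real obstacle. The only point needing care is justifying the interchange of the pairing with $v^*$ and the Bochner integral defining $\pi_L$, which is routine given the Hölder bound above.
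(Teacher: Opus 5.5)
Your proposal is correct and follows the paper's proof essentially step for step. You use the same boundedness estimate $\|Q_{v^*,v}\|\le\|T\|\,\|v\|_1\|v^*\|_\infty$, the same per-interval identity $\langle e_L^*,Q_{v^*,v}e_L\rangle=\langle v^*,\pi_L(T(e_L\otimes v))\rangle$ followed by averaging over $L\in\mathcal D_m(A)$, and the same pairing of $v^*\in L_\infty$ with the weak $L_1$ ultralimit. You also spell out two points the paper leaves implicit: that the outer mean vanishes because $\mathbb E_1$ kills $X_{00}$, and the H\"older justification for moving $v^*$ through the Bochner integral defining $\pi_L$.
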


\begin{proof}
The boundedness follows from
\begin{equation*}
        \|Q_{v^*,v}x\|_p
        \leq
        \|v^*\|_\infty\|T\|\|x\|_p\|v\|_1 .
\end{equation*}
Since $T(x\otimes v)\in X_{00}$, taking the inner $v^*$-coefficient leaves an element of $L_p^0$. For $L\in\PthreeD_m(A)$,
\begin{equation*}
        \Pthreeip{e_L^*}{Q_{v^*,v}e_L} = \Pthreeip{e_L^*\otimes v^*}{T(e_L\otimes v)} = \Pthreeip{v^*}{\pi_L(T(e_L\otimes v))}.
\end{equation*}
Averaging over $L\in\PthreeD_m(A)$ gives \eqref{p3:eq:same-outer-identity}. The ultrafilter statement follows by applying $v^*$ to the weak ultralimit.
\end{proof}

We shall also need the complementary construction, where the local trace is taken in the inner $L_1$ coordinate and the outer variable is left alive. Fixing an inner finite equal-dyadic set $B$, the local $L_1$ trace theorem turns each outer matrix coefficient of $T$ into a scalar. The next lemma packages these scalars as a bounded operator on $L_p^0$.

\begin{lemma}[Outer trace operator associated with an inner set]
\label{p3:lem:inner-set-outer-trace-operator}
Let $B$ be a finite equal-dyadic set in the inner variable and let $T\in\PthreeL(X_{00})$. There is a unique operator
\begin{equation*}
        \mathcal I_B^T\in\PthreeL(L_p^0)
\end{equation*}
such that, for all $x\in L_p^0$ and all $x^*\in L_{p'}$,
\begin{equation}
\label{p3:eq:definition-of-IBT}
        \Pthreeip{x^*}{\mathcal I_B^Tx}=\lambda_B\bigl(R_{x^*,x}^T\bigr),
\end{equation}
where
\begin{equation*}
        R_{x^*,x}^Tg=\int_0^1 x^*(s)T(x\otimes g)(s,\cdot)\,ds\qquad(g\in L_1^0).
\end{equation*}
Moreover,
\begin{equation*}
        \|\mathcal I_B^T\|\lesssim_p\|T\|.
\end{equation*}
\end{lemma}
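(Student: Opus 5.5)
The plan is to define $\mathcal I_B^T$ through a bounded bilinear form and then represent that form by an operator. First, for fixed $x\in L_p^0$ and $x^*\in L_{p'}$, I would check that $R_{x^*,x}^T$ is a bounded operator on $L_1^0$ with $\|R_{x^*,x}^T\|\leq\|x^*\|_{p'}\|T\|\|x\|_p$. Indeed, $T(x\otimes g)\in X_{00}$, so for almost every $s$ the fibre $T(x\otimes g)(s,\cdot)$ lies in $L_1^0$. Integrating against $x^*$ is a Bochner integral in $L_1^0$, and H\"older's inequality in the outer variable gives
\begin{equation*}
        \Bigl\|\int_0^1 x^*(s)T(x\otimes g)(s,\cdot)\,ds\Bigr\|_1\leq\|x^*\|_{p'}\,\|T(x\otimes g)\|_{L_p(L_1)}\leq\|x^*\|_{p'}\|T\|\|x\|_p\|g\|_1 .
\end{equation*}
The map $(x^*,x)\mapsto R_{x^*,x}^T$ is bilinear into $\PthreeL(L_1^0)$.

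Next, consider $\lambda_B$ as a functional on $\PthreeL(L_1^0)$. By \Cref{p3:prop:local-l1-trace}, $\lambda_B(R)=\lim_n\beta_n^B(R)$, where each $\beta_n^B$ is an average of the coefficients $\langle f_J^*,Rf_J\rangle$, and these are linear in $R$. Hence $\lambda_B$ is linear. The proposition also gives $|\lambda_B(R)|=\bigl||B|^{-1}\int_B\varphi_R\bigr|\leq\|\varphi_R\|_\infty\lesssim\|R\|$, so $\lambda_B$ is bounded with an absolute constant, uniformly in $B$. Alternatively, the bound $|\beta_n^B(R)|\leq\|R\|$ follows directly from $\|f_J\|_1=\|f_J^*\|_\infty=1$. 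Composing, the form
\begin{equation*}
        b(x^*,x)=\lambda_B\bigl(R^T_{x^*,x}\bigr)
\end{equation*}
is a bounded bilinear form on $L_{p'}\times L_p^0$ satisfying $|b(x^*,x)|\lesssim\|T\|\|x^*\|_{p'}\|x\|_p$.

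For fixed $x$, the functional $x^*\mapsto b(x^*,x)$ is bounded on $L_{p'}$. Since $1<p<\infty$, reflexivity (the duality $L_{p'}^*=L_p$) represents it by a unique $y\in L_p$ with $\|y\|_p\lesssim\|T\|\|x\|_p$. Setting $\mathcal I_B^Tx=y$ gives a linear map satisfying \eqref{p3:eq:definition-of-IBT}, and uniqueness follows because $L_{p'}$ separates points of $L_p$.

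It remains to show that $y\in L_p^0$, which is the only point needing care. Take $x^*=\Pthreeone$. Then $R_{\Pthreeone,x}^Tg=\int_0^1 T(x\otimes g)(s,\cdot)\,ds$, which is exactly $\mathbb E_1$ applied to $T(x\otimes g)$, viewed as a function of $t$. By \Cref{p3:lem:x00-projection}, $\mathbb E_1$ vanishes on $X_{00}$, so $R_{\Pthreeone,x}^T=0$ and therefore $\int y=b(\Pthreeone,x)=0$.

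This establishes existence, uniqueness, the bound $\|\mathcal I_B^T\|\lesssim\|T\|$, and the membership $\mathcal I_B^T\in\PthreeL(L_p^0)$. The constant is in fact absolute; the subscript $p$ in the statement is harmless.
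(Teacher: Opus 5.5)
Your proof is correct and follows essentially the same route as the paper. Both arguments bound $R^T_{x^*,x}$ in $\mathcal B(L_1^0)$ by H\"older, control $\lambda_B$ through the local $L_1$ trace theorem, represent $x^*\mapsto\lambda_B(R^T_{x^*,x})$ by an element of $L_p$ via $L_{p'}^*=L_p$, and test against $x^*=\mathbf 1$ to land in $L_p^0$. Your explicit remarks that $\lambda_B$ is linear and satisfies $|\lambda_B(R)|\le\|R\|$ (because $\|f_J\|_1=\|f_J^*\|_\infty=1$) make precise what the paper leaves implicit, and they show the constant can in fact be taken to be $1$.
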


\begin{proof}
For $g\in L_1^0$,
\begin{equation*}
        \|R_{x^*,x}^Tg\|_1\leq\|x^*\|_{p'}\|T(x\otimes g)\|_{L_p(L_1)}\leq\|x^*\|_{p'}\|T\|\|x\|_p\|g\|_1.
\end{equation*}
Since $T(x\otimes g)\in X_{00}$ is cancellative in the inner variable, $R_{x^*,x}^Tg$ has inner integral zero. Hence $R_{x^*,x}^T\in\PthreeL(L_1^0)$. By \Cref{p3:prop:local-l1-trace},
\begin{equation*}
        |\lambda_B(R_{x^*,x}^T)|\lesssim\|R_{x^*,x}^T\|\leq\|x^*\|_{p'}\|T\|\|x\|_p.
\end{equation*}
Thus, for fixed $x$, the map $x^*\mapsto\lambda_B(R_{x^*,x}^T)$ is a bounded functional on $L_{p'}$. Since $1<p<\infty$, it is represented by a unique element $y\in L_p$.

It remains to check that $y$ belongs to $L_p^0$. This uses the outer cancellation. If $x^*=\Pthreeone_{[0,1)}$, then
\begin{equation*}
        R_{\Pthreeone_{[0,1)},x}^Tg=\int_0^1T(x\otimes g)(s,\cdot)\,ds=0,
\end{equation*}
because $T(x\otimes g)\in X_{00}$ has zero outer integral. Therefore $\Pthreeip{\Pthreeone_{[0,1)}}{y}=0$, so $y\in L_p^0$.

Define $\mathcal I_B^Tx=y$. Linearity follows from the linearity of $x\mapsto\lambda_B(R_{x^*,x}^T)$ and uniqueness of the representing vector. The displayed estimate gives $\|\mathcal I_B^T\|\lesssim_p\|T\|$, and uniqueness follows because an element of $L_p^0$ annihilated by all $x^*\in L_{p'}$ is zero.
\end{proof}

\section{Admissible constraints and outer tail preservation}
\label{p3:sec:admissible-constraints-and-tail-preservation}

We now isolate the side constraints that will be imposed during the random splittings introduced above. These are the coefficients that can be made small by randomness. The self-diagonal coefficients are excluded, because their averages are the trace quantities controlled separately.

In the finite splitting estimates below, where $E$ is either $L_p^0$ or $L_1^0$, we adopt a generic notation in order to treat the two normalizations simultaneously. A \emph{random normalized Haar block} means the block constructed from the split \eqref{p3:eq:random-split-sets}, with the normalization appropriate to $E$. More precisely, when $E=L_p^0$, we use the outer normalization \eqref{p3:eq:outer-random-block-definition} and write
\begin{equation*}
        u_A(\varepsilon)=H_A(\varepsilon),\qquad u_A^*(\varepsilon)=H_A^*(\varepsilon).
\end{equation*}
When $E=L_1^0$, we use the inner normalization \eqref{p3:eq:inner-random-block-definition} and write
\begin{equation*}
        u_A(\varepsilon)=K_A(\varepsilon),\qquad u_A^*(\varepsilon)=K_A^*(\varepsilon).
\end{equation*}

\begin{definition}[Admissible finite constraints]
\label{p3:def:admissible-side-constraints}
Let $E=L_p^0$ or $E=L_1^0$. Let $A_1,\ldots,A_d$ be pairwise disjoint finite equal-dyadic sets. For each $a=1,\ldots,d$, let $u_a,u_a^*$ be a random normalized Haar block supported on $A_a$, defined by an independent sign selection on sufficiently fine dyadic descendants of $A_a$.

A finite collection $\mathscr C$ of inequalities involving the random blocks $u_a,u_a^*$ is called \emph{admissible} if each element $C\in\mathscr C$ is an inequality of the form
\begin{equation*}
        |X_C|<\eta_C,
\end{equation*}
where $\eta_C>0$ is called the tolerance of $C$, and where the scalar-valued random variable $X_C$ has one of the following forms:
\begin{equation*}
        X_C=\Pthreeip{u_a^*}{S_Cz_C},\qquad X_C=\Pthreeip{z_C^*}{S_Cu_a},
\end{equation*}
or
\begin{equation*}
        X_C=\Pthreeip{u_a^*}{S_Cu_b}\qquad(a\ne b).
\end{equation*}
Here $S_C\in\PthreeL(E)$, $z_C\in E$, $z_C^*\in E^*$, the indices $a,b$, and the number $\eta_C>0$ are fixed before the random signs defining the blocks $u_a$ are chosen.
\end{definition}

The self-diagonal inequality
\begin{equation*}
        |\Pthreeip{u_a^*}{Su_a}|<\eta
\end{equation*}
is deliberately not included. Such terms are not side constraints: their expectations are averaged diagonal traces, and they are controlled separately by the trace estimates.

\section{Finite splitting estimates}
\label{p3:sec:finite-splitting}

At each finite stage of the construction, only finitely many current finite equal-dyadic sets are split. The estimates below are finite-dimensional probability estimates for the Haar blocks produced by such splits. In particular, we have the following elementary observation.

\begin{lemma}[Coordinate projections on dyadic descendants]
\label{p3:lem:packet-coordinates}
Let $E=L_p^0$, $1<p<\infty$, or $E=L_1^0$. Let $L_1,\ldots,L_N$ be pairwise disjoint dyadic intervals of the same length. In the case $E=L_p^0$, put
\begin{equation*}
        u_i=e_{L_i},\qquad u_i^*=e_{L_i}^*\qquad(1\leq i\leq N),
\end{equation*}
and in the case $E=L_1^0$, put
\begin{equation*}
        u_i=f_{L_i},\qquad u_i^*=f_{L_i}^*\qquad(1\leq i\leq N).
\end{equation*}
Define
\begin{equation*}
        Px=\sum_{i=1}^N\Pthreeip{u_i^*}{x}u_i .
\end{equation*}
Then $P\colon E\to E$ is a contraction. Moreover, the map
\begin{equation*}
        (a_i)_{i=1}^N\longmapsto \sum_{i=1}^N a_iu_i
\end{equation*}
identifies the span of $u_1,\ldots,u_N$ isometrically with $\ell_p^N$ in the $L_p$ case and with $\ell_1^N$ in the $L_1$ case.
\end{lemma}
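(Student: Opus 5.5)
The plan is to check the isometric identification of the span first, and then deduce contractivity of $P$ by writing it as a composition of two elementary contractions (conditional expectations and Haar martingale difference pieces). Fix the common length $\ell=|L_i|$ and put $U=\bigsqcup_i L_i$.

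\emph{The span.} In the $L_p$ case the vectors $e_{L_i}=\ell^{-1/p}h_{L_i}$ have pairwise disjoint supports and $|e_{L_i}|=\ell^{-1/p}\Pthreeone_{L_i}$. Hence
\begin{equation*}
\Bigl\|\sum_i a_ie_{L_i}\Bigr\|_p^p=\sum_i|a_i|^p,
\end{equation*}
which is the $\ell_p^N$ identification. In the $L_1$ case, $|f_{L_i}|=\ell^{-1}\Pthreeone_{L_i}$ has integral one, so disjointness gives the $\ell_1^N$ norm.

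\emph{The projection.} Let $\mathcal G$ be the finite $\sigma$-algebra generated by the children $L_i^{\pm}$ together with the complement $[0,1)\setminus U$. Let $\mathcal G_0$ be the one generated by the $L_i$ and $[0,1)\setminus U$. The corresponding conditional expectations $E_{\mathcal G}$ and $E_{\mathcal G_0}$ are contractions on every $L_q$, $1\le q<\infty$. I claim that
\begin{equation*}
P=\Pthreeone_U\,(E_{\mathcal G}-E_{\mathcal G_0}).
\end{equation*}
On each $L_i$, the difference $E_{\mathcal G}x-E_{\mathcal G_0}x$ equals
\begin{equation*}
\tfrac12\Bigl(\tfrac{1}{|L_i^+|}\int_{L_i^+}x-\tfrac{1}{|L_i^-|}\int_{L_i^-}x\Bigr)h_{L_i}=\tfrac1{\ell}\Pthreeip{h_{L_i}}{x}h_{L_i}.
\end{equation*}
The normalizations are set so that $\Pthreeip{u_i^*}{x}u_i$ is exactly this term in both cases. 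Indeed, in the $L_p$ case $\ell^{-1/p'}\ell^{-1/p}=\ell^{-1}$. In the $L_1$ case the coefficient is $\Pthreeip{h_{L_i}}{x}$ and $f_{L_i}=\ell^{-1}h_{L_i}$.

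Writing $P$ as a difference of two contractions only gives $\|P\|\le 2$, so norm one needs a better representation. Locally on each $L_i$, the operator $x\mapsto \ell^{-1}\Pthreeip{h_{L_i}}{x}h_{L_i}$ is the local Haar projection. Its norm is at most $1$ on $L_q(L_i)$ for every $q\ge1$, by the following argument. Set $c=\ell^{-1}\Pthreeip{h_{L_i}}{x}$. Then
\begin{equation*}
\|c\,h_{L_i}\|_q=|c|\ell^{1/q},
\end{equation*}
while
\begin{equation*}
|c|\ell=\Bigl|\int_{L_i^+}x-\int_{L_i^-}x\Bigr|\le\int_{L_i}|x|\le \ell^{1-1/q}\|x\Pthreeone_{L_i}\|_q
\end{equation*}
by Hölder. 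Summing over the disjoint $L_i$ then gives
\begin{equation*}
\|Px\|_q^q\le\sum_i\|x\Pthreeone_{L_i}\|_q^q\le\|x\|_q^q,
\end{equation*}
and for $q=1$ the same sum is taken without powers.

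\emph{Range.} $P$ maps into $E$ because each $h_{L_i}$ has mean zero. This completes the proof; the only point needing care is avoiding the lossy bound $2$ by using the local Hölder estimate.
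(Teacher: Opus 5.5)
Your proposal is correct and is essentially the paper's proof. Disjoint supports give the $\ell_p^N$ and $\ell_1^N$ identifications, $P$ is written as a sum of local Haar-coordinate projections, each is bounded via H\"older by $\|x\mathbf 1_{L_i}\|_q$, and these bounds are summed over the disjoint $L_i$. The conditional-expectation representation is a harmless detour that the argument does not need, since, as you note, it alone only yields $\|P\|\le 2$.
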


\begin{proof}
The supports of the $u_i$ are disjoint, and the chosen normalizations give the displayed $\ell_p$ and $\ell_1$ norms by direct integration. The projection is the sum of the local Haar-coordinate projections on the intervals $L_i$. On each $L_i$ it has the form
\begin{equation*}
        x\longmapsto |L_i|^{-1}h_{L_i}\int_{L_i}h_{L_i}x .
\end{equation*}
For $E=L_p^0$, H\"older's inequality gives
\begin{equation*}
        \left\||L_i|^{-1}h_{L_i}\int_{L_i}h_{L_i}x\right\|_p\leq \|\Pthreeone_{L_i}x\|_p .
\end{equation*}
For $E=L_1^0$, the same formula gives
\begin{equation*}
        \left\||L_i|^{-1}h_{L_i}\int_{L_i}h_{L_i}x\right\|_1\leq \int_{L_i}|x|.
\end{equation*}
Since the intervals $L_i$ are disjoint, summing over $i$ gives $\|Px\|_E\leq\|x\|_E$.
\end{proof}

The next estimate says that the random self-diagonal coefficient of a new Haar block is concentrated around the average of the diagonal coefficients on the dyadic descendants from which the block is built.

\begin{lemma}[Centered quadratic estimate]
\label{p3:lem:centered}
Let $E=L_p^0$ or $E=L_1^0$, let $Q\in\PthreeL(E)$, and let $A$ be a finite equal-dyadic set. Fix $n\geq0$, write $\PthreeD_n(A)=\{L_1,\ldots,L_N\}$, and let $\varepsilon=(\varepsilon_i)_{i=1}^N$ be uniformly distributed on $\{-1,1\}^N$. Let $u(\varepsilon),u^*(\varepsilon)$ be the random normalized Haar block and its biorthogonal functional associated with $A$, $n$, and $\varepsilon$. For each $i$, let $u_i,u_i^*$ be the normalized Haar vector and biorthogonal functional associated with $L_i$, using the normalization of $E$. Then
\begin{equation*}
        \PthreeE_\varepsilon\left[\left|\Pthreeip{u^*(\varepsilon)}{Qu(\varepsilon)}-\frac1N\sum_{i=1}^N\Pthreeip{u_i^*}{Qu_i}\right|^2\right]\lesssim_p \frac{\|Q\|^2}{N}.
\end{equation*}
In the case $E=L_1^0$, the subscript $p$ is omitted.
\end{lemma}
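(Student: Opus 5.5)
The plan is to expand the random diagonal coefficient in the descendant basis and reduce the estimate to a second-moment bound for a Rademacher chaos of order two. By \eqref{p3:eq:outer-random-block-expansion} and \eqref{p3:eq:inner-random-block-expansion}, in both normalizations we have
\begin{equation*}
        \Pthreeip{u^*(\varepsilon)}{Qu(\varepsilon)}=\frac1N\sum_{i,j=1}^N\varepsilon_i\varepsilon_j\Pthreeip{u_i^*}{Qu_j}.
\end{equation*}
Indeed, the outer normalization contributes $N^{-1/p}N^{-1/p'}=N^{-1}$, and the inner normalization contributes $N^{-1}\cdot 1$. The diagonal terms $i=j$ give exactly the average $\frac1N\sum_i\Pthreeip{u_i^*}{Qu_i}$. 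The centered quantity is therefore the off-diagonal chaos
\begin{equation*}
        Z=\frac1N\sum_{i\ne j}\varepsilon_i\varepsilon_j q_{ij},\qquad q_{ij}=\Pthreeip{u_i^*}{Qu_j}.
\end{equation*}
By orthogonality of the Walsh products $\varepsilon_i\varepsilon_j$, we get
\begin{equation*}
        \PthreeE|Z|^2=N^{-2}\sum_{i\ne j}|q_{ij}+q_{ji}|^2\le 2N^{-2}\sum_{i,j}\bigl(|q_{ij}|^2+|q_{ji}|^2\bigr).
\end{equation*}
It therefore suffices to prove the Hilbert--Schmidt type bound $\sum_{i,j}|q_{ij}|^2\lesssim_p N\|Q\|^2$.

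To reach this bound, I will compress $Q$ to the span of $u_1,\dots,u_N$. Let $P$ be the coordinate projection of \Cref{p3:lem:packet-coordinates}. Then $PQP$ restricted to this span is a contraction-compressed matrix $(q_{ij})$ acting on $\ell_p^N$ (outer case) or on $\ell_1^N$ (inner case), with operator norm at most $\|Q\|$. For the matrix entries, write $q_{ij}=\Pthreeip{u_i^*}{Qu_j}$, where $u_i^*$ is a norm-one functional on the span, since the dual basis is isometric to $\ell_{p'}^N$ or $\ell_\infty^N$.

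In the $\ell_1^N$ case each column satisfies $\sum_i|q_{ij}|\le\|Q\|$, because $\|Qu_j\|\le\|Q\|$ and $\|u_j\|=1$. Since also $|q_{ij}|\le\|Q\|$, we get $\sum_i|q_{ij}|^2\le\|Q\|\sum_i|q_{ij}|\le\|Q\|^2$, and summing over $j$ gives $N\|Q\|^2$.

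The $\ell_p^N$ case is the main obstacle, because the column bound is only $\bigl(\sum_i|q_{ij}|^p\bigr)^{1/p}\le\|Q\|$, and this is too weak when $p>2$. I would handle it with a randomized (Khintchine) estimate. Using the Rademacher average over independent signs $\delta$,
\begin{equation*}
        \sum_{i,j}|q_{ij}|^2=\PthreeE_\delta\sum_i\Bigl|\sum_j\delta_jq_{ij}\Bigr|^2 ,
\end{equation*}
and for each $\delta$ the vector $Q\bigl(\sum_j\delta_ju_j\bigr)$ has coefficient vector of $\ell_p^N$-norm at most $\|Q\|N^{1/p}$. This gives $\sum_i|c_i|^2\le N^{\max(0,1-2/p)}\|c\|_p^2\le N\|Q\|^2$ whenever $p\ge2$. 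For $p<2$ I will argue dually: transposing, the matrix $(q_{ji})$ has norm at most $\|Q\|$ on $\ell_{p'}^N$ with $p'>2$, and the same computation applies. Alternatively, for $p<2$ the column bound already suffices directly, since $\sum_i|q_{ij}|^2\le\|Q\|^{2-p}\sum_i|q_{ij}|^p\le\|Q\|^2$.

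Combining these estimates gives $\PthreeE|Z|^2\le 4N^{-2}\cdot N\|Q\|^2$, which is the claimed bound with an absolute constant. The dependence on $p$ is absorbed into the notation $\lesssim_p$.
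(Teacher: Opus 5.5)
Your proof is correct, and its skeleton coincides with the paper's. You use the same expansion into the off-diagonal chaos $N^{-1}\sum_{i\ne j}\varepsilon_i\varepsilon_j q_{ij}$ and the same reduction to the Hilbert--Schmidt bound $\sum_{i,j}|q_{ij}|^2\lesssim N\|Q\|^2$. For $E=L_1^0$ and for $1<p\le 2$ you also use the same column estimate, via the contractive coordinate projection of \Cref{p3:lem:packet-coordinates}.

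The one real difference is the case $p>2$. The paper handles it by symmetry, running the column argument on the rows. Row $i$ is the restriction of $Q^*u_i^*$ to the isometric copy of $\ell_p^N$, so its $\ell_{p'}$-norm, and hence (as $p'<2$) its $\ell_2$-norm, is at most $\|Q\|$. You stay on the primal side instead. With $M=(q_{ij})$ you write $\sum_{i,j}|q_{ij}|^2=\mathbb{E}_\delta\|M\delta\|_{\ell_2^N}^2$, and you combine $\|M\delta\|_{\ell_p^N}\le N^{1/p}\|Q\|$ with H\"older's inequality $\|c\|_{\ell_2^N}\le N^{1/2-1/p}\|c\|_{\ell_p^N}$. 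Your version needs only the norm of the compressed matrix on $\ell_p^N$ and no duality, whereas the paper's is a one-line transposition. Both give $\sum_{i,j}|q_{ij}|^2\le N\|Q\|^2$ with a constant independent of $p$. Your step uses only the orthogonality of the signs $\delta_j$, not Khintchine's inequality, so that label is unnecessary.

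There is one minor slip. Since $Z=N^{-1}\sum_{i<j}\varepsilon_i\varepsilon_j(q_{ij}+q_{ji})$, the exact identity is $\mathbb{E}|Z|^2=N^{-2}\sum_{i<j}|q_{ij}+q_{ji}|^2$. Your sum over $i\ne j$ counts each pair twice. This is harmless, because you only use that expression as an upper bound.
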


\begin{proof}
Write $q_{ij}=\Pthreeip{u_i^*}{Qu_j}$. In the $L_p$ case,
\begin{equation*}
        u(\varepsilon)=N^{-1/p}\sum_{i=1}^N\varepsilon_iu_i,\qquad u^*(\varepsilon)=N^{-1/p'}\sum_{i=1}^N\varepsilon_iu_i^*.
\end{equation*}
In the $L_1$ case,
\begin{equation*}
        u(\varepsilon)=N^{-1}\sum_{i=1}^N\varepsilon_iu_i,\qquad u^*(\varepsilon)=\sum_{i=1}^N\varepsilon_iu_i^*.
\end{equation*}
Thus, in both cases,
\begin{equation*}
        \Pthreeip{u^*(\varepsilon)}{Qu(\varepsilon)}-\frac1N\sum_{i=1}^Nq_{ii}=\frac1N\sum_{i\ne j}\varepsilon_i\varepsilon_jq_{ij}.
\end{equation*}
By independence of the signs,
\begin{equation*}
        \PthreeE_\varepsilon\left[\left|\frac1N\sum_{i\ne j}\varepsilon_i\varepsilon_jq_{ij}\right|^2\right]\leq 2N^{-2}\sum_{i\ne j}|q_{ij}|^2.
\end{equation*}
It remains to bound the Hilbert--Schmidt sum. For fixed $j$, the column $(q_{ij})_{i=1}^N$ is the coordinate vector of $PQu_j$, where $P$ is the coordinate projection from \Cref{p3:lem:packet-coordinates}. In the $L_1$ case, its $\ell_1$ norm is at most $\|Q\|$, hence its $\ell_2$ norm is at most $\|Q\|$. In the $L_p$ case with $1<p\leq2$, its $\ell_p$ norm is at most $\|Q\|$, hence its $\ell_2$ norm is at most $\|Q\|$. Summing over $j$ gives
\begin{equation*}
        \sum_{i,j=1}^N|q_{ij}|^2\lesssim_p N\|Q\|^2.
\end{equation*}
For $p\geq2$, the same argument is applied to the rows, equivalently to $Q^*$ on $L_{p'}$ with $p'\leq2$. Combining this estimate with the preceding second-moment bound proves the lemma.
\end{proof}

The next estimates handle the genuinely one-sided coefficients. If one side of the pairing is fixed before the signs are chosen, then a sufficiently fine random Haar block is almost orthogonal to it on average.

\begin{lemma}[One-sided estimates]
\label{p3:lem:one-sided}
Let $E=L_p^0$ or $E=L_1^0$, and let $A$ be a finite equal-dyadic set. Then, for every $\eta>0$, every pair of integers $M_1,M_2\geq0$, and every pair of finite families
\begin{equation*}
        \mathcal F_1=\{(S_b,z_b):1\leq b\leq M_1\}\subset \PthreeL(E)\times E
\end{equation*}
and
\begin{equation*}
        \mathcal F_2=\{(R_a,z_a^*):1\leq a\leq M_2\}\subset \PthreeL(E)\times E^*,
\end{equation*}
there is $n_0\geq0$ such that, for every $n\geq n_0$, the following holds. If $\PthreeD_n(A)=\{L_1,\ldots,L_N\}$ and $u(\varepsilon),u^*(\varepsilon)$ is the random normalized Haar block and its biorthogonal functional associated with $A$, $n$, and $\varepsilon\in\{-1,1\}^N$, then
\begin{equation*}
        \PthreeE_\varepsilon\left[\left|\Pthreeip{u^*(\varepsilon)}{S_bz_b}\right|^2\right]<\eta^2\qquad (1\leq b\leq M_1),
\end{equation*}
and
\begin{equation*}
        \PthreeE_\varepsilon\left[\left|\Pthreeip{R_a^*z_a^*}{u(\varepsilon)}\right|^2\right]<\eta^2\qquad (1\leq a\leq M_2).
\end{equation*}
\end{lemma}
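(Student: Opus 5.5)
Since only finitely many constraints are involved, it suffices to treat a single pair $(S,z)$ or $(R,z^*)$ and then take $n_0$ to be the maximum of the resulting thresholds. We expand the random block in the dyadic descendants, exactly as in \eqref{p3:eq:outer-random-block-expansion} and \eqref{p3:eq:inner-random-block-expansion}, and compute the second moment using orthogonality of the independent signs. The goal is to reduce each expectation to a quantity governed by the Haar coefficients of a fixed vector at level $n$ below $A$, and then show that this quantity tends to zero as $n\to\infty$.

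\emph{First family.} Put $w=S_bz_b\in E$. In the $L_p$ case,
\[
\Pthreeip{u^*(\varepsilon)}{w}=N^{-1/p'}\sum_i\varepsilon_i\Pthreeip{e_{L_i}^*}{w},
\qquad\text{so}\qquad
\PthreeE_\varepsilon|\cdot|^2=N^{-2/p'}\sum_i|\Pthreeip{e_{L_i}^*}{w}|^2 .
\]
By \Cref{p3:lem:packet-coordinates}, the coefficient vector $c=(\Pthreeip{e_{L_i}^*}{w})_i$ has $\ell_p$ norm at most $\|P_nw\|_p$, where $P_n$ is the contractive coordinate projection onto level $n$ below $A$. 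If $p\le 2$, then $\|c\|_2\le\|c\|_p$. If $p>2$, then $\|c\|_2\le N^{1/2-1/p}\|c\|_p$, and the prefactor becomes $N^{-2/p'+1-2/p}=N^{-1}$. In either case the expectation is at most $\|P_nw\|_p^2$. Since the Haar system is a basis of $L_p^0$, the level-$n$ martingale difference $P_nw\to0$ in norm as $n\to\infty$, which gives the bound.

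In the $L_1$ case, $u^*=\sum_i\varepsilon_if_{L_i}^*$, and the expectation equals $\sum_i|\Pthreeip{h_{L_i}}{w}|^2$. This requires a different argument, because the Haar system is not a basis of $L_1$. We use $\Pthreeip{h_{L_i}}{w}\le\int_{L_i}|w|$ together with
\[
\sum_i\Bigl(\int_{L_i}|w|\Bigr)^2\le\Bigl(\max_i\int_{L_i}|w|\Bigr)\|w\|_1 .
\]
The maximum tends to zero as $|L_i|\to0$, by absolute continuity of the integral. This step is the one that needs care: the decay must come from uniform integrability of the single fixed function $w$, not from any basis convergence.

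\emph{Second family.} Put $\phi=R_a^*z_a^*\in E^*$. In the $L_p$ case the argument is dual to the first: the expectation is $N^{-2/p}\sum_i|\Pthreeip{\phi}{e_{L_i}}|^2$, which we bound by the level-$n$ Haar projection of $\phi$ in $L_{p'}$ and which therefore tends to zero.

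In the $L_1$ case, $u=N^{-1}\sum_i\varepsilon_if_{L_i}$, so
\[
\PthreeE_\varepsilon|\cdot|^2=N^{-2}\sum_i|\Pthreeip{\phi}{f_{L_i}}|^2\le N^{-1}\|\phi\|_\infty^2\sup_i\|f_{L_i}\|_1^2 .
\]
With the normalization $f_L=|L|^{-1}h_L$ we have $\|f_L\|_1=1$, so the right-hand side is at most $\|\phi\|_\infty^2/N$, and $N=\#\PthreeD_n(A)\to\infty$ as $n\to\infty$.

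Taking the maximum of the finitely many thresholds over $b\le M_1$ and $a\le M_2$ gives $n_0$. Every bound above is monotone decreasing or tends to zero in $n$, so the conclusion holds for all $n\ge n_0$. The main obstacle we expect is the $L_1$ first-family estimate, which cannot rely on basis properties; the absolute-continuity bound above is how we handle it.
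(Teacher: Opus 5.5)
Your proof is correct and follows the paper's argument. The two $L_1^0$ estimates are exactly the paper's (absolute continuity of $\int_{L_i}|w|$ for the first family, the bound $\|\phi\|_\infty^2/N$ for the second). In the $L_p^0$ case you control the coefficient vector through the contractive level-$n$ projection of \Cref{p3:lem:packet-coordinates} and martingale-difference convergence, where the paper uses the pointwise bound $|\Pthreeip{e_{L_i}^*}{z}|\leq\|\Pthreeone_{L_i}z\|_p$ together with absolute continuity or H\"older. This is a harmless variation; in fact, in that case the prefactors $N^{-2/p'}$ and $N^{-2/p}$ (or $N^{-1}$ after H\"older) already force decay with no convergence argument at all.

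One correction to your commentary: the Haar system \emph{is} a monotone Schauder basis of $L_1$; it is only conditional there. So the level-$n$ Haar blocks of a fixed $w\in L_1^0$ do tend to zero, and your projection argument with $\|c\|_2\leq\|c\|_1$ would also have handled the first family in $L_1^0$. The absolute-continuity route is an alternative, not a necessity.
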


\begin{proof}
For each depth $n$, write $\PthreeD_n(A)=\{L_1,\ldots,L_N\}$ and denote the corresponding random block by $u_n(\varepsilon),u_n^*(\varepsilon)$. It suffices to prove that, for every fixed $y\in E$ and every fixed $y^*\in E^*$,
\begin{equation*}
        \lim_{n\to\infty}\PthreeE_\varepsilon\left[\left|\Pthreeip{u_n^*(\varepsilon)}{y}\right|^2\right]=0,\qquad \lim_{n\to\infty}\PthreeE_\varepsilon\left[\left|\Pthreeip{y^*}{u_n(\varepsilon)}\right|^2\right]=0.
\end{equation*}
Indeed, after applying these two limits to the finitely many fixed vectors $y_b=S_bz_b$ and fixed functionals $y_a^*=R_a^*z_a^*$, one chooses a single depth $n$ large enough for all of them.

First consider $E=L_1^0$. If $z\in L_1^0$, then
\begin{equation*}
        \PthreeE_\varepsilon\left[\left|\Pthreeip{u^*(\varepsilon)}{z}\right|^2\right]=\sum_{i=1}^N\left|\int_{L_i}f_{L_i}^*z\right|^2\leq \left(\max_{1\leq i\leq N}\int_{L_i}|z|\right)\|z\|_1.
\end{equation*}
The last expression tends to zero as $n\to\infty$ by absolute continuity of the integral. If $z^*\in (L_1^0)^*$, choose an $L_\infty$ representative, still denoted by $z^*$; then
\begin{equation*}
        \PthreeE_\varepsilon\left[\left|\Pthreeip{z^*}{u(\varepsilon)}\right|^2\right]=N^{-2}\sum_{i=1}^N|\Pthreeip{z^*}{f_{L_i}}|^2\leq N^{-1}\|z^*\|_\infty^2,
\end{equation*}
which tends to zero.

Now consider $E=L_p^0$. In this case
\begin{equation*}
        u(\varepsilon)=N^{-1/p}\sum_{i=1}^N\varepsilon_iu_i,\qquad u^*(\varepsilon)=N^{-1/p'}\sum_{i=1}^N\varepsilon_iu_i^*.
\end{equation*}
If $z\in L_p^0$, then
\begin{equation*}
        \PthreeE_\varepsilon\left[\left|\Pthreeip{u^*(\varepsilon)}{z}\right|^2\right]=N^{-2/p'}\sum_{i=1}^N|\Pthreeip{u_i^*}{z}|^2,\qquad |\Pthreeip{u_i^*}{z}|\leq\|\Pthreeone_{L_i}z\|_p.
\end{equation*}
For $1<p\leq2$, this tends to zero because
\begin{equation*}
        \sum_{i=1}^N\|\Pthreeone_{L_i}z\|_p^2\leq \left(\max_{1\leq i\leq N}\|\Pthreeone_{L_i}z\|_p\right)^{2-p}\|z\|_p^p.
\end{equation*}
For $p>2$, H\"older's inequality gives
\begin{equation*}
        \sum_{i=1}^N\|\Pthreeone_{L_i}z\|_p^2\leq N^{1-2/p}\|z\|_p^2,
\end{equation*}
and hence
\begin{equation*}
        \PthreeE_\varepsilon\left[\left|\Pthreeip{u^*(\varepsilon)}{z}\right|^2\right]\leq N^{-1}\|z\|_p^2.
\end{equation*}
Thus, the vector estimate tends to zero in all cases. The estimate for fixed functionals is the same argument with $p$ replaced by $p'$, after representing the functional by an element of $L_{p'}$ modulo constants. Applying these estimates to the fixed vectors $S_bz_b$ and the fixed functionals $R_a^*z_a^*$, and then choosing $n_0$ large enough so that all estimates hold for every $n\geq n_0$, proves the lemma.
\end{proof}

The next estimate is the two-sided random analogue of the preceding one-sided estimates. When two new Haar blocks are built on disjoint finite equal-dyadic sets, and their signs are chosen independently, the mixed coefficient has second moment of order the reciprocal of the number of descendants used in the split.

\begin{lemma}[Bilinear estimate for two new blocks]
\label{p3:lem:bilinear}
Let $E=L_p^0$ or $E=L_1^0$, and let $S\in\PthreeL(E)$. Let $A_a$ and $A_b$ be disjoint finite equal-dyadic sets. Fix depths $n_a,n_b\geq0$, write
\begin{equation*}
        \PthreeD_{n_a}(A_a)=\{L_1^a,\ldots,L_{N_a}^a\},\qquad \PthreeD_{n_b}(A_b)=\{L_1^b,\ldots,L_{N_b}^b\},
\end{equation*}
and let $N_0 = \min \{ N_a,N_b \}$. Let $u_a(\varepsilon^a),u_a^*(\varepsilon^a)$ and $u_b(\varepsilon^b),u_b^*(\varepsilon^b)$ be the random normalized Haar blocks and their biorthogonal functionals associated with these two splittings, where the sign selections $\varepsilon^a\in\{-1,1\}^{N_a}$ and $\varepsilon^b\in\{-1,1\}^{N_b}$ are independent. Then
\begin{equation*}
        \PthreeE_{\varepsilon^a,\varepsilon^b}\left[\left|\Pthreeip{u_a^*(\varepsilon^a)}{S u_b(\varepsilon^b)}\right|^2\right]\lesssim_p \frac{\|S\|^2}{N_0}.
\end{equation*}
In the case $E=L_1^0$ the subscript $p$ is omitted.
\end{lemma}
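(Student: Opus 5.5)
The plan is to expand both random blocks in the coordinates of the dyadic descendants, exactly as in the proof of \Cref{p3:lem:centered}. We write $q_{ij}=\Pthreeip{u_i^{a*}}{S u_j^b}$, where $u_i^a,u_i^{a*}$ are the normalized Haar vectors and functionals on $L_i^a$, and $u_j^b$ those on $L_j^b$. In the $L_p$ case the expansion \eqref{p3:eq:outer-random-block-expansion} gives
\begin{equation*}
        \Pthreeip{u_a^*(\varepsilon^a)}{Su_b(\varepsilon^b)}=N_a^{-1/p'}N_b^{-1/p}\sum_{i,j}\varepsilon^a_i\varepsilon^b_jq_{ij}.
\end{equation*}
In the $L_1$ case \eqref{p3:eq:inner-random-block-expansion} gives the same expression with prefactor $N_b^{-1}$. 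Since the products $\varepsilon_i^a\varepsilon_j^b$ are orthonormal under the product measure, the second moment equals the squared prefactor times the Hilbert--Schmidt sum $\sum_{i,j}|q_{ij}|^2$. So everything reduces to bounding this sum. Note that, unlike the centered estimate, no diagonal term needs to be removed: since $A_a\cap A_b=\emptyset$, all pairs $(i,j)$ are genuine cross terms with mean zero.

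For the $L_1$ case and for $1<p\leq 2$, we bound the columns. For fixed $j$, the vector $(q_{ij})_i$ is the coordinate vector of $P_aSu_j^b$, where $P_a$ is the contractive coordinate projection of \Cref{p3:lem:packet-coordinates} onto the span of the $u_i^a$. Its $\ell_1$ norm (respectively $\ell_p$ norm) is therefore at most $\|S\|$, and hence so is its $\ell_2$ norm. This gives $\sum_{i,j}|q_{ij}|^2\leq N_b\|S\|^2$.

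In the $L_1$ case the second moment is then at most $N_b^{-2}N_b\|S\|^2=\|S\|^2/N_b\leq \|S\|^2/N_0$.

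In the $L_p$ case with $p\leq 2$ the second moment is at most $N_a^{-2/p'}N_b^{1-2/p}\|S\|^2$. Here $1-2/p\leq 0$, so $N_b^{1-2/p}\leq 1$, and $N_a^{-2/p'}\leq N_a^{-1}$ fails when $p'>2$. This is the step needing care. The crude column bound wastes a factor, so I would instead use that the $\ell_p^{N_a}$-norm of each column bounds its $\ell_2$ norm by $N_a^{0}$ only, and try a better interpolation. The cleanest route is to bound $\sum|q_{ij}|^2$ directly by the $2$-summing/Grothendieck-type estimate for the matrix $(q_{ij})$ viewed as an operator $\ell_p^{N_b}\to\ell_p^{N_a}$ of norm $\le\|S\|$. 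Via \Cref{p3:lem:packet-coordinates}, its norm is at most $\|S\|$. For $p\leq 2$, the Hilbert--Schmidt norm of an operator $\ell_p^{n}\to\ell_p^{m}$ of norm $1$ is at most $\min(m,n)^{1/p-1/2}\cdot\max(\dots)$-type bounds. Combining these with the prefactor $N_a^{-2/p'}N_b^{1-2/p}$, I expect the exponents to balance to at most $N_0^{-1}$ up to $p$-dependent constants. Should this balance fail, I would fall back to a weaker but sufficient decay $N_0^{-c_p}$, which serves the later applications equally well.

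For $p>2$, we argue by duality, as in the end of the proof of \Cref{p3:lem:centered}. We apply the row argument to $S^*$ on $L_{p'}^0$, where $p'<2$, so the roles of $a$ and $b$ and of $p$ and $p'$ interchange.

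The main obstacle is getting the exponent bookkeeping right in the mixed-normalization $L_p$ case, where $N_a\neq N_b$.
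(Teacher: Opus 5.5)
Your setup is correct and is the paper's: the expansion of both blocks, the orthonormality of the products $\varepsilon^a_i\varepsilon^b_j$, the reduction to $\sum_{i,j}|q_{ij}|^2$, the column bound through the contractive coordinate projection, and the $L_1$ case all match. The gap is in the $L_p$ case, and it comes from a miscalculation. You abandon the column bound for $1<p\le 2$ because $N_a^{-2/p'}$ alone does not decay like $N_a^{-1}$. But you had already discarded the decay of the other factor by bounding $N_b^{1-2/p}\le 1$. The two exponents in fact complement each other: write $1-2/p=-(2/p-1)$ with $2/p-1\ge 0$, and note that $2/p'+2/p=2$. Since $N_a,N_b\ge N_0$, this gives
\begin{equation*}
N_a^{-2/p'}N_b^{1-2/p}=N_a^{-2/p'}N_b^{-(2/p-1)}\le N_0^{-(2/p'+2/p-1)}=N_0^{-1}.
\end{equation*}
So the crude column estimate already yields $\|S\|^2/N_0$, with constant $1$, and nothing is lost when $N_a\ne N_b$; this is exactly the paper's proof.

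As submitted, however, the $L_p$ case is not proved. The Grothendieck/$2$-summing replacement is stated only with unspecified bounds, and its outcome is an expectation (``I expect the exponents to balance'') rather than a computation. The fallback decay $N_0^{-c_p}$ would suffice for the later Markov-inequality applications, but it is weaker than the estimate the lemma asserts.

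Your $p>2$ case has the same gap, since you defer it to the unfinished $p\le 2$ computation. Done directly, as in the paper, it closes in one line. Each row $(q_{ij})_j$ has $\ell_{p'}$-norm at most $\|S\|$: test $S^*u_i^{a*}$ against $\sum_j c_j u_j^b$, whose norm is $\|c\|_{\ell_p}$. Since $p'\le 2$, the row's $\ell_2$-norm is then at most $\|S\|$, so $\sum_{i,j}|q_{ij}|^2\le N_a\|S\|^2$. The resulting prefactor is $N_a^{1-2/p'}N_b^{-2/p}=N_a^{-(1-2/p)}N_b^{-2/p}$. Its decay exponents again sum to $1$, so it is at most $N_0^{-1}$.
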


\begin{proof}
Let $u_{a,i},u_{a,i}^*$ be the normalized Haar vector and biorthogonal functional associated with $L_i^a$, and let $u_{b,j},u_{b,j}^*$ be the corresponding objects associated with $L_j^b$. Write
\begin{equation*}
        s_{ij}=\Pthreeip{u_{a,i}^*}{S u_{b,j}}\qquad(1\leq i\leq N_a,\ 1\leq j\leq N_b).
\end{equation*}
The products $\varepsilon_i^a\varepsilon_j^b$ are orthogonal in $L_2$ of the product sign space, so the mixed terms vanish after averaging.

First consider $E=L_1^0$. Then
\begin{equation*}
        u_a^*(\varepsilon^a)=\sum_{i=1}^{N_a}\varepsilon_i^a u_{a,i}^*,\qquad u_b(\varepsilon^b)=N_b^{-1}\sum_{j=1}^{N_b}\varepsilon_j^b u_{b,j}.
\end{equation*}
Hence
\begin{equation*}
        \PthreeE_{\varepsilon^a,\varepsilon^b}\left[\left|\Pthreeip{u_a^*(\varepsilon^a)}{S u_b(\varepsilon^b)}\right|^2\right]=N_b^{-2}\sum_{i=1}^{N_a}\sum_{j=1}^{N_b}|s_{ij}|^2.
\end{equation*}
For each fixed $j$, the column $(s_{ij})_{i=1}^{N_a}$ is the coordinate vector of the projection of $S u_{b,j}$ onto the span of $u_{a,1},\ldots,u_{a,N_a}$. By \Cref{p3:lem:packet-coordinates}, its $\ell_1$ norm is at most $\|S\|$, hence its $\ell_2$ norm is at most $\|S\|$. Therefore
\begin{equation*}
        \sum_{i=1}^{N_a}\sum_{j=1}^{N_b}|s_{ij}|^2\leq N_b\|S\|^2,
\end{equation*}
and so
\begin{equation*}
        \PthreeE_{\varepsilon^a,\varepsilon^b}\left[\left|\Pthreeip{u_a^*(\varepsilon^a)}{S u_b(\varepsilon^b)}\right|^2\right]\leq N_b^{-1}\|S\|^2\leq N_0^{-1}\|S\|^2.
\end{equation*}

Now consider $E=L_p^0$. Then
\begin{equation*}
        u_a^*(\varepsilon^a)=N_a^{-1/p'}\sum_{i=1}^{N_a}\varepsilon_i^a u_{a,i}^*,\qquad u_b(\varepsilon^b)=N_b^{-1/p}\sum_{j=1}^{N_b}\varepsilon_j^b u_{b,j}.
\end{equation*}
Thus
\begin{equation*}
        \PthreeE_{\varepsilon^a,\varepsilon^b}\left[\left|\Pthreeip{u_a^*(\varepsilon^a)}{S u_b(\varepsilon^b)}\right|^2\right]=N_a^{-2/p'}N_b^{-2/p}\sum_{i=1}^{N_a}\sum_{j=1}^{N_b}|s_{ij}|^2.
\end{equation*}
If $1<p\leq2$, then for each fixed $j$, the column $(s_{ij})_{i=1}^{N_a}$ has $\ell_p$ norm at most $\|S\|$, hence $\ell_2$ norm at most $\|S\|$. Therefore
\begin{equation*}
        \sum_{i=1}^{N_a}\sum_{j=1}^{N_b}|s_{ij}|^2\leq N_b\|S\|^2,
\end{equation*}
and consequently
\begin{equation*}
        \PthreeE_{\varepsilon^a,\varepsilon^b}\left[\left|\Pthreeip{u_a^*(\varepsilon^a)}{S u_b(\varepsilon^b)}\right|^2\right]\leq N_a^{-2/p'}N_b^{1-2/p}\|S\|^2\leq N_0^{-1}\|S\|^2.
\end{equation*}
If $p\geq2$, we use rows instead. For each fixed $i$, the row $(s_{ij})_{j=1}^{N_b}$ has $\ell_{p'}$ norm at most $\|S\|$, hence $\ell_2$ norm at most $\|S\|$. Thus
\begin{equation*}
        \sum_{i=1}^{N_a}\sum_{j=1}^{N_b}|s_{ij}|^2\leq N_a\|S\|^2,
\end{equation*}
and therefore
\begin{equation*}
        \PthreeE_{\varepsilon^a,\varepsilon^b}\left[\left|\Pthreeip{u_a^*(\varepsilon^a)}{S u_b(\varepsilon^b)}\right|^2\right]\leq N_a^{1-2/p'}N_b^{-2/p}\|S\|^2\leq N_0^{-1}\|S\|^2.
\end{equation*}
This proves the lemma.
\end{proof}

\section{Finite one-step constructions}
\label{p3:sec:finite-one-step-constructions}

The preceding estimates are probabilistic results. We now use them to prove two finite splitting results that are used in the recursive construction. Each result starts from finitely many current terminal sets and finitely many constraints fixed before the random signs are chosen.

\subsection{Outer one-step construction}
\label{p3:sec:outer-one-step-construction}

The outer coordinate is constructed by repeatedly splitting finite equal-dyadic sets. At a single outer step, one has finitely many current outer sets, one-parameter operators on $L_p^0$, and admissible side constraints. The step has two tasks. First, the newly born outer Haar blocks must satisfy the prescribed side constraints. Second, each child must inherit an infinite tail of depths on which the relevant averaged diagonal estimates remain valid. The next lemma is the precise one-step form of this operation.

\begin{lemma}[Additive preservation of outer tails]
\label{p3:lem:outer-tail}
Let $A_1,\ldots,A_d$ be pairwise disjoint finite equal-dyadic sets in the outer coordinate. Assume that the following is given.

\begin{enumerate}[label=\textup{(\alph*)}]
\item \label{p3:item:outer-tail-hyp-operators} For each $1\leq a\leq d$, a finite index set $F_a$ and a family $(Q_{a,s})_{s\in F_a}$ in $\PthreeL(L_p^0)$.

\item \label{p3:item:outer-tail-hyp-tails} For each $1\leq a\leq d$, an infinite set $S_a\subset\N$.

\item \label{p3:item:outer-tail-hyp-bounds} Positive numbers $\theta_{a,s}$ for $1\leq a\leq d$ and $s\in F_a$ such that
\begin{equation}
\label{p3:eq:outer-tail-hypothesis}
        |\alpha_m^{A_a}(Q_{a,s})|<\theta_{a,s}\qquad(1\leq a\leq d,\ m\in S_a,\ s\in F_a).
\end{equation}

\item \label{p3:item:outer-tail-hyp-tolerances} Positive numbers $\eta_{a,s}$ and $\delta_{a,s}$ for $1\leq a\leq d$ and $s\in F_a$.

\item \label{p3:item:outer-tail-hyp-side} A finite collection $\mathscr C$ of admissible side constraints, in the sense of \Cref{p3:def:admissible-side-constraints}, involving the new outer blocks to be constructed on $A_1,\ldots,A_d$.
\end{enumerate}

Then there exist depths $n_a\in S_a$ and sign selections
\begin{equation*}
        \varepsilon^a\in\{-1,1\}^{\PthreeD_{n_a}(A_a)}\qquad(1\leq a\leq d)
\end{equation*}
with the following properties. Put
\begin{equation*}
        A_{a,+}=A_{a,+}(\varepsilon^a),\qquad A_{a,-}=A_{a,-}(\varepsilon^a)
\end{equation*}
and let $H_{A_a}(\varepsilon^a),H_{A_a}^*(\varepsilon^a)$ be the associated new outer block and its biorthogonal functional.

\begin{enumerate}[label=\textup{(\roman*)}]
\item \label{p3:item:outer-tail-concl-side} The chosen signs make every admissible side constraint in $\mathscr C$ true for the blocks $H_{A_a}(\varepsilon^a),H_{A_a}^*(\varepsilon^a)$.

\item \label{p3:item:outer-tail-concl-diagonal} For every $1\leq a\leq d$ and every $s\in F_a$,
\begin{equation}
\label{p3:eq:outer-tail-new-diagonal}
        \left|\Pthreeip{H_{A_a}^*(\varepsilon^a)}{Q_{a,s}H_{A_a}(\varepsilon^a)}-\alpha_{n_a}^{A_a}(Q_{a,s})\right|<\eta_{a,s}.
\end{equation}

\item \label{p3:item:outer-tail-concl-child-tail} For every $1\leq a\leq d$ and every $\vartheta\in\{+,-\}$, there is an infinite set $S_{a,\vartheta}\subset\N$ such that, for every $r\in S_{a,\vartheta}$ and every $s\in F_a$,
\begin{equation}
\label{p3:eq:outer-tail-child-tail}
        n_a+1+r\in S_a\quad\text{and}\quad |\alpha_r^{A_{a,\vartheta}}(Q_{a,s})|<\theta_{a,s}+\delta_{a,s}.
\end{equation}
\end{enumerate}
\end{lemma}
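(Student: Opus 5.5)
The argument is probabilistic: we choose the depths $n_a\in S_a$ large, draw the sign selections $\varepsilon^a$ independently and uniformly, and show that each of the three required properties fails with small probability. A union bound over the finitely many events then leaves a sign selection satisfying all of them simultaneously. The depths are fixed first, and the signs are chosen only afterwards. Throughout, write $N_a=\#\PthreeD_{n_a}(A_a)$, which tends to infinity as $n_a\to\infty$.

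\textbf{Side constraints (i).} There are three types of admissible constraints. For each constraint $C\in\mathscr C$ of one-sided type, \Cref{p3:lem:one-sided} provides a threshold $n_0$ beyond which $\PthreeE|X_C|^2<\eta_C^2/(4|\mathscr C|)$. For each constraint of mixed type $\Pthreeip{u_a^*}{S_Cu_b}$ with $a\ne b$, \Cref{p3:lem:bilinear} gives $\PthreeE|X_C|^2\lesssim_p\|S_C\|^2/\min(N_a,N_b)$. Choosing all $n_a\in S_a$ beyond the relevant thresholds and applying Chebyshev's inequality, each constraint then fails with probability less than $1/(4|\mathscr C|)$. This is possible because each $S_a$ is infinite.

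\textbf{Diagonal estimate (ii).} Here we apply \Cref{p3:lem:centered} with $E=L_p^0$. Since $\frac1{N_a}\sum_i\Pthreeip{e_{L_i}^*}{Q_{a,s}e_{L_i}}=\alpha_{n_a}^{A_a}(Q_{a,s})$, the second moment of the deviation is $\lesssim_p\|Q_{a,s}\|^2/N_a$. By Chebyshev, \eqref{p3:eq:outer-tail-new-diagonal} fails with probability at most $C_p\|Q_{a,s}\|^2/(N_a\eta_{a,s}^2)$, and this is small once $n_a$ is large.

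\textbf{Child tails (iii).} This is the main obstacle, because the set $S_{a,\vartheta}$ must be infinite while we may only exclude finitely many bad events. We set $T_a=\{r\geq0:n_a+1+r\in S_a\}$, which is infinite. For $r\in T_a$, the expectation identity \eqref{p3:eq:child-trace-expectation} gives $\PthreeE\,\alpha_r^{A_{a,\vartheta}}(Q_{a,s})=\alpha_{n_a+1+r}^{A_a}(Q_{a,s})$, and this has modulus less than $\theta_{a,s}$ by \eqref{p3:eq:outer-tail-hypothesis}. \Cref{p3:lem:child-trace-variance} bounds the variance by $\|Q_{a,s}\|^2/N_a$, uniformly in $r$.

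Hence, for each fixed $r\in T_a$, the bad event $B_{a,\vartheta,r}=\{\exists s\in F_a:|\alpha_r^{A_{a,\vartheta}}(Q_{a,s})|\geq\theta_{a,s}+\delta_{a,s}\}$ has probability at most $p_a:=\sum_{s}\|Q_{a,s}\|^2/(N_a\delta_{a,s}^2)$. Summing this bound over infinitely many $r$ diverges, so a direct union bound is unavailable. Instead we use an averaging, Fatou-type argument. Let $G_{a,\vartheta}$ be the event that infinitely many $r\in T_a$ avoid $B_{a,\vartheta,r}$. Its complement is $\bigcup_k\bigcap_{r\in T_a,r\geq k}B_{a,\vartheta,r}$, an increasing union of events each of probability at most $p_a$, so $\mathbb P(G_{a,\vartheta}^c)\leq p_a$. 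Alternatively, one can apply Fatou's lemma to $\liminf_r\mathbf 1_{B_{a,\vartheta,r}}$. On $G_{a,\vartheta}$ we take $S_{a,\vartheta}=\{r\in T_a:B_{a,\vartheta,r}\text{ fails}\}$, which is infinite and satisfies \eqref{p3:eq:outer-tail-child-tail}.

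Finally, we choose all $n_a$ so large that the total failure probability across (i), (ii) and the finitely many events $G_{a,\vartheta}^c$ is less than $1$. Some sign selection therefore lies in every good event, and it satisfies (i)–(iii) simultaneously.
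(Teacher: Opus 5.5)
Your proof is correct. It agrees with the paper on the choice of depths and on parts (i) and (ii): Chebyshev/Markov with the centered, one-sided and bilinear second-moment bounds, followed by a finite union bound.

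The genuine difference is how you extract an infinite child tail from the per-depth bound $\mathbb P(B_{a,\vartheta,r})\leq p_a$.

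\begin{itemize}
\item The paper works with finite horizons. For each $M$ it applies Markov's inequality to the number of failures among the first $2M$ candidate depths. This shows that, with probability greater than $1-2\kappa$, at least $M$ of them are good. It then uses the finiteness of the sign space to pick a single sign choice that works for infinitely many $M$.
\item You instead note that the event ``eventually every candidate depth is bad'' is the increasing union $\bigcup_k\bigcap_{r\geq k}B_{a,\vartheta,r}$. By continuity of measure (Fatou's lemma for events), its probability is at most $\sup_r\mathbb P(B_{a,\vartheta,r})\leq p_a$.
\end{itemize}

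Your route is shorter. It dispenses with the pigeonhole step and the auxiliary constants $\gamma,\kappa$, and it gives the sharper failure bound $p_a$ rather than $2\kappa$. The paper's version is purely finite-combinatorial. It also yields a common set $S_{a,+}=S_{a,-}$ for both children, which the statement does not require. You could obtain that as well by merging the two children's bad events into a single $B_{a,r}$.
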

\begin{proof}
Throughout the proof, $\mathbb P$ denotes probability with respect to the random sign choices. For every finite equal-dyadic set $A$, write $N_n(A)=\#\PthreeD_n(A)$ for the number of depth $n$ descendants of $A$. Choose numbers $0<\gamma<1/4$ and $0<\kappa<1/(8d)$. 

The proof has four steps: first we choose the depths so that the diagonal and side-constraint failures are rare; then we estimate the probability that a child tail fails at a fixed future depth; then we use Markov's inequality to force many future depths to be good; finally we use finiteness of the sign space to pass from arbitrarily many good depths to infinitely many good depths. 

\begin{description}

\item[\textup{Step 1: Choosing the splitting depths}]
We first choose depths $n_a\in S_a$, $1\leq a\leq d$. Since each $S_a$ is infinite and $N_n(A_a)\to\infty$ as $n\to\infty$ along $S_a$, we may choose the depths so large that the following three requirements hold.

Let $\mathcal E_{\mathrm{diag}}$ be the event that conclusion \ref{p3:item:outer-tail-concl-diagonal} holds for all $1\leq a\leq d$ and $s\in F_a$. For $1\leq a\leq d$ and $s\in F_a$, let $\mathcal B_{a,s}$ be the event
\begin{equation*}
        \left|\Pthreeip{H_{A_a}^*(\varepsilon^a)}{Q_{a,s}H_{A_a}(\varepsilon^a)}-\alpha_{n_a}^{A_a}(Q_{a,s})\right|\geq \eta_{a,s}.
\end{equation*}
By \Cref{p3:lem:centered},
\begin{equation*}
        \PthreeE\left[\left|\Pthreeip{H_{A_a}^*(\varepsilon^a)}{Q_{a,s}H_{A_a}(\varepsilon^a)}-\alpha_{n_a}^{A_a}(Q_{a,s})\right|^2\right]\lesssim_p \frac{\|Q_{a,s}\|^2}{N_{n_a}(A_a)}.
\end{equation*}
Hence Chebyshev's inequality gives
\begin{equation*}
        \mathbb P(\mathcal B_{a,s})\lesssim_p \frac{\|Q_{a,s}\|^2}{N_{n_a}(A_a)\eta_{a,s}^2}.
\end{equation*}
Since $\mathcal E_{\mathrm{diag}}^c\subseteq\bigcup_{a=1}^d\bigcup_{s\in F_a}\mathcal B_{a,s}$, the finite union bound gives
\begin{equation*}
        \mathbb P(\mathcal E_{\mathrm{diag}}^c)\lesssim_p \sum_{a=1}^d\sum_{s\in F_a}\frac{\|Q_{a,s}\|^2}{N_{n_a}(A_a)\eta_{a,s}^2}.
\end{equation*}
Increasing the depths $n_a\in S_a$ if necessary, we may arrange that
\begin{equation*}
        \mathbb P(\mathcal E_{\mathrm{diag}}^c)<\gamma/2.
\end{equation*}

Let $\mathcal E_{\mathrm{side}}$ be the event that conclusion \ref{p3:item:outer-tail-concl-side} holds for the finite admissible collection $\mathscr C$. For each constraint $C\in\mathscr C$, let $X_C$ denote the random scalar appearing in that constraint, and let $\tau_C>0$ denote its tolerance. By admissibility, each $X_C$ is of one of the following forms:
\begin{equation*}
        \Pthreeip{H_{A_a}^*(\varepsilon^a)}{S_Cz_C},\qquad \Pthreeip{R_C^*z_C^*}{H_{A_a}(\varepsilon^a)},\qquad \Pthreeip{H_{A_a}^*(\varepsilon^a)}{S_CH_{A_b}(\varepsilon^b)}\quad(a\neq b).
\end{equation*}
Choose numbers $\rho_C>0$, $C\in\mathscr C$, such that
\begin{equation*}
        \sum_{C\in\mathscr C}\rho_C<\gamma/2.
\end{equation*}
By \Cref{p3:lem:one-sided,p3:lem:bilinear}, after increasing the depths $n_a\in S_a$ if necessary, we may arrange that
\begin{equation*}
        \PthreeE[|X_C|^2]<\rho_C\tau_C^2\qquad(C\in\mathscr C).
\end{equation*}
Let $\mathcal B_C=\{|X_C|\geq\tau_C\}$ be the event that the constraint $C$ fails. By Markov's inequality,
\begin{equation*}
        \mathbb P(\mathcal B_C)\leq \frac{\PthreeE[|X_C|^2]}{\tau_C^2}<\rho_C.
\end{equation*}
Since $\mathcal E_{\mathrm{side}}^c\subseteq\bigcup_{C\in\mathscr C}\mathcal B_C$, the finite union bound gives
\begin{equation*}
        \mathbb P(\mathcal E_{\mathrm{side}}^c)\leq \sum_{C\in\mathscr C}\mathbb P(\mathcal B_C)<\sum_{C\in\mathscr C}\rho_C<\gamma/2.
\end{equation*}

Finally, we choose the same depths so large that, for every $1\leq a\leq d$,
\begin{equation}
\label{p3:eq:outer-tail-child-failure-small}
        \sum_{s\in F_a}\frac{2\|Q_{a,s}\|^2}{N_{n_a}(A_a)\delta_{a,s}^2}<\kappa.
\end{equation}
Set $\mathcal E=\mathcal E_{\mathrm{diag}}\cap\mathcal E_{\mathrm{side}}$. Then
\begin{equation}
\label{p3:eq:event-E-large}
        \mathbb P(\mathcal E)>1-\gamma.
\end{equation}

\item[\textup{Step 2: Estimating child tail failures}]
Fix $1\leq a\leq d$. Since $S_a$ is infinite, the set of integers $r\geq0$ such that $n_a+1+r\in S_a$ is infinite. Enumerate it increasingly as
\begin{equation*}
        r_{a,1}<r_{a,2}<\ldots,\qquad n_a+1+r_{a,\ell}\in S_a\quad(\ell\geq1).
\end{equation*}
For each $\ell\geq1$, let $\mathcal H_{a,\ell}$ be the event that, for both children and every $s\in F_a$,
\begin{equation*}
        |\alpha_{r_{a,\ell}}^{A_{a,+}}(Q_{a,s})|<\theta_{a,s}+\delta_{a,s},\qquad |\alpha_{r_{a,\ell}}^{A_{a,-}}(Q_{a,s})|<\theta_{a,s}+\delta_{a,s}.
\end{equation*}
Fix $\ell\geq1$ and put $r=r_{a,\ell}$. For $s\in F_a$, set
\begin{equation*}
        X_s^+=\alpha_r^{A_{a,+}}(Q_{a,s}),\qquad X_s^-=\alpha_r^{A_{a,-}}(Q_{a,s}).
\end{equation*}
By \eqref{p3:eq:child-trace-expectation}, both random variables have the same expectation
\begin{equation*}
        \PthreeE_{\varepsilon^a}[X_s^+]=\PthreeE_{\varepsilon^a}[X_s^-]=\alpha_{n_a+1+r}^{A_a}(Q_{a,s}).
\end{equation*}
Denote this common scalar by
\begin{equation*}
        \mu_s=\alpha_{n_a+1+r}^{A_a}(Q_{a,s}).
\end{equation*}
Since $n_a+1+r\in S_a$, the hypothesis \eqref{p3:eq:outer-tail-hypothesis} gives
\begin{equation*}
        |\mu_s|<\theta_{a,s}.
\end{equation*}
Hence, if either child fails the desired estimate for this $s$, then the corresponding random variable must deviate from its expectation by more than $\delta_{a,s}$. Indeed,
\begin{equation*}
        |X_s^\vartheta|\geq\theta_{a,s}+\delta_{a,s}\quad\Longrightarrow\quad |X_s^\vartheta-\mu_s|\geq |X_s^\vartheta|-|\mu_s|>\delta_{a,s}\qquad(\vartheta\in\{+,-\}).
\end{equation*}
Therefore
\begin{equation*}
        \mathcal H_{a,\ell}^c\subseteq\bigcup_{s\in F_a}\bigcup_{\vartheta\in\{+,-\}}\{|X_s^\vartheta-\mu_s|>\delta_{a,s}\}.
\end{equation*}
By \eqref{p3:eq:child-trace-variance} and Chebyshev's inequality, for each $s\in F_a$ and $\vartheta\in\{+,-\}$,
\begin{equation*}
        \mathbb P(|X_s^\vartheta-\mu_s|>\delta_{a,s})\leq\frac{\|Q_{a,s}\|^2}{N_{n_a}(A_a)\delta_{a,s}^2}.
\end{equation*}
Taking the union bound over both children and over $s\in F_a$, and using \eqref{p3:eq:outer-tail-child-failure-small}, we obtain
\begin{equation*}
        \mathbb P(\mathcal H_{a,\ell}^c)\leq \sum_{s\in F_a}\frac{2\|Q_{a,s}\|^2}{N_{n_a}(A_a)\delta_{a,s}^2}<\kappa.
\end{equation*}

\item[\textup{Step 3: Forcing many good future depths}]
Fix $M\geq1$. For each $1\leq a\leq d$, let $\mathcal G_{a,M}$ be the event that at least $M$ of the events $\mathcal H_{a,1},\ldots,\mathcal H_{a,2M}$ occur. Let
\begin{equation*}
        Z_{a,M}=\sum_{\ell=1}^{2M}\Pthreeone_{\mathcal H_{a,\ell}^c}
\end{equation*}
be the number of failures among these $2M$ events. By Step 2,
\begin{equation*}
        \PthreeE[Z_{a,M}]=\sum_{\ell=1}^{2M}\mathbb P(\mathcal H_{a,\ell}^c)<2M\kappa.
\end{equation*}
If $\mathcal G_{a,M}$ fails, then fewer than $M$ of the $2M$ events occur, and hence at least $M+1$ of them fail. Therefore
\begin{equation*}
        \mathcal G_{a,M}^c=\{Z_{a,M}\geq M+1\}\subseteq\{Z_{a,M}\geq M\}.
\end{equation*}
By Markov's inequality,
\begin{equation*}
        \mathbb P(\mathcal G_{a,M}^c)\leq \mathbb P(Z_{a,M}\geq M)\leq \frac{\PthreeE[Z_{a,M}]}{M}<2\kappa.
\end{equation*}
Using \eqref{p3:eq:event-E-large} and a union bound over $1\leq a\leq d$, we get
\begin{equation*}
        \mathbb P\left(\mathcal E\cap\bigcap_{a=1}^d\mathcal G_{a,M}\right)\geq 1-\mathbb P(\mathcal E^c)-\sum_{a=1}^d\mathbb P(\mathcal G_{a,M}^c)>1-\gamma-2d\kappa>0.
\end{equation*}
Thus, for every $M\geq1$, there is a choice of signs for which $\mathcal E$ and all the events $\mathcal G_{a,M}$ hold.

\item[\textup{Step 4: Extracting one sign choice with infinitely many good depths}]
The depths $n_a$ are now fixed, so the product sign space
\begin{equation*}
        \prod_{a=1}^d\{-1,1\}^{\PthreeD_{n_a}(A_a)}
\end{equation*}
is finite. For each $M\geq1$, choose one sign choice for which
\begin{equation*}
        \mathcal E\cap\bigcap_{a=1}^d\mathcal G_{a,M}
\end{equation*}
holds. Since the product sign space is finite, one of these sign choices occurs for infinitely many values of $M$. Fix such a sign choice, and let $\mathcal M\subset\N$ be an infinite set such that this fixed choice works for every $M\in\mathcal M$.

Since $\mathcal E$ holds for this choice, conclusions \ref{p3:item:outer-tail-concl-side} and \ref{p3:item:outer-tail-concl-diagonal} hold. Moreover, for every $1\leq a\leq d$ and every $M\in\mathcal M$, the event $\mathcal G_{a,M}$ holds. Since $\mathcal M$ is infinite, it is unbounded. Hence, for each fixed $a$, infinitely many of the events $\mathcal H_{a,\ell}$ hold. Indeed, if only finitely many of them held, say $R_a$ of them, then choosing $M\in\mathcal M$ with $M>R_a$ would contradict the fact that $\mathcal G_{a,M}$ holds.

For each $1\leq a\leq d$, define
\begin{equation*}
        S_{a,+}=S_{a,-}=\{r_{a,\ell}:\mathcal H_{a,\ell}\text{ holds}\}.
\end{equation*}
These sets are infinite by the preceding paragraph. The same infinite set works for both children because $\mathcal H_{a,\ell}$ requires the estimates for $A_{a,+}$ and $A_{a,-}$ simultaneously. If $r\in S_{a,\vartheta}$, then $r=r_{a,\ell}$ for some $\ell$ with $\mathcal H_{a,\ell}$ true, so
\begin{equation*}
        n_a+1+r\in S_a
\end{equation*}
and, for every $s\in F_a$,
\begin{equation*}
        |\alpha_r^{A_{a,\vartheta}}(Q_{a,s})|<\theta_{a,s}+\delta_{a,s}.
\end{equation*}
This proves conclusion \ref{p3:item:outer-tail-concl-child-tail}, and hence the lemma.
\end{description}
\end{proof}

\subsection{Inner one-step construction}
\label{p3:sec:inner-one-step-construction}

The inner coordinate is controlled differently. The relevant one-parameter multipliers live on $L_1^0$, and the local trace from \Cref{p3:prop:local-l1-trace} replaces the outer tail averages. At one inner step, the new block is chosen so that its diagonal coefficient is close to the trace over the parent set, and the two children are chosen so that their traces remain close to the same value.

\begin{lemma}[One-step $L_1$ split with trace control]
\label{p3:lem:l1-one-step}
Suppose that the following objects are fixed.

\begin{enumerate}[label=\textup{(\alph*)}]
\item \label{p3:item:l1-one-step-hyp-operators} Operators $R_1,\ldots,R_m\in\PthreeL(L_1^0)$.

\item \label{p3:item:l1-one-step-hyp-set} A finite equal-dyadic set $B$ in the inner coordinate.

\item \label{p3:item:l1-one-step-hyp-error} A number $\delta>0$.

\item \label{p3:item:l1-one-step-hyp-side} A finite admissible collection $\mathscr C$ of one-sided side constraints, to be imposed on the new inner Haar block produced by the split of $B$.
\end{enumerate}

For a split at depth $n$, choose independent signs $\varepsilon=(\varepsilon_J)_{J\in\PthreeD_n(B)}$ and let $B_\pm(\varepsilon)$ and $K_B(\varepsilon),K_B^*(\varepsilon)$ be the associated random split and inner Haar block. The constraints in $\mathscr C$ are interpreted with this block $K_B(\varepsilon)$ and this functional $K_B^*(\varepsilon)$. Then, for all sufficiently large $n$, there is a choice of signs $\varepsilon$ such that the following hold.

\begin{enumerate}[label=\textup{(\roman*)}]
\item \label{p3:item:l1-one-step-diagonal} For every $1\leq r\leq m$,
\begin{equation}
\label{p3:eq:l1-new-diagonal-trace}
        \left|\Pthreeip{K_B^*(\varepsilon)}{R_rK_B(\varepsilon)}-\lambda_B(R_r)\right|<\delta.
\end{equation}

\item \label{p3:item:l1-one-step-child-traces} For every $1\leq r\leq m$,
\begin{equation}
\label{p3:eq:l1-child-traces}
        |\lambda_{B_+(\varepsilon)}(R_r)-\lambda_B(R_r)|<\delta,\qquad |\lambda_{B_-(\varepsilon)}(R_r)-\lambda_B(R_r)|<\delta.
\end{equation}

\item \label{p3:item:l1-one-step-side} All one-sided side constraints in $\mathscr C$ hold.
\end{enumerate}
\end{lemma}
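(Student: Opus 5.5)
The plan is a probabilistic argument: I will show that each of the three families of requirements fails with small probability once the depth $n$ is large, and then take a union bound. The key input is \Cref{p3:prop:local-l1-trace}, which gives a density $\varphi_r=\varphi_{R_r}\in L_\infty$ with $\lambda_A(R_r)=|A|^{-1}\int_A\varphi_r$ for every finite equal-dyadic set $A$. In particular, the child traces are linear in the split:
\begin{equation*}
        \lambda_{B_+(\varepsilon)}(R_r)=\frac{2}{|B|}\sum_{J\in\PthreeD_n(B)}\int_{J^{\varepsilon_J}}\varphi_r .
\end{equation*}
Consequently $\PthreeE_\varepsilon[\lambda_{B_+(\varepsilon)}(R_r)]=\lambda_B(R_r)$, and by independence of the summands,
\begin{equation*}
        \operatorname{Var}_\varepsilon\bigl(\lambda_{B_+(\varepsilon)}(R_r)\bigr)\leq \frac{4}{|B|^2}\sum_{J}|J|^2\|\varphi_r\|_\infty^2\lesssim \frac{\|R_r\|^2}{N},
\end{equation*}
where $N=\#\PthreeD_n(B)$. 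The same bound holds for $B_-$. By Chebyshev's inequality, (ii) then fails with probability at most $O(\sum_r\|R_r\|^2/(N\delta^2))$.

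For (i), I would use \Cref{p3:lem:centered} in the case $E=L_1^0$. It shows that $\Pthreeip{K_B^*(\varepsilon)}{R_rK_B(\varepsilon)}$ lies within $O(\|R_r\|N^{-1/2})$ of $\beta_n^B(R_r)$ in $L_2(\varepsilon)$, because the diagonal average in that lemma is exactly $\beta_n^B(R_r)=\frac1N\sum_{J\in\PthreeD_n(B)}\Pthreeip{f_J^*}{R_rf_J}$. \Cref{p3:prop:local-l1-trace} gives $\beta_n^B(R_r)\to\lambda_B(R_r)$. I would therefore first choose $n_0$ so that $|\beta_n^B(R_r)-\lambda_B(R_r)|<\delta/2$ for all $n\geq n_0$ and all $r\leq m$. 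Chebyshev's inequality applied with tolerance $\delta/2$ then bounds the failure probability of (i) by $O(\sum_r\|R_r\|^2/(N\delta^2))$.

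For (iii), every constraint in $\mathscr C$ is one-sided, of the form $\Pthreeip{K_B^*}{S_Cz_C}$ or $\Pthreeip{z_C^*}{S_CK_B}$ with data fixed in advance. \Cref{p3:lem:one-sided} with $E=L_1^0$ then gives, for large $n$, $\PthreeE|X_C|^2<\rho_C\eta_C^2$ with $\sum_C\rho_C<1/3$. Markov's inequality and a union bound show that (iii) fails with probability less than $1/3$. Since $N=2^n\cdot\#\PthreeD_0(B)\to\infty$, for all sufficiently large $n$ the three failure probabilities sum to less than $1$. Hence some sign choice satisfies (i)--(iii) simultaneously.

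The main point to check is the variance bound for the child traces, namely that the trace $\lambda$ really is given by integrating a bounded density over the scattered set $B_\pm(\varepsilon)$. This is exactly the content of \Cref{p3:prop:local-l1-trace} for finite equal-dyadic sets with decomposition level $m_B+n+1$. So this step is immediate once the proposition is used in its full generality rather than only for dyadic intervals.
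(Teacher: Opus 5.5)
Your proposal is correct and follows the paper's proof essentially step for step. The density $\varphi_{R_r}$ from the local trace theorem makes the child traces unbiased with variance $O(\|R_r\|^2/N)$, the centered quadratic estimate together with $\beta_n^B(R_r)\to\lambda_B(R_r)$ controls the new diagonal coefficient, the one-sided lemma with Markov's inequality controls $\mathscr C$, and a union bound then gives a good sign choice; the only difference is how the failure-probability budget is split (the paper allots $1/4$ to each of the three events, you allot $1/3$ to the side constraints).
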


\begin{proof}
We show that, for all sufficiently large depths $n$, the desired set of sign choices has positive probability. Throughout the proof, probability and expectation are taken with respect to the random signs defining the split of $B$.

By \Cref{p3:prop:local-l1-trace}, for each $1\leq r\leq m$ we have
\begin{equation*}
        \beta_n^B(R_r)\longrightarrow\lambda_B(R_r)\qquad(n\to\infty).
\end{equation*}
Since there are only finitely many operators in \ref{p3:item:l1-one-step-hyp-operators}, there is $n_0$ such that, for every $n\geq n_0$,
\begin{equation}
\label{p3:eq:beta-close-lambda}
        |\beta_n^B(R_r)-\lambda_B(R_r)|<\delta/2\qquad(1\leq r\leq m).
\end{equation}

Fix $n\geq n_0$ and put $N=\#\PthreeD_n(B)$. By the expansion of the random inner block,
\begin{equation*}
        K_B(\varepsilon)=N^{-1}\sum_{J\in\PthreeD_n(B)}\varepsilon_Jf_J,\qquad K_B^*(\varepsilon)=\sum_{J\in\PthreeD_n(B)}\varepsilon_Jf_J^*.
\end{equation*}
Therefore, for each $1\leq r\leq m$,
\begin{equation*}
        \PthreeE_\varepsilon\left[\Pthreeip{K_B^*(\varepsilon)}{R_rK_B(\varepsilon)}\right]=\beta_n^B(R_r).
\end{equation*}
Moreover, applying \Cref{p3:lem:centered} in the case $E=L_1^0$ gives
\begin{equation*}
        \PthreeE_\varepsilon\left[\left|\Pthreeip{K_B^*(\varepsilon)}{R_rK_B(\varepsilon)}-\beta_n^B(R_r)\right|^2\right]\lesssim\frac{\|R_r\|^2}{N}.
\end{equation*}
Thus, by Chebyshev's inequality and a finite union bound over $1\leq r\leq m$,
\begin{equation*}
        \mathbb P\left(\exists\,1\leq r\leq m:\left|\Pthreeip{K_B^*(\varepsilon)}{R_rK_B(\varepsilon)}-\beta_n^B(R_r)\right|\geq\delta/2\right)\longrightarrow0
\end{equation*}
as $n\to\infty$. Increasing $n_0$ if necessary, this failure probability is smaller than $1/4$ for all $n\geq n_0$. On the complementary event, \eqref{p3:eq:beta-close-lambda} implies \ref{p3:item:l1-one-step-diagonal}.

Next we control the traces of the two children. For each $1\leq r\leq m$, let $\varphi_r\in L_\infty[0,1]$ be supplied by \Cref{p3:prop:local-l1-trace}, so that
\begin{equation*}
        \lambda_C(R_r)=\frac1{|C|}\int_C\varphi_r
\end{equation*}
for every finite equal-dyadic set $C$, and
\begin{equation*}
        \|\varphi_r\|_\infty\lesssim\|R_r\|.
\end{equation*}
For the plus child,
\begin{equation*}
        \lambda_{B_+(\varepsilon)}(R_r)=\frac2{|B|}\sum_{J\in\PthreeD_n(B)}\int_{J^{\varepsilon_J}}\varphi_r.
\end{equation*}
The summands are independent as $J$ varies, and
\begin{equation*}
        \PthreeE_\varepsilon\left[\lambda_{B_+(\varepsilon)}(R_r)\right]=\lambda_B(R_r).
\end{equation*}
Since every $J\in\PthreeD_n(B)$ has measure $|B|/N$, we also have
\begin{equation*}
        \operatorname{Var}_\varepsilon\left(\lambda_{B_+(\varepsilon)}(R_r)\right)\leq\sum_{J\in\PthreeD_n(B)}\left(\frac{\|\varphi_r\|_\infty}{N}\right)^2\lesssim\frac{\|R_r\|^2}{N}.
\end{equation*}
The same argument gives
\begin{equation*}
        \PthreeE_\varepsilon\left[\lambda_{B_-(\varepsilon)}(R_r)\right]=\lambda_B(R_r),\qquad \operatorname{Var}_\varepsilon\left(\lambda_{B_-(\varepsilon)}(R_r)\right)\lesssim\frac{\|R_r\|^2}{N}.
\end{equation*}
Chebyshev's inequality and a finite union bound over $1\leq r\leq m$ and the two signs $\pm$ show that the failure probability of \ref{p3:item:l1-one-step-child-traces} tends to zero as $n\to\infty$. Increasing $n_0$ again, this failure probability is strictly smaller than $1/4$ for all $n\geq n_0$.

It remains to impose the side constraints from \ref{p3:item:l1-one-step-hyp-side}. Since $\mathscr C$ is finite and admissible, every constraint is one of the one-sided forms handled by \Cref{p3:lem:one-sided}. For each constraint, Markov's inequality applied to the corresponding second-moment estimate shows that its failure probability can be made arbitrarily small by taking the splitting depth sufficiently large. Hence, after increasing $n_0$ once more and using a finite union bound over all constraints in $\mathscr C$, the probability that some side constraint fails is strictly smaller than $1/4$ for all $n\geq n_0$.

For every $n\geq n_0$, the probability that any of the three required conclusions fails is therefore strictly smaller than $3/4$. Hence the probability that \ref{p3:item:l1-one-step-diagonal}, \ref{p3:item:l1-one-step-child-traces}, and \ref{p3:item:l1-one-step-side} all hold is positive. Thus there is a choice of signs satisfying all three conclusions.
\end{proof}

The previous lemma handles one inner set and one new inner block. We now record the finite simultaneous version needed at an inner splitting stage. The only extra point is that bilinear constraints between two distinct new blocks are allowed; these are handled by the bilinear estimate, using independence of the sign choices on distinct sets.

\begin{lemma}[Simultaneous finite $L_1$ splitting]
\label{p3:lem:l1-simultaneous}
Let $B_1,\ldots,B_d$ be pairwise disjoint finite equal-dyadic sets in the inner coordinate. Suppose that the following objects are fixed.

\begin{enumerate}[label=\textup{(\alph*)}]
\item \label{p3:item:l1-sim-hyp-operators} For each $1\leq a\leq d$, a finite family $\mathcal R_a\subset\PthreeL(L_1^0)$.

\item \label{p3:item:l1-sim-hyp-tolerances} For each $1\leq a\leq d$ and each $R\in\mathcal R_a$, a positive tolerance $\delta_{a,R}>0$.

\item \label{p3:item:l1-sim-hyp-side} A finite admissible collection $\mathscr C$ of side constraints involving the new inner blocks to be constructed on $B_1,\ldots,B_d$.
\end{enumerate}

Then there are depths $n_1,\ldots,n_d$ and independent sign choices
\begin{equation*}
        \varepsilon^a=(\varepsilon_J^a)_{J\in\PthreeD_{n_a}(B_a)}\qquad(1\leq a\leq d)
\end{equation*}
such that, writing $B_{a,\pm}=B_{a,\pm}(\varepsilon^a)$ and denoting the associated inner Haar block by $K_{B_a}(\varepsilon^a)$, with functional $K_{B_a}^*(\varepsilon^a)$, the following hold.

\begin{enumerate}[label=\textup{(\roman*)}]
\item \label{p3:item:l1-sim-concl-diagonal} For every $1\leq a\leq d$ and every $R\in\mathcal R_a$,
\begin{equation*}
        \left|\Pthreeip{K_{B_a}^*(\varepsilon^a)}{RK_{B_a}(\varepsilon^a)}-\lambda_{B_a}(R)\right|<\delta_{a,R}.
\end{equation*}

\item \label{p3:item:l1-sim-concl-child-traces} For every $1\leq a\leq d$, every $R\in\mathcal R_a$, and every $\vartheta\in\{+,-\}$,
\begin{equation*}
        |\lambda_{B_{a,\vartheta}}(R)-\lambda_{B_a}(R)|<\delta_{a,R}.
\end{equation*}

\item \label{p3:item:l1-sim-concl-side} All side constraints in $\mathscr C$ hold.
\end{enumerate}
\end{lemma}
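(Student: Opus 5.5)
The plan is to repeat the probabilistic argument of \Cref{p3:lem:l1-one-step}, now for all $d$ sets at once. The sign families $\varepsilon^1,\ldots,\varepsilon^d$ are independent, so the only new ingredient is the bilinear constraints between two distinct new blocks. Fix a target failure budget $1/4$ for each of the three conclusions. Every failure event will be bounded via Chebyshev or Markov by a quantity tending to zero as $\min_a N_{n_a}(B_a)\to\infty$. It then suffices to choose all depths $n_a$ large enough and apply a finite union bound, so that the event where all conclusions hold has positive probability.

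For \ref{p3:item:l1-sim-concl-diagonal} and \ref{p3:item:l1-sim-concl-child-traces}, argue separately for each $a$, exactly as in \Cref{p3:lem:l1-one-step}.

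\begin{enumerate}[label=\textup{(\arabic*)}]
\item By \Cref{p3:prop:local-l1-trace}, choose $n_a$ so large that $|\beta_{n_a}^{B_a}(R)-\lambda_{B_a}(R)|<\delta_{a,R}/2$ for all $R\in\mathcal R_a$.
\item The expectation of $\langle K_{B_a}^*(\varepsilon^a),RK_{B_a}(\varepsilon^a)\rangle$ is $\beta_{n_a}^{B_a}(R)$. By \Cref{p3:lem:centered}, its variance is $\lesssim\|R\|^2/N_{n_a}(B_a)$.
\item Writing $\lambda_{B_{a,\pm}}(R)$ through the density $\varphi_R$ of \Cref{p3:prop:local-l1-trace}, each child trace is a sum of independent terms. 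Its mean is $\lambda_{B_a}(R)$ and its variance is $\lesssim\|R\|^2/N_{n_a}(B_a)$.
\end{enumerate}

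Chebyshev's inequality, summed over the finitely many $a$, $R$ and the two children, makes the total failure probability of \ref{p3:item:l1-sim-concl-diagonal} and \ref{p3:item:l1-sim-concl-child-traces} below $1/2$ once every $N_{n_a}(B_a)$ is large.

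For \ref{p3:item:l1-sim-concl-side}, split $\mathscr C$ into one-sided and bilinear constraints. Each one-sided constraint involves a single block $K_{B_a}$ paired against a fixed vector or functional. By \Cref{p3:lem:one-sided} with $E=L_1^0$, its second moment is below $\rho_C\eta_C^2$ for all large $n_a$, where the $\rho_C$ are chosen with $\sum_C\rho_C<1/4$. Each bilinear constraint has the form $\langle K_{B_a}^*,S_CK_{B_b}\rangle$ with $a\ne b$, with disjoint supports and independent signs. By \Cref{p3:lem:bilinear}, its second moment is $\lesssim\|S_C\|^2/\min(N_{n_a}(B_a),N_{n_b}(B_b))$, which is again small for large depths. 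Markov's inequality on these second moments then bounds the failure probability of \ref{p3:item:l1-sim-concl-side} by $1/4$.

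The only point needing care is that the depths are chosen \emph{after} all operators, vectors and tolerances are fixed, as the admissibility definition requires. It is also important that every threshold is of the form ``for all sufficiently large $n_a$''. Because of this, one can take each $n_a$ beyond the maximum of the finitely many thresholds without spoiling any earlier estimate, including the bilinear ones, which improve as either depth grows. With the three failure probabilities summing to less than $1$, some realisation of the independent signs satisfies everything, which proves the lemma. I do not expect a real obstacle here. The only step beyond \Cref{p3:lem:l1-one-step} is the bilinear bound, and \Cref{p3:lem:bilinear} supplies it directly.
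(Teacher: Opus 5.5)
Your proposal is correct and follows the paper's proof essentially step for step. For the diagonal and child-trace conclusions, both rely on the Chebyshev estimates from the one-step $L_1$ split. For the side constraints, both apply Markov's inequality, using the one-sided estimates and the bilinear estimate for two new blocks. All depths are then taken large enough that the total failure probability is below one. The only difference is cosmetic: you split the failure budget as $1/2+1/4$, while the paper uses $1/3+1/3$.
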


\begin{proof}
We choose the depths probabilistically. Throughout the proof, probability and expectation are taken with respect to the independent sign choices
\begin{equation*}
        \varepsilon^a=(\varepsilon_J^a)_{J\in\PthreeD_{n_a}(B_a)}\qquad(1\leq a\leq d).
\end{equation*}
Since only finitely many operators and side constraints occur, it is enough to choose the depths so that each of the relevant failure probabilities is small.

First consider the trace estimates. Fix $1\leq a\leq d$ and $R\in\mathcal R_a$. By the proof of \Cref{p3:lem:l1-one-step}, applied to the set $B_a$ and the operator $R$, the failure probability of
\begin{equation*}
        \left|\Pthreeip{K_{B_a}^*(\varepsilon^a)}{RK_{B_a}(\varepsilon^a)}-\lambda_{B_a}(R)\right|<\delta_{a,R}
\end{equation*}
and
\begin{equation*}
        |\lambda_{B_{a,+}(\varepsilon^a)}(R)-\lambda_{B_a}(R)|<\delta_{a,R},\qquad |\lambda_{B_{a,-}(\varepsilon^a)}(R)-\lambda_{B_a}(R)|<\delta_{a,R}
\end{equation*}
tends to zero as $n_a\to\infty$. Hence, by increasing the finitely many depths $n_a$, we may assume that the probability that any trace estimate in \ref{p3:item:l1-sim-concl-diagonal} or \ref{p3:item:l1-sim-concl-child-traces} fails is less than $1/3$.

It remains to impose the side constraints in $\mathscr C$. Since $\mathscr C$ is admissible, each constraint is either one-sided or bilinear between two distinct new blocks. For every one-sided constraint, \Cref{p3:lem:one-sided} and Markov's inequality show that its failure probability can be made arbitrarily small by increasing the depth of the corresponding split. For every bilinear constraint between the blocks over $B_a$ and $B_b$, with $a\ne b$, \Cref{p3:lem:bilinear} and Markov's inequality show that its failure probability can be made arbitrarily small by increasing both depths $n_a$ and $n_b$. Because there are only finitely many constraints, we may increase the depths again so that the probability that any constraint in $\mathscr C$ fails is less than $1/3$.

For these choices of depths, the probability that one of the trace estimates or one of the side constraints fails is less than $2/3$. Therefore the probability that all conclusions \ref{p3:item:l1-sim-concl-diagonal}, \ref{p3:item:l1-sim-concl-child-traces}, and \ref{p3:item:l1-sim-concl-side} hold is positive. Thus, there is a choice of signs satisfying all the required estimates simultaneously.
\end{proof}

\section{The multiplier reduction construction}
\label{p3:sec:multiplier-reduction-construction}
\label{p3:sec:strips-and-partial-constructions}
\label{p3:sec:reduction-to-multiplier}

We now prove the reduction from an arbitrary operator on $X_{00}$ to a product Haar multiplier. Fix
\begin{equation*}
        T\in\PthreeL(X_{00}).
\end{equation*}
We construct an outer faithful Haar system
\begin{equation*}
        (H_I,H_I^*)_{I\in\PthreeD}
\end{equation*}
and an inner faithful Haar system
\begin{equation*}
        (K_J,K_J^*)_{J\in\PthreeD}
\end{equation*}
so that, after passing to the associated faithful product-Haar copy, the matrix of $T$ is small away from the product diagonal. The product matrix coefficients are
\begin{equation}
\label{p3:eq:product-matrix-coefficients}
        a_{J,J'}^{I,I'}=\Pthreeip{H_I^*\otimes K_J^*}{T(H_{I'}\otimes K_{J'})}.
\end{equation}
The diagonal coefficients are $a_{J,J}^{I,I}$, and they define the multiplier which remains after the construction. The off-diagonal coefficients have three types:
\begin{equation*}
        I=I',\ J\ne J',\qquad I\ne I',\ J=J',\qquad I\ne I',\ J\ne J'.
\end{equation*}
The first type is controlled by outer tail preservation, the second by inner trace variation, and the third by summable coefficient estimates.

We shall use the following terminology throughout the construction. We say that an outer block $H_I$ is \emph{born} at the outer half stage where the set $\Gamma_I$ is split into its two children. We say that an inner block $K_J$ is \emph{born} at the inner half stage where the set $\Delta_J$ is split into its two children. A product coefficient is \emph{available} once all Haar blocks appearing in it have been born. A \emph{frontier} is the finite family of indices whose sets have already been chosen but have not yet been split. Thus, at a full state $\mathfrak S_n^0$, both frontiers are indexed by $\PthreeD_n$, while at a half state $\mathfrak S_n^{1/2}$ the outer frontier is indexed by $\PthreeD_{n+1}$ and the inner frontier is indexed by $\PthreeD_n$. An element of one of these finite generations is called a \emph{frontier index}. A \emph{reservoir tail} for an outer frontier index $A$ is an infinite set $S_A\subset\mathbb N$ along which the outer tail estimates over $\Gamma_A$ are imposed.

For $J\ne J'$, the family
\begin{equation*}
        (a_{J,J'}^{I,I})_{I\in\PthreeD}
\end{equation*}
is called the \emph{same outer strip} indexed by $(J,J')$. This strip is \emph{registered} at the inner half stage where the later of the two blocks $K_J$ and $K_{J'}$ is born. From that half stage onward, its outer coefficient estimates and reservoir-tail estimates are included in every later legal state. For $I\ne I'$, the family
\begin{equation*}
        (a_{J,J}^{I,I'})_{J\in\PthreeD}
\end{equation*}
is called the \emph{same inner strip} indexed by $(I,I')$. This strip is \emph{registered} at the outer half stage where the later of the two blocks $H_I$ and $H_{I'}$ is born. From that half stage onward, its inner trace and variation estimates are included in every later legal state. The coefficients $a_{J,J'}^{I,I'}$ with $I\ne I'$ and $J\ne J'$ are called \emph{mixed coefficients}. A mixed coefficient is \emph{registered} at the half stage where the last of the four blocks
\begin{equation*}
        H_I,\qquad H_{I'},\qquad K_J,\qquad K_{J'}
\end{equation*}
is born. At that half stage its assigned summable estimate is imposed. Thorough the proof, $\alpha_m^A(Q)$ denotes the potential outer trace average from \Cref{p3:def:potential-packet-traces-shadows}, and $\lambda_B(R)$ denotes the local $L_1$ trace from \Cref{p3:prop:local-l1-trace}.

The proof below invokes construction claims exactly at the points where choices have to be made: registrations are locally finite, the outer half stage can be performed, and the inner half stage can be performed. The final norm estimate also uses a few elementary auxiliary lemmas. The formal statements and proofs are given in \Cref{p3:sec:technical-claims-multiplier-reduction}.

\begin{theorem}[Arbitrary operators reduce to product Haar multipliers]
\label{p3:thm:reduction-to-multiplier}
Let $1<p<\infty$. For every $T\in\PthreeL(X_{00})$ and every $\varepsilon>0$, there are faithful outer and inner Haar systems, exterior maps $A,B\in\PthreeL(X_{00})$, and a bounded product Haar multiplier $M$ on $X_{00}$ such that
\begin{equation*}
        AB=\PthreeId_{X_{00}},\qquad \|ATB-M\|_{\PthreeL(X_{00})}<\varepsilon.
\end{equation*}
Moreover,
\begin{equation*}
        \|M\|_{\PthreeL(X_{00})}\leq \|T\|_{\PthreeL(X_{00})}+\varepsilon.
\end{equation*}
If the faithful systems are denoted by $(H_I,H_I^*)_{I\in\PthreeD}$ and $(K_J,K_J^*)_{J\in\PthreeD}$, then
\begin{equation*}
        Mu_{I,J}=\Pthreeip{H_I^*\otimes K_J^*}{T(H_I\otimes K_J)}u_{I,J}\qquad(I,J\in\PthreeD).
\end{equation*}
\end{theorem}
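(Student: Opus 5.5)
The plan is to build the two faithful trees $(\Gamma_I)_{I\in\PthreeD}$ and $(\Delta_J)_{J\in\PthreeD}$ by an alternating recursion of legal states $\mathfrak S_n^0\to\mathfrak S_n^{1/2}\to\mathfrak S_{n+1}^0$. Fix a summable budget $(\eta_k)$ indexed by an enumeration of all coefficient registrations, with $\sum_k\eta_k$ as small as needed relative to $\varepsilon$. Once the systems exist, \Cref{p3:lem:exterior} gives contractions $A,B$ with $AB=\PthreeId_{X_{00}}$, and the coefficients of $ATB$ in the basis $u_{I,J}$ are exactly the $a^{I,I'}_{J,J'}$ of \eqref{p3:eq:product-matrix-coefficients}. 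We then split $ATB=M+O_1+O_2+O_3$. Here $M$ is the diagonal part, and $O_1,O_2,O_3$ collect the same-outer-strip, same-inner-strip, and mixed coefficients. The theorem follows once we show $\|O_1\|+\|O_2\|+\|O_3\|<\varepsilon$, because then $\|M\|\le\|ATB\|+\varepsilon\le\|T\|+\varepsilon$. The operator $O_3$ is easy: mixed coefficients are registered when their last block is born, and they are imposed as admissible bilinear or one-sided constraints of size $\eta_k$. By \Cref{p3:lem:bilinear,p3:lem:one-sided} these can be met within \Cref{p3:lem:outer-tail,p3:lem:l1-simultaneous}. Since $\|u_{I,J}\|=\|u^*_{I,J}\|=1$, the absolute sum of these coefficients bounds $\|O_3\|$.

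At an outer half stage we apply \Cref{p3:lem:outer-tail} to the current outer frontier. The operators $Q_{a,s}$ there are the one-parameter compressions $Q_{K_J^*,K_{J'}}$ from \Cref{p3:lem:same-outer-identity}, taken for each registered same-outer strip $(J,J')$. Their averaged diagonals $\alpha^A_m$ equal $\langle K_J^*,\Omega^T_{A,m}K_{J'}\rangle$. Each such strip is registered at an inner half stage, where the later of $K_J,K_{J'}$ is born. At that moment we impose the side constraint that this shadow coefficient is small along an infinite reservoir tail. The constraint is one-sided in the new inner block, with the fixed vector or functional being the other block applied through the ultrafilter shadow $\Omega^{T,\mathcal U}_A$ of \Cref{p3:prop:shadow}. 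The reservoir tail $S_A$ is then chosen inside the ultrafilter set. Conclusion (ii) of \Cref{p3:lem:outer-tail} makes the new diagonal coefficient $a^{I,I}_{J,J'}$ small. Conclusion (iii) passes the tail bounds to both children with a loss $\delta_{a,s}$, and we take these losses summable. Hence every coefficient of the strip $(a^{I,I}_{J,J'})_{I}$ is bounded by a small $\theta_{J,J'}+\sum\delta$. The strip itself is an outer $L_p^0$ Haar multiplier in $I$ for fixed $(J,J')$. By \Cref{p3:lem:lp-haar-multiplier} its norm is at most $U_p$ times this bound, and summing over the countably many pairs $(J,J')$ controls $\|O_1\|$. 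We must check that each strip contributes an operator of norm at most its multiplier norm, using the exterior structure.

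Dually, at an inner half stage we apply \Cref{p3:lem:l1-simultaneous} with operators $R=R^T_{H_I^*,H_{I'}}$ for each registered same-inner strip $(I,I')$. The trace $\lambda_B(R)$ equals $\langle H_I^*,\mathcal I^T_B H_{I'}\rangle$ by \Cref{p3:lem:inner-set-outer-trace-operator}. When the later of $H_I,H_{I'}$ is born, we impose the one-sided constraint that $\langle H^*_I,\mathcal I^T_{\Delta_J}H_{I'}\rangle$ is small for the current inner frontier sets. Conclusion (ii) of \Cref{p3:lem:l1-simultaneous} keeps child traces within $\delta_{a,R}$ of the parent trace, and conclusion (i) keeps the new diagonal within $\delta_{a,R}$ of the trace. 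The strip $(a^{I,I'}_{J,J})_J$ is then an inner Haar multiplier whose entries are small and whose branch variation is at most $\sum\delta$ along every branch. By \Cref{p3:cor:su-branch} its norm on $L_1^0$ is small, which controls $\|O_2\|$ after summing over $(I,I')$.

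The main obstacle is the bookkeeping that keeps all registrations locally finite and their constraints admissible at the moment they are imposed. In particular, the quantities we want small must involve only blocks already born plus the one being created. A second difficulty is passing from coefficientwise control of a strip to an operator bound for the corresponding piece of $ATB$ on $X_{00}$. I would try to write each strip piece as $(\text{rank-one in one variable})\otimes(\text{Haar multiplier in the other})$ and use the $L_p(L_1)$ norm of such tensors. I would also verify that the tolerances can be chosen in advance, by a diagonal enumeration, so that everything sums to less than $\varepsilon$.
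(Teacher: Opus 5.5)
Your construction is the paper's own. It alternates outer and inner half stages and applies \Cref{p3:lem:outer-tail} to the compressions $Q_{J,J'}$, with reservoir tails refreshed through the ultrafilter shadow constraint of \Cref{p3:prop:shadow} and \Cref{p3:lem:same-outer-identity}. It applies \Cref{p3:lem:l1-simultaneous} to $R_{I,I'}$, with the root traces $\langle H_I^*,\mathcal I^T_{\Delta_B}H_{I'}\rangle$ made small at registration. Mixed coefficients are handled as summable side constraints, and each strip is written as a rank-one map in one variable tensored with a one-parameter multiplier in the other.

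There is, however, a concrete gap: both strip mechanisms only control coefficients whose blocks are born \emph{after} the strip is registered. A same-outer strip $(J,J')$ is registered at the inner half stage of some level $n$, when the outer blocks $H_I$, $I\in\mathcal D_{\le n}$, already exist. Conclusion (ii) of \Cref{p3:lem:outer-tail} only concerns outer blocks born at later outer half stages, and the shadow/tail estimate only concerns averages over future depths. So nothing in your argument bounds $a^{I,I}_{J,J'}$ for these finitely many old $I$, and your claim that every coefficient of the strip is at most $\theta_{J,J'}+\sum\delta$ is unjustified there. Symmetrically, a same-inner strip $(I,I')$ is registered at an outer half stage when the inner blocks $K_P$, $P\in\mathcal D_{<n}$, are already fixed. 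The coefficients $a^{I,I'}_{P,P}$ on this old tree enter every branch variation: through the starting value, the jumps along old edges, and the jump from an old parent into a frontier root. Your trace and node/edge estimates say nothing about them. Since the chosen signs are only guaranteed to satisfy the constraints you actually list, these coefficients are uncontrolled.

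The repair, which is what the paper does, is to enlarge the side-constraint list at the registration half stage. At the inner half stage registering $(J,J')$, add $|a^{I,I}_{J,J'}|<\sigma_{J,J'}$ for every already born outer $I$, where $\sigma_{J,J'}$ is the budget reserved for that strip. This is admissible: the outer block is fixed, and the constraint is one-sided in the new inner block, or bilinear between two distinct new inner blocks if both $K_J,K_{J'}$ are new. At the outer half stage registering $(I,I')$, add constraints making $|a^{I,I'}_{P,P}|$ small for every already born inner $P$. This is admissible because the inner blocks are fixed and at least one of $H_I,H_{I'}$ is new. Choose the tolerances so that the old-tree variation plus $\sum_B|\lambda_{\Delta_B}(R_{I,I'})|$ is a small fraction of the strip's budget $\tau_{I,I'}$, and fix the future node and edge tolerances only afterwards. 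The branch-variation bound then follows as in \Cref{p3:lem:inner-strip-branch-variation}, by splitting each branch into its old part, the single crossing into the future forest, and the future part.

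One further detail: when you refresh $S_A$ through the shadow, choose $\mathcal U_A$ to contain the current tail $S_A$, and use one ultrafilter for all strips registered at that stage. The new tail is then $S_A$ intersected with finitely many sets of $\mathcal U_A$, which keeps it infinite. Otherwise the tail estimates of previously registered strips may be lost.
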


\begin{proof}
Fix $T\in\PthreeL(X_{00})$ and $\varepsilon>0$. Let $U_p$ be the constant from \Cref{p3:lem:lp-haar-multiplier}, and let $C_{\rm SU}$ be such that the estimate $\|M_a\|_{\PthreeL(L_1^0)}\leq C_{\rm SU}\PthreeBV(a)$ holds in \Cref{p3:cor:su-branch}. Put
\begin{equation*}
        C_0=8,\qquad C_{\rm out}=U_p,\qquad C_{\rm in}=C_{\rm SU}.
\end{equation*}
Choose positive summable families
\begin{equation*}
        (\sigma_{J,J'})_{J\ne J'},\qquad (\tau_{I,I'})_{I\ne I'},\qquad (\mu_{I,I',J,J'})_{I\ne I',\,J\ne J'}
\end{equation*}
such that
\begin{equation}
\label{p3:eq:multiplier-reduction-budgets}
        C_{\rm out}\sum_{J\ne J'}\sigma_{J,J'}<\frac{\varepsilon}{3},\qquad C_{\rm in}\sum_{I\ne I'}\tau_{I,I'}<\frac{\varepsilon}{3},\qquad \sum_{I\ne I',\,J\ne J'}\mu_{I,I',J,J'}<\frac{\varepsilon}{3}.
\end{equation}
The number $\sigma_{J,J'}$ is reserved for the same outer strip indexed by $(J,J')$, the number $\tau_{I,I'}$ is reserved for the same inner strip indexed by $(I,I')$, and the number $\mu_{I,I',J,J'}$ is reserved for the mixed coefficient indexed by $(I,I',J,J')$.

Put
\begin{equation*}
        \PthreeD_{\leq n}=\bigcup_{m=0}^n\PthreeD_m,\qquad \PthreeD_{<n}=\bigcup_{m=0}^{n-1}\PthreeD_m.
\end{equation*}
The recursion alternates outer and inner half stages:
\begin{equation*}
        \mathfrak S_0^0\xrightarrow{\mathrm{outer}}\mathfrak S_0^{1/2}\xrightarrow{\mathrm{inner}}\mathfrak S_1^0\xrightarrow{\mathrm{outer}}\mathfrak S_1^{1/2}\xrightarrow{\mathrm{inner}}\mathfrak S_2^0\xrightarrow{\mathrm{outer}}\ldots .
\end{equation*}
At the beginning of stage $n$, denoted by $\mathfrak S_n^0$, the outer sets $\Gamma_I$ and inner sets $\Delta_J$ have been chosen for $I,J\in\PthreeD_{\leq n}$, while the outer and inner Haar blocks have been born for $I,J\in\PthreeD_{<n}$. Thus both current frontiers are indexed by $\PthreeD_n$. The construction is by induction on $n$. In the induction hypothesis we first list the objects in place, and then the conditions which the construction has achieved for the corresponding coefficients and strip operators. A full state $\mathfrak S_n^0$ is called \emph{legal} if the objects in \textup{(\ref{p3:item:quantity-sets})}--\textup{(\ref{p3:item:quantity-mixed})} have been specified and the estimates in \textup{(\ref{p3:item:estimate-outer-coefficients})}--\textup{(\ref{p3:item:estimate-mixed})} hold. A half state $\mathfrak S_n^{1/2}$ is \emph{legal} when the analogous assertion holds with outer frontier $\PthreeD_{n+1}$ and inner frontier $\PthreeD_n$. In the base case these estimates are empty because no strip or mixed coefficient has yet been registered.

\medskip
\noindent\textbf{Base case.} Choose
\begin{equation*}
        \Gamma_{[0,1)}=[0,1),\qquad \Delta_{[0,1)}=[0,1),\qquad S_{[0,1)}=\mathbb N.
\end{equation*}
No Haar block has yet been born, no same outer or same inner strip has been registered, and no mixed coefficient has been registered. The estimates in the induction hypothesis below are therefore vacuous, except for the reservoir tail $S_{[0,1)}$. Hence $\mathfrak S_0^0$ is a legal initial state.

\medskip
\noindent\textbf{Induction hypothesis at stage $n$.} Suppose that $\mathfrak S_n^0$ has been constructed and is legal. The following objects are in place.
\begin{enumerate}[label=\textup{(Q\arabic*)},ref=Q\arabic*]
\item\label{p3:item:quantity-sets} The outer and inner sets $\Gamma_I$ and $\Delta_J$ for $I,J\in\PthreeD_{\leq n}$, and the born blocks $H_I,H_I^*,K_J,K_J^*$ for $I,J\in\PthreeD_{<n}$.
\item\label{p3:item:quantity-reservoirs} For every current outer frontier index $A\in\PthreeD_n$, an infinite reservoir tail $S_A\subset\mathbb N$.
\item\label{p3:item:quantity-same-outer} For every same outer strip indexed by $(J,J')$ which has already been registered and every current outer frontier index $A\in\PthreeD_n$, positive numbers $\theta_A^{J,J'}$ and $\omega_A^{J,J'}$.
\item\label{p3:item:quantity-same-inner} For every same inner strip indexed by $(I,I')$ which has already been registered, the finite inner tree $\mathcal T_{I,I'}^{\rm old}$ and the finite inner frontier $\mathcal R_{I,I'}$ present when $(I,I')$ was registered, the future forest
\begin{equation*}
        \mathcal F_{I,I'}=\{C\in\PthreeD:C\subseteq B\text{ for some }B\in\mathcal R_{I,I'}\},
\end{equation*}
and positive future tolerances $\eta_C^{I,I'}$ and $\rho_{C,C'}^{I,I'}$ for every pair $C,C'\in\mathcal F_{I,I'}$ with $C$ a dyadic parent of $C'$, which we will denote by $C\to C'$.
\item\label{p3:item:quantity-mixed} The finite collection of mixed coefficients whose registration half stage has already occurred.
\end{enumerate}
For a same outer strip indexed by $(J,J')$ which has already been registered, we write
\begin{equation}
\label{p3:eq:same-outer-operator}
        Q_{J,J'}x=(\PthreeId\otimes K_J^*)T(x\otimes K_{J'})\qquad(x\in L_p^0).
\end{equation}
For a same inner strip indexed by $(I,I')$ which has already been registered, we write
\begin{equation}
\label{p3:eq:same-inner-operator}
        R_{I,I'}v=(H_I^*\otimes\PthreeId)T(H_{I'}\otimes v)\qquad(v\in L_1^0).
\end{equation}
The construction has been carried out so that the following conditions hold for these operators and coefficients.
\begin{enumerate}[label=\textup{(E\arabic*)},ref=E\arabic*]
\item\label{p3:item:estimate-outer-coefficients} (Existing same outer-strip are small) For every same outer strip indexed by $(J,J')$ which has already been registered and every already born outer index $I$,
\begin{equation}
\label{p3:eq:same-outer-old-coefficients}
        |a_{J,J'}^{I,I}|<\sigma_{J,J'}.
\end{equation}
\item\label{p3:item:estimate-outer-tails}(Room to make future same outer-strip choices small) For every same outer strip indexed by $(J,J')$ which has already been registered and every current outer frontier index $A$,
\begin{equation}
\label{p3:eq:outer-tail-invariant}
        |\alpha_m^{\Gamma_A}(Q_{J,J'})|<\theta_A^{J,J'}\qquad(m\in S_A)
\end{equation}
and
\begin{equation}
\label{p3:eq:outer-room-invariant}
        \theta_A^{J,J'}+\omega_A^{J,J'}<\sigma_{J,J'}.
\end{equation}

\item\label{p3:item:estimate-inner-old-part} (Existing same inner-strip is small) For every same inner-strip indexed by $(I, I')$ which has already been registered, set
\begin{equation}
\label{p3:eq:old-inner-variation}
\begin{aligned}
    G_{I,I'}={}&
    \mathbf 1_{\{[0,1)\in\mathcal T_{I,I'}^{\rm old}\}}
        |a_{[0,1),[0,1)}^{I,I'}|
    +\sum_{\substack{P\to Q\\ P,Q\in\mathcal T_{I,I'}^{\rm old}}}
        |a_{P,P}^{I,I'}-a_{Q,Q}^{I,I'}| \\
    &+\sum_{\substack{B\in\mathcal R_{I,I'}\\ p(B)\in\mathcal T_{I,I'}^{\rm old}}}
        |a_{p(B),p(B)}^{I,I'}|,
    \end{aligned}
\end{equation}
here $p(B)$ denotes the dyadic parent of $B$, when this parent belongs to the old inner tree $\mathcal T_{I,I'}^{\rm old}$.
The part of the strip which was already present at the registration half stage is controlled by
\begin{equation}
\label{p3:eq:same-inner-birth-condition}
        G_{I,I'}+\sum_{B\in\mathcal R_{I,I'}}|\lambda_{\Delta_B}(R_{I,I'})|<\frac{\tau_{I,I'}}{C_0}.
\end{equation}
Here $G_{I,I'}$ measures the variation on the inner tree which had already been built, while the trace terms $\lambda_{\Delta_B}(R_{I,I'})$, $B\in\mathcal R_{I,I'}$, measure the size at the roots where the future tree will be attached.
\item\label{p3:item:estimate-inner-future-part} (Room to make future same inner-strip choices small) For every same inner strip indexed by $(I,I')$ which has already been registered, the future tolerances are chosen so that
\begin{equation}
\label{p3:eq:same-inner-future-budget}
        4\sum_{C\in\mathcal F_{I,I'}}\eta_C^{I,I'}+\sum_{C\to C',\ C,C'\in\mathcal F_{I,I'}}\rho_{C,C'}^{I,I'}<\frac{\tau_{I,I'}}{C_0}.
\end{equation}
After the registration half stage, whenever a future inner block $K_C$ is born after $(I, I')$ is born, that is, $C$ is born but $C \not \in \mathcal T_{I,I'}^{\rm old}$, we impose
\begin{equation}
\label{p3:eq:same-inner-node-estimate}
        |\Pthreeip{K_C^*}{R_{I,I'}K_C}-\lambda_{\Delta_C}(R_{I,I'})|<\eta_C^{I,I'},
\end{equation}
and whenever a future edge $C\to C'$ in $\mathcal F_{I,I'}$ has been created, we impose
\begin{equation}
\label{p3:eq:same-inner-edge-estimate}
        |\lambda_{\Delta_{C'}}(R_{I,I'})-\lambda_{\Delta_C}(R_{I,I'})|<\rho_{C,C'}^{I,I'}.
\end{equation}
Thus \textup{(\ref{p3:item:estimate-inner-old-part})} controls the part of the strip present at registration, while \textup{(\ref{p3:item:estimate-inner-future-part})} controls the coefficients born after registration.
\item\label{p3:item:estimate-mixed} (Mixed coefficients are small) Every registered mixed coefficient satisfies
\begin{equation}
\label{p3:eq:mixed-coefficient-estimate}
        |a_{J,J'}^{I,I'}|<\mu_{I,I',J,J'}.
\end{equation}
\end{enumerate}
By Claim~\ref{p3:claim:registration-finiteness}, every off-diagonal coefficient is assigned to exactly one of these mechanisms, and only finitely many new strips, coefficients, and estimates occur at each half stage. We now argue how to continue the inductive process, provided we are starting from a legal state at stage $n$.

\medskip
\noindent\textbf{Outer half stage at level $n$.} We split every current outer set $\Gamma_I$, $I\in\PthreeD_n$, birth $H_I,H_I^*$ for $I\in\PthreeD_n$, and produce a half state $\mathfrak S_n^{1/2}$ whose outer frontier is indexed by $\PthreeD_{n+1}$ and whose inner frontier is indexed by $\PthreeD_n$. By Claim~\ref{p3:claim:outer-half-stage}, the splits can be chosen together with the reservoir tails $S_{A^+}$ and $S_{A^-}$ below each $A\in\PthreeD_n$, the numbers $\theta_{A^\vartheta}^{J,J'}$ and $\omega_{A^\vartheta}^{J,J'}$ for every same outer strip registered by the half state, and, for every same inner strip registered at this half stage, the objects listed in \textup{(\ref{p3:item:quantity-same-inner})}. These choices can be made so that, \textup{(\ref{p3:item:estimate-outer-coefficients})} and \textup{(\ref{p3:item:estimate-outer-tails})} hold for the same outer strips registered by the half state, \textup{(\ref{p3:item:estimate-inner-old-part})} and \textup{(\ref{p3:item:estimate-inner-future-part})} hold for every same inner strip registered by the half state, and \textup{(\ref{p3:item:estimate-mixed})} holds for every mixed coefficient registered at this half stage.

\medskip
\noindent\textbf{Inner half stage at level $n$.} We split every current inner set $\Delta_J$, $J\in\PthreeD_n$, birth $K_J,K_J^*$ for $J\in\PthreeD_n$, and produce the next full state $\mathfrak S_{n+1}^0$. By Claim~\ref{p3:claim:inner-half-stage}, the splits can be chosen together with the possibly shrunk reservoir tails $S_A$ for $A\in\PthreeD_{n+1}$, the initial numbers $\theta_A^{J,J'}$ and $\omega_A^{J,J'}$ for every same outer strip registered at this half stage, and the future estimates in \textup{(\ref{p3:item:estimate-inner-future-part})} for all same inner strips registered by that time whose relevant future blocks or future edges are created at this half stage. These choices can be made so that, \textup{(\ref{p3:item:estimate-outer-coefficients})} and \textup{(\ref{p3:item:estimate-outer-tails})} hold for all same outer strips registered by the new full state, \textup{(\ref{p3:item:estimate-inner-old-part})} and \textup{(\ref{p3:item:estimate-inner-future-part})} hold for all same inner strips registered by that time, and \textup{(\ref{p3:item:estimate-mixed})} holds for every mixed coefficient registered at this half stage.

This proves the induction step from $\mathfrak S_n^0$ to $\mathfrak S_{n+1}^0$. Repeating the two half stages gives the sequence
\begin{equation*}
        \mathfrak S_0^0\xrightarrow{\mathrm{outer}}\mathfrak S_0^{1/2}\xrightarrow{\mathrm{inner}}\mathfrak S_1^0\xrightarrow{\mathrm{outer}}\mathfrak S_1^{1/2}\xrightarrow{\mathrm{inner}}\ldots .
\end{equation*}
Therefore every dyadic index is eventually split in both coordinates, and we obtain faithful outer and inner Haar systems
\begin{equation*}
        (H_I,H_I^*)_{I\in\PthreeD},\qquad (K_J,K_J^*)_{J\in\PthreeD}.
\end{equation*}

Let $A$ and $B$ be the associated exterior maps. By \Cref{p3:lem:exterior},
\begin{equation*}
        \|A\|\leq1,\qquad \|B\|\leq1,\qquad AB=\PthreeId_{X_{00}},
\end{equation*}
and
\begin{equation}
\label{p3:eq:compressed-matrix-coefficients}
        \Pthreeip{u_{I,J}^*}{ATB u_{I',J'}}=a_{J,J'}^{I,I'}\qquad(I,I',J,J'\in\PthreeD).
\end{equation}
On the algebraic product Haar span define
\begin{equation*}
        M_0u_{I,J}=a_{J,J}^{I,I}u_{I,J}.
\end{equation*}
The matrix coefficients of $ATB-M_0$ are the off-diagonal coefficients from \eqref{p3:eq:product-matrix-coefficients}. We estimate the three off-diagonal pieces.

For $J\ne J'$, let $O_{J,J'}^{\rm out}$ be given on product Haar vectors by
\begin{equation*}
        O_{J,J'}^{\rm out}u_{I,L}=\begin{cases} a_{J,J'}^{I,I}u_{I,J},&L=J',\\ 0,&L\ne J'. \end{cases}
\end{equation*}
The construction gives $|a_{J,J'}^{I,I}|<\sigma_{J,J'}$ for all $I\in\PthreeD$, and \Cref{p3:lem:one-coordinate-strip-estimates} gives
\begin{equation*}
        \|O_{J,J'}^{\rm out}\|\leq C_{\rm out}\sigma_{J,J'}.
\end{equation*}
Thus
\begin{equation}
\label{p3:eq:same-outer-total-estimate}
        \left\|\sum_{J\ne J'}O_{J,J'}^{\rm out}\right\|\leq C_{\rm out}\sum_{J\ne J'}\sigma_{J,J'}.
\end{equation}

For $I\ne I'$, let $O_{I,I'}^{\rm in}$ be given on product Haar vectors by
\begin{equation*}
        O_{I,I'}^{\rm in}u_{L,J}=\begin{cases} a_{J,J}^{I,I'}u_{I,J},&L=I',\\ 0,&L\ne I'. \end{cases}
\end{equation*}
The estimates \eqref{p3:eq:same-inner-birth-condition}, \eqref{p3:eq:same-inner-future-budget}, \eqref{p3:eq:same-inner-node-estimate}, and \eqref{p3:eq:same-inner-edge-estimate}, together with \Cref{p3:lem:inner-strip-branch-variation}, imply that the branch variation of $(a_{J,J}^{I,I'})_{J\in\PthreeD}$ is less than $\tau_{I,I'}$. Hence \Cref{p3:cor:su-branch} and \Cref{p3:lem:one-coordinate-strip-estimates} gives
\begin{equation*}
        \|O_{I,I'}^{\rm in}\|\leq C_{\rm in}\tau_{I,I'}.
\end{equation*}
Therefore
\begin{equation}
\label{p3:eq:same-inner-total-estimate}
        \left\|\sum_{I\ne I'}O_{I,I'}^{\rm in}\right\|\leq C_{\rm in}\sum_{I\ne I'}\tau_{I,I'}.
\end{equation}

Finally, define
\begin{equation*}
        O^{\rm mix}=\sum_{I\ne I',\,J\ne J'}a_{J,J'}^{I,I'}u_{I,J}\otimes u_{I',J'}^*.
\end{equation*}
Since $\|u_{I,J}\|=\|u_{I',J'}^*\|=1$ and \eqref{p3:eq:mixed-coefficient-estimate} holds for every mixed coefficient, the mixed series converges absolutely in operator norm and
\begin{equation}
\label{p3:eq:mixed-total-estimate}
        \|O^{\rm mix}\|\leq \sum_{I\ne I',\,J\ne J'}\mu_{I,I',J,J'}.
\end{equation}
Set
\begin{equation*}
        O=\sum_{J\ne J'}O_{J,J'}^{\rm out}+\sum_{I\ne I'}O_{I,I'}^{\rm in}+O^{\rm mix}.
\end{equation*}
Combining \eqref{p3:eq:multiplier-reduction-budgets}, \eqref{p3:eq:same-outer-total-estimate}, \eqref{p3:eq:same-inner-total-estimate}, and \eqref{p3:eq:mixed-total-estimate}, we obtain
\begin{equation*}
        \|O\|<\varepsilon.
\end{equation*}
Moreover, by construction and \eqref{p3:eq:compressed-matrix-coefficients}, the operators $ATB-M_0$ and $O$ have the same product Haar coefficients on the algebraic product Haar span. By \Cref{p3:lem:product-coefficient-separation}, they agree there. Define
\begin{equation*}
        M=ATB-O.
\end{equation*}
Then $M\in\PthreeL(X_{00})$, and $M$ agrees with $M_0$ on the algebraic product Haar span. Hence $M$ is the bounded product Haar multiplier with diagonal
\begin{equation*}
        Mu_{I,J}=a_{J,J}^{I,I}u_{I,J}=\Pthreeip{H_I^*\otimes K_J^*}{T(H_I\otimes K_J)}u_{I,J}.
\end{equation*}
Finally,
\begin{equation*}
        \|ATB-M\|=\|O\|<\varepsilon,
\end{equation*}
and, because $A$ and $B$ are contractions,
\begin{equation*}
        \|M\|\leq\|ATB\|+\|ATB-M\|\leq\|T\|+\varepsilon.
\end{equation*}
This completes the proof.
\end{proof}

\section{Technical claims for the multiplier reduction construction}
\label{p3:sec:technical-claims-multiplier-reduction}

In this section we prove the auxiliary lemmas and construction claims invoked in the proof of \Cref{p3:thm:reduction-to-multiplier}. The terminology is the one fixed at the beginning of the construction section.

\begin{lemma}[One-coordinate strip estimates]
\label{p3:lem:one-coordinate-strip-estimates}
Let $(d_I)_{I\in\PthreeD}$ define a scalar Haar multiplier $M_d$ on $L_p^0$, and let $J\ne J'$. The product strip operator
\begin{equation*}
        O_{J,J'}^{\rm out}u_{I,L}=\begin{cases} d_Iu_{I,J},&L=J',\\ 0,&L\ne J' \end{cases}
\end{equation*}
satisfies
\begin{equation*}
        \|O_{J,J'}^{\rm out}\|_{\PthreeL(X_{00})}\leq \|M_d\|_{\PthreeL(L_p^0)}.
\end{equation*}
In particular, with $U_p$ as in \Cref{p3:lem:lp-haar-multiplier},
\begin{equation*}
        \|O_{J,J'}^{\rm out}\|_{\PthreeL(X_{00})}\leq U_p\sup_{I\in\PthreeD}|d_I|.
\end{equation*}
Similarly, if $(e_J)_{J\in\PthreeD}$ defines a scalar Haar multiplier $N_e$ on $L_1^0$ and $I\ne I'$, then the product strip operator
\begin{equation*}
        O_{I,I'}^{\rm in}u_{L,J}=\begin{cases} e_Ju_{I,J},&L=I',\\ 0,&L\ne I' \end{cases}
\end{equation*}
satisfies
\begin{equation*}
        \|O_{I,I'}^{\rm in}\|_{\PthreeL(X_{00})}\leq \|N_e\|_{\PthreeL(L_1^0)}.
\end{equation*}
\end{lemma}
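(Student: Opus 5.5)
The idea is to realise each strip operator as the composition of a one-variable multiplier, acting in one coordinate, with a fixed rank-one map acting in the other coordinate. Then the norm factorises through the tensor structure of $L_p(L_1)$.

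\emph{Outer strip.} We first identify the operator. Let $\Lambda$ be the inner operator $g\mapsto \Pthreeip{f_{J'}^*}{g}f_J$ on $L_1^0$. On an algebraic tensor $x\otimes g$ with $x\in L_p^0$ and $g\in L_1^0$, we expect
\begin{equation*}
        O_{J,J'}^{\rm out}(x\otimes g)=(M_dx)\otimes \Lambda g .
\end{equation*}
This holds on the basis, since $u_{I,L}=e_I\otimes f_L$ is sent to $d_Ie_I\otimes \delta_{L,J'}f_J$. On a general element $F$ of the algebraic product Haar span, the operator therefore acts as follows. First apply the inner functional $f_{J'}^*=h_{J'}$ fibrewise, obtaining $y(s)=\int F(s,t)h_{J'}(t)\,dt$, which lies in $L_p^0$. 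Then apply $M_d$. Finally tensor with $f_J$.

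\emph{Norm estimate for the outer strip.} Since $|h_{J'}|\le 1$, we have $\|y\|_p\le\|F\|_{L_p(L_1)}$. Also $\|f_J\|_1=1$, so $\|(M_dy)\otimes f_J\|_{L_p(L_1)}=\|M_dy\|_p$. Hence
\begin{equation*}
        \|O_{J,J'}^{\rm out}F\|\le\|M_d\|\,\|F\|
\end{equation*}
on a dense subspace, and the operator extends with the same bound. The "in particular" clause then follows from \Cref{p3:lem:lp-haar-multiplier}.

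\emph{Inner strip.} Here the roles are swapped. The outer map is $x\mapsto \Pthreeip{e_{I'}^*}{x}e_I$, and the inner map is the multiplier $N_e$. For $F$ in the span, set
\begin{equation*}
        g=\int e_{I'}^*(s)F(s,\cdot)\,ds\in L_1^0 .
\end{equation*}
Then $O_{I,I'}^{\rm in}F=e_I\otimes N_eg$, which we check on the basis vectors $u_{L,J}$. Its norm is
\begin{equation*}
        \|e_I\|_p\,\|N_eg\|_1\le \|N_e\|\,\|g\|_1 .
\end{equation*}
By Minkowski and H\"older, $\|g\|_1\le\int|e_{I'}^*(s)|\,\|F(s,\cdot)\|_1\,ds\le \|e_{I'}^*\|_{p'}\|F\|_{L_p(L_1)}=\|F\|$. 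Density of the span in $X_{00}$ then finishes the proof.

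\emph{Main obstacle.} The only delicate point is making sure that the two sides agree on all of the algebraic span, and not just on elementary tensors. This is automatic, because both sides are linear and agree on every $u_{I,L}$. The rest is Minkowski's integral inequality for the Bochner integral.
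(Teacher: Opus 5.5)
Your proposal is correct and follows essentially the same route as the paper. In both, each strip is written as a one-variable multiplier in one coordinate composed with the rank-one coefficient map $v\mapsto\langle f_{J'}^*,v\rangle f_J$ (respectively $x\mapsto\langle e_{I'}^*,x\rangle e_I$) in the other, and the fibrewise coefficient is bounded by $\|F\|_{L_p(L_1)}$ using $|h_{J'}|\le 1$ (respectively H\"older); the only cosmetic difference is that you verify the identity on the algebraic span and extend by density, whereas the paper applies the formula directly to every element of $X_{00}$ via its vanishing one-variable means.
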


\begin{proof}
For the same outer strip, define $P_{J',J}\colon L_1^0\to L_1^0$ by
\begin{equation*}
        P_{J',J}v=\Pthreeip{f_{J'}^*}{v}f_J.
\end{equation*}
Then $\|P_{J',J}\|\leq1$. For $x\in X_{00}$, let $g(s)=\Pthreeip{f_{J'}^*}{x(s,\cdot)}$. Since $x$ has zero outer integral, $g\in L_p^0$. Also $\|g\|_{L_p}\leq\|x\|_{L_p(L_1)}$, and the same outer strip is $(M_dg)\otimes f_J$. Hence
\begin{equation*}
        \|O_{J,J'}^{\rm out}x\|_{L_p(L_1)}\leq\|M_d\|\|x\|_{L_p(L_1)}.
\end{equation*}
The estimate with $U_p$ follows from \Cref{p3:lem:lp-haar-multiplier}.

For the same inner strip, define $P_{I',I}\colon L_p^0\to L_p^0$ by
\begin{equation*}
        P_{I',I}x=\Pthreeip{e_{I'}^*}{x}e_I.
\end{equation*}
Then $\|P_{I',I}\|\leq1$. For $x\in X_{00}$, let
\begin{equation*}
        v(t)=\int_0^1 e_{I'}^*(s)x(s,t)\,ds.
\end{equation*}
Since $x$ has zero inner integral, $v\in L_1^0$. By H\"older's inequality, $\|v\|_1\leq\|x\|_{L_p(L_1)}$, and the same inner strip is $e_I\otimes N_ev$. Hence
\begin{equation*}
        \|O_{I,I'}^{\rm in}x\|_{L_p(L_1)}\leq\|N_e\|\|x\|_{L_p(L_1)}.
\end{equation*}
\end{proof}

\begin{lemma}[Product Haar coefficients separate points]
\label{p3:lem:product-coefficient-separation}
If $x\in X_{00}$ and
\begin{equation*}
        \Pthreeip{u_{I,J}^*}{x}=0\qquad(I,J\in\PthreeD),
\end{equation*}
then $x=0$. In particular, if two bounded operators on $X_{00}$ have the same product Haar coefficients on every product Haar vector, then they agree on the algebraic product Haar span.
\end{lemma}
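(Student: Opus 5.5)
The plan is to identify $X_{00}$ with a closed subspace of $X=L_p(L_1)$ and show that the vanishing of all product Haar coefficients forces every dyadic rectangular average of $x$ to vanish; density of dyadic rectangles in the dual then gives $x=0$.

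First, fix dyadic levels $m,k$ and let $E_{m,k}$ denote the (Bochner) conditional expectation onto functions constant on dyadic rectangles $I\times J$ with $|I|=2^{-m}$, $|J|=2^{-k}$. This is a contraction on $L_p(L_1)$: apply the inner $L_1$ conditional expectation fibrewise, as in the proof of \Cref{p3:lem:exterior}, and then the outer Bochner conditional expectation. Moreover $E_{m,k}x\to x$ in $L_p(L_1)$ for every $x$. Indeed, this holds on dyadic rectangular simple functions, which are dense, and the $E_{m,k}$ are uniformly bounded.

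Next, I compute $E_{m,k}x$ for $x\in X_{00}$. The range of $E_{m,k}$ is finite-dimensional and spanned by $\Pthreeone_I\otimes\Pthreeone_J$. By \Cref{p3:lem:x00-projection}, $P_{00}$ commutes with $E_{m,k}$, since both are built from commuting conditional expectations and $\mathbb E_1,\mathbb E_2$ are the coarsest levels. Hence
\begin{equation*}
E_{m,k}x=E_{m,k}P_{00}x=P_{00}E_{m,k}x.
\end{equation*}
The right-hand side lies in the span of $h_I\otimes h_J$ with $|I|>2^{-m}$ and $|J|>2^{-k}$. By the biorthogonality of $u_{I,J}$ and $u_{I,J}^*$, the coefficient of $u_{I,J}$ in this finite expansion is $\Pthreeip{u_{I,J}^*}{E_{m,k}x}$. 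This equals $\Pthreeip{u_{I,J}^*}{x}$, because $u_{I,J}^*$ is measurable with respect to the level-$(m,k)$ rectangles and conditional expectation is self-adjoint for such test functions via Fubini. So all these coefficients vanish, which gives $E_{m,k}x=0$.

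Letting $m,k\to\infty$ yields $x=\lim E_{m,k}x=0$. For the second assertion, apply the first to $(S_1-S_2)u$ for each product Haar vector $u$, and extend by linearity to the algebraic span.

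The main point requiring care is the commutation $E_{m,k}P_{00}=P_{00}E_{m,k}$ together with the self-adjointness pairing $\Pthreeip{u^*_{I,J}}{E_{m,k}x}=\Pthreeip{u^*_{I,J}}{x}$ in the mixed-norm Bochner setting. Both reduce to Fubini on finitely many rectangles, since $u_{I,J}^*\in L_{p'}(L_\infty)$ is a step function.
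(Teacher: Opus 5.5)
Your proof is correct and is essentially the paper's argument. On $X_{00}$, your conditional expectation $E_{m+1,m+1}$ is exactly the finite product-Haar projection $P_m$ that the paper uses. What you add are the details the paper leaves implicit: the strong convergence to the identity via density of dyadic rectangular simple functions and uniform boundedness, and the identification of the coefficients of $E_{m,k}x$ with those of $x$ via commutation with $P_{00}$ and self-adjointness of conditional expectation.
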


\begin{proof}
Let $P_m$ be the finite product-Haar projection onto the span of $u_{I,J}$ with $I,J\in\bigcup_{k=0}^{m}\PthreeD_k$. These projections are obtained from the dyadic martingale difference projections in the two variables, and they converge strongly to the identity on $X_{00}\subset L_p(L_1)$. If all product Haar coefficients of $x$ vanish, then $P_mx=0$ for every $m$, hence $x=\lim_mP_mx=0$. The operator statement follows by applying this to the difference of the two images of each product Haar vector.
\end{proof}

\begin{lemma}[Inner strip branch variation estimate]
\label{p3:lem:inner-strip-branch-variation}
Let $(I,I')$ be a registered same inner strip, and put $d_J=a_{J,J}^{I,I'}$. If the old-part estimate in \textup{(\ref{p3:item:estimate-inner-old-part})} and the future-part estimates in \textup{(\ref{p3:item:estimate-inner-future-part})} hold for this strip, then
\begin{equation*}
        \PthreeBV((d_J)_{J\in\PthreeD})<\tau_{I,I'}.
\end{equation*}
\end{lemma}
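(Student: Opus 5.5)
We bound the branch variation of $(d_J)_{J\in\PthreeD}$, with $d_J=a_{J,J}^{I,I'}$, one infinite dyadic branch $J_0\supset J_1\supset\ldots$ at a time. The bound will be uniform in the branch and strictly below $\tau_{I,I'}$. Let $\mathcal T=\mathcal T_{I,I'}^{\rm old}$ and $\mathcal R=\mathcal R_{I,I'}$. Every dyadic interval then lies either in the old tree $\mathcal T$ or in the future forest $\mathcal F_{I,I'}$, since the frontier $\mathcal R$ consists exactly of the children of leaves of $\mathcal T$ (or is $\{[0,1)\}$ if $\mathcal T$ is empty). Thus any branch consists of an initial segment inside $\mathcal T$, possibly empty, followed by a segment inside $\mathcal F_{I,I'}$ that starts at some $B\in\mathcal R$ or at a descendant of such a $B$.

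For the old segment, we bound the starting term $|d_{J_0}|$ and the increments $|d_{J_{k+1}}-d_{J_k}|$ with both indices in $\mathcal T$ by the terms of $G_{I,I'}$. The starting term needs care if $J_0\ne[0,1)$. In that case we bound $|d_{J_0}|$ by $|d_{[0,1)}|$ plus the variation along the path from $[0,1)$ down to $J_0$, which costs at most $2G_{I,I'}$. This is presumably part of why the budget carries the factor $C_0=8$. The transition edge $p(B)\to B$, with $p(B)$ a leaf of $\mathcal T$ and $B\in\mathcal R$, is handled by the triangle inequality:
\[
|d_B-d_{p(B)}|\le|d_{p(B)}|+|\lambda_{\Delta_B}(R_{I,I'})|+|d_B-\lambda_{\Delta_B}(R_{I,I'})|.
\]
The first term is one of the boundary terms in $G_{I,I'}$, the second is one of the root traces in \eqref{p3:eq:same-inner-birth-condition}, and the third is at most $\eta_B^{I,I'}$ by \eqref{p3:eq:same-inner-node-estimate}. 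Here we use $d_B=\Pthreeip{K_B^*}{R_{I,I'}K_B}$, which follows directly from the definitions \eqref{p3:eq:product-matrix-coefficients} and \eqref{p3:eq:same-inner-operator}.

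For the future segment, each node $C\in\mathcal F_{I,I'}$ satisfies $|d_C-\lambda_{\Delta_C}(R_{I,I'})|<\eta_C^{I,I'}$. Each future edge $C\to C'$ therefore gives
\[
|d_{C'}-d_C|<\eta_C^{I,I'}+\eta_{C'}^{I,I'}+\rho_{C,C'}^{I,I'}.
\]
If the branch starts inside $\mathcal F_{I,I'}$, below its root $B\in\mathcal R$, we bound $|d_{J_0}|$ by $\eta_{J_0}^{I,I'}+|\lambda_{\Delta_B}(R_{I,I'})|$ plus the sum of $\rho$ over the edges from $B$ to $J_0$. Summing along the branch, every $\eta_C$ is used at most about four times, which matches the factor $4$ in \eqref{p3:eq:same-inner-future-budget}. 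The $\rho$ terms are bounded by the full sum over edges.

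Adding the old and future contributions then gives a total below $C_0\bigl(G_{I,I'}+\sum_B|\lambda_{\Delta_B}|\bigr)/C_0\cdot(\text{small multiple})+\tau_{I,I'}/C_0<\tau_{I,I'}$, and taking the supremum over branches finishes the proof. The main obstacle is the bookkeeping of multiplicities, which must fit inside the constants $C_0=8$ and $4$. The delicate points are the starting term for branches that begin deep in the tree, and the fact that a branch meets exactly one root $B\in\mathcal R$, so only one root-trace term is used. If the constants do not close exactly, I would absorb the excess by using the strict inequalities together with the slack in $C_0$.
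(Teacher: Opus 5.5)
Your proposal is correct and is essentially the paper's argument. You split each branch into three pieces: an old segment controlled by $G_{I,I'}$; a single crossing $p(B)\to B$ bounded by $|d_{p(B)}|+|\lambda_{\Delta_B}(R_{I,I'})|+\eta_B^{I,I'}$; and a future segment controlled by the node and edge estimates, with the same root-trace-plus-$\rho$-path bound for $|d_{J_0}|$ when the branch starts inside $\mathcal F_{I,I'}$.

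Your explicit handling of $|d_{J_0}|$ for an old $J_0\ne[0,1)$ fills in a point the paper leaves implicit. It actually costs nothing extra, because the root-to-$J_0$ edges are disjoint from the branch edges, so the whole old contribution is at most $G_{I,I'}$. The constants also close with room to spare: even your cruder count gives at most $3\bigl(G_{I,I'}+\sum_{B}|\lambda_{\Delta_B}(R_{I,I'})|\bigr)+4\sum_C\eta_C^{I,I'}+\sum\rho_{C,C'}^{I,I'}<\tau_{I,I'}/2$. So you do not need the fallback of absorbing excess through slack in the strict inequalities, which would not have worked anyway.
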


\begin{proof}
Fix an infinite dyadic branch $J_0\supset J_1\supset\ldots$. The branch is the union of an initial part contained in the old finite tree $\mathcal T_{I,I'}^{\rm old}$ and, possibly after crossing one index $B\in\mathcal R_{I,I'}$, a tail contained in the future forest $\mathcal F_{I,I'}$. On the old part, the contribution is bounded by the corresponding terms in $G_{I,I'}$. If the branch crosses from an old parent $p(B)$ into $B\in\mathcal R_{I,I'}$, then
\begin{equation*}
        |d_{p(B)}-d_B|\leq |a_{p(B), p(B)}^{I,I'}|+|\lambda_{\Delta_B}(R_{I,I'})|+\eta_B^{I,I'}.
\end{equation*}
If the branch starts inside the future forest, let $B\in\mathcal R_{I,I'}$ be the frontier root containing $J_0$. The value $|d_{J_0}|$ is bounded by $|\lambda_{\Delta_B}(R_{I,I'})|$, the $\rho$-sum along the path from $B$ to $J_0$, and $\eta_{J_0}^{I,I'}$. Finally, for every future edge $C\to C'$ on the branch,
\begin{equation*}
        |d_C-d_{C'}|\leq \eta_C^{I,I'}+\rho_{C,C'}^{I,I'}+\eta_{C'}^{I,I'}.
\end{equation*}
Therefore the branch variation is bounded by
\begin{equation*}
        G_{I,I'}+3\sum_{B\in\mathcal R_{I,I'}}|\lambda_{\Delta_B}(R_{I,I'})|+4\sum_{C\in\mathcal F_{I,I'}}\eta_C^{I,I'}+\sum_{C\to C',\ C,C'\in\mathcal F_{I,I'}}\rho_{C,C'}^{I,I'}.
\end{equation*}
By \eqref{p3:eq:same-inner-birth-condition}, the first two terms satisfy
\begin{equation*}
        G_{I,I'}+3\sum_{B\in\mathcal R_{I,I'}}|\lambda_{\Delta_B}(R_{I,I'})|\leq 3\left(G_{I,I'}+\sum_{B\in\mathcal R_{I,I'}}|\lambda_{\Delta_B}(R_{I,I'})|\right)<\frac{3\tau_{I,I'}}{C_0}.
\end{equation*}
Together with \eqref{p3:eq:same-inner-future-budget} and $C_0=8$, this gives a bound smaller than $\tau_{I,I'}$. Taking the supremum over all branches gives the claim.
\end{proof}

\begin{p3claim}[Registration is well defined and locally finite]
\label{p3:claim:registration-finiteness}
Every off diagonal product coefficient is assigned to exactly one of the three mechanisms in the construction. At every half stage, only finitely many new strips and mixed coefficients are registered, only finitely many reservoir tails are updated, and the side constraint lists submitted to the finite splitting lemmas are finite. When a same inner strip is registered, the countable family of future tolerances attached to it is chosen after the finite constraints have been imposed.
\end{p3claim}

\begin{proof}
Let $a_{J,J'}^{I,I'}$ be an off-diagonal coefficient. If $I=I'$ and $J\ne J'$, the coefficient belongs to the same outer strip indexed by $(J,J')$, and this strip is registered when the later of $K_J$ and $K_{J'}$ is born. If $I\ne I'$ and $J=J'$, the coefficient belongs to the same inner strip indexed by $(I,I')$, and this strip is registered when the later of $H_I$ and $H_{I'}$ is born. If $I\ne I'$ and $J\ne J'$, the coefficient is mixed, and it is registered when the last of $H_I,H_{I'},K_J,K_{J'}$ is born. These three alternatives are disjoint and exhaust the off-diagonal cases.

At a fixed half stage, only finitely many blocks have been born and only finitely many new blocks are born. Hence only finitely many strips can be registered for the first time, and only finitely many mixed coefficients can have their last block born at that half stage. The current frontiers are finite dyadic generations, so only finitely many reservoir tails are created or shrunk at that half stage. Therefore every side constraint list submitted to a finite splitting lemma is finite. The future tolerance family attached to a newly registered same inner strip is countable, but it is chosen separately after the finite splitting step and is not part of the side constraint list.
\end{proof}

We now prove the main claims in the construction, in other words, that we can make appropriate choices in each of the half-stages.

\begin{p3claim}[The outer half stage is possible]
\label{p3:claim:outer-half-stage}
Assume that $\mathfrak S_n^0$ is legal. Then one can choose the outer splits, the reservoir tails for the new outer frontier, the propagated same outer numbers $\theta$ and $\omega$, and, for every same inner strip registered at this half stage, the objects in \textup{(\ref{p3:item:quantity-same-inner})}, so that the resulting half state $\mathfrak S_n^{1/2}$ is legal. The chosen outer splits determine the born blocks $H_I,H_I^*$ for $I\in\PthreeD_n$. More precisely, \textup{(\ref{p3:item:estimate-outer-coefficients})} and \textup{(\ref{p3:item:estimate-outer-tails})} hold for the same outer strips registered by the half state, \textup{(\ref{p3:item:estimate-mixed})} holds for every mixed coefficient registered at this half stage, and every same inner strip registered at this half stage satisfies the old-part condition \textup{(\ref{p3:item:estimate-inner-old-part})} and is assigned future tolerances so that the budget in \textup{(\ref{p3:item:estimate-inner-future-part})} holds.
\end{p3claim}

\begin{proof}
At the beginning of the outer half stage, the inner tree is fixed and the current outer frontier is indexed by $\PthreeD_n$. For each current outer frontier index $A$ and each same outer strip indexed by $(J,J')$ already registered, choose positive numbers $\eta_A^{J,J'}$ and $\delta_A^{J,J'}$ so small that
\begin{equation*}
        \eta_A^{J,J'}<\frac{\omega_A^{J,J'}}{2},\qquad \delta_A^{J,J'}<\frac{\omega_A^{J,J'}}{2}.
\end{equation*}
We shall apply \Cref{p3:lem:outer-tail} with the current common tail $S_A$, the bound $\theta_A^{J,J'}$, and the tolerances $\eta_A^{J,J'},\delta_A^{J,J'}$, together with the admissible side constrain now defined.

We form one finite admissible outer side constraint list. First, include every mixed coefficient whose registration time is the present outer half stage:
\begin{equation*}
        |a_{J,J'}^{I,I'}|<\mu_{I,I',J,J'}.
\end{equation*}
The inner blocks are fixed. If exactly one of $H_I,H_{I'}$ is new, the constraint is one-sided in the new outer block. If both are new, then $I\ne I'$, so the two new outer blocks are supported on distinct current frontier sets, and the constraint is bilinear between distinct new blocks. Thus the constraint is admissible.

Next consider a same inner strip indexed by $(I,I')$ whose registration time is the present outer half stage. Let $\mathcal T_{I,I'}^{\rm old}$ be the already born inner tree and let $\mathcal R_{I,I'}$ be the current inner frontier. For every $P\in\mathcal T_{I,I'}^{\rm old}$, the coefficient is
\begin{equation*}
        a_{P,P}^{I,I'}=\Pthreeip{H_I^*\otimes K_P^*}{T(H_{I'}\otimes K_P)}.
\end{equation*}
Choose tolerances for the finitely many old coefficients so small that, if all corresponding inequalities hold, then
\begin{equation*}
        G_{I,I'}<\frac{\tau_{I,I'}}{4C_0}.
\end{equation*}
This is possible because $G_{I,I'}$ is a finite sum of terms $|a_{P,P}^{I,I'}|$, $|a_{P,P}^{I,I'}-a_{Q,Q}^{I,I'}|$, and $|a_{p(B), p(B)}^{I,I'}|$, and making all finitely many coefficients $|a_{P,P}^{I,I'}|$ sufficiently small makes that finite sum small. These inequalities are admissible outer constraints, because the inner blocks are fixed and at least one of the two outer blocks is new. For each $B\in\mathcal R_{I,I'}$, the trace value is
\begin{equation*}
        \lambda_{\Delta_B}(R_{I,I'})=\Pthreeip{H_I^*}{\mathcal I_{\Delta_B}^T H_{I'}},
\end{equation*}
by \Cref{p3:lem:inner-set-outer-trace-operator}. Choose positive numbers $\zeta_B^{I,I'}$, $B\in\mathcal R_{I,I'}$, such that
\begin{equation*}
        \sum_{B\in\mathcal R_{I,I'}}\zeta_B^{I,I'}<\frac{\tau_{I,I'}}{4C_0},
\end{equation*}
and add the admissible constraints
\begin{equation*}
        |\lambda_{\Delta_B}(R_{I,I'})| = |\Pthreeip{H_I^*}{\mathcal I_{\Delta_B}^T H_{I'}}| <\zeta_B^{I,I'}\qquad(B\in\mathcal R_{I,I'}).
\end{equation*}

The side constraint list is finite by Claim~\ref{p3:claim:registration-finiteness}. Apply \Cref{p3:lem:outer-tail} simultaneously to the current outer frontier sets, the operators $Q_{J,J'}$ for the already registered same outer strips, the common tails $S_A$, the chosen tolerances, and this finite side constraint list.

All side constraints are then satisfied. Hence all mixed coefficients registered at this half stage satisfy their $\mu$ bounds, and every same inner strip indexed by $(I,I')$ registered at this half stage satisfies
\begin{equation*}
        G_{I,I'}+\sum_{B\in\mathcal R_{I,I'}}|\lambda_{\Delta_B}(R_{I,I'})|<\frac{\tau_{I,I'}}{C_0}.
\end{equation*}
This is precisely the old-part condition \textup{(\ref{p3:item:estimate-inner-old-part})} for the newly registered same inner strips.
For a same outer strip indexed by $(J,J')$ already registered, \Cref{p3:lem:outer-tail} gives
\begin{equation*}
        |\Pthreeip{H_A^*}{Q_{J,J'}H_A}|<\theta_A^{J,J'}+\eta_A^{J,J'}<\theta_A^{J,J'}+\omega_A^{J,J'}<\sigma_{J,J'}
\end{equation*}
for every newly born outer block $H_A$. For each child $A^\vartheta$, $\vartheta\in\{+,-\}$, the same lemma gives an infinite tail $S_{A^\vartheta}$ such that
\begin{equation*}
        |\alpha_m^{\Gamma_{A^\vartheta}}(Q_{J,J'})|<\theta_A^{J,J'}+\delta_A^{J,J'}\qquad(m\in S_{A^\vartheta}).
\end{equation*}
Define
\begin{equation*}
        \theta_{A^\vartheta}^{J,J'}=\theta_A^{J,J'}+\delta_A^{J,J'},\qquad \omega_{A^\vartheta}^{J,J'}=\omega_A^{J,J'}-\delta_A^{J,J'}.
\end{equation*}
Then $\omega_{A^\vartheta}^{J,J'}>0$ and
\begin{equation*}
        \theta_{A^\vartheta}^{J,J'}+\omega_{A^\vartheta}^{J,J'}=\theta_A^{J,J'}+\omega_A^{J,J'}<\sigma_{J,J'}.
\end{equation*}
Thus the same outer tail estimate is preserved.

For each same inner strip indexed by $(I,I')$ registered at this half stage, choose positive numbers $\eta_C^{I,I'}$ and $\rho_{C,C'}^{I,I'}$ on the future forest $\mathcal F_{I,I'}$ so that \eqref{p3:eq:same-inner-future-budget} holds. Thus the future tolerance condition in \textup{(\ref{p3:item:estimate-inner-future-part})} is initialized for the newly registered same inner strips; the corresponding node and edge estimates will be imposed in later inner half stages when those future blocks and edges are born. Therefore the half state is legal.
\end{proof}

\begin{p3claim}[The inner half stage is possible]
\label{p3:claim:inner-half-stage}
Assume that $\mathfrak S_n^{1/2}$ is legal. Then one can choose the inner splits, the possibly shrunk reservoir tails for the current outer frontier, and the initial same outer numbers $\theta$ and $\omega$ for every same outer strip registered at this half stage so that the resulting full state $\mathfrak S_{n+1}^0$ is legal. The chosen inner splits determine the born blocks $K_J,K_J^*$ for $J\in\PthreeD_n$. More precisely, the relevant node and edge estimates from the future-part condition \textup{(\ref{p3:item:estimate-inner-future-part})} are imposed for the same inner strips already registered, \textup{(\ref{p3:item:estimate-mixed})} holds for every mixed coefficient registered at this half stage, and every same outer strip registered at this inner half stage satisfies \textup{(\ref{p3:item:estimate-outer-coefficients})} and \textup{(\ref{p3:item:estimate-outer-tails})}.
\end{p3claim}

\begin{proof}
At the beginning of the inner half stage, the outer tree is fixed. We form one finite admissible inner side constraint list. First include every mixed coefficient whose registration time is the present inner half stage:
\begin{equation*}
        |a_{J,J'}^{I,I'}|<\mu_{I,I',J,J'}.
\end{equation*}
The outer blocks are fixed. If exactly one of $K_J,K_{J'}$ is new, the constraint is one-sided in the new inner block. If both are new, then $J\ne J'$, so the two new inner blocks are supported on distinct current frontier sets, and the constraint is bilinear between distinct new blocks. Thus the constraint is admissible.

For each current outer frontier index $A$ in the half state $\mathfrak S_n^{1/2}$, choose once and for all a free ultrafilter $\mathcal U_A$ containing the current reservoir tail $S_A$.

For each same outer strip indexed by $(J,J')$ whose registration time is the present inner half stage, add, for every already born outer index $I$, the constraint
\begin{equation*}
        |\Pthreeip{H_I^*\otimes K_J^*}{T(H_I\otimes K_{J'})}|<\sigma_{J,J'}.
\end{equation*}
The outer block is fixed, so this is an admissible inner constraint. If exactly one of $K_J,K_{J'}$ is new it is one-sided; if both are new it is bilinear between distinct current inner frontier sets.

For the same newly registered same outer strip, add, for every current outer frontier index $A$, the shadow constraint
\begin{equation*}
        |\Pthreeip{K_J^*}{\Omega_{\Gamma_A}^{T,\mathcal U_A}K_{J'}}|<\frac{\sigma_{J,J'}}{8}.
\end{equation*}
By \Cref{p3:prop:shadow}, $\Omega_{\Gamma_A}^{T,\mathcal U_A}\in\PthreeL(L_1^0)$, so this is an admissible inner constraint. The same ultrafilter $\mathcal U_A$ is used for every same outer strip registered at this half stage.

Now add the same inner estimates. For every same inner strip indexed by $(I,I')$ already registered and every current inner frontier index $C\in\mathcal F_{I,I'}$, include the operator $R_{I,I'}$ with tolerance
\begin{equation*}
        \delta_C^{I,I'}=\min\{\eta_C^{I,I'},\rho_{C,C^+}^{I,I'},\rho_{C,C^-}^{I,I'}\}.
\end{equation*}
The side constraint list and the list of already registered same inner strips are finite by Claim~\ref{p3:claim:registration-finiteness}. Apply \Cref{p3:lem:l1-simultaneous} to the current inner frontier sets, the finite families of operators $R_{I,I'}$ for the already registered same inner strips, the tolerances $\delta_C^{I,I'}$, and the finite admissible side constraint list.

The chosen signs impose all listed side constraints. Therefore every mixed coefficient registered at this half stage satisfies its $\mu$ bound. Since the old finite tree and registration frontier attached to a same inner strip do not change after that strip is registered, \textup{(\ref{p3:item:estimate-inner-old-part})} remains true for every previously registered same inner strip. The conclusions of \Cref{p3:lem:l1-simultaneous} also give
\begin{equation*}
        |\Pthreeip{K_C^*}{R_{I,I'}K_C}-\lambda_{\Delta_C}(R_{I,I'})|<\eta_C^{I,I'}
\end{equation*}
for every same inner strip indexed by $(I,I')$ already registered and every current inner frontier index $C\in\mathcal F_{I,I'}$, and
\begin{equation*}
        |\lambda_{\Delta_{C'}}(R_{I,I'})-\lambda_{\Delta_C}(R_{I,I'})|<\rho_{C,C'}^{I,I'}
\end{equation*}
for the two children $C'$ of each current $C$. These are exactly the node and edge estimates required in the future-part condition \textup{(\ref{p3:item:estimate-inner-future-part})} at this half stage.

Let $(J,J')$ index a same outer strip registered at this half stage, and write $Q_{J,J'}$ as in \eqref{p3:eq:same-outer-operator}. The old coefficient constraints give
\begin{equation*}
        |a_{J,J'}^{I,I}|<\sigma_{J,J'}
\end{equation*}
for every already born outer index $I$. For each current outer frontier index $A$, the shadow constraint and \Cref{p3:lem:same-outer-identity} give
\begin{equation*}
        \lim_{m\to\mathcal U_A}\alpha_m^{\Gamma_A}(Q_{J,J'})=\Pthreeip{K_J^*}{\Omega_{\Gamma_A}^{T,\mathcal U_A}K_{J'}}.
\end{equation*}
Hence
\begin{equation*}
        \{m\in\mathbb N:|\alpha_m^{\Gamma_A}(Q_{J,J'})|<\sigma_{J,J'}/4\}\in\mathcal U_A.
\end{equation*}
For fixed $A$, all such good sets, as $(J,J')$ ranges over the finitely many same outer strips registered at the present half stage, belong to the same ultrafilter $\mathcal U_A$. Hence their finite intersection with the old reservoir tail $S_A$ also belongs to $\mathcal U_A$, and is infinite. Replace $S_A$ by this finite intersection. For every newly registered same outer strip indexed by $(J,J')$, this reservoir satisfies
\begin{equation*}
        |\alpha_m^{\Gamma_A}(Q_{J,J'})|<\frac{\sigma_{J,J'}}{4}\qquad(m\in S_A).
\end{equation*}
We record this estimate by setting
\begin{equation*}
        \theta_A^{J,J'}=\frac{\sigma_{J,J'}}{4}
\end{equation*}
and choose, for instance,
\begin{equation*}
        \omega_A^{J,J'}=\frac{\sigma_{J,J'}}{2}.
\end{equation*}
Then $\theta_A^{J,J'}+\omega_A^{J,J'}=3\sigma_{J,J'}/4<\sigma_{J,J'}$. Thus \eqref{p3:eq:outer-tail-invariant} and \eqref{p3:eq:outer-room-invariant} hold for all newly registered same outer strips, while they remain true for the previously registered same outer strips because the common tail has only been shrunk. Hence $\mathfrak S_{n+1}^0$ is legal.
\end{proof}

\section{Scalar compression and primarity}
\label{p3:sec:scalar-compression}

We now combine the arbitrary-to-multiplier reduction with the LMMS scalar compression theorem for bounded product Haar multipliers. This gives the scalar compression statement on $X_{00}$.

\begin{theorem}[Scalar compression on $X_{00}$]
\label{p3:thm:scalar-x00}
For every $S\in\PthreeL(X_{00})$ and every $\varepsilon>0$, there are $A,B\in\PthreeL(X_{00})$ and a scalar $c\in\mathbb R$ such that
\begin{equation*}
        AB=\PthreeId_{X_{00}},\qquad \|ASB-c\PthreeId_{X_{00}}\|_{\PthreeL(X_{00})}<\varepsilon .
\end{equation*}
Moreover, $A$ and $B$ may be chosen so that
\begin{equation*}
        \|A\|\,\|B\|\leq 1+\varepsilon .
\end{equation*}
\end{theorem}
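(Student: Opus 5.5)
The plan is to compose the two compressions already available: first \Cref{p3:thm:reduction-to-multiplier} reduces $S$ to a product Haar multiplier, and then \Cref{p3:thm:lmms} compresses that multiplier to a scalar. Fix $S\in\PthreeL(X_{00})$ and $\varepsilon>0$, and choose a small auxiliary $\delta>0$, to be fixed at the end in terms of $\varepsilon$ only.

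First apply \Cref{p3:thm:reduction-to-multiplier} to $S$ with parameter $\delta$. This gives contractions $A_1,B_1$ (the exterior maps of \Cref{p3:lem:exterior}, so $\|A_1\|,\|B_1\|\leq1$) with $A_1B_1=\PthreeId_{X_{00}}$. It also gives a bounded product Haar multiplier $M$ with $\|A_1SB_1-M\|<\delta$.

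Next apply \Cref{p3:thm:lmms} to $M$ with parameter $\delta$. This gives $A_2,B_2$ with $A_2B_2=\PthreeId_{X_{00}}$, $\|A_2\|\|B_2\|\leq 1+\delta$, and a scalar $\lambda$ with $\|A_2MB_2-\lambda\PthreeId\|<\delta$.

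Set $A=A_2A_1$, $B=B_1B_2$ and $c=\lambda$. Then
\[
AB=A_2(A_1B_1)B_2=A_2B_2=\PthreeId_{X_{00}},
\]
and since $\|A_1\|,\|B_1\|\leq1$,
\[
\|A\|\|B\|\leq \|A_2\|\|B_2\|\leq 1+\delta .
\]
For the main estimate,
\[
\|ASB-c\PthreeId\|\leq \|A_2(A_1SB_1-M)B_2\|+\|A_2MB_2-\lambda\PthreeId\|\leq (1+\delta)\delta+\delta .
\]

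One point needs care before the product bound above is meaningful. The factorization constant in \Cref{p3:thm:lmms} is stated as the product $\|A_2\|\|B_2\|$, not as separate bounds on $\|A_2\|$ and $\|B_2\|$. The two factors can be rebalanced by scaling: replace $A_2$ by $tA_2$ and $B_2$ by $t^{-1}B_2$ with $t>0$. This leaves $A_2B_2$, $A_2MB_2$ and the product of norms unchanged. With this choice the error term satisfies
\[
\|A_2(A_1SB_1-M)B_2\|\leq \|A_2\|\|B_2\|\,\delta ,
\]
so the normalization causes no difficulty. Choosing $\delta$ with $(2+\delta)\delta<\varepsilon$ and $\delta\leq\varepsilon$ gives both conclusions.

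No step here is a genuine obstacle. The difficulty is entirely in \Cref{p3:thm:reduction-to-multiplier} and \Cref{p3:thm:lmms}, which are assumed. The only remaining check is that \Cref{p3:thm:lmms} applies to $M$, which holds because $M$ is a bounded product Haar multiplier on $X_{00}$. In the real-scalar setting $c$ is real, as stated.
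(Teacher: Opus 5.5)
Your proposal is correct and is essentially the paper's own proof. It reduces $S$ to a product Haar multiplier via \Cref{p3:thm:reduction-to-multiplier}, with the contractive exterior maps, compresses that multiplier with \Cref{p3:thm:lmms}, and composes, giving error $(1+\delta)\delta+\delta$ and factorization constant $1+\delta$. The rescaling remark is harmless but unnecessary, since $\|A_2(A_1SB_1-M)B_2\|\leq\|A_2\|\,\|A_1SB_1-M\|\,\|B_2\|$ already holds by submultiplicativity without any normalization of the factors.
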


\begin{proof}
Choose $\delta>0$ small enough so that
\begin{equation*}
        (1+\delta)\delta+\delta<\varepsilon,\qquad 1+\delta<1+\varepsilon .
\end{equation*}
Apply \Cref{p3:thm:reduction-to-multiplier} to $S$ with accuracy $\delta$. We obtain $A_0,B_0\in\PthreeL(X_{00})$ and a bounded product Haar multiplier $M$ on $X_{00}$ such that
\begin{equation*}
        A_0B_0=\PthreeId_{X_{00}},\qquad \|A_0SB_0-M\|_{\PthreeL(X_{00})}<\delta .
\end{equation*}
By \Cref{p3:lem:exterior}, the maps $A_0$ and $B_0$ are contractions.

Apply \Cref{p3:thm:lmms} to $M$ with accuracy $\delta$. There are $A_1,B_1\in\PthreeL(X_{00})$ and $c\in\mathbb R$ such that
\begin{equation*}
        A_1B_1=\PthreeId_{X_{00}},\qquad \|A_1MB_1-c\PthreeId_{X_{00}}\|_{\PthreeL(X_{00})}<\delta,\qquad \|A_1\|\,\|B_1\|\leq 1+\delta .
\end{equation*}
Set
\begin{equation*}
        A=A_1A_0,\qquad B=B_0B_1 .
\end{equation*}
Then
\begin{equation*}
        AB=A_1A_0B_0B_1=A_1B_1=\PthreeId_{X_{00}}.
\end{equation*}
Moreover,
\begin{equation*}
\begin{aligned}
        \|ASB-c\PthreeId_{X_{00}}\|_{\PthreeL(X_{00})}
        &\leq \|A_1(A_0SB_0-M)B_1\|_{\PthreeL(X_{00})}+\|A_1MB_1-c\PthreeId_{X_{00}}\|_{\PthreeL(X_{00})} \\
        &< \|A_1\|\,\|B_1\|\delta+\delta \leq (1+\delta)\delta+\delta<\varepsilon .
\end{aligned}
\end{equation*}
Finally, since $A_0$ and $B_0$ are contractions,
\begin{equation*}
        \|A\|\,\|B\|\leq \|A_1\|\,\|B_1\|\leq 1+\delta<1+\varepsilon.
\end{equation*}
\end{proof}

It remains to pass from the doubly cancellative space back to $X$. The passage uses a standard complemented cancellative copy.

\begin{lemma}[A one-complemented cancellative copy]
\label{p3:lem:cancellative-copy}
The space $X_{00}$ contains an isometric copy of $X$ which is the range of a norm-one projection on $X$.
\end{lemma}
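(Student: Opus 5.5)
We need a one-complemented isometric copy of $X=L_p(L_1)$ inside $X_{00}$. The plan is to embed each variable separately by a measure-preserving "doubling" map onto the cancellative functions, and then to tensor the two maps.

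\emph{The one-variable map.} For a function $f$ on $[0,1)$, define
\begin{equation*}
        (Jf)(s)=f(2s)\mathbf 1_{[0,1/2)}(s)-f(2s-1)\mathbf 1_{[1/2,1)}(s).
\end{equation*}
Thus $Jf$ places a compressed copy of $f$ on the left half and minus that copy on the right half. The following properties should be checked.
\begin{itemize}
\item $Jf$ has integral zero.
\item $J$ is an isometry of $L_r$ for every $1\le r<\infty$, up to no constant: $\int|Jf|^r=\tfrac12\int|f|^r+\tfrac12\int|f|^r$.
\item $J$ is a lattice-type map, in the sense that it is induced by the measure-preserving correspondence between $[0,1)$ and each half, with a sign attached.
\end{itemize}

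\emph{A left inverse.} Define
\begin{equation*}
        (Pg)(u)=\tfrac12\bigl(g(u/2)-g((u+1)/2)\bigr).
\end{equation*}
Then $PJ=\mathrm{Id}$. Moreover, $\|Pg\|_r\le\tfrac12\bigl(\|g(\cdot/2)\|_r+\|g((\cdot+1)/2)\|_r\bigr)$, and by convexity of $t\mapsto t^r$ this is at most $\|g\|_r$. So $JP$ is a norm-one projection on $L_r$ onto $J(L_r)\subset L_r^0$.

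\emph{The two-variable map.} Now set $\mathcal J=J\otimes J$ and $\mathcal P=P\otimes P$, acting in both variables at once. For $F\in L_p(L_1)$,
\begin{equation*}
        (\mathcal JF)(s,t)=\pm F(2s \bmod 1,\,2t\bmod 1),
\end{equation*}
with the sign given by the product of the half indicators.
\begin{itemize}
\item For almost every $s$, the inner $L_1$ norm of $(\mathcal JF)(s,\cdot)$ equals $\|F(2s\bmod 1,\cdot)\|_1$, by the one-variable isometry in $t$.
\item Taking the outer $L_p$ norm and using the isometry in $s$ gives $\|\mathcal JF\|_{L_p(L_1)}=\|F\|_{L_p(L_1)}$.
\item The means in each variable vanish, so $\mathcal JF\in X_{00}$ by \Cref{p3:lem:x00-projection}.
\end{itemize}

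\emph{Contractivity of $\mathcal P$.} This is the step needing care. Write $\mathcal P=(P\otimes\mathrm{Id})(\mathrm{Id}\otimes P)$.
\begin{itemize}
\item $\mathrm{Id}\otimes P$ acts fibrewise in the inner variable. It is contractive on each $L_1$ fibre, hence contractive on $L_p(L_1)$.
\item $P\otimes\mathrm{Id}$ averages two $L_1$-valued fibres: its value is $\tfrac12\bigl(G(u/2)-G((u+1)/2)\bigr)$, where $G$ is the $L_1$-valued function $s\mapsto F(s,\cdot)$. By the triangle inequality in $L_1$ followed by the scalar convexity estimate in $L_p$, this is contractive.
\end{itemize}

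\emph{Conclusion.} We have $\mathcal P\mathcal J=\mathrm{Id}_X$. Hence $\mathcal J\mathcal P$ is a norm-one projection on $X$, and its range $\mathcal J(X)\subset X_{00}$ is isometric to $X$. The same projection restricted to $X_{00}$ is also norm one, which is what the subsequent Pe{\l}czy\'nski argument needs.

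The main obstacle is the mixed-norm verification. Isometry and contractivity have to be checked in the right order: inner $L_1$ first, then outer $L_p$. In particular, the outer averaging acts on $L_1$-valued functions, so the triangle inequality must be applied in $L_1$ before the convexity estimate in $L_p$.
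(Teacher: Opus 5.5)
Your proposal is correct and is essentially the paper's proof. Your $\mathcal J=J\otimes J$ is exactly the paper's map $Uf(s,t)=r(s)r(t)f(\sigma(s),\sigma(t))$, and your $\mathcal P=P\otimes P$ is exactly its four-term average $V$. Contractivity is shown the same way, by the triangle inequality in $L_1$ followed by convexity in $L_p$. The only difference is that you factor $\mathcal P$ into its two one-variable pieces, whereas the paper estimates $V$ in one step.
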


\begin{proof}
Let
\begin{equation*}
        r(t)=\Pthreeone_{[0,1/2)}(t)-\Pthreeone_{[1/2,1)}(t),\qquad \sigma(t)=2t\pmod 1 .
\end{equation*}
Define $U\colon X\to X$ by
\begin{equation*}
        (Uf)(s,t)=r(s)r(t)f(\sigma(s),\sigma(t)).
\end{equation*}
Since $\sigma$ preserves Lebesgue measure and $|r|=1$, we have
\begin{equation*}
        \|Uf\|_{L_p(L_1)}=\|f\|_{L_p(L_1)}\qquad(f\in X),
\end{equation*}
so $U$ is an isometry. Moreover, for almost every $s$,
\begin{equation*}
        \int_0^1 (Uf)(s,t)\,dt=0,
\end{equation*}
and, as an $L_1$-valued integral in the outer variable,
\begin{equation*}
        \int_0^1 (Uf)(s,\cdot)\,ds=0.
\end{equation*}
Thus $U(X)\subset X_{00}$.

Define $V\colon X\to X$ by
\begin{equation*}
        (Vg)(u,v)=\frac14\sum_{\alpha,\beta\in\{0,1\}}(-1)^{\alpha+\beta}g\left(\frac{u+\alpha}{2},\frac{v+\beta}{2}\right).
\end{equation*}
For $f\in X$ we have
\begin{equation*}
        U f\left(\frac{u+\alpha}{2},\frac{v+\beta}{2}\right)=(-1)^{\alpha+\beta}f(u,v),
\end{equation*}
and therefore
\begin{equation*}
        VUf=f.
\end{equation*}

It remains to check that $V$ is contractive. For $g\in X$, put
\begin{equation*}
        a(s)=\|g(s,\cdot)\|_{L_1}.
\end{equation*}
Then, for almost every $u$,
\begin{equation*}
        \|Vg(u,\cdot)\|_{L_1}\leq \frac12\left(a(u/2)+a((u+1)/2)\right).
\end{equation*}
By convexity and a change of variables,
\begin{equation*}
        \|Vg\|_{L_p(L_1)}^p\leq \int_0^1\frac12\left(a(u/2)^p+a((u+1)/2)^p\right)\,du=\|g\|_{L_p(L_1)}^p.
\end{equation*}
Thus $\|V\|\leq1$.

Now set
\begin{equation*}
        P=UV .
\end{equation*}
Since $VU=\PthreeId_X$, we have
\begin{equation*}
        P^2=UVUV=U(VU)V=UV=P,
\end{equation*}
and
\begin{equation*}
        \operatorname{ran} P=U(X)\subset X_{00}.
\end{equation*}
Also $\|P\|\leq\|U\|\|V\|\leq1$, and since $P$ is a non-zero projection, $\|P\|=1$. Hence $U(X)$ is an isometric copy of $X$ contained in $X_{00}$ and one-complemented in $X$.
\end{proof}

Finally, we are ready for the proof of our main result.

\begin{proof}[Proof of \Cref{p3:thm:main}]
Let
\begin{equation*}
        P_{00}=(\PthreeId-\mathbb E_1)(\PthreeId-\mathbb E_2)\colon X\to X_{00},
\end{equation*}
where $\mathbb E_i$ denotes integration in the $i$-th variable. By \Cref{p3:lem:x00-projection}, $P_{00}$ is a bounded projection onto $X_{00}$ and $\|P_{00}\|\leq4$. Let $\iota\colon X_{00}\to X$ denote the inclusion.

Fix $T\in\PthreeL(X)$, and define
\begin{equation*}
        S=P_{00}T\iota\in\PthreeL(X_{00}).
\end{equation*}
Apply \Cref{p3:thm:scalar-x00} to $S$ with accuracy $\eta=1/8$. We obtain $A_0,B_0\in\PthreeL(X_{00})$ and $c\in\mathbb R$ such that
\begin{equation*}
        A_0B_0=\PthreeId_{X_{00}},\qquad \|A_0SB_0-c\PthreeId_{X_{00}}\|_{\PthreeL(X_{00})}<1/8,\qquad \|A_0\|\,\|B_0\|\leq 9/8.
\end{equation*}
Either $|c|\geq1/2$ or $|1-c|\geq1/2$.

First suppose that $|c|\geq1/2$. Then $A_0SB_0$ is invertible on $X_{00}$, and
\begin{equation*}
        \|(A_0SB_0)^{-1}\|\leq \frac1{|c|-1/8}\leq \frac83.
\end{equation*}
Set
\begin{equation*}
        R=(A_0SB_0)^{-1}.
\end{equation*}
Since $S=P_{00}T\iota$, we have
\begin{equation*}
        RA_0P_{00}T\iota B_0=\PthreeId_{X_{00}}.
\end{equation*}

Now let $U\colon X\to X_{00}$ and $V\colon X_{00}\to X$ be the maps from \Cref{p3:lem:cancellative-copy}, so that $U$ is an isometry, $V$ is a contraction, and $VU=\PthreeId_X$. Composing the preceding factorization with $U$ on the right and $V$ on the left gives
\begin{equation*}
        VRA_0P_{00}T\iota B_0U=\PthreeId_X.
\end{equation*}
Thus $\PthreeId_X$ factors through $T$, with factorization constant at most
\begin{equation*}
        \|V\|\,\|R\|\,\|A_0\|\,\|P_{00}\|\,\|B_0\|\,\|U\|\leq \frac83\cdot\frac98\,\|P_{00}\|=3\|P_{00}\|.
\end{equation*}

Now suppose that $|1-c|\geq1/2$. Since
\begin{equation*}
        A_0(\PthreeId_{X_{00}}-S)B_0=\PthreeId_{X_{00}}-A_0SB_0,
\end{equation*}
we have
\begin{equation*}
        \|A_0(\PthreeId_{X_{00}}-S)B_0-(1-c)\PthreeId_{X_{00}}\|_{\PthreeL(X_{00})}<1/8.
\end{equation*}
Thus $A_0(\PthreeId_{X_{00}}-S)B_0$ is invertible and its inverse has norm at most $8/3$. Since
\begin{equation*}
        \PthreeId_{X_{00}}-S=P_{00}(\PthreeId_X-T)\iota,
\end{equation*}
the same argument gives
\begin{equation*}
        V\bigl(A_0(\PthreeId_{X_{00}}-S)B_0\bigr)^{-1}A_0P_{00}(\PthreeId_X-T)\iota B_0U=\PthreeId_X.
\end{equation*}
Hence $\PthreeId_X$ factors through $\PthreeId_X-T$, again with factorization constant at most $3\|P_{00}\|$.

Therefore, $X$ has the uniform primary factorization property, with constant $K=3\|P_{00}\|$. Since $\mathbb E_1$ and $\mathbb E_2$ are contractive projections, $\|P_{00}\|\leq4$, so one may take $K\leq12$. Primariness follows automatically from the UPFP and Pe\l czy\'nski's decomposition method.
\end{proof}

\end{problempaperbody}

\newpage
\part{The Automated Pipeline}\label{part3}
\markboth{The Automated Pipeline}{The Automated Pipeline}
\section{Technical and methodological considerations}
\label{sec:technical-details-pipeline}

\subsection{An experimental search process}

The computational part of the project began as an experimental search procedure rather than as a fully developed autonomous system. In the initial runs, a small number of agents inspected papers, extracted questions, attempted proofs or counterexamples, and retained promising outputs for later review. After these runs produced several promising proof candidates, we formalized the procedure into the pipeline described below.

The subsequent refinements were primarily organizational rather than mathematical. The agent instructions did not include problem-specific lemmas, proof strategies, or techniques. Agents were instructed to read the source paper, assess whether an extracted question was approachable, and attempt it using the local context and any references they chose to consult. Later changes concerned classification, record keeping, and coordination: distinguishing full from partial results, identifying answers already present in the literature, preserving unsuccessful attempts, preventing duplication across parallel agents, and standardizing the resulting packets. The pipeline supplied candidate targets, while the protocol provided a common structure in which attempts could be recorded and reviewed; neither prescribed a solution path for an individual problem. Human involvement remained substantial: we selected the broad mathematical area, revised the protocol in response to observed failure modes, chose packets for closer examination, and edited or rewrote the material retained for inclusion.

\subsection{Pipeline overview}

\Cref{fig:pipeline-overview} gives the operational view of the stable version of the experiment.  The diagram separates two automatic stages, candidate setup and agent work. Run memory denotes shared state consulted by agents, while human review sits outside the automatic loop.

\begin{figure}[t]
\centering
\definecolor{pipeblue}{RGB}{232,238,247}
\definecolor{pipegreen}{RGB}{232,243,235}
\definecolor{pipeamber}{RGB}{249,242,226}
\definecolor{pipeviolet}{RGB}{241,236,247}
\definecolor{pipegray}{RGB}{246,246,246}
\resizebox{\textwidth}{!}{%
\begin{tikzpicture}[
  font=\small,
  groupbox/.style={draw=black!35, rounded corners=4pt, dashed, line width=.55pt},
  grouplabel/.style={font=\scriptsize\bfseries, fill=white, inner sep=2pt},
  stagebox/.style={draw=black!55, rounded corners=2pt, minimum height=1.35cm,
    text width=3.35cm, align=center, inner sep=5pt},
  setupbox/.style={stagebox, text width=4.05cm},
  widebox/.style={stagebox, text width=3.75cm},
  arrow/.style={-{Latex[length=2.2mm]}, line width=0.55pt, draw=black!70},
  feedback/.style={-{Latex[length=2.1mm]}, line width=0.45pt,
    draw=black!55, dashed}
]
\node[grouplabel, anchor=west] at (-2.80,1.56) {Candidate setup};
\draw[groupbox, fill=pipeblue!20] (-2.85,-1.38) rectangle (12.75,1.38);
\node[grouplabel, anchor=west] at (2.20,-2.18) {Agent work};
\draw[groupbox, fill=pipeamber!20] (2.20,-5.10) rectangle (11.90,-2.40);

\node[setupbox, fill=pipeblue] (corpus) at (0,0)
  {\textbf{Corpus}\\arXiv metadata\\TeX/PDF source};
\node[setupbox, fill=pipeblue] (signals) at (4.95,0)
  {\textbf{Source signals}\\question and\\conjecture cues};
\node[setupbox, fill=pipegreen] (queue) at (9.90,0)
  {\textbf{Target queue}\\ranked targets\\lane assignment};

\node[stagebox, fill=pipegray] (memory) at (-0.85,-3.75)
  {\textbf{Run memory}\\global index\\past outcomes};
\node[stagebox, fill=pipeamber] (agent) at (4.65,-3.75)
  {\textbf{Agent attempt}\\read source\\try proof/example};
\node[widebox, fill=pipeviolet] (packet) at (9.25,-3.75)
  {\textbf{Make packet}\\result or\\attempt note};
\node[stagebox, fill=pipegray] (review) at (14.15,-3.75)
  {\textbf{Human review}};

\draw[arrow] (corpus.east) -- (signals.west);
\draw[arrow] (signals.east) -- (queue.west);
\draw[arrow] (queue.south) -- ++(0,-1.12) -| (agent.north);
\draw[arrow] (memory.east) -- (agent.west);
\draw[arrow] (agent.east) -- (packet.west);
\draw[arrow] (packet.east) -- (review.west);

\draw[feedback] (packet.south) -- ++(0,-.95) -| (memory.south);
\end{tikzpicture}
}
\caption{Operational view of the automatic proof-discovery pipeline. Candidate
setup builds the target queue; agent work consults run memory, makes a packet
or attempt note, and passes the result to human review.}
\label{fig:pipeline-overview}
\end{figure}

The implementation combined scripts with protocol files read by the model. Scripts handled the deterministic parts: collecting source material, ranking papers, assigning lanes, and rebuilding the global indexes. The mathematical choices were left to the agent. At the start of a run the agent read a protocol file, selected targets from its lane, inspected the source, and decided whether to attempt a proof, search for a counterexample, record a literature answer, or leave an attempt note.

This is not a fully scripted proof engine. It is a controlled way to run Codex sessions as mathematical agents, while preserving enough structure for their outputs to be compared across many papers and sessions. Once the protocols stabilized, most deviations concerned formatting or coordination rather than changes in the mathematical task.

\subsection{Candidate setup}

The candidate setup corresponds to the three boxes in the upper part of \Cref{fig:pipeline-overview}.  The \emph{corpus} consisted of arXiv metadata together with TeX source whenever it was available.  TeX source was important because open questions, theorem environments, definitions, and references can be located more reliably in source form than from PDF text extraction. 

The second box, \emph{source signals}, is a deliberately simple extraction stage. We searched the source for phrases such as ``Question'', ``Problem'', ``Conjecture'', ``we ask'', and ``we do not know''.  These signals did not define the target by themselves.  They only marked passages that an agent should read.  This distinction matters because many papers ask a question and
then answer it nearby, or quote an older problem only to report a known solution.

The final setup box is the \emph{target queue}. Ranking was a triage device, not a measure of absolute importance.  For the Banach-space run, the ranker favoured functional-analytic terminology, explicit question or conjecture language, publication and affiliation signals, and references to central work in the area.  The goal is practical: to give the agents a stream of papers likely to contain serious questions that were still specific enough for an autonomous attempt.

\subsection{Agent work}

The agent workflow in \Cref{fig:pipeline-overview} begins with the run memory. Before attacking a target, an agent checks whether the paper, the question, or a close variant has already appeared in the run. The same check helps avoid same paper false positives, in which a paper states a question and then answers it nearby. If the target has already been covered, the agent is instructed to record only what is useful for future avoidance and then move on.

The next stage is the mathematical attempt. After reading the relevant source passage, the agent decides whether to seek a proof or counterexample, establish a special case, isolate an obstruction, or identify an applicable theorem from the literature. The instructions give priority to full proofs and counterexamples, followed by substantial partial results and conditional theorems. Literature searches are used to identify directly relevant prior results, particularly before an output is classified as a full solution; they are not intended to constitute an exhaustive review. If an existing answer is found, the agent records it and moves to another target. Otherwise, it proceeds with the mathematical attempt.

The final stage in the agent workflow is \emph{make packet}. If the attempt produces a substantive mathematical claim, the agent writes either a packet or a compact attempt note. The protocol instructs agents not to call a proof complete when it depends on an unproved lemma, to label conditional arguments honestly, and to record novelty checks for claimed full solutions or counterexamples. These safeguards reduce overclaiming, but do not eliminate it. Several packets later changed status after review. Thus packets should be read as structured claims for inspection, not as formal certificates. In the case of partial results, agents can be encouraged to push the argument to a full result, in which case they can produce an additional packet.

The early runs used one or two agents under close human supervision. Later runs are organized to support parallel work. In the main Banach run, each paper is assigned to one of twenty possible lanes by a deterministic hash of its arXiv identifier. An agent launched on a lane asks a queue helper for suitable targets in that lane.

\subsection{Run memory and packet records}

Run memory is the shared state behind the experiment. Each substantial result or literature identification is recorded there, together with the papers and questions already associated with full solutions, counterexamples, partial results, conditional results, literature answers, proof gaps, or attempts. Agents are instructed to consult this memory before committing to a target and to update it after each attempt. In parallel runs, the same memory also records which targets are currently being pursued, reducing collisions among agents.

The output of an agent run is not necessarily a solution packet. If an attempt fails but contains useful information, the agent can leave an attempt note. If it produces a result, a serious reduction, a proof gap, or a literature identification, it updates the run memory and, when appropriate, writes a human readable packet. The packet is the persistent record. It identifies the source paper, the question or conjecture being addressed, the proposed result, and enough evidence for a human reader to reconstruct the claim. Once the packet format had been standardized, packets also included the original arXiv PDF, a brief account of the proof idea, the references consulted, and any code or exact computations used in the argument.

The packet categories exist because early runs made clear that ``solved'' was too coarse a label. We use the following distinctions.

\begin{enumerate}[label=\emph{(\roman*)}, leftmargin=2em]
\item \emph{Full solution.}  A claimed proof of the extracted question as stated in the source paper.  These packets are treated as high priority candidates for human review, not as formally certified proofs.

\item \emph{Counterexample.}  A construction satisfying the hypotheses of a question or conjecture while violating its proposed conclusion.  As with full solutions, these require later checks of both the proof and the novelty status.

\item \emph{Partial result.}  A solved subcase, theorem adjacent to the original problem, meaningful reduction, sharp obstruction, or quantitative improvement that does not settle the full source question.

\item \emph{Conditional result.}  An argument whose remaining dependency is isolated explicitly, for example an unproved lemma, a scalar inequality, or a computational check.

\item \emph{Literature answer.}  A record that the question is already answered elsewhere.  Within this category we distinguish two cases.  An \emph{explicit literature answer} means that another paper explicitly answers the original question.  An \emph{implied literature answer} means that an
existing theorem answers the question only after an identification or reformulation made by the agent.  Both kinds of record are useful for avoiding duplication, but neither is counted as a new result of the pipeline.

\item \emph{Proof gap.} A possible gap identified incidentally in a source paper or in one of our earlier packets. This category is included because agents began pointing out such gaps while attempting to solve other problems. We do not systematically search for proof gaps, and a gap is not counted as a solution unless it subsequently leads to a corrected theorem or a genuine counterexample. These gaps have not been independently verified and may therefore reflect errors made by the model.
\end{enumerate}

These categories respond to concrete failure modes. Sometimes an agent finds a theorem in the literature and initially writes as if it has proved something new.  Sometimes it solves a nearby subcase but not the actual question.  Sometimes it finds a real obstruction, but only under an additional assumption.  Sometimes, while attempting a target, it notices a possible gap in
the source paper or in an earlier packet.  The taxonomy is a way of making these outcomes visible rather than hiding them inside a single success/failure
label.

\subsection{Scale of the \texorpdfstring{\texttt{math.FA}}{math.FA} run}

The Banach-space run grew in stages.  The first small-scale pass began with a ranked metadata pool of 439 papers. From this pool we selected 35 papers for source probing, and the deterministic source scan recorded 166 open-problem signals across 28 of them.  This first pass was mainly a feasibility test of whether the model could find real questions and produce packets that a human would want to read.

The main pipeline run, from which the principal results reported in this paper are drawn, then used a source backed queue for \texttt{math.FA}. Candidate papers were ranked using deterministic metadata and source text signals. Papers were admitted to the queue if source text could be extracted, at least one deterministic open problem signal was detected, and the paper passed a functional analysis relevance filter. This procedure produced a queue of 1,433 papers. All aggregate counts, rates, and quantitative results reported in this paper refer to this 1,433 paper run.

After this run, we expanded the source corpus to include the full modern \texttt{math.FA} collection of 35,297 papers published from 2000 through 2026. Source text was successfully extracted from 34,890 of these papers. The deterministic scan identified 42,483 open problem signals across 15,666 papers. This expanded corpus will be used for live updates on the \website[project website]\footnote{Future live updates may include solutions generated by the newer GPT 5.6 model. None of the results discussed in this paper were generated by that model.}, \textbf{but it is not included in the aggregate counts reported in this paper}.

\subsection{Summary of Results}

This section summarizes the aggregate output of the discovery run. We separate active run records from packets that entered human review, since these represent different stages of the workflow. The tables below report both the total number of records or packets and the corresponding number of unique sources. Unique sources are counted by primary arXiv identifier, so the two columns need not agree: a single source paper may contribute more than one result, and multiple attempts may target the same paper.

\begin{table}[htbp]
\centering
\begin{tabular}{@{}p{0.60\textwidth}rr@{}}
\hline
\textbf{Category} & \textbf{Records} & \textbf{Unique sources} \\
\hline
Full solutions & 126 & 124 \\
Counterexamples & 102 & 101 \\
Partial results & 211 & 185 \\
Conditional results & 14 & 14 \\
Literature already answered & 140 & 137 \\
Literature implied answers & 116 & 114 \\
Proof gaps & 18 & 17 \\
\hline
Attempt records & 925 & 822 \\
All active solution packets & 709 & 653 \\
All registry records & 1127 & 945 \\
\hline
\end{tabular}
\caption{Aggregate counts for records in the active run indexes. Packets moved to human review are reported separately in \Cref{tab:human-review-counts}.}
\label{tab:active-run-counts}
\end{table}

\Cref{tab:active-run-counts} reports the active state of the run. The solution categories count packets that remain in the active solution folders; packets moved to human review are excluded from these counts and reported separately. Differences between the record count and the unique source count arise when a single source paper contributes more than one record. For instance, arXiv:1901.07866 contributes two records; arXiv:2312.14711 contributes two records in the full solution category. Partial results can likewise include multiple records for the same target, since distinct partial attacks may yield different intermediate results that the model judged worth preserving. Attempt records should not be interpreted as disjoint mathematical results. Rather, they are bookkeeping entries for searches, failed attacks, same paper triage, and partial investigations. The registry is broader still: it serves as the long term memory of the run, recording active packets, reviewed packets, failures, and other outcomes used to avoid rediscovering the same target.

\begin{table}[htbp]
\centering
\begin{tabular}{@{}p{0.54\textwidth}rr@{}}
\hline
\textbf{Human review outcome} & \textbf{Packets} & \textbf{Unique sources} \\
\hline
Verified & 31 & 29 \\
Rejected & 10 & 10 \\
\hline
\end{tabular}
\caption{Human reviewed packets grouped by recorded review outcome. These packets have been moved out of the active solution folders and are therefore not counted as active solution packets in \Cref{tab:active-run-counts}.}
\label{tab:human-review-counts}
\end{table}

\Cref{tab:human-review-counts} reports the corresponding outcomes for packets that entered human review. These counts are kept separate from the active run counts because review changes the status of a packet. Once a packet is moved into the human review workflow, it no longer represents an unresolved active solution packet. The table therefore records the assigned review outcome while preserving the same distinction between total packets and unique source papers.

\subsection{Scope, verification, novelty, and attribution}
\label{sec:scope-verification-attribution}

The labels ``full solution'' and ``counterexample'' describe the pipeline's current classification of an attempt. They should not be interpreted as guarantees that the argument is correct, that it addresses the source question exactly as intended, or that the result is novel. An attempt may contain a mathematical error, rely on an incorrect interpretation or extraction of the problem statement, overlook a hypothesis, or establish only a nearby result. As reflected in the verified and rejected outcomes reported in \Cref{tab:human-review-counts}, subsequent model and human review can reveal errors, gaps, or misunderstandings that were not apparent in the initial packet.

We conducted targeted literature searches and recorded known answers where they were found, but these searches were not exhaustive. In particular, the absence of a result from the references inspected by the pipeline should not be taken as evidence that the result is new. A proof or counterexample classified as potentially new may already appear, either explicitly or implicitly, in the existing literature, possibly in different terminology or as a consequence of a more general theorem. Claims of novelty therefore remain provisional until the relevant literature has been examined more thoroughly by researchers familiar with the area.

All reported attempts should be treated as candidates for mathematical inspection rather than as independently certified results. Establishing a result requires more than producing a plausible argument. It requires understanding the proof, checking that it addresses the intended problem, identifying and correcting errors where necessary, determining its relationship to existing work, and explaining its significance within the surrounding mathematical context.

For any result that is ultimately established, mathematical credit belongs to the people who formulated the question and to the mathematicians who understand, verify, correct, develop, and contextualize the argument. These contributions are central to both the mathematical value of the result and its proper attribution. The model and pipeline are best understood as research tools that can support the search for connections, the generation of candidate ideas, and the organization of possible arguments.

\section{Selected results from the automated pipeline}
\label{sec:selected-pipeline-results}

This section presents a small selection of results obtained by the automated pipeline described above. The examples were chosen arbitrarily by the first-named author and are intended only to illustrate the range and character of the mathematics produced by the pipeline; they should not be regarded as exhaustive, representative, or selected according to mathematical significance. For readability, the original arguments have been rewritten and reorganised to improve their exposition. The complete collection of results and their accompanying records is available at the \website. Each example below records its origin, the question addressed, and the resulting answer and proof.

\subsection{A tube construction for uniformly Lipschitz maps on decomposable Banach balls}
\label{sec:selected-decomposable-uniformly-lipschitz}

\noindent\textit{Origin.} Question~2 in the paper of Barroso and Ferreira \cite{BarrosoFerreira2026RetractionMethods} asks whether the closed unit ball of every infinite-dimensional Banach space admits a fixed-point-free uniformly Lipschitz self-map with null minimal displacement.

\begin{question}
Let $X$ be an infinite-dimensional Banach space. Does there exist a map $T\colon B_X\to B_X$ such that
\begin{equation*}
 \operatorname{Fix}(T)=\varnothing,
 \qquad
 d(T,B_X):=\inf_{x\in B_X}\|Tx-x\|=0,
 \qquad
 \sup_{n\geq1}\operatorname{Lip}(T^n)<\infty?
\end{equation*}
\end{question}

\begin{theorem}
\label{thm:selected-decomposable-uniformly-lipschitz}
Suppose that
\begin{equation*}
 X=Y\oplus Z
\end{equation*}
as a topological direct sum, where $Y$ and $Z$ are closed infinite-dimensional subspaces. Then there is a map $T\colon B_X\to B_X$ such that
\begin{equation*}
 \operatorname{Fix}(T)=\varnothing,
 \qquad
 d(T,B_X)=0,
 \qquad
 \sup_{n\geq1}\operatorname{Lip}(T^n)<\infty.
\end{equation*}
\end{theorem}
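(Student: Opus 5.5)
The plan is to reduce the problem to a product of two balls, and then to build $T$ in ``tube'' form: a contraction in the $Z$-coordinate steers every point into a thin tube around a model set in $Y$, and a shift-type map acts along that set.

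\textbf{Step 1: reduction to a product of balls.} Since $X=Y\oplus Z$ is a topological direct sum, the norm $|(y,z)|=\max\{\|y\|,\|z\|\}$ is equivalent to the original norm. The radial map $x\mapsto (\|x\|/|x|)\,x$ (with $0\mapsto0$) is a bi-Lipschitz homeomorphism from $B_X$ onto the unit ball of $|\cdot|$, which is $B_Y\times B_Z$. Conjugating by a bi-Lipschitz map $\Phi$ preserves each of the three required properties:
\begin{itemize}
\item fixed points correspond to fixed points;
\item null minimal displacement is preserved, since displacements change by at most a factor $\operatorname{Lip}(\Phi)\operatorname{Lip}(\Phi^{-1})$;
\item $\operatorname{Lip}\bigl((\Phi T\Phi^{-1})^n\bigr)\le \operatorname{Lip}(\Phi)\operatorname{Lip}(\Phi^{-1})\operatorname{Lip}(T^n)$.
\end{itemize}
So it suffices to construct $T$ on $B_Y\times B_Z$ with the max norm.

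\textbf{Step 2: the model dynamics in $Y$.} Fix a normalized basic sequence $(y_k)_{k\ge0}$ in $Y$, and let $K\subset B_Y$ be the closed convex set of convex combinations $\sum t_k y_k$ with $t_k\ge0$ and $\sum t_k=1$. On $K$ I would use a right-shift-type affine map $s$. The shift itself has no fixed points in $K$, and its iterates move mass to the right. To get uniform Lipschitz bounds I would use a bounded averaged version, so that $\operatorname{Lip}(s^n)$ stays controlled by the basis constant. To get null displacement I would use slowly spreading averages, for instance $u_N=N^{-1}\sum_{k<N}y_k$, which satisfy $\|s(u_N)-u_N\|\to0$.

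\textbf{Step 3: the tube construction.} The difficulty is that in general there is no Lipschitz retraction of $B_Y$ onto $K$; this is where the infinite-dimensional complement $Z$ enters. I would define
\[
T(y,z)=\bigl(\,(1-\phi(z))\,y+\phi(z)\,s(\pi(y)),\;g(z)\,\bigr),
\]
with the following ingredients:
\begin{itemize}
\item $g\colon B_Z\to B_Z$ is a uniformly Lipschitz map whose iterates drive every point toward a point $z_0$ of the sphere;
\item $\phi$ is a Lipschitz cutoff, equal to $1$ near $z_0$ and to $0$ far from $z_0$;
\item $\pi$ is a Lipschitz ``coarse projection'' onto $K$, which only needs to act within the tube $\{\phi>0\}$.
\end{itemize}
Rather than an exact retraction, $\pi$ would be realized by re-indexing coordinates along $Z$, using an infinite-dimensional family of directions in $Z$ to encode the position along $K$. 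The intended behaviour of each part is as follows:
\begin{itemize}
\item \emph{Fixed-point freeness.} Off the tube the $Z$-coordinate moves, and inside the tube the $Y$-coordinate is pushed by the fixed-point-free shift.
\item \emph{Null displacement.} This is witnessed by the points $(u_N,z_0)$.
\item \emph{Uniform Lipschitz bounds for $T^n$.} The $Z$-component decouples, and on the tube $T^n$ is, up to bounded error, $s^n$ composed with the Lipschitz map $\pi$.
\end{itemize}

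\textbf{Main obstacle.} The hardest step is the uniform bound $\sup_n\operatorname{Lip}(T^n)<\infty$ in the transition region where $0<\phi<1$. There the convex combination of $y$ and $s(\pi(y))$ is iterated, and the errors could accumulate. My first attempt would be to choose $g$ so that every orbit spends only a uniformly bounded number of steps in the transition region, for example with $g$ a retraction-like map onto a segment reached in finitely many steps. Then $T^n$ factors as a bounded number of ``mixing'' steps followed by pure shift iterates $s^m\circ\pi$, each of which is controlled. If this fails, the fallback is to absorb the transition region into a second, disjoint tube with the roles of $Y$ and $Z$ reversed, using that both summands are infinite-dimensional.
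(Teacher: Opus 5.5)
There is a genuine gap in Steps~2--3, and it is structural rather than a matter of unfinished technicalities. Your Step~1 reduction is fine.

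\textbf{The slice $B_Y\times\{z_0\}$ is invariant.} Your null-displacement witnesses are the points $(u_N,z_0)$. In the max norm, $\|T(u_N,z_0)-(u_N,z_0)\|\ge\|g(z_0)-z_0\|$, so you need $g(z_0)=z_0$; this also follows by continuity if all $g$-orbits converge to $z_0$. Since $\phi(z_0)=1$, the slice $B_Y\times\{z_0\}$ is then $T$-invariant, with $T(y,z_0)=(s(\pi(y)),z_0)$. Consequently $y\mapsto s(\pi(y))$ would itself be a fixed-point-free self-map of $B_Y$ with uniformly Lipschitz iterates and null minimal displacement, witnessed by $u_N$.

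\textbf{This is the original question again.} That is the same problem posed for $Y$, and $Y$ here is an arbitrary infinite-dimensional Banach space. For instance, take $X=Y\oplus Y$ with $Y$ hereditarily indecomposable. This is precisely the general case the theorem does not claim to settle. So the $Z$-summand does no work in your design. Because the $Z$-coordinate decouples, and the small displacement comes from slow transport along $K\subset Y$, the whole difficulty is pushed back into $B_Y$.

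\textbf{The ingredients are also not constructed.} You concede there is no Lipschitz retraction onto $K$ in general. ``Re-indexing along $Z$'' cannot help, because $\pi$ is only ever evaluated at $y$. The right shift on the convex hull of an arbitrary basic sequence need not be Lipschitz, and the ``bounded averaged version'' with uniformly bounded iterates is never specified. Finally, fixed-point freeness of $s\circ\pi$ requires $\pi\circ s$ to have no fixed point in $K$, which you do not address.

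\textbf{The missing idea.} Make the $Z$-dynamics depend on the $Y$-position, and put the fixed-point freeness in $Z$ rather than in $Y$. The paper proceeds as follows.
\begin{enumerate}
\item Take Benyamini--Sternfeld Lipschitz retractions $R_Y\colon B_Y\to S_Y$ and $R_Z\colon B_Z\to S_Z$.
\item Retract the $Y$-part onto $aS_Y$ by $A(y)=aR_Y(\kappa_Y(y/a))$, where $\kappa_Y$ is the radial retraction.
\item Choose a Lipschitz $r\colon aS_Y\to(0,b]$ with $r(au_n)\to0$ along a separated sequence $(u_n)\subset S_Y$ given by Riesz's lemma.
\item With $\rho(x)=r(A(Px))$, set $T(x)=A(Px)-\rho(x)R_Z(\kappa_Z(Qx/\rho(x)))$.
\end{enumerate}
Then $T$ maps $B_X$ into the tube $\{y+w:\|y\|=a,\ w\in Z,\ \|w\|=r(y)\}$ and acts on it as the antipodal involution $y+w\mapsto y-w$. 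This gives all three properties:
\begin{enumerate}
\item $T^3=T$, so the iterates alternate between two Lipschitz maps;
\item $T$ has no fixed point, because $w\neq-w$ for $w\neq0$;
\item for $z_0\in S_Z$, the displacement at $au_n+r(au_n)z_0$ equals $2r(au_n)\to0$.
\end{enumerate}
No shift, basic sequence, or transition-region analysis is needed. The small displacement comes from shrinking the radius of the flip, not from slow motion in $Y$.
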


\begin{proof}
We begin with three auxiliary observations. For a normed space $E$, define the radial retraction $\kappa_E\colon E\to B_E$ by
\begin{equation*}
 \kappa_E(x)=
 \begin{cases}
 x, & \|x\|\leq1,\\[1mm]
 \dfrac{x}{\|x\|}, & \|x\|>1.
 \end{cases}
\end{equation*}
This map is $2$-Lipschitz. Indeed, suppose that $\|x\|\leq\|y\|$. The assertion is immediate if $x,y\in B_E$. If $\|x\|\leq1<\|y\|$, then
\begin{equation*}
 \left\|x-\frac{y}{\|y\|}\right\|
 \leq \|x-y\|+\left\|y-\frac{y}{\|y\|}\right\|
 =\|x-y\|+\|y\|-1
 \leq2\|x-y\|.
\end{equation*}
If $1<\|x\|\leq\|y\|$, then
\begin{equation*}
\begin{aligned}
 \left\|\frac{x}{\|x\|}-\frac{y}{\|y\|}\right\|
 &\leq \frac{\|x-y\|}{\|x\|}
     +\|y\|\left|\frac1{\|x\|}-\frac1{\|y\|}\right|\\
 &=\frac{\|x-y\|}{\|x\|}
     +\frac{\|y\|-\|x\|}{\|x\|}
 \leq2\|x-y\|.
\end{aligned}
\end{equation*}

We next record an estimate for selecting a point on a sphere whose radius varies. Let $R_Z\colon B_Z\to S_Z$ be a Lipschitz retraction, put
\begin{equation*}
 L=\operatorname{Lip}(R_Z),
\end{equation*}
and, for $r>0$ and $q\in Z$, define
\begin{equation*}
 H(r,q)=rR_Z\bigl(\kappa_Z(q/r)\bigr).
\end{equation*}
Then
\begin{equation*}
 \|H(r,q)-H(s,p)\|
 \leq2L\|q-p\|+(1+L)|r-s|
 \qquad(r,s>0,\ p,q\in Z).
\end{equation*}
For fixed $r>0$, the estimate for $\kappa_Z$ gives
\begin{equation*}
 \|H(r,q)-H(r,p)\|\leq2L\|q-p\|.
\end{equation*}
To compare the radii, suppose that $0<r\leq s$. For fixed $q\in Z$,
\begin{equation*}
 \|H(r,q)-H(s,q)\|
 \leq s-r+rL\|\kappa_Z(q/r)-\kappa_Z(q/s)\|.
\end{equation*}
Writing $c=\|q\|$, a direct consideration of the cases $c\leq r$, $r<c<s$, and $s\leq c$ gives
\begin{equation*}
 r\|\kappa_Z(q/r)-\kappa_Z(q/s)\|\leq s-r.
\end{equation*}
The claimed estimate follows.

Finally, every infinite-dimensional Banach space $Y$ admits a Lipschitz function
\begin{equation*}
 \eta\colon S_Y\to(0,1]
\end{equation*}
whose infimum is zero. To see this, use Riesz' lemma to choose a sequence $(u_n)_{n=1}^{\infty}\subset S_Y$ and $\delta>0$ such that
\begin{equation*}
 \|u_n-u_m\|\geq\delta \qquad(n\ne m).
\end{equation*}
Let $(\varepsilon_n)_{n=1}^{\infty}$ be a decreasing sequence of positive numbers converging to zero, and define
\begin{equation*}
 \eta(u)=\min\left\{1,\inf_{n\in\N}
       \bigl(\varepsilon_n+\|u-u_n\|\bigr)\right\}
 \qquad(u\in S_Y).
\end{equation*}
The function $\eta$ is $1$-Lipschitz and satisfies
\begin{equation*}
 \eta(u_n)\leq\varepsilon_n \qquad(n\in\N),
\end{equation*}
so its infimum is zero. It is nevertheless strictly positive at every point. Indeed, at most one member of the separated sequence can lie strictly within distance $\delta/2$ of a given $u\in S_Y$; all the remaining terms in the infimum are at least $\delta/2$, while the possible exceptional term is also positive.

We now construct the required map. Let $P\colon X\to Y$ and $Q\colon X\to Z$ be the bounded projections associated with the decomposition, and choose $a,b>0$ such that
\begin{equation*}
 a+b\leq1.
\end{equation*}
By the theorem of Benyamini and Sternfeld \cite{BenyaminiSternfeld1983}, there are Lipschitz retractions
\begin{equation*}
 R_Y\colon B_Y\to S_Y,
 \qquad
 R_Z\colon B_Z\to S_Z.
\end{equation*}
Define $A\colon Y\to aS_Y$ by
\begin{equation*}
 A(y)=aR_Y\bigl(\kappa_Y(y/a)\bigr).
\end{equation*}
Then $A$ is Lipschitz and fixes every point of $aS_Y$. Using the function $\eta$ constructed above, define
\begin{equation*}
 r(y)=b\eta(y/a)
 \qquad(y\in aS_Y).
\end{equation*}
Thus $r$ is Lipschitz,
\begin{equation*}
 0<r(y)\leq b \qquad(y\in aS_Y),
\end{equation*}
and
\begin{equation*}
 \inf_{y\in aS_Y}r(y)=0.
\end{equation*}

For $x\in B_X$, set
\begin{equation*}
 y(x)=A(Px),
 \qquad
 \rho(x)=r(y(x)),
\end{equation*}
and
\begin{equation*}
 \Phi(x)
 =H(\rho(x),Qx)
 =\rho(x)R_Z\bigl(\kappa_Z(Qx/\rho(x))\bigr).
\end{equation*}
Finally, define
\begin{equation*}
 T(x)=y(x)-\Phi(x).
\end{equation*}
The preceding estimates show that $T$ is Lipschitz. Moreover,
\begin{equation*}
 \|y(x)\|=a,
 \qquad
 \|\Phi(x)\|=\rho(x)\leq b,
\end{equation*}
and hence
\begin{equation*}
 \|T(x)\|\leq a+b\leq1.
\end{equation*}
Thus $T$ maps $B_X$ into itself.

Define also
\begin{equation*}
 S(x)=y(x)+\Phi(x).
\end{equation*}
We claim that
\begin{equation*}
 T^2=S,
 \qquad
 T^3=T.
\end{equation*}
Fix $x\in B_X$, and write
\begin{equation*}
 y=y(x),
 \qquad
 \rho=\rho(x),
 \qquad
 \Phi(x)=\rho z
\end{equation*}
for some $z\in S_Z$. Since $P(y-\rho z)=y$ and $A(y)=y$, the base point and the radius are unchanged after applying $T$. Furthermore,
\begin{equation*}
 \frac{Q(y-\rho z)}{\rho}=-z\in S_Z.
\end{equation*}
Both $\kappa_Z$ and $R_Z$ fix points of $S_Z$, and therefore
\begin{equation*}
 T(Tx)=y+\rho z=S(x).
\end{equation*}
The same argument applied to $S(x)=y+\rho z$ gives
\begin{equation*}
 T(Sx)=y-\rho z=T(x).
\end{equation*}
Thus the iterates of $T$ alternate between the two Lipschitz maps $T$ and $S$, and consequently
\begin{equation*}
 \sup_{n\geq1}\operatorname{Lip}(T^n)<\infty.
\end{equation*}

The map $T$ has no fixed point. Indeed, if $T(x)=x$, then $T^2(x)=T(x)$. On the other hand, the formulas above give
\begin{equation*}
 T(x)=y-\rho z,
 \qquad
 T^2(x)=y+\rho z.
\end{equation*}
This would imply $2\rho z=0$, which is impossible because $\rho>0$ and $z\in S_Z$.

It remains to prove that the minimal displacement is zero. The following scaling-and-radial-retraction argument follows \cite[Proposition~4.12]{BarrosoFerreira2026RetractionMethods}. Fix $z_0\in S_Z$ and use the sequence $(u_n)_{n=1}^{\infty}$ occurring in the construction of $\eta$. Put
\begin{equation*}
 x_n=au_n+b\eta(u_n)z_0
 \qquad(n\in\N).
\end{equation*}
Since $\|x_n\|\leq a+b\leq1$, the sequence lies in $B_X$. The defining formula for $T$ gives
\begin{equation*}
 T(x_n)=au_n-b\eta(u_n)z_0,
\end{equation*}
and therefore
\begin{equation*}
 \|T(x_n)-x_n\|
 =2b\eta(u_n)
 \leq2b\varepsilon_n
 \longrightarrow0.
\end{equation*}
Hence $d(T,B_X)=0$.
\end{proof}

\subsection{Uniform H\"older nonexpansive maps on decomposable Banach balls}
\label{sec:selected-holder-maps}

\noindent\textit{Origin.} The final open-question list in Barroso's paper \cite{BarrosoHolderContractive} asks whether the closed unit ball $B_X$ fails the fixed point property for uniformly $\alpha$-H\"older nonexpansive maps with null minimal displacement in several important classes of Banach spaces.

\begin{question}
Let $X$ be an infinite-dimensional Banach space and let $\alpha\in(0,1)$. Does there exist a map $T\colon B_X\to B_X$ such that
\begin{equation*}
 \operatorname{Fix}(T)=\varnothing,
 \qquad
 d(T,B_X):=\inf_{x\in B_X}\|Tx-x\|=0,
\end{equation*}
and
\begin{equation*}
 \|T^n x-T^n y\|\leq \|x-y\|^\alpha
 \qquad(x,y\in B_X,\ n\geq1)?
\end{equation*}
In particular, what happens when $X$ is isomorphic to a Hilbert space or when $X$ is reflexive and has an unconditional basis?
\end{question}

\begin{proof}[Answer and proof]
The answer is affirmative whenever
\begin{equation*}
 X=Y\oplus Z,
\end{equation*}
where $Y$ and $Z$ are closed infinite-dimensional subspaces; equivalently, whenever $X$ is decomposable. This includes spaces isomorphic to an infinite-dimensional Hilbert space and spaces with an unconditional basis.

By \Cref{thm:selected-decomposable-uniformly-lipschitz}, there is a map $U\colon B_X\to B_X$ such that
\begin{equation*}
 \operatorname{Fix}(U)=\varnothing,
 \qquad d(U,B_X)=0,
 \qquad M:=\sup_{n\geq1}\operatorname{Lip}(U^n)<\infty.
\end{equation*}
It remains to convert these uniformly Lipschitzian dynamics into uniformly $\alpha$-H\"older nonexpansive dynamics.

Put $C=2M$ and
\begin{equation*}
 \eta=\min\{2,C^{-1/(1-\alpha)}\}.
\end{equation*}
Choose $r>0$ sufficiently small that $2r\leq\eta^\alpha$. In particular, $r<1$. Let $R_r\colon B_X\to rB_X$ be the radial retraction
\begin{equation*}
 R_r(x)=
 \begin{cases}
 x, & \|x\|\leq r,\\[1mm]
 \dfrac{rx}{\|x\|}, & \|x\|>r.
 \end{cases}
\end{equation*}
This retraction is $2$-Lipschitz on every normed space. Define $S\colon rB_X\to rB_X$ by
\begin{equation*}
 S(z)=rU(z/r).
\end{equation*}
Then
\begin{equation*}
 S^n(z)=rU^n(z/r),
 \qquad \operatorname{Lip}(S^n)\leq M
 \quad(n\geq1),
\end{equation*}
and $S$ is fixed-point free with null minimal displacement on $rB_X$. Set
\begin{equation*}
 V=SR_r\colon B_X\longrightarrow B_X.
\end{equation*}
Because $S(rB_X)\subseteq rB_X$ and $R_r$ is the identity on $rB_X$,
\begin{equation*}
 V^n=S^nR_r \qquad(n\geq1).
\end{equation*}

For $x,y\in B_X$, write $d=\|x-y\|$. Since $V^nx,V^ny\in rB_X$,
\begin{equation*}
 \|V^nx-V^ny\| \leq \min\{M\|R_rx-R_ry\|,2r\}\leq \min\{Cd,2r\}.
\end{equation*}
If $d\leq\eta$, then the definition of $\eta$ gives $Cd\leq d^\alpha$. If $d\geq\eta$, then $2r\leq\eta^\alpha\leq d^\alpha$. Consequently,
\begin{equation*}
 \|V^nx-V^ny\|\leq d^\alpha
 \qquad(x,y\in B_X,\ n\geq1).
\end{equation*}

If $Vx=x$, then $x\in rB_X$, so $R_rx=x$ and $Sx=x$, contradicting $\operatorname{Fix}(S)=\varnothing$. Hence $V$ has no fixed point. Finally, because $d(S,rB_X)=0$, there are $z_j\in rB_X$ such that $\|Sz_j-z_j\|\to0$. For these points $R_rz_j=z_j$, and therefore
\begin{equation*}
 \|Vz_j-z_j\|=\|Sz_j-z_j\|\longrightarrow0.
\end{equation*}
Thus $d(V,B_X)=0$.

For the stated consequences, split a Hilbert space into two closed orthogonal summands of infinite dimension and transport the decomposition through an isomorphism. If $X$ has an unconditional basis, its odd and even coordinate subspaces give the required decomposition. This argument does not settle the arbitrary reflexive case.
\end{proof}

\subsection{Nearly isometric embeddability need not imply almost Lipschitz embeddability}
\label{sec:selected-nearly-isometric}

\noindent\textit{Origin.} This is Problem~4 in Section~5 of Baudier and Lancien \cite{BaudierLancienTightEmbeddability}.

\begin{question}
Exhibit metric spaces $X$ and $Y$ such that $X$ nearly isometrically embeds into $Y$, but $X$ does not almost Lipschitz embed into $Y$.

More precisely, $X$ \emph{almost Lipschitz embeds} into $Y$ if there are constants $r>0$ and $D\geq1$ such that, for every continuous function $\varphi\colon[0,\infty)\to[0,1)$ satisfying
\begin{equation*}
 \varphi(0)=0,
 \qquad \varphi(t)>0\quad(t>0),
\end{equation*}
there is a map $f_\varphi\colon X\to Y$ for which
\begin{equation*}
 r d_X(x,y)\varphi(d_X(x,y))
 \leq d_Y(f_\varphi(x),f_\varphi(y))
 \leq Drd_X(x,y).
\end{equation*}

For nearly isometric embeddability, let $\mathcal P$ consist of the continuous functions $\rho$ satisfying
\begin{equation*}
 \rho(t)=t\quad(0\leq t\leq1),
 \qquad \rho(t)\leq t\quad(t\geq1),
 \qquad \frac{\rho(t)}t\longrightarrow0\quad(t\to\infty),
\end{equation*}
and let $\Omega$ consist of the functions $\omega$ satisfying
\begin{equation*}
 \begin{aligned}
 \omega(0)&=0,
 & t&\leq\omega(t) &&(0\leq t\leq1),\\
 \omega(t)&=t &&(t\geq1),
 & \frac{\omega(t)}t&\longrightarrow\infty &&(t\downarrow0).
 \end{aligned}
\end{equation*}
The space $X$ \emph{nearly isometrically embeds} into $Y$ if, for every $(\rho,\omega)\in\mathcal P\times\Omega$, there is a map $f\colon X\to Y$ such that
\begin{equation*}
 \rho(d_X(x,y))\leq d_Y(f(x),f(y))\leq\omega(d_X(x,y)).
\end{equation*}
\end{question}

\begin{proof}[Answer and proof]
We first record a concave-majorant construction. Given $\rho\in\mathcal P$, there is a continuous, nondecreasing, concave, unbounded function $g\colon[0,\infty)\to[0,\infty)$ satisfying
\begin{equation*}
 g(0)=0,
 \qquad g(t)=t\quad(0\leq t\leq1),
 \qquad \rho(t)\leq g(t)\leq t,
 \qquad \frac{g(t)}t\longrightarrow0.
\end{equation*}
To see this, define
\begin{equation*}
 h(t)=
 \begin{cases}
 t, & 0\leq t\leq1,\\
 1+\log t, & t\geq1,
 \end{cases}
 \qquad
 \rho_0(t)=\max\{\rho(t),h(t)\}.
\end{equation*}
Choose a sequence $\varepsilon_j\downarrow0$. Since $\rho_0(t)/t\to0$, each number
\begin{equation*}
 A_j=\sup_{t\geq0}\bigl(\rho_0(t)-\varepsilon_jt\bigr)
\end{equation*}
is finite. Set
\begin{equation*}
 g(t)=\inf\bigl(\{t\}\cup
       \{A_j+\varepsilon_jt:j\geq1\}\bigr).
\end{equation*}
Every affine function in this infimum dominates $\rho_0$. Thus $\rho\leq\rho_0\leq g\leq t$. The function $g$ is concave and nondecreasing. For each fixed $j$,
\begin{equation*}
 \limsup_{t\to\infty}\frac{g(t)}t
 \leq \limsup_{t\to\infty}\frac{A_j+\varepsilon_jt}{t}
 =\varepsilon_j,
\end{equation*}
so $g(t)/t\to0$. Finally, $g\geq h$, and hence $g$ is unbounded.

A nondecreasing concave function with $g(0)=0$ is subadditive. Consequently,
\begin{equation*}
 d_g(m,n)=g(|m-n|)
\end{equation*}
defines a metric on $\mathbb N_0$.

Take $X=\mathbb N_0$ with its usual metric. For every $\rho\in\mathcal P$, choose a function $g_\rho$ as above, and let $Y_\rho$ be a copy of $\mathbb N_0$ with metric
\begin{equation*}
 d_\rho(m,n)=g_\rho(|m-n|).
\end{equation*}
Let $Y$ be the metric wedge of the family $(Y_\rho)_{\rho\in\mathcal P}$, obtained by identifying all zero points to a common basepoint $o$. If $a\in Y_\rho$ and $b\in Y_\sigma$, define
\begin{equation*}
 d_Y(a,b)=
 \begin{cases}
 g_\rho(|a-b|), & \rho=\sigma,\\[1mm]
 g_\rho(a)+g_\sigma(b), & \rho\neq\sigma.
 \end{cases}
\end{equation*}

We first show that $X$ nearly isometrically embeds into $Y$. Fix $\rho\in\mathcal P$ and $\omega\in\Omega$, and send $n\in\mathbb N_0$ to the point $n$ in the $\rho$-component. For $k=|m-n|\geq1$,
\begin{equation*}
 \rho(k) \leq g_\rho(k) =d_Y(F_{\rho,\omega}(m),F_{\rho,\omega}(n)) \leq k =\omega(k).
\end{equation*}
The case $k=0$ is immediate.

We next prove that $Y$ contains no bi-Lipschitz copy of the usual integer ray. Suppose that $F\colon\mathbb N_0\to Y$ and constants $c,C>0$ satisfy
\begin{equation*}
 c|m-n|\leq d_Y(F(m),F(n))\leq C|m-n|
 \qquad(m,n\in\mathbb N_0).
\end{equation*}
Put $R_n=d_Y(F(n),o)$. Then
\begin{equation*}
 R_n\geq d_Y(F(n),F(0))-R_0\geq cn-R_0,
\end{equation*}
so $R_n$ grows at least linearly. On the other hand, $d_Y(F(n+1),F(n))\leq C$. If $F(n)$ and $F(n+1)$ belong to different wedge components, their distance is $R_n+R_{n+1}$, which is eventually larger than $C$. Thus, from some index onward, the whole sequence lies in a single component $Y_{\rho_0}$.

Write $F(n)=a_n\in Y_{\rho_0}$ for $n\geq N$. Since
\begin{equation*}
 g_{\rho_0}(|a_{n+1}-a_n|)=d_Y(F(n+1),F(n))\leq C
\end{equation*}
and $g_{\rho_0}$ is nondecreasing and unbounded, there is $B<\infty$ such that
\begin{equation*}
 |a_{n+1}-a_n|\leq B \qquad(n\geq N).
\end{equation*}
Hence
\begin{equation*}
 |a_m-a_n|\leq B|m-n| \qquad(m,n\geq N),
\end{equation*}
and therefore
\begin{equation*}
 d_Y(F(m),F(n))\leq g_{\rho_0}(B|m-n|).
\end{equation*}
Because $g_{\rho_0}(t)/t\to0$, the right-hand side is $o(|m-n|)$ as $|m-n|\to\infty$, contradicting the lower estimate $d_Y(F(m),F(n))\geq c|m-n|$. Thus no bi-Lipschitz embedding $\mathbb N_0\to Y$ exists.

Finally, suppose that $X$ almost Lipschitz embeds into $Y$. Choose
\begin{equation*}
 \varphi(t)=
 \begin{cases}
 t/2, & 0\leq t\leq1,\\
 1/2, & t\geq1.
 \end{cases}
\end{equation*}
Every nonzero distance in $\mathbb N_0$ is at least $1$, so the corresponding map would satisfy
\begin{equation*}
    \frac r2|m-n| \leq d_Y(f_\varphi(m),f_\varphi(n)) \leq Dr|m-n| \qquad(m\neq n).
\end{equation*}
This would be a bi-Lipschitz embedding of the integer ray into $Y$, a contradiction. Hence $X$ does not almost Lipschitz embed into $Y$.
\end{proof}

\subsection{A \texorpdfstring{$c_0$}{c0}-vector outside the canonical closed span in a variable-exponent space}
\label{sec:selected-variable-exponent}

\noindent\textit{Origin.} This answers negatively the open question repeated on page~8 of Talponen's paper \cite{Talponen2009SequentialSpaces}.

\begin{question}
Let $p\colon\mathbb N\to[1,\infty]$. For $x=(x_n)\in\ell^\infty$, define
\begin{equation*}
    \lvert\!\lvert\!\lvert  x\rvert\!\rvert\!\rvert_{(1)}=|x_1|\boxplus_{p(1)}|x_2|, \qquad \lvert\!\lvert\!\lvert x\rvert\!\rvert\!\rvert_{(k)}    =\lvert\!\lvert\!\lvert x\rvert\!\rvert\!\rvert_{(k-1)}\boxplus_{p(k)}|x_{k+1}|,
\end{equation*}
where $a\boxplus_r b=(a^r+b^r)^{1/r}$ when $r<\infty$, and put
\begin{equation*}
 \Phi(x)=\lim_{k\to\infty} \lvert\!\lvert\!\lvert x\rvert\!\rvert\!\rvert_{(k)}, \qquad \ell^{p(\cdot)}=\{x\in\ell^\infty:\Phi(x)<\infty\}.
\end{equation*}
Writing $[(e_n)]$ for the closed linear span in $\ell^{p(\cdot)}$ of the canonical unit vectors, must one always have
\begin{equation*}
 \ell^{p(\cdot)}\cap c_0=[(e_n)]?
\end{equation*}
\end{question}

\begin{proof}[Answer and proof]
No. We construct a function $p\colon\mathbb N\to[2,\infty)$ taking only finite values and an element
\begin{equation*}
 x\in\ell^{p(\cdot)}\cap c_0\setminus[(e_n)].
\end{equation*}
Let $P_n$ denote truncation after the $n$th coordinate and let $Q_n=I-P_n$. We first note that, for every $y\in\ell^{p(\cdot)}$,
\begin{equation*}
    y\in[(e_n)] \quad\Longleftrightarrow\quad \|Q_ny\|_{\ell^{p(\cdot)}}\longrightarrow0.
\end{equation*}
Indeed, the reverse implication follows because $P_ny$ is finitely supported and $P_ny\to y$. Conversely, if $y\in[(e_n)]$ and $\varepsilon>0$, choose a finitely supported $z$, supported in $\{1,\ldots,N\}$, such that $\|y-z\|_{\ell^{p(\cdot)}}<\varepsilon$. The recursive norm is coordinatewise monotone in the absolute values, so every coordinate projection is contractive. Hence, for $n\geq N$,
\begin{equation*}
 \|Q_ny\|_{\ell^{p(\cdot)}}
 =\|Q_n(y-z)\|_{\ell^{p(\cdot)}}
 \leq\|y-z\|_{\ell^{p(\cdot)}}<\varepsilon.
\end{equation*}

Fix $0<c<1$, and for $j\geq1$ put
\begin{equation*}
 r_j=j+1,
 \qquad m_j=j^{r_j}.
\end{equation*}
Set $N_1=2$ and $N_{j+1}=N_j+m_j$, and let
\begin{equation*}
 B_j=\{N_j,N_j+1,\ldots,N_{j+1}-1\}.
\end{equation*}
Define $x\in\ell^\infty$ by
\begin{equation*}
 x_1=1,
 \qquad x_n=\frac cj\quad(n\in B_j).
\end{equation*}
Since $c/j\to0$, we have $x\in c_0$. Define $p\colon\mathbb N\to[2,\infty)$ by
\begin{equation*}
    p(k)=r_j \qquad\text{whenever}\qquad N_j-1\leq k\leq N_{j+1}-2.
\end{equation*}
These intervals partition $\mathbb N$, so $p$ is well defined.

Let
\begin{equation*}
 A_j=\bigl\|P_{N_{j+1}-1}x\bigr\|_{\ell^{p(\cdot)}}
\end{equation*}
be the norm of the initial segment consisting of $x_1$ and the first $j$ complete blocks. Then $A_0=1$, and all exponents used while adjoining block $j$ are equal to $r_j$. Therefore
\begin{equation*}
    A_j =\left(A_{j-1}^{r_j}+m_j\left(\frac cj\right)^{r_j}\right)^{1/r_j} =\left(A_{j-1}^{r_j}+c^{r_j}\right)^{1/r_j}.
\end{equation*}
Since $A_{j-1}\geq1$,
\begin{equation*}
    A_j=A_{j-1}\left(1+\left(\frac c{A_{j-1}}\right)^{r_j}\right)^{1/r_j} \leq A_{j-1}(1+c^{r_j})^{1/r_j},
\end{equation*}
and consequently
\begin{equation*}
    A_j\leq\prod_{i=1}^j(1+c^{r_i})^{1/r_i}.
\end{equation*}
This infinite product converges because
\begin{equation*}
    \sum_{i=1}^\infty\frac{\log(1+c^{r_i})}{r_i} \leq\sum_{i=1}^\infty\frac{c^{r_i}}{r_i}<\infty.
\end{equation*}
Every finite prefix ending inside a block has norm at most the norm after the complete block. Thus $\sup_n\|P_nx\|_{\ell^{p(\cdot)}}<\infty$, and hence $x\in\ell^{p(\cdot)}$.

It remains to show that $x\notin[(e_n)]$. Consider the tail $Q_{N_j-1}x$. Its projection onto $B_j$ has $m_j$ coordinates equal to $c/j$, with all relevant exponents equal to $r_j$, and therefore has norm
\begin{equation*}
 \left(m_j\left(\frac cj\right)^{r_j}\right)^{1/r_j}=c.
\end{equation*}
By coordinatewise monotonicity,
\begin{equation*}
    \|Q_{N_j-1}x\|_{\ell^{p(\cdot)}}\geq c \qquad(j\geq1).
\end{equation*}
The tails therefore do not converge to zero in norm, so the preceding equivalence yields $x\notin[(e_n)]$.
\end{proof}

\subsection{A radial-twist counterexample for a coarse sum}
\label{sec:selected-radial-twist}

\noindent\textit{Origin.} This is Problem~5.7 in Braga's paper \cite{Braga2017Asymptotic}. It appears on page~20 of the arXiv version.

\begin{question}
Let $X,Y_1,Y_2$ be Banach spaces, and let
\begin{equation*}
 f=(f_1,f_2)\colon X\longrightarrow Y_1\oplus Y_2
\end{equation*}
be a coarse Lipschitz embedding. Must there exist an infinite-dimensional subspace $X_0\subseteq X$ such that either $f_1|_{X_0}\colon X_0\to Y_1$ or $f_2|_{X_0}\colon X_0\to Y_2$ is a coarse Lipschitz embedding?
\end{question}

\begin{proof}[Answer and proof]
No. We construct a bi-Lipschitz embedding
\begin{equation*}
 F=(F_1,F_2)\colon\ell_2\longrightarrow\ell_2\oplus_2\ell_2
\end{equation*}
such that neither coordinate restricts to a coarse embedding on any nonzero linear subspace of $\ell_2$.

Let $H=\ell_2$ over the real scalars. For $t\geq0$, define
\begin{equation*}
 d_4(t)=\operatorname{dist}(t,4\mathbb Z),
 \qquad
 \varphi(t)=\frac\pi4\min\{d_4(t),2\},
\end{equation*}
and, for $r\geq0$, set
\begin{equation*}
 \theta(r)=\varphi(\log(1+r)).
\end{equation*}
The function $\varphi$ is $\pi/4$-Lipschitz and takes values in $[0,\pi/2]$. Moreover, for every $k\in\mathbb N_0$,
\begin{equation*}
 \theta(e^{4k}-1)=0, \qquad \theta(e^{4k+2}-1)=\frac\pi2.
\end{equation*}
Define
\begin{equation*}
 F(x)=\bigl(\cos(\theta(\|x\|))x,
             \sin(\theta(\|x\|))x\bigr)
 \in H\oplus_2H,
\end{equation*}
and write $F=(F_1,F_2)$.

We first prove that $F$ is bi-Lipschitz. For $t\in[0,\pi/2]$, let
\begin{equation*}
 J_t\colon H\longrightarrow H\oplus_2H,
 \qquad J_tz=(\cos(t)z,\sin(t)z).
\end{equation*}
Each $J_t$ is an isometry and $F(x)=J_{\theta(\|x\|)}x$. Let $r=\|x\|$ and $s=\|y\|$. If $x,y\neq0$, put
\begin{equation*}
 u=\frac xr,
 \qquad v=\frac ys,
 \qquad a=\langle u,v\rangle,
 \qquad c=\cos(\theta(r)-\theta(s)).
\end{equation*}
Since $\theta$ takes values in $[0,\pi/2]$, we have $c\in[0,1]$, and a direct calculation gives
\begin{equation*}
 \|F(x)-F(y)\|^2=r^2+s^2-2rsca,
 \qquad
 \|x-y\|^2=r^2+s^2-2rsa.
\end{equation*}
If $a\geq0$, then $ca\leq a$, so $\|F(x)-F(y)\|\geq\|x-y\|$. If $a<0$, then
\begin{equation*}
 \|F(x)-F(y)\|^2\geq r^2+s^2
 \geq\frac12\|x-y\|^2.
\end{equation*}
The same estimate is immediate if one of $x,y$ is zero. Hence
\begin{equation*}
 \|F(x)-F(y)\|\geq\frac1{\sqrt2}\|x-y\|.
\end{equation*}

For the upper estimate, assume without loss of generality that $r\geq s$. Since $\|J_t-J_{t'}\|\leq|t-t'|$, we obtain
\begin{equation*}
\begin{aligned}
 \|F(x)-F(y)\|
 &\leq\|J_{\theta(r)}(x-y)\|
   +\|(J_{\theta(r)}-J_{\theta(s)})y\|\\
 &\leq\|x-y\|+s|\theta(r)-\theta(s)|.
\end{aligned}
\end{equation*}
The Lipschitz estimate for $\varphi$ yields
\begin{equation*}
 |\theta(r)-\theta(s)|
 \leq\frac\pi4\log\frac{1+r}{1+s}.
\end{equation*}
Using $\log(1+u)\leq u$ with $u=(r-s)/(1+s)$, we get
\begin{equation*}
 s\log\frac{1+r}{1+s}
 \leq\frac{s}{1+s}(r-s)
 \leq r-s
 \leq\|x-y\|.
\end{equation*}
Therefore
\begin{equation*}
 \frac1{\sqrt2}\|x-y\|
 \leq\|F(x)-F(y)\|
 \leq\left(1+\frac\pi4\right)\|x-y\|,
\end{equation*}
so $F$ is bi-Lipschitz and hence a coarse Lipschitz embedding.

It remains to show that neither coordinate works on any nonzero subspace. Let $X_0\subseteq H$ be a nonzero linear subspace, and choose $u\in X_0$ with $\|u\|=1$. For
\begin{equation*}
 R_k=e^{4k+2}-1,
 \qquad x_k=R_ku,
\end{equation*}
we have $\theta(R_k)=\pi/2$, and hence
\begin{equation*}
 F_1(x_k)=F_1(-x_k)=0,
 \qquad \|x_k-(-x_k)\|=2R_k\longrightarrow\infty.
\end{equation*}
Thus $F_1|_{X_0}$ is not a coarse embedding. Similarly, with
\begin{equation*}
 S_k=e^{4k}-1,
 \qquad y_k=S_ku,
\end{equation*}
we have $\theta(S_k)=0$, and therefore
\begin{equation*}
 F_2(y_k)=F_2(-y_k)=0,
 \qquad \|y_k-(-y_k)\|=2S_k\longrightarrow\infty.
\end{equation*}
Thus $F_2|_{X_0}$ is not a coarse embedding either. This holds for every nonzero $X_0$, and gives a negative answer to the question.
\end{proof}

\newpage

\newpage

\noindent\textbf{Acknowledgements.}
This paper forms part of the first-named author's PhD research at Lancaster University, conducted under the supervision of Professor N. J. Laustsen. He gratefully acknowledges the support of the Engineering and Physical Sciences Research Council (EPSRC), grant number EP/W524438/1, which has supported his studies.

The authors would like to thank Kevin Beanland and Tomasz Kania for proposing problems and providing comments for this project, which helped to shape some of the mathematical directions pursued here. They are also grateful to Ond\v{r}ej Kalenda, Tommaso Russo, Bruno de Mendon\c{c}a Braga, Anna Pelczar-Barwacz, Jes\'us Mar\'ia Fern\'andez Castillo, and Samya Kumar Ray for their kind and careful responses to our original questions concerning the automatic pipeline, and for taking the time to look at the problems and material we sent them.

The first named author would like to express his special gratitude to Kevin Beanland for his enthusiastic response to the project, as well as for his encouragement and support. He is also especially grateful to Tomasz Kania for many long conversations about mathematics and artificial intelligence, and for his generosity in answering questions, even at inconvenient times. He would also like to thank Enrique Rozas García and Felix Schwarzfischer for their comments on a preliminary version of this manuscript and for their thoughtful and stimulating conversations about artificial intelligence and its role in mathematical research.

Finally, he would like to express his heartfelt thanks to his supervisor, Niels Laustsen, for giving him the freedom to explore, including when that exploration led in unconventional directions. \medskip

\noindent\textbf{AI usage statement.} Large language models were central to the exploratory and drafting stages of this project, and we refer to the main text for a full description of AI usage. The proofs printed here have been edited, checked against the cited literature, and integrated by the authors, who take responsibility for the mathematical content. \medskip

For the purpose of open access, the author has applied a Creative Commons Attribution (CC BY) licence to any Author Accepted Manuscript version arising. \medskip

\noindent\textbf{Data availability.} The source materials used in this study consist of publicly available arXiv papers and their associated source files. For papers for which the pipeline produced a  packet, the relevant paper information, packet, and model output are available at the \website[project website]. No proprietary, personal, or restricted data were used.

\noindent\textbf{Conflict of interest.} The authors declare that they have no conflict of interest.

\clearpage
\markboth{References}{References}
\providecommand{\bibdoi}[1]{%
  \href{https://doi.org/#1}{\nolinkurl{doi:#1}}}
\providecommand{\bibarxiv}[1]{%
  \href{https://arxiv.org/abs/#1}{\nolinkurl{arXiv:#1}}}

\end{document}